\newlength{\XHeight}
\newlength{\XWidth}
\setlist[itemize,1]{leftmargin=\dimexpr 26pt-.1in}
\newtheorem{PARA}{}[section]
\newtheorem{theorem}[PARA]{Theorem}
\newtheorem{corollary}[PARA]{Corollary}
\newtheorem{lemma}[PARA]{Lemma}
\newtheorem{proposition}[PARA]{Proposition}
\newtheorem{definition}[PARA]{Definition}
\newtheorem{definition-proposition}[PARA]{Definition-Proposition}
\newtheorem{definition-lemma}[PARA]{Definition-Lemma}
\theoremstyle{definition}
\newtheorem{remark}[PARA]{Remark}
\theoremstyle{theorem}
\newtheorem{example}[PARA]{Example}
\newcommand{\para}{\begin{PARA}\rm}
\newcommand{\arap}{\end{PARA}\rm}
\newcommand{\dfn}{\begin{definition}\rm}
\newcommand{\nfd}{\end{definition}\rm}
\newcommand{\rmk}{\begin{remark}\rm}
\newcommand{\kmr}{\end{remark}\rm}
\newcommand{\xmpl}{\begin{example}\rm}
\newcommand{\lpmx}{\end{example}\rm}
\newcommand{\cI}{\mathcal{I}}
\newcommand{\cK}{\mathcal{K}}
\newcommand{\cL}{\mathcal{L}}
\newcommand{\cO}{\mathcal{O}}
\newcommand{\cP}{\mathcal{P}}
\newcommand{\cU}{\mathcal{U}}
\newcommand{\cV}{\mathcal{V}}
\newcommand{\cW}{\mathcal{W}}
\newcommand{\hatotimes}{\, {\wh\otimes}}
\newcommand{\C}{{\mathbb{C}}}
\newcommand{\N}{{\mathbb{N}}}
\newcommand{\R}{{\mathbb{R}}}
\newcommand{\Z}{{\mathbb{Z}}}
\DeclareMathOperator{\coker}{coker}
\DeclareMathOperator*{\colim}{colim}
\newcommand{\im}{\mathrm{im}\,}        
\newcommand{\id}{\mathrm{ id}}         
\newcommand{\Id}{\mathrm{ Id}}
\newcommand{\ev}{\mathrm{ev}}
\newcommand{\Ev}{\mathrm{Ev}}
\newcommand{\Hom}{\mathrm{Hom}}
\newcommand{\fin}{\mathrm{fin}}
\newcommand{\Top}{\mathrm{Top}}
\newcommand{\eps}{{\varepsilon}}
\newcommand{\om}{{\omega}}
\def\NABLA#1{{\mathop{\nabla\kern-.5ex\lower1ex\hbox{$#1$}}}}
\def\Nabla#1{\nabla\kern-.5ex{}_{#1}}
\def\Tabla#1{\Tilde\nabla\kern-.5ex{}_{#1}}
\renewcommand{\Tilde}{\widetilde}
\newcommand{\p}{{\partial}}
\newenvironment{enum}
{\begin{enumerate}}
{\end{enumerate}}
\newcommand{\wh}{\widehat}
\newcommand{\ol}{\overline}
\newcommand{\into}{\hookrightarrow}
\newcommand{\bk}{\mathbf{k}}
\definecolor{darkgreen}{rgb}{0.12, 0.3, 0.17}
\definecolor{burntorange}{rgb}{0.8, 0.33, 0.0}
\definecolor{chromeyellow}{rgb}{1.0, 0.65, 0.0}
\definecolor{darkorange}{rgb}{1.0, 0.55, 0.0}
\definecolor{flame}{rgb}{0.89, 0.35, 0.13}
\definecolor{lightgray}{rgb}{0.75, 0.75, 0.75}
\begin{document}

\title[Rabinowitz Floer homology as a Tate vector space]{Rabinowitz
  Floer homology as a Tate vector space} 
\author{Kai Cieliebak}
\address{Universit\"at Augsburg \newline Universit\"atsstrasse 14, D-86159 Augsburg, Germany}
\email{kai.cieliebak@math.uni-augsburg.de}
\author{Alexandru Oancea}
\address{Universit\'e de Strasbourg \newline 
Institut de recherche math\'ematique avanc\'ee, IRMA \newline
Strasbourg, France}
\email{oancea@unistra.fr}
\date{\today}


\begin{abstract} 
We show that the category of linearly topologized vector spaces over discrete fields constitutes the correct framework for algebraic structures on Floer homologies with field coefficients. Our case in point is the Poincaré duality theorem for Rabinowitz Floer homology. We prove that Rabinowitz Floer homology is a locally linearly compact vector space in the sense of Lefschetz, or, equivalently, a Tate vector space in the sense of Beilinson-Feigin-Mazur. Poincaré duality and the graded Frobenius algebra structure on Rabinowitz Floer homology then hold in the topological sense. Along the way, we develop in a largely self-contained manner the theory of linearly topologized vector spaces, with special emphasis on duality and completed tensor products, complementing results of Beilinson-Drinfeld, Beilinson, Rojas, Positselski, and Esposito-Penkov.
\end{abstract}

\maketitle

\tableofcontents

\section{Introduction}\label{sec:intro}

Rabinowitz Floer homology is a Floer
theory akin to symplectic homology~\cite{Cieliebak-Frauenfelder,Cieliebak-Frauenfelder-Oancea} which associates to each Liouville domain $V$ and field $\bk$ a
graded $\bk$-vector space $RFH_*(\p V)$. Liouville domains are central objects of study in symplectic topology, as they include for example Stein domains and unit disc bundles in phase spaces. 

A key feature of Rabinowitz Floer homology is that it satisfies a\break {\em
  Poincar\'e duality} isomorphism with Rabinowitz Floer cohomology~\cite{CHO-PD}, namely
\begin{equation}\label{eq:PD}
   PD:RFH_*(\p V) \stackrel\simeq\longrightarrow RFH^{1-*}(\p V).
\end{equation}
This isomorphism intertwines natural graded Frobenius algebra structures on both sides~\cite{CHO-PD}.  
Given that $RFH_*(\p V)$ and $RFH^*(\p V)$ are typically infinite dimensional\footnote{
They are infinite dimensional in all known examples where they are nonzero.}, 
the question arises whether this isomorphism is compatible with an interpretation of Rabinowitz Floer cohomology as the dual of Rabinowitz Floer homology. In many examples (e.g.~if $V=D^*M$ is the unit disk cotangent bundle
of a simply connected closed manifold $M$), $RFH_*(\p V)$ is finite
dimensional in each degree and~\eqref{eq:PD} makes sense because 
$RFH^*(\p V)$ is the {\em degree-wise dual}.  
This fails for $V=D^*M$ as soon as $\pi_1(M)$ has infinitely many
conjugacy classes (e.g.~for $M=S^1$). 
The point of view originally taken in~\cite{CHO-PD} is to remember the action filtrations on both sides and
interpret~\eqref{eq:PD} in the filtered sense. 

In this paper, we offer a different solution which we find both simple
and satisfactory, and which can be applied to other infinite dimensional contexts in symplectic topology, particularly in relation to Calabi-Yau structures, e.g.~\cite{BJK,GGV}. 
We will see that Rabinowitz Floer homology and
cohomology carry naturally the structure of {\em topological} vector
spaces such that $RFH^*(\p V)$ is the {\em topological dual} of $RFH_*(\p V)$. 
In fact, they belong to a particular class of spaces which we now describe. 

{\bf Tate vector spaces.} We equip the field $\bk$ with the discrete topology. A {\em linearly
  topologized $\bk$-vector space} $A$ is a topological $\bk$-vector space 
which is Hausdorff and has a neighborhood basis of $0$ consisting of
linear subspaces. It is called {\em discrete} if $\{0\}$ is open, and
{\em linearly compact} if it is complete and $A/U$ is finite
dimensional for each open linear subspace $U\subset A$.
A {\em Tate vector space} $A$ is a linearly topologized vector space
which possesses an open linearly compact subspace. 
Tate spaces were introduced by Lefschetz~\cite[II.27]{Lefschetz-book} under the
name ``locally linearly compact vector spaces'', and later named
``Tate vector spaces'' in recognition of Tate's work~\cite{Tate68}.
They enjoy nice duality properties: if $A$ is a Tate vector space,
then so is its topological dual $A^*$ and the canonical map $A\to
A^{**}$ is a topological isomorphism. 

To phrase our first result, we define a {\em Tate$^\omega$ vector space} to be a linearly topologized vector space which possesses an open linearly compact subspace with a countable basis of linear neighborhoods of $0$ and of countable codimension.

\begin{theorem}\label{thm:main1}
Rabinowitz Floer homology and cohomology of any Liouville domain $V$ are
Tate$^\omega$ vector spaces such that $RFH^*(\p V)$ 
is the topological dual of
$RFH_*(\p V)$, and~\eqref{eq:PD} is a topological isomorphism. 
\end{theorem}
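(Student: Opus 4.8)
The plan is to equip $RFH_*(\p V)$ with its \emph{action topology} and then to read off the Tate$^\omega$ structure from the way Rabinowitz Floer homology interpolates between symplectic homology and symplectic cohomology. First I would recall that, for a generic (nondegenerate) defining datum and any finite window $(a,b)\subset\R$, the truncated homology $RFH^{(a,b)}_*(\p V)$ is finite-dimensional in each degree, since only finitely many Reeb orbits have period and index in a bounded range, and that $RFH_*(\p V)$ is computed as the double limit
$$RFH_*(\p V)\;=\;\varinjlim_{b\to+\infty}\ \varprojlim_{a\to-\infty}\ RFH^{(a,b)}_*(\p V),$$
the colimit over $b$ encoding the homological (positive-action) direction and the limit over $a$ the cohomological (negative-action) direction. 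Taking as a basis of neighborhoods of $0$ the kernels of the projections onto the truncations $RFH^{(a,b)}_*$ defines a Hausdorff linear topology, and restricting to a cofinal sequence $a_n\to-\infty$, $b_n\to+\infty$ makes this basis countable.

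Next I would exhibit an open linearly compact subspace. Let $A_+\subset RFH_*(\p V)$ be the closed subspace generated by classes of non-positive action, which the action filtration identifies with $\varprojlim_{a\to-\infty}RFH^{(a,0)}_*(\p V)$. As an inverse limit of degreewise finite-dimensional spaces along a countable system, $A_+$ is complete with finite-dimensional open quotients, hence linearly compact with a countable neighborhood basis; and since $RFH_*/A_+\cong\varinjlim_{b\to+\infty}RFH^{(0,b)}_*$ is a colimit of finite-dimensional spaces it is discrete and of countable dimension. Thus $A_+$ is open, linearly compact, with countable basis and countable codimension, which is exactly the Tate$^\omega$ condition. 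Equivalently, one may feed the Cieliebak--Frauenfelder--Oancea long exact sequence~\cite{Cieliebak-Frauenfelder-Oancea} relating $RFH_*(\p V)$ to $SH_*(V)$ and $SH^*(V)$ into this picture: with field coefficients $SH_*(V)$ is a colimit, hence discrete, while $SH^*(V)$ is a limit, hence linearly compact, and the sequence realizes $RFH_*(\p V)$ as an extension of a subspace of the former by a quotient of the latter.

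For the remaining two assertions I would dualize the truncations. Degreewise finite-dimensional duality gives $RFH^{(a,b),*}\cong(RFH^{(a,b)}_*)^*$, and continuous dualization exchanges $\varinjlim$ with $\varprojlim$; hence
$$RFH^*(\p V)\;=\;\varprojlim_{b\to+\infty}\ \varinjlim_{a\to-\infty}\ RFH^{(a,b),*}(\p V)\;\cong\;\bigl(RFH_*(\p V)\bigr)^*,$$
so that $RFH^*(\p V)$ is the topological dual, and the general Tate duality recalled above (the dual of a Tate space is Tate and $A\to A^{**}$ is a topological isomorphism) applies. Finally, \eqref{eq:PD} is already a linear isomorphism by \cite{CHO-PD}; at the chain level it reverses the action filtration, sending a window $(a,b)$ to $(-b,-a)$, so it interchanges $A_+$ with the discrete quotient and matches the truncation bases on the two sides. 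Consequently both it and its inverse are continuous, i.e.\ \eqref{eq:PD} is a topological isomorphism.

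The main obstacle I expect is not any single algebraic identity but the systematic passage from algebra to topology. One must verify that every Floer-theoretic map in sight---the action-truncation and continuation maps (so that the action topology is independent of the auxiliary data), the connecting maps of the Cieliebak--Frauenfelder--Oancea sequence, and the duality and Poincaré-duality isomorphisms of \cite{CHO-PD}---is a \emph{strict} continuous homomorphism for the action topology, equivalently that the algebraic (co)limits above compute the correctly completed topological (co)limits. Granting this strictness, the finite-dimensionality of bounded-action truncations together with the general duality theory of Tate spaces yields the theorem, the countability clauses of Tate$^\omega$ being a bookkeeping consequence of working along cofinal sequences of action windows.
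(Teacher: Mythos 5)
Your overall strategy --- topologize via action truncations, invoke the Cieliebak--Frauenfelder--Oancea sequence, dualize degreewise, and use truncated Poincar\'e duality --- is the same as the paper's, but there is a genuine gap at exactly the step you flag at the end and then set aside. You topologize $RFH_*(\p V)=\colim_b\lim_a RFH_*^{(a,b)}$ by declaring the kernels of the projections $\pi_c$ onto $RFH_*^{(c,\infty)}$ to be a neighborhood basis of $0$, and you take as candidate c-lattice $A_+=\lim_a RFH_*^{(a,0)}$ with discrete quotient $\colim_b RFH_*^{(0,b)}$. Both steps presuppose that the two iterated (co)limit presentations agree, i.e.\ that the canonical map $\kappa:\colim_b\lim_a RFH_*^{(a,b)}\to\lim_a\colim_b RFH_*^{(a,b)}$ is a topological isomorphism: the subspaces $\ker\pi_c$ are open essentially by definition in the \emph{target} of $\kappa$, but not visibly so in the source, which is the actual definition of $RFH_*$; and for a general bidirected system of finite-dimensional spaces $\lim$ and $\colim$ do \emph{not} commute (cf.~\cite{CF}). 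This commutation is precisely Theorem~\ref{thm:main2} (= Theorem~\ref{thm:kappa-top}), which the paper identifies as the crucial ingredient; writing ``granting this strictness, \dots\ yields the theorem'' concedes the mathematical content rather than supplying it. A secondary inaccuracy: the canonical map $\iota_0:\lim_a RFH_*^{(a,0)}\to RFH_*$ need not be injective --- the long exact sequence in~\S\ref{sec:basis-of-opens} only gives $\ker\pi_0=\im\,\iota_0$ --- so your identification of $A_+$ with the inverse limit is unjustified; only the image $\im\,\iota_0$ is available, and the openness of that image in the colimit topology is again the commutation problem in disguise.

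The paper closes this gap with Proposition~\ref{prop:kappa-abstract}: starting from the finite-window short exact sequence~\eqref{eq:SES-ab}, $0\to \ol{SH}_*^{(-\infty,b)}\to RFH_*^{(a,b)}\to \ol{SH}^{1-*}_{(-\infty,-a)}\to 0$, a double induction produces a splitting $RFH_*^{(a,b)}\cong \ol{SH}_*^{(-\infty,b)}\oplus \ol{SH}^{1-*}_{(-\infty,-a)}$ of \emph{bidirected systems}, i.e.\ compatible with all structure maps in both directions; this simultaneously shows that source and target of $\kappa$ are Tate$^\omega$ (via Corollary~\ref{cor:limit-colimit-Tate} and exactness of $\lim$ on systems of finite-dimensional spaces) and that $\kappa$ is a topological isomorphism. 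The same commutation, in its cohomological form Theorem~\ref{thm:kappa-top}(b), is also used silently in your duality computation: the definition of $RFH^*(\p V)$ is $\colim_a\lim_b RFH^*_{(a,b)}$, whereas degreewise dualization of $RFH_*$ produces $\lim_b\colim_a RFH^*_{(a,b)}$ (Corollary~\ref{cor:lim-colim-duality}), and identifying the two is again Theorem~\ref{thm:kappa-top}. Your sketch for Poincar\'e duality via the action-truncated diagrams of~\cite{CHO-PD} is sound in outline and matches the proof of Theorem~\ref{thm:RFHmulambdaPD}, but it too only yields continuity once the topology has been pinned down by the commutation result. In short: you correctly located the difficulty, but the missing piece --- the compatible splitting of the bidirected system that proves Theorem~\ref{thm:main2} --- is where the actual work of the theorem lies.
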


Thus Rabinowitz Floer homology belongs to the class of Tate vector
spaces that are {\em self-dual}. It was observed
in~\cite{Esposito-Penkov23} that an $\infty$-dimensional Tate vector
space $A$ which is self-dual has the form $A\cong D\oplus D^*$ for an
$\infty$-dimensional discrete vector space $D$. This reflects on the
algebraic level the splitting theorem from~\cite{CHO-PD} (essentially
proved in~\cite{Cieliebak-Frauenfelder-Oancea}): Rabinowitz Floer
homology equals the direct sum of reduced symplectic homology and
cohomology, 
\begin{equation} \label{eq:RFH-SH-bar}
   RFH_*(\p V) \cong \ol{SH}_*(V)\oplus \ol{SH}^{1-*}(V). 
\end{equation}
Rabinowitz Floer homology is defined via its action truncated versions
as $RFH_*(\p V) = \colim_b
\lim_a \, RFH_*^{(a,b)}(\p V)$, where $a\to-\infty$ and $b\to+\infty$.  
In general, limits and colimits in a bidirected system do not commute, see e.g.~\cite{CF}.
It is a crucial ingredient in the proof of Theorem~\ref{thm:main1} 
that in this case they do:

\begin{theorem}\label{thm:main2}
For any Liouville domain $V$, the canonical homomorphism
$$
  \kappa\colon 
  \colim_b \lim_a \, RFH_*^{(a,b)}(\p V)
  \to \lim_a \colim_b \, RFH_*^{(a,b)}(\p V)
$$
is an isomorphism of Tate$^\omega$ vector spaces.
\end{theorem}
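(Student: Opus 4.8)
The plan is to reduce the commutation of $\lim_a$ and $\colim_b$ to standard homological algebra of inverse and direct systems of \emph{finite-dimensional} vector spaces, organized around a fixed reference action level. Throughout I work in a fixed degree $*$, where all the truncated groups $RFH^{(a,b)}_*(\p V)$ are finite-dimensional (this rests on non-degeneracy of the Rabinowitz action functional after perturbation, so that the action spectrum is discrete and each compact window contains finitely many generators of each index). I choose a value, say $0$, not in the action spectrum, so that for $a<0<b$ the window splits into a triple. The tautological short exact sequence of truncated complexes $0\to RFC^{(a,0]}\to RFC^{(a,b)}\to RFC^{(0,b)}\to 0$ (low action is a subcomplex because the differential decreases action) yields a long exact sequence
\[
\cdots \to RFH^{(a,0]}_*(\p V)\to RFH^{(a,b)}_*(\p V)\to RFH^{(0,b)}_*(\p V)\xrightarrow{\partial} RFH^{(a,0]}_{*-1}(\p V)\to\cdots.
\]
The decisive structural point is that $RFH^{(a,0]}_*(\p V)$ is \emph{independent of $b$} and $RFH^{(0,b)}_*(\p V)$ is \emph{independent of $a$}. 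I therefore set $P_*:=\lim_a RFH^{(a,0]}_*(\p V)$, an inverse limit of finite-dimensional spaces, hence linearly compact, and $Q_*:=\colim_b RFH^{(0,b)}_*(\p V)$, a filtered colimit of finite-dimensional spaces, hence discrete.

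The key homological inputs are: an inverse system of finite-dimensional vector spaces is automatically Mittag--Leffler, so its $\lim^1$ vanishes and $\lim$ is exact on it; and filtered colimits of vector spaces are always exact. For the order $\colim_b\lim_a$ I first apply $\lim_a$ to the window sequence — legitimate since its only $a$-dependent term $RFH^{(a,0]}$ is finite-dimensional, so $\lim^1_a=0$ — and then apply the exact functor $\colim_b$, using $\colim_b P_*=P_*$ and $\colim_b RFH^{(0,b)}_*(\p V)=Q_*$. This produces an exact sequence
\[
\cdots \to Q_{*+1}\xrightarrow{\delta}P_*\to \colim_b\lim_a RFH^{(a,b)}_*(\p V)\to Q_*\xrightarrow{\delta}P_{*-1}\to\cdots.
\]
For the order $\lim_a\colim_b$ I apply $\colim_b$ first (exact), obtaining for each $a$ a long exact sequence with $a$-dependent term $RFH^{(a,0]}$ (finite-dimensional) and constant term $Q_*$, and then apply $\lim_a$, controlling the middle term $\colim_b RFH^{(a,b)}_*(\p V)$ by realizing it as an extension of a subspace of $Q_*$ by a finite-dimensional quotient of $RFH^{(a,0]}_*(\p V)$. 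Since $\lim^1_a$ of the finite-dimensional part vanishes and $Q_*$ is constant in $a$, the resulting exact sequence has the same boundary maps $\delta$ as above.

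The comparison map $\kappa$ is, by naturality of the window sequence in both $a$ and $b$, a morphism between these two exact sequences inducing the identity on the $P$- and $Q$-columns; the five lemma then gives that $\kappa$ is an isomorphism of graded vector spaces. To finish, I note that each side is an extension of the discrete space $\ker(\delta)\subseteq Q_*$ by the linearly compact space $\coker(\delta)$, hence a Tate vector space, with the Tate$^\omega$ countability conditions inherited from the countable cofinal systems in $a$ and $b$ together with the degree-wise finiteness in each window; comparing with~\eqref{eq:RFH-SH-bar} identifies these summands with (reduced) symplectic cohomology and homology. That $\kappa$ is a homeomorphism, not merely a linear bijection, follows because it restricts to an isomorphism on the open linearly compact subspace coming from $P_*$ and descends to an isomorphism of discrete quotients, using the functoriality of $\lim$ and $\colim$ in the category of linearly topologized spaces developed earlier.

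I expect the main obstacle to be the $\lim^1$/Mittag--Leffler bookkeeping in the order $\lim_a\colim_b$, where the middle term is genuinely infinite-dimensional: the argument goes through only because its ``linearly compact part'' is finite-dimensional (so $\lim^1$ vanishes) while its ``discrete part'' is a subspace of the \emph{constant} system $Q_*$. A secondary but essential point is the upgrade from an algebraic isomorphism to a topological one, which must be argued at the level of the open linearly compact subspace and the discrete quotient rather than by a bare continuity-plus-bijectivity argument.
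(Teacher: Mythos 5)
There is a genuine gap, and it sits exactly where you flagged your ``main obstacle'': the long exact sequence you obtain for the order $\lim_a\colim_b$ is \emph{not} exact at $P_*$, and your stated reason for the vanishing of the obstruction is backwards. Splitting the window sequence for fixed $a$ into short exact sequences and applying $\lim_a$, the kernel of $P_*\to\lim_a\colim_b RFH^{(a,b)}_*$ is $\lim_a\im(\partial_a)$, whereas exactness at $P_*$ would require this to equal $\im(\delta\colon Q_{*+1}\to P_*)$; the discrepancy is ${\lim}^1_a K_a$ with $K_a=\ker(\partial_a\colon Q_{*+1}\to RFH^{(a,0]}_*)$. These $K_a$ form a decreasing, generally non-stabilizing chain of finite-codimension subspaces of the \emph{constant} discrete space $Q_{*+1}$ --- and for such chains ${\lim}^1$ is typically enormous (for $K_n=\ker(\bk^{(\N)}\to\bk^n)$ one gets ${\lim}^1 K_n\cong\bk^\N/\bk^{(\N)}$). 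Being a subobject of a constant system is precisely the situation that \emph{produces} nonzero ${\lim}^1$, not the one that kills it. And this is not a repairable bookkeeping issue: injectivity of $\kappa$ in your five-lemma chase uses exactness at $P_*$ in both rows, so the obstruction hits the conclusion itself. Indeed, the formal structure you use (bidirected system of finite-dimensional windows, low part independent of $b$, high part independent of $a$, natural window LES) is consistent with $\kappa$ failing to be injective. Take $Q_j=\mathrm{span}(e_1,\dots,e_j)$ with inclusions, $P_i=\bk^i$ with coordinate projections, $\partial_{i,j}\colon Q_j\to P_i$, $e_s\mapsto e_s$ for $s\le i$ and $e_s\mapsto 0$ otherwise, and define window groups in two degrees by the natural exact sequence
$$
  0\to V^{(1)}_{i,j}\to Q_j\xrightarrow{\partial_{i,j}} P_i\to V^{(0)}_{i,j}\to 0,
$$
so $V^{(0)}_{i,j}=\coker\partial_{i,j}$. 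Then $\colim_j\lim_i V^{(0)}_{i,j}=\colim_j\prod_{s>j}\bk\cong\bk^\N/\bk^{(\N)}\neq 0$, while $\lim_i\colim_j V^{(0)}_{i,j}=0$ because $V^{(0)}_{i,j}=0$ for $j\ge i$. So no argument using only your hypotheses can prove the theorem; this is the paper's own warning that $\lim$ and $\colim$ of bidirected systems of finite-dimensional spaces need not commute.

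What is missing is a genuine Floer-theoretic input that kills the connecting maps. The paper (Theorem~\ref{thm:kappa-top} via Proposition~\ref{prop:kappa-abstract}) does not use the tautological window LES at all: it uses the Cieliebak--Frauenfelder--Oancea exact sequence, which after passing to \emph{reduced} symplectic (co)homology becomes the \emph{short} exact sequence of bidirected systems~\eqref{eq:SES-ab}, with sub $\ol{SH}^{(-\infty,b)}_*$ constant in $a$ and quotient $\ol{SH}^{1-*}_{(-\infty,-a)}$ constant in $b$ --- i.e., the analogue of your $\delta$ vanishes identically by construction. Given that, Proposition~\ref{prop:kappa-abstract} constructs by a double induction a splitting $V_{i,j}\cong V_j\oplus W_i$ \emph{compatible with all structure maps}, after which the commutation of $\lim$ and $\colim$, the topological nature of $\kappa$, and the Tate$^\omega$ property all follow simultaneously from $\colim_j V_j\oplus\lim_i W_i$. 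Note that even in the good case your route would still owe a separate argument for the topological upgrade, whereas the compatible splitting makes it automatic. To repair your proof you would have to import~\eqref{eq:SES-ab} (or equivalently prove that your boundary maps $\partial_{a,b}$ vanish after the appropriate reduction), at which point you are essentially running the paper's argument.
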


{\bf Tensor products. }
Next, we turn to the graded Frobenius algebra structure on $RFH_*(\p
V)$ defined in~\cite{CHO-algebra}. 
It is encoded in the unital pair-of-pants product $\mu$ and the
secondary pair-of-pants coproduct $\lambda$. Simple examples (e.g.~the
unit cotangent bundle of $S^1$) show that in general $\lambda$ does
not take values in $RFH_*(\p V)\otimes RFH_*(\p V)$ but only in a
certain completion. So the question arises on which tensor products
$\mu$ and $\lambda$ are defined, and whether it can be done so that
Poincar\'e duality intertwines $\mu,\lambda$ with their topological
duals $\lambda^\vee,\mu^\vee$. 
The answer is provided by Beilinson~\cite{Beilinson}, who defines two
natural topologies, called * and !, on the algebraic tensor product $A\otimes B$ of two linearly
topologized vector spaces. The completions $A\hatotimes^*B$ and
$A\hatotimes^!B$ of $A\otimes B$ with respect to these two topologies
have the following property (Proposition~\ref{prop:duality_intertwines}):
if $A,B$ are Tate, then
$$
  (A\hatotimes^*B)^* = A^*\hatotimes^!B^* \quad\text{and}\quad
  (A\hatotimes^!B)^* = A^*\hatotimes^*B^*.
$$
(However, the category of Tate vector spaces is not closed under the
completed tensor products.)

\begin{theorem}\label{thm:main3}
For any Liouville domain $V$, the pair-of-pants product $\mu$ and the
secondary pair-of-pants coproduct $\lambda$ on Rabinowitz Floer
homology $A=RFH_*(\p V)$ define continuous linear maps
\begin{gather*}
  \mu: A\hatotimes^*A\to A \quad\text{and}\quad
  \lambda: A\to A\hatotimes^!A. 
\end{gather*}
The Poincar\'e duality isomorphism~\eqref{eq:PD} intertwines them with
their topological duals on $A^*=RFH^*(\p V)$,  
\begin{gather*}
  \lambda^\vee: A^*\hatotimes^*A^* = (A\hatotimes^!A)^* \to A^*, \cr
  \mu^\vee: A^*\to (A\hatotimes^*A)^* = A^*\hatotimes^!A^*.
\end{gather*}
\end{theorem}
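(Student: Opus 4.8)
The plan is to reduce the entire statement to the action-truncated level, where everything is finite dimensional, and then to reassemble the structures using the two expressions for $A$ provided by Theorem~\ref{thm:main2}. Write $A^{(a,b)}:=RFH_*^{(a,b)}(\p V)$; these are finite dimensional in each degree, and
\[
  A=\colim_b\lim_a A^{(a,b)}=\lim_a\colim_b A^{(a,b)}.
\]
The first input I need is the description of the two completed tensor products in terms of lattices, developed earlier in the paper: for a Tate$^\omega$ space presented as such an iterated limit of finite dimensional pieces one has
\[
  A\hatotimes^* A=\colim_{b_1,b_2}\lim_{a_1,a_2}A^{(a_1,b_1)}\otimes A^{(a_2,b_2)},
  \qquad
  A\hatotimes^! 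A=\lim_{a_1,a_2}\colim_{b_1,b_2}A^{(a_1,b_1)}\otimes A^{(a_2,b_2)},
\]
the inner tensor products being the ordinary ones. That these two presentations are interchanged by dualization is exactly the content of Proposition~\ref{prop:duality_intertwines}, which is what will let me transport the product--coproduct intertwining across Poincaré duality at the end.

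For the product I would start from the chain-level pair-of-pants operation and its action estimate $\cA(\text{output})\le\cA(\text{input}_1)+\cA(\text{input}_2)$. This yields truncated maps
\[
  \mu^{(a_1,b_1),(a_2,b_2)}\colon A^{(a_1,b_1)}\otimes A^{(a_2,b_2)}\longrightarrow A^{(a_1+a_2,\,b_1+b_2)}
\]
compatible with the structure maps in all four indices. Since the upper bounds add ($b_1+b_2$) while the lower bounds tend to $-\infty$ together, these maps are tailored exactly to the colimit-over-$b$, limit-over-$a$ shape of $A\hatotimes^* A$, and the universal properties of $\colim$ and $\lim$ assemble them into a single continuous map $\mu\colon A\hatotimes^* A\to A$.

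For the coproduct I would argue dually, starting from the secondary pair-of-pants operation, whose action estimate runs in the opposite direction, $\cA(\text{output}_1)+\cA(\text{output}_2)\ge\cA(\text{input})$ up to the secondary shift. This produces truncated maps $\lambda^{(a,b)}\colon A^{(a,b)}\to\colim_{b_1,b_2}\bigl(A^{(a_1,b_1)}\otimes A^{(a_2,b_2)}\bigr)$ compatible with the inverse system in $a$; passing to $\lim_a\colim_b$ via the presentation $A=\lim_a\colim_b A^{(a,b)}$ gives $\lambda\colon A\to A\hatotimes^! A$. Here the completion is genuinely used: a single input, viewed through the inverse limit in $a$, spreads over an infinite family of output pairs, and the point is that the action estimate forces this family to be a compatible (hence convergent) system in the $!$-completion rather than a finite sum in $A\otimes A$. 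Continuity is again formal from the universal properties.

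Finally, for the intertwining I would invoke the finite dimensional Poincaré duality of \cite{CHO-PD}, under which each truncated $\mu$ is carried to the dual of the corresponding truncated $\lambda$ and vice versa; this is the truncated Frobenius compatibility already recorded there. Dualizing the lattice presentations above and applying Proposition~\ref{prop:duality_intertwines} to exchange $\hatotimes^*$ with $\hatotimes^!$, these truncated identities assemble into $PD\circ\mu=\lambda^\vee\circ(PD\hatotimes^* PD)$ (where $PD\hatotimes^* PD$ is the induced map $A\hatotimes^* A\to A^*\hatotimes^* A^*$) together with the analogous statement for $\lambda$, which is precisely the assertion of the theorem. The main obstacle I anticipate is bookkeeping rather than formal: making the Floer-theoretic action windows on the two ends $a$ and $b$ match Beilinson's $*$- and $!$-topologies precisely enough that the truncated operations are genuinely compatible with every structure and continuation map, and in particular verifying that the secondary coproduct's infinitely many terms do define an element of $A\hatotimes^! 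A$. Once the action estimates are pinned down, the assembly and the duality are forced by the universal properties.
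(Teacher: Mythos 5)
Your proposal follows essentially the same route as the paper: truncate to finite dimensional action windows, assemble $\mu$ through the presentation $A=\colim_b\lim_a A^{(a,b)}$ and $\lambda$ through the presentation $A=\lim_a\colim_b A^{(a,b)}$ (with Theorem~\ref{thm:main2} bridging the two), use the stability of the ind-linearly compact and pro-discrete categories under $\hatotimes^*$ and $\hatotimes^!$ (Proposition~\ref{prop:duality_intertwines}, together with the coincidence of all tensor topologies on linearly compact and on discrete spaces) to identify the completed tensor products with the iterated $\colim/\lim$ of finite dimensional tensors, and finally assemble the intertwining from the action-truncated Poincar\'e duality diagrams of~\cite{CHO-PD}, exactly as in Theorems~\ref{thm:RFHmulambdacont} and~\ref{thm:RFHmulambdaPD}.

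One concrete slip in the bookkeeping you yourself flagged as the anticipated obstacle: the truncated product does \emph{not} exist in the form
$\mu^{(a_1,b_1),(a_2,b_2)}\colon A^{(a_1,b_1)}\otimes A^{(a_2,b_2)}\to A^{(a_1+a_2,\,b_1+b_2)}$.
The action estimate $\cA(\mathrm{output})\le\cA(\mathrm{in}_1)+\cA(\mathrm{in}_2)$ controls the upper bound $b_1+b_2$, but for the operation to descend to the quotient by low-action chains one must kill $\mu(C^{<a_1}\otimes C^{<b_2})$ and $\mu(C^{<b_1}\otimes C^{<a_2})$, which forces the lower truncation of the target to be $\max(a_1+b_2,\,a_2+b_1)$, strictly larger than $a_1+a_2$; this is why the paper works with equal windows and the product $\mu^{(a,b)}\colon RFH_*^{(a,b)}\otimes RFH_*^{(a,b)}\to RFH_*^{(a+b,2b)}$. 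The error is harmless for your assembly, since $\max(a_1+b_2,a_2+b_1)\to-\infty$ as $a_1,a_2\to-\infty$ with $b_1,b_2$ fixed, so the corrected maps are still compatible with the $\colim_b\lim_a$ shape of $A\hatotimes^*A$. A dual remark applies to the coproduct, where you left the lower windows $a_1,a_2$ unspecified: the correct statement involves the halving $a\mapsto a/2$ (at least one output has action $>a/2$ when the input has action $>a$), which is what produces the paper's maps $\lambda^{>a}\colon RFH_*^{>a}\to RFH_*^{>a/2}\otimes RFH_*^{>a/2}$; the paper also first constructs $\lambda^{(a,b)}$ at the level of a fixed Hamiltonian, with target a sum of subspaces of the tensor product, before passing to colimits over $H$ and $b$ -- an extra layer your sketch compresses but does not misrepresent. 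With these windows pinned down, your argument coincides with the paper's proof.
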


As shown in~\cite{CHO-algebra}, this result implies that Rabinowitz Floer homology is a graded
Frobenius algebra in the category of Tate vector spaces.

\begin{remark}
The above notions on linearly topologized vector spaces have analogues
for Banach spaces (over $\R$ or $\C$): Tate vector spaces correspond
to reflexive Banach spaces, self-dual Tate vector spaces to Hilbert
spaces (well, almost), $\otimes^*$ to the projective tensor product,
and $\otimes^!$ to the injective tensor product (see
Remark~\ref{prop:*topological}).  
\end{remark}

{\bf Organization of the paper.} In Sections~\ref{sec:lin-top}--\ref{sec:count} we develop in a
largely self-contained manner the theory of linearly topologized
vector spaces, with special emphasis on $\Hom$ spaces, duality and completed tensor
products. The theorems stated above are proved in Sections~\ref{sec:RFH} and~\ref{sec:RFHprodcoprod}. 

Many of our results in Sections~\ref{sec:lin-top}--\ref{sec:count} are either new, or complement to maximal generality the literature. While Section~\ref{sec:lin-top} consists mostly of recollections, already in Section~\ref{sec:Tate} we distance ourselves from the existing literature by simultaneously presenting the Lefschetz definition and the Beilinson-Feigin-Mazur definition of Tate vector spaces and proving their equivalence (Proposition~\ref{prop:equivalence-Tate-Beilinson}). We also prove in Proposition~\ref{prop:self-duality} a self-duality result that generalizes Esposito-Penkov~\cite[Lemma 2.5(iv)]{Esposito-Penkov23}. 
Section~\ref{sec:hom-bilin} is dedicated to the study of spaces of linear and bilinear maps and contains mostly new results, among which we would like to emphasize the Adjunction Theorem~\ref{thm:adjunction}. Section~\ref{sec:tensor} contains a systematic study of Beilinson's three topologies on the tensor product. To our knowledge the only other systematic study of these operations available in the literature is due to Positselski~\cite[\S12]{Positselski}, a paper that was immensely helpful to us. Our point of view is different since we place special emphasis on the relationship between tensor products and spaces of linear and bilinear maps. In subsections~\ref{sec:ind-pro}--\ref{sec:tensor-ind-pro} we introduce the key categories of ind-linearly compact, respectively pro-discrete vector spaces, and prove their stability under the completed * and ! tensor products. We owe the idea of working with these categories to Esposito-Penkov, who introduced in~\cite{Esposito-Penkov23} their countable analogues. In Section~\ref{sec:count} we explore the consequences of countability assumptions and prove in particular that the intersection of the categories of countable ind-linearly compact and countable pro-discrete spaces is Tate$^\omega$ (Proposition~\ref{prop:IPTateomega}). This relies on~\cite[Lemma~2.5(i)]{Esposito-Penkov23} and on some new criteria for Tate vector spaces that we establish in subsections~\ref{sec:lim-colim-Tate}--\ref{sec:lim-colim-duality}.


The main theorems on Rabinowitz Floer homology are proved in Sections~\ref{sec:RFH} and~\ref{sec:RFHprodcoprod}: 
Theorem~\ref{thm:main2} corresponds to Theorem~\ref{thm:kappa-top},
the first part of Theorem~\ref{thm:main1} ($RFH_*$ and $RFH^*$ being Tate$^\omega$ and topological duals of each other) to Theorem~\ref{thm:RFH-self-dual}, the first part of
Theorem~\ref{thm:main3} ($\mu$ and $\lambda$ linear continuous defined
on/taking values in the respective completed tensor products) to
Theorem~\ref{thm:RFHmulambdacont}, and the rest of
Theorem~\ref{thm:main1} ($PD$ being a topological isomorphism) and
Theorem~\ref{thm:main3} (intertwining of $\mu$, $\lambda$ with
$\lambda^\vee$, $\mu^\vee$) to Theorem~\ref{thm:RFHmulambdaPD}.
The Appendix discusses some further topics for Tate vector spaces:
splitting of short exact sequences, topological complements,
and the Hahn-Banach theorem.

{\bf Acknowledgements. }
We thank P.~Safronov for bringing Tate vector spaces to our attention,
and D.~Vaintrob, N.~Wahl and V.~Pilloni for inspiring discussions. A.O. thanks Ivan Smith for his hospitality at Cambridge. A.O. is a member of the ANR grant 21-CE40-0002 COSY, and holds a Fellowship of the University of Strasbourg Institute for Advanced Study (USIAS) within the French national programme ``Investment for the future" (IdEx-Unistra).

\section{Linearly topologized vector spaces}\label{sec:lin-top}

We fix a discrete field $\bk$ (i.e., a field equipped with the discrete topology). 
A {\em linearly topologized vector space} is a $\bk$-vector space $V$
with a Hausdorff topology which is translation invariant and has a
basis of open neighbourhoods of $0$ consisting of linear subspaces. 
Linearly topologized vector spaces form a category $\Top$ whose morphisms are
continuous linear maps.
The kernel and cokernel of a morphism $f:V\to W$ are defined as
$$
  \ker(f) = \{v\in V\mid f(v)=0\}, \qquad 
  \coker(f) = W/\ol{f(V)}. 
$$
In particular, quotients in $\Top$ should always be taken by {\em closed}
linear subspaces.

\begin{lemma}\label{lem:open}
An open linear subspace $U\subset V$ of a linearly topologized vector
  space $V$ is also closed. 
Conversely, a closed subspace $L\subset V$ of finite codimension is open. 
\end{lemma}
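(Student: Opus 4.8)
The plan is to prove the two implications separately, each using the defining property that a linearly topologized vector space $V$ has a neighbourhood basis of $0$ consisting of open linear subspaces.

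\textbf{Open $\Rightarrow$ closed.} Let $U \subset V$ be an open linear subspace. First I would write the complement as a union of cosets,
$$
  V \setminus U = \bigcup_{v \notin U} (v + U).
$$
Since $U$ is a subgroup of $(V,+)$ and translation by $v$ is a homeomorphism, each coset $v+U$ is open; hence the complement of $U$ is a union of open sets and therefore open, so $U$ is closed. This direction is essentially the standard fact that an open subgroup of a topological group is closed, and I would not expect any obstacle here.

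\textbf{Closed of finite codimension $\Rightarrow$ open.} Let $L \subset V$ be closed with $\dim_\bk(V/L) < \infty$. The idea is to produce a single open linear subspace $W$ with $W \subset L$, which forces $L$ to be open (being a union of cosets $w + W$ for $w \in L$, hence a neighbourhood of each of its points). To find such a $W$, I would pick coset representatives $v_1,\dots,v_n \in V$ whose images form a basis of the finite-dimensional quotient $V/L$. Since $L$ is closed and each $v_i \notin L$, for every $i$ there is an open linear subspace $U_i$ such that $(v_i + U_i) \cap L = \emptyset$, i.e.\ $v_i \notin L + U_i$; using that the $U_i$ form part of a neighbourhood basis consisting of subspaces I may take a common open subspace $U := \bigcap_i U_i$ (a finite intersection of open subspaces is again an open subspace). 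The main technical point, and the step I expect to require the most care, is to upgrade the conditions $v_i \notin L + U$ into the assertion that $U \subset L$, so that $U$ itself is the desired open subspace inside $L$.

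To carry out that step I would argue as follows. Replacing $U$ by a smaller open subspace if necessary, I want to arrange that $U$ meets the span of the $v_i$ trivially modulo $L$; concretely, I claim one can choose $U$ so that no nonzero $\bk$-linear combination $\sum_i c_i v_i$ lies in $L + U$ unless all $c_i$ vanish. Granting this, take any $u \in U$ and write its class in $V/L$ as $\bar u = \sum_i c_i \bar v_i$; then $\sum_i c_i v_i \in L + U$ (since $u \in U$ realizes the same class up to $L$), forcing all $c_i = 0$, whence $\bar u = 0$ and $u \in L$. Thus $U \subset L$ and $L$ is open. The claim itself follows by a finite induction over the basis $\bar v_1,\dots,\bar v_n$, at each stage shrinking $U$ within the neighbourhood basis of subspaces to exclude the finitely many offending directions, using Hausdorffness to separate each $v_i$ from the closed subspace $L$ together with the previously chosen directions. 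The only real subtlety is bookkeeping over the finitely many linear combinations, which the finite-dimensionality of $V/L$ keeps under control.
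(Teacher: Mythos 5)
Your first direction (open $\Rightarrow$ closed via cosets) is exactly the paper's argument and is fine. For the converse you also follow the paper's strategy in outline, but the decisive step has a genuine gap. First, observe that your ``claim'' is not an intermediate statement at all: since the classes $\bar v_1,\dots,\bar v_n$ span $V/L$, the condition ``no nonzero combination $\sum_i c_iv_i$ lies in $L+U$'' is \emph{equivalent} to $U\subset L$ (the kernel of $V/L\to V/(L+U)$ is $(L+U)/L$, and your condition says this kernel is zero). So the ``granting this'' paragraph is a restatement of the goal, and the entire burden of proof rests on your final inductive sketch. That sketch, as written, is incorrect: over an infinite field $\bk$ there are infinitely many nonzero tuples $(c_1,\dots,c_n)\in\bk^n$, so there are \emph{not} ``finitely many linear combinations'' to bookkeep, and one cannot exclude the offending combinations one at a time by finitely many shrinkings of $U$. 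Finite-dimensionality of $V/L$ alone does not control this.

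What makes finitely many separations suffice is a scaling argument exploiting that the $U_j$ are \emph{linear} subspaces, and this idea is absent from your proposal. You do set up the right separations (open $U_j$ with $(v_j+U_j)\cap\bigl(L\oplus\mathrm{span}\{v_1,\dots,v_{j-1}\}\bigr)=\varnothing$, i.e.\ separating $v_j$ from $L$ together with the previously chosen directions — note this uses closedness of that subspace, not just Hausdorffness; the paper glosses over this point as well, and it holds because a closed subspace plus a finite-dimensional one is closed here). The missing step is then the paper's computation: for $u\in U:=U_1\cap\cdots\cap U_n$ write $u=\ell+\sum_j\lambda_jv_j$ and let $k$ be the largest index with $\lambda_k\neq 0$; then $v_k-\lambda_k^{-1}u=-\lambda_k^{-1}\bigl(\ell+\sum_{j<k}\lambda_jv_j\bigr)$ lies in $(v_k+U_k)\cap\bigl(L\oplus\mathrm{span}\{v_1,\dots,v_{k-1}\}\bigr)$, a contradiction; hence all $\lambda_j=0$ and $u\in L$. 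This normalization of the leading coefficient is precisely how $n$ separations handle all of $\bk^n\setminus\{0\}$; with it inserted, your argument becomes the paper's proof. (Your earlier, weaker separations of each $v_i$ from $L$ alone would indeed not suffice — e.g.\ $v_1+v_2$ could lie in $L+U$ — as you seem to have anticipated when you strengthened them.)
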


\begin{proof}
An open linear subspace $U$ is closed because its complement can be
written as a union of translates of $U$. 
Consider now a closed subspace $L\subset V$ of finite codimension.
Pick a basis $v_1,\dots,v_k$ of its complement, so that $V=L\oplus{\rm
  span}\{v_1,\dots,v_k\}$. Since $L$ is closed, we find for 
$j=1,\dots,k$ open linear subspaces $U_j$ such that
$$
  (L\oplus{\rm span}\{v_1,\dots,v_{j-1}\})\cap(v_j+U_j)=\varnothing.
$$
Then $L$ contains the open subspace $U=U_1\cap\cdots\cap U_k$ and is
therefore open. To see this, consider $u\in U$ and write it as
$u=\ell+\sum_{j=1}^k\lambda_jv_j$ with $\ell\in L$ and $\lambda_j\in\bk$. 
If $\lambda_k\neq 0$, then
$v_k-\lambda_k^{-1}u=-\lambda_k^{-1}(\ell+\sum_{j=1}^{k-1}\lambda_jv_j)$ 
would belong to $(L\oplus{\rm span}\{v_1,\dots,v_{k-1}\})\cap(v_k+U_k)$, 
contradicting the displayed equation with $j=k$. Thus $\lambda_k=0$,
and inductively we find $\lambda_j=0$ for all $j$ and therefore $u\in L$.
\end{proof}

\subsection{Limits and colimits}

The category $\Top$ admits products, sums, limits and colimits defined
as follows. 

{\em Products. }The product of a family $\{V_i\}_{i\in I}$ is the \emph{direct product} $\prod_i V_i$ endowed with the initial topology with respect to the canonical projections $proj_i:\prod_i V_i\to V_i$, i.e., the coarsest topology that makes all the maps $proj_i$ continuous. Specifically, the basic open sets in $\prod_i V_i$ are of the form $\prod_{i\notin I_\fin} V_i \times \prod_{i\in I_\fin} \cO_i$, where $I_\fin\subset I$ is any finite subset and $\cO_i\subset V_i$ is open. The direct product satisfies the following universal property: for any $V\in \Top$ and any collection of linear continuous maps $p_i:V\to V_i$, there exists a unique linear continuous map $p:V\to \prod_i V_i$ such that $proj_i\circ p = p_i$. The map $p$ is given by $p(x)=(p_i(x))_{i\in I}$.

{\em Sums. }The sum of a family $\{V_i\}_{i\in I}$ is the \emph{direct sum} $\oplus_i V_i$, endowed with the final topology with respect to the canonical inclusions $incl_i:V_i\to \oplus_i V_i$, i.e., the finest topology that makes all the maps $incl_i$ continuous. Specifically,
the basic open sets in $\oplus_i V_i$ are of the form $\oplus_i\cO_i$, where $\cO_i\subset V_i$ is open.
The direct sum satisfies the following universal property: for any $V\in\Top$ and any collection of linear continuous maps $j_i:V_i\to V$, there exists a unique linear and continuous map $j:\oplus_i V_i\to V$ such that $j\circ incl_i = j_i$. The map $j$ is given by $j(\sum_i x_i) = \sum_i j_i(x_i)$, where the sum is finite.

To define limits and colimits, recall that a {\em quasi ordered set}
$(I,\leq)$ is a set with a reflexive and transitive binary relation.
It is called {\em (upward) directed} if for all $i,j\in I$ there
exists $k\in I$ such $i,j\leq k$.

\emph{Limits}.
Let $(I,\leq)$ be a
directed set
and $\{W_i,g_{ji}\}_{i,j\in I}$ be an inverse system in $\Top$, i.e., we are given linear continuous maps $g_{ji}:W_i\to W_j$ for $i\ge j$ such that $g_{kj}\circ g_{ji}=g_{ki}$ for $i\ge j\ge k$. The \emph{limit} (or \emph{inverse limit}), denoted $\lim W_i$, is a topological vector space endowed with linear continuous maps $g_i:\lim W_i\to W_i$ such that $g_{ji}\circ g_i = g_j$, and that satisfies the following universal property: for any $W\in\Top$ and any collection of maps $\psi_i:W\to W_i$ such that $g_{ji}\circ \psi_i = \psi_j$, there exists a unique linear continuous map $g:W\to \lim W_i$ such that $g_i\circ g=\psi_i$. 
$$
\xymatrix{
W \ar@{.>}[r]^{\exists ! \, g} \ar[dr]_{\{\psi_i\}} & \lim W_i \ar[d]^{\{g_i\}}\\
& \{W_i\}
}
$$
The universal property guarantees that, if it exists, the limit is uniquely determined up to unique isomorphism. One model for the limit is the subspace of $\prod_i W_i$ given by 
$$
\lim W_i = \{ (x_i)\in \prod_i W_i \mid g_{ji}(x_i)=x_j \text{ for all } i\ge j\} \subset \prod_i W_i,
$$
with the family of maps $g_i$ being induced by the canonical projections $\prod W_i \to W_i$, and endowed with the subspace topology, i.e., the coarsest topology that makes the inclusion $\lim W_i\hookrightarrow \prod_i W_i$ continuous. Equivalently, the topology on $\lim W_i$ is the initial topology with respect to the family of maps $g_i:\lim W_i\to W_i$, i.e., the coarsest topology such that all the maps $g_i$ are continuous. The open sets in $\lim W_i$ are intersections with the open sets of $\prod_i W_i$. 

\emph{Colimits}. Let $(I,\leq)$ be a directed set and $\{V_i,f_{ji}\}_{i,j\in I}$ be a direct system in $\Top$, i.e., we are given linear continuous maps $f_{ji}:V_i\to V_j$ for $i\le j$ such that $f_{kj}\circ f_{ji}=f_{ki}$ for all $i\le j\le k$. The \emph{colimit} (or \emph{direct limit}), denoted $\colim V_i$, is a topological vector space endowed with linear continuous maps $f_i:V_i\to V$ such that $f_j\circ f_{ji}=f_i$, and that satisfies the following universal property: for any $V\in \Top$ and any collection of maps $\varphi_i:V_i\to V$ such that $\varphi_j \circ f_{ji}=\varphi_i$, there exists a unique map $f:\colim V_i\to V$ such that $f\circ f_i=\varphi_i$. 
$$
\xymatrix{
\{V_i\} \ar[r]^{\{f_i\}} \ar[dr]_{\{\varphi_i\}} & \colim V_i \ar@{.>}[d]^{\exists ! \, f}\\
& V
}
$$
The universal property guarantees that, if it exists, the colimit is uniquely determined up to unique isomorphism. One model for the colimit is the quotient vector space 
$$
\colim V_i = \oplus_i V_i / \ol{\langle f_{ji} x_i - x_i \mid x_i\in V_i\rangle},
$$
with the family of maps $f_i$ being induced by the canonical inclusions $incl_i:V_i\to \oplus_i V_i$, and 
endowed with the quotient topology, i.e., the finest topology that makes the projection $\oplus_i V_i\to \colim V_i$ continuous. 
The open sets in this topology are by definition the subsets whose preimage under the projection is open. They can be equivalently, and more conveniently, described as the projections of open sets in $\oplus_i V_i$.  
The topology on $\colim V_i$ can also be equivalently described as the final topology with respect to the family of maps $f_i:V_i\to \colim V_i$, i.e., the finest topology such that all the maps $f_i$ are continuous. 

\subsection{Completeness} \label{sec:completeness}

The {\em completion} of a linearly topologized vector space $V$ is
$$
   \wh V := \lim_{U\in\mathcal{U}} V/U
$$
where $\mathcal{U}$ denotes the collection of open linear subspaces in
$V$.
The canonical map $V \to \wh V$ is always injective and its image is dense in $\wh V$.
The space $V$ is called {\em complete} if the canonical map $V \to \wh V$ is an
isomorphism.

\begin{example}
Any discrete vector space is complete because the inverse system
$V/U$, $U\in\mathcal{U}$, has a maximal element equal to $V$.
%
\end{example} 

\begin{lemma}
Let $V$ be complete and $H\subset V$ a linear subspace. Then
$H$ is complete with the induced topology if and only if it is
a closed subspace.
\end{lemma}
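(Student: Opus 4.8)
The plan is to reduce both implications to a single computation identifying the completion $\wh H$ with the closure of $H$ inside $\wh V$, and then to specialize to the case $\wh V = V$. First I would note that, for the subspace topology on $H$, the linear subspaces $H\cap U$ with $U\in\mathcal{U}$ form a neighbourhood basis of $0$ and are cofinal among all open linear subspaces of $H$; hence the completion may be computed along this cofinal family as $\wh H=\lim_{U\in\mathcal{U}}H/(H\cap U)$. The inclusion $H\hookrightarrow V$ induces injections of discrete spaces $H/(H\cap U)\hookrightarrow V/U$, and passing to the limit these assemble into a map $\wh H\to\wh V$ which is injective because limits are left exact.

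The conceptual heart is to identify the image of $\wh H\to\wh V$ with the closure $\overline H$ of $H$ in $\wh V$, topologically. Unwinding the limit, a compatible family $(x_U)_U\in\wh V=\lim_U V/U$ lies in the image of $\wh H$ exactly when each component $x_U$ lies in $\mathrm{im}(H\to V/U)$, i.e.\ when for every $U$ there is $h\in H$ with $h\equiv x_U\pmod U$; since $V/U$ is discrete, this is precisely the condition that $(x_U)$ be adherent to (the image of) $H$. The identification is a homeomorphism because the limit topologies on both $\wh H$ and $\wh V$ are the subspace topologies inherited from $\prod_U V/U$, compatibly with the inclusion $\prod_U H/(H\cap U)\hookrightarrow\prod_U V/U$. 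Moreover, tracing through the definitions, the canonical map $H\to\wh H$ followed by this identification is simply the inclusion $H\hookrightarrow\overline H$.

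Now I would specialize to $V$ complete, so that $\wh V=V$ and $\overline H^{\wh V}=\overline H^{V}$. If $H$ is closed, then $\overline H=H$, so the identification gives $\wh H\cong H$ with $H\to\wh H$ corresponding to the identity of $H$; hence $H\to\wh H$ is an isomorphism and $H$ is complete. Conversely, if $H$ is complete, then $H\to\wh H$ is an isomorphism, so its composite with the identification, namely the inclusion $H\hookrightarrow\overline H$, is an isomorphism; being an inclusion, this forces $\overline H=H$, i.e.\ $H$ is closed.

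The main obstacle, or at least the only place demanding care, is the bookkeeping behind the identification $\wh H\cong\overline H$: verifying cofinality of $\{H\cap U\}$, the compatibility of the representatives chosen in each quotient, and above all that $\wh H\to\wh V$ is a \emph{topological} embedding onto the closure rather than a mere continuous injection. Note that establishing this topological identification is exactly what makes both directions immediate, so no separate ``continuous bijection is a homeomorphism'' argument is needed afterwards. The Hausdorff hypothesis enters through $\bigcap_{U\in\mathcal{U}}U=0$, which underlies the injectivity of $V\hookrightarrow\wh V$ and hence lets me recover a unique element of $V$, and thus of $H$, from the limit data.
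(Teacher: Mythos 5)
Your proof is correct and takes essentially the same route as the paper's: both arguments reduce the lemma to the single identification $\wh H\cong\ol H$ inside the complete space $V$, from which the two implications follow at once. The paper reaches this identification algebraically, via $H/(H\cap U)\cong(H+U)/U$ and $\lim_{U\in\cU}(H+U)=\bigcap_{U\in\cU}(H+U)=\ol H$, whereas you unwind the embedding $\wh H\hookrightarrow\wh V=V$ and characterize its image as the set of points adherent to $H$ --- a difference of bookkeeping only.
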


\begin{proof}
Let $\cU$ be the collection of open linear subspaces in $V$.
Since $V$ is complete, for any $U\in\cU$ we have $H+U=\lim_{U'\subset
  U,\, U'\in\cU} (H+U)/U'$. This gives the middle isomorphism in the
chain of equalities
$$
  \wh H = \lim_{U\in\cU}H/(H\cap U) = \lim_{U\in\cU}(H+U)/U \cong
  \lim_{U\in\cU}(H+U) = \bigcap_{U\in\cU}(H+U) = \ol H
$$
which proves the lemma.
\end{proof}

Any linear continuous map $f:A\to B$ induces a canonical linear continuous map $\wh f:\wh A\to \wh B$. 

\begin{lemma} \label{lem:completion-dense}
Let $H\subset V$ be a dense linear subspace. Then the canonical map $\wh H\to \wh V$ is an isomorphism. 
\end{lemma}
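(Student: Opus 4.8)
The plan is to compute both completions over the \emph{same} directed set of open subspaces coming from $V$, and then to identify the two inverse systems level-by-level using density together with the second isomorphism theorem. Throughout, let $\cU$ denote the collection of open linear subspaces of $V$, directed by reverse inclusion, so that $\wh V = \lim_{U\in\cU} V/U$ by definition.

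First I would describe $\wh H$ through the subspace topology. The subspaces $H\cap U$, for $U\in\cU$, are open linear subspaces of $H$ and form a neighbourhood basis of $0$; more precisely, every open linear subspace of $H$ contains one of the form $H\cap U$, so the system $\{H/(H\cap U)\}_{U\in\cU}$ is cofinal in the system computing $\wh H$. Since cofinal subsystems compute the same limit, this gives $\wh H = \lim_{U\in\cU} H/(H\cap U)$, now indexed by the \emph{same} set $\cU$ as $\wh V$. The key input is density: for each $U\in\cU$ and each $v\in V$ the translate $v+U$ is open and nonempty, so it meets $H$, i.e. $v\in H+U$; hence $H+U=V$ for every $U\in\cU$. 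The second isomorphism theorem then furnishes, for each $U$, an isomorphism
$$
  H/(H\cap U)\ \stackrel{\simeq}{\longrightarrow}\ (H+U)/U = V/U
$$
induced by the inclusion $H\hookrightarrow V$. Because all of these maps are induced by the one inclusion, they commute with the transition maps $V/U\to V/U'$ (for $U\subseteq U'$), so they assemble into an isomorphism of inverse systems over $\cU$.

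Passing to limits yields a linear bijection $\wh H\to\wh V$, and by construction it is exactly the map induced on limits by the level maps $H/(H\cap U)\to V/U$, which is precisely the canonical map $\wh f$ associated to the inclusion $f\colon H\hookrightarrow V$. Finally, each quotient $V/U$ (and each $H/(H\cap U)$) is discrete because $U$ is open, and the limit topology is the initial topology with respect to the projections; hence the bijection is automatically a homeomorphism and $\wh H\to\wh V$ is an isomorphism in $\Top$.

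The only step that requires genuine care is the reduction to the common index set $\cU$: one must check that $\{H/(H\cap U)\}_{U\in\cU}$ is cofinal among all quotients of $H$ by open linear subspaces, and that the resulting limit-level map is indeed the canonical completion map and not merely some abstract isomorphism. Once density delivers $H+U=V$, everything else is the second isomorphism theorem algebraically, with continuity and the topological statement automatic from the discreteness of the quotients.
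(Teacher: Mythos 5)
Your proposal is correct and follows essentially the same route as the paper's proof: reindex $\wh H$ over the open linear subspaces $U\subset V$ via the cofinal system $\{H\cap U\}$, use density in the form $H+U=V$, and apply the second isomorphism theorem to get $H/(H\cap U)\cong (H+U)/U=V/U$ levelwise before passing to the limit. The paper states this as a one-line chain of identifications, while you additionally verify the cofinality, the identification of the limit map with the canonical map $\wh f$, and the (automatic) topological matching — all correct elaborations of the same argument.
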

\begin{proof} Let $\cU$ be the collection of open linear subspaces in $V$.
That $H$ is dense in $V$ is equivalent to $H+U=V$ for any $U\in\cU$. We obtain 
$$
\wh H=\lim_{U\in\cU} H/(H\cap U)= \lim_{U\in\cU}(H+U)/U=\lim_{U\in\cU}V/U=\wh V.
$$
\end{proof}

\begin{example} \label{example:contlinbij}
Let $f:A\to B$ be a continuous linear bijection. The induced map $\wh f:\wh A\to \wh B$, although continuous, may not be a bijection. An explicit example can be constructed as follows. Let $C$ be complete and $B\subset C$ a strict linear subspace which is dense. Let $B_d$ be the space $B$ endowed with the discrete topology and consider the continuous linear bijection $f=\mathrm{Id}:B_d\to B$. Then $\wh f:\wh B_d=B_d\to \wh B=C$ factors as $B_d\stackrel{f}\to B\subset C$ and is a strict inclusion. 
\end{example}

Following Positselski~\cite{Positselski}, let us denote by
$\Top^c\subset\Top$ the full subcategory of complete spaces. 
It is closed under kernels and direct products, hence
under limits, and under direct sums~\cite[Lemma 1.2]{Positselski}. On
the other hand, it is {\em not}
closed under arbitrary cokernels and colimits~\cite[Theorem 2.5]{Positselski}.\footnote{
See however~\S\ref{sec:count} below.}
Therefore, cokernels and coproducts in the
category of complete spaces need to be taken with a completion,
$$
  \wh\coker(f) = \wh{W/\ol{f(V)}},\qquad \wh\colim V_i = \wh{\colim V_i}.
$$

\section{Tate vector spaces}\label{sec:Tate}

In this section we recall some basic notions and facts about \emph{locally linearly compact vector spaces}, or \emph{Tate vector
spaces}. 
Locally linearly compact vector spaces were defined by Lefschetz~\cite[II]{Lefschetz-book} as linear analogues of locally compact groups, in connection with duality theorems. They were used by Tate~\cite{Tate68} in his treatment of residues of differentials on curves, and later revived by Beilinson-Feigin-Mazur~\cite{BFM91} and Beilinson-Drinfeld~\cite{BD04} under the name of Tate vector spaces. A valuable reference for Tate vector spaces is the 2018 dissertation by Rojas~\cite{Rojas}.

There are currently two definitions of these objects in the literature, which we will refer to as the ``Lefschetz definition"~\cite[(II.27.1)]{Lefschetz-book} and the ``Rojas definition"~\cite[Definition~1.19]{Rojas}. Although their equivalence may be known to experts, we were not able to find a written proof of that equivalence. We start by introducing these two definitions, and then prove their equivalence. 

\subsection{Locally linearly compact vector spaces (Lefschetz)} 

Let $V$ be a linearly topologized vector space over the discrete field
$\bk$. A \emph{linear variety in $V$} is a coset mod $H$, for some
linear subspace $H\subset V$. 
Note that a linear variety $S$ that is open is also closed. 

\begin{definition}[Lefschetz~{\cite[(II.27.1)]{Lefschetz-book}}]  \label{defi:Lef-linearly-compact}
A linearly topologized vector space $V$, and more generally a linear variety $S\subset V$, is said to be \emph{linearly compact} whenever given any family $\{S_\alpha\}$ of linear varieties which are closed in $V$, respectively in $S$, such that any finite intersection of the $S_\alpha$ is nonempty, then $\bigcap_\alpha S_\alpha\neq \emptyset$. 
\end{definition}

A linearly compact linear variety is closed~\cite[(II.27.5)]{Lefschetz-book}. 

\begin{definition}[Lefschetz~{\cite[(II.27.9)]{Lefschetz-book}}] 
A linearly topologized vector space $V$ is said to be \emph{locally linearly compact} whenever it has a linearly compact open neighborhood of $0$.  
\end{definition}

\begin{lemma}[Lefschetz~{\cite[(II.27.10)]{Lefschetz-book}}] \label{lem:decomp-Lefschetz}
A vector space $V$ is locally linearly compact if and only if it is a topological direct sum $V= L\oplus D$, with $L$ linearly compact and $D$ discrete. \qed
\end{lemma}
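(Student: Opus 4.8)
The plan is to prove the two implications separately; the reverse direction carries the content.

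First I would dispatch the easy direction. Assuming $V = L \oplus D$ topologically with $L$ linearly compact and $D$ discrete, I note that discreteness of $D$ makes $\{0\}$ open in $D$, so $L = L \oplus \{0\}$ is an open neighborhood of $0$ in the direct sum. Being linearly compact by hypothesis, it witnesses the local linear compactness of $V$.

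For the converse, the first step is to upgrade the linearly compact open neighborhood of $0$ provided by the definition into an open linearly compact \emph{subspace}. Since the topology is linear, such a neighborhood contains an open linear subspace $U$, which is closed by Lemma~\ref{lem:open}; as a closed subvariety of a linearly compact variety it is again linearly compact (any family of linear varieties closed in $U$ with the finite intersection property is, since $U$ is closed in the ambient neighborhood, also a family closed there, and hence has nonempty intersection). Calling this subspace $L$, I would then set $D := V/L$ with quotient map $\pi$ and observe that $D$ is discrete, since $\pi^{-1}(\{0\}) = L$ is open.

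The decisive step is to exhibit $D$ as a \emph{topological} complement of $L$. I would pick a $\bk$-linear section $s\colon D \to V$ of $\pi$, which is automatically continuous because $D$ is discrete, and set $D' := s(D)$ with the subspace topology. Openness of $L$ together with $L \cap D' = \{0\}$ forces $\{0\}$ to be open in $D'$, so $D'$ is discrete and $s$ is a topological isomorphism onto it; algebraically $V = L \oplus D'$. To promote this to a topological direct sum, I would verify that both projections are continuous: the projection to $D'$ is $s \circ \pi$, while the projection to $L$ is $\id_V - s\circ\pi$, which indeed lands in $L$. Continuity of the projections makes the inverse of the continuous addition map $L \times D' \to V$ continuous, yielding the topological isomorphism $V \cong L \oplus D'$.

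I expect the main subtlety to be the passage from an algebraic splitting to a topological one — specifically, guaranteeing that the chosen complement $D'$ carries the discrete topology and that the projections are continuous. This is exactly where the discreteness of $D = V/L$ does the work: it makes the section continuous for free and forces $D'$ to be discrete. The only other point requiring care is the reduction in the converse direction to an open linearly compact subspace, which rests on the stability of linear compactness under passage to closed subvarieties.
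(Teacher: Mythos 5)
Your proof is correct. Note, however, that the paper offers no argument of its own for this lemma: it is stated with a \qed and a citation to Lefschetz~\cite[(II.27.10)]{Lefschetz-book}, so there is no in-paper proof to match against. What you have written is essentially the classical argument, and it is sound at every step: the reduction to an \emph{open} linearly compact subspace $L$ (via Lemma~\ref{lem:open} and stability of linear compactness under passage to closed subvarieties) is fine, discreteness of $V/L$ does indeed make any linear section $s$ continuous for free, $L\cap D'=\{0\}$ with $L$ open forces $D'$ discrete, and continuity of the two projections $s\circ\pi$ and $\id_V-s\circ\pi$ is exactly what upgrades the algebraic splitting to a topological one. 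It is worth observing that your mechanism is the same one the paper itself deploys in the proof of the analogous Lemma~\ref{lem:top-splitting} for Tate spaces, where openness of the c-lattice $L$ is what makes the projection along the algebraic complement $D$ continuous; so your blind reconstruction is consistent both with Lefschetz's original proof and with the paper's treatment of the parallel statement. The one cosmetic remark: if one reads ``linearly compact open neighborhood of $0$'' as already being a linear variety (as Definition~\ref{defi:Lef-linearly-compact} suggests, since linear compactness is only defined for linear varieties), then such a neighborhood containing $0$ is itself an open linearly compact subspace and your reduction step is unnecessary --- but including it makes the proof robust to the looser reading, at no cost.
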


\subsection{Tate vector spaces (Beilinson-Feigin-Mazur, Rojas)}

\begin{definition}[Beilinson-Feigin-Mazur~{\cite[1.2]{BFM91}}, Rojas~{\cite[Definition~1.11]{Rojas}}] 
A linear subspace $L\subset V$ is called \emph{linearly bounded} if $\dim(L/L\cap U)<\infty$ for each open
  linear subspace $U\subset V$.\footnote{Rojas calls this ``linearly compact", but we opted for ``linearly bounded" to distinguish it from Definition~\ref{defi:Lef-linearly-compact}.}
\end{definition}

\begin{example} (i) A discrete vector space is linearly bounded if and only if it is finite dimensional. 

(ii) The vector space $\bk[[t]]$ of formal power series, with a basis of neighborhoods of $0$ given by $t^n\bk[[t]]$, $n\ge 0$, is linearly bounded. 
\end{example}

\begin{definition}[Beilinson-Feigin-Mazur~{\cite[1.2]{BFM91}}, Rojas~{\cite[Definitions~1.11 and~1.15]{Rojas}}] 
A closed linear subspace $L\subset V$ is called 
\begin{itemize}
\item {\em linearly cobounded} if $\dim(V/L+U)<\infty$ for each open
  linear subspace $U\subset V$;
\item a {\em c-lattice} if it is open and linearly bounded;
\item a {\em d-lattice} if it is discrete and linearly cobounded.
\end{itemize}
\end{definition}

The existence of a c-lattice is equivalent to that of a d-lattice: an
algebraic complement of a c-lattice is a d-lattice, and an open
algebraic complement of a d-lattice is a c-lattice.  

\begin{example} Let $V=\bk((t))$ be the vector space of Laurent polynomials, with a basis of neighborhoods of $0$ given by $t^n\bk[[t]]$, $n\in\Z$. Then $\bk[[t]]$ is a c-lattice and $t^{-1}\bk[t^{-1}]$ is a complementary d-lattice. 
\end{example}

\begin{definition}[Beilinson-Feigin-Mazur~{\cite[1.2]{BFM91}}, Rojas~{\cite[Definition~1.19]{Rojas}}] A linearly topologized vector space $V$ is called a {\em Tate vector space} if it is complete and admits a
c-lattice. 
\end{definition}

\begin{example} The space of Laurent polynomials $\bk((t))$ is a Tate vector space. It is neither discrete, nor linearly bounded.
\end{example}

\begin{lemma}\label{lem:top-splitting}
A vector space is Tate iff it is a topological direct sum $V=L\oplus D$ of a
complete c-lattice $L$ and a d-lattice $D$.
\end{lemma}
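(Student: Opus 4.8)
The plan is to prove both implications directly from the definitions, without invoking the (later) equivalence with Lefschetz's locally linearly compact spaces. The algebraic backbone is already supplied by the remark preceding the statement that an algebraic complement of a c-lattice is a d-lattice; the real content is upgrading an algebraic splitting to a topological one and handling completeness.

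For the easier direction $(\Leftarrow)$, suppose $V=L\oplus D$ is a topological direct sum with $L$ a complete c-lattice and $D$ a d-lattice. Then $L$ is by definition a c-lattice of $V$, so it remains only to check that $V$ is complete. Since the sum has two factors, the direct-sum and direct-product topologies agree, whence $V\cong L\times D$ in $\Top$; as $L$ is complete, $D$ is discrete and hence complete, and $\Top^c$ is closed under (finite) direct products, the space $V$ is complete. Therefore $V$ is Tate.

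For the main direction $(\Rightarrow)$, suppose $V$ is Tate and fix a c-lattice $L$. Because a c-lattice is closed and $V$ is complete, the lemma above on closed subspaces of complete spaces shows that $L$ is a complete c-lattice. Choose an algebraic complement $D$ of $L$; by the remark preceding the statement, $D$ is a d-lattice, and I endow it with the subspace topology. It remains to show that the algebraic decomposition $V=L\oplus D$ is topological, i.e.\ that both projections $\pi_L\colon V\to L$ and $\pi_D\colon V\to D$ are continuous (the inclusions being embeddings for the subspace topologies). Since $L$ is open, $D\cong V/L$ carries the discrete topology and $\pi_D$ is continuous because each fiber $\pi_D^{-1}(d)=d+L$ is open. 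The crux is the continuity of $\pi_L$: for an open linear subspace $U_L\subset L$ --- which is open in $V$ since $L$ itself is open --- one has $\pi_L^{-1}(U_L)=U_L+D=\bigcup_{d\in D}(d+U_L)$, a union of translates of the open set $U_L$, hence open. Thus $\pi_L$ is continuous, the splitting is topological, and $V=L\oplus D$ is the required decomposition.

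The step I expect to be the main (though modest) obstacle is precisely this last continuity computation: the algebraic complement $D$ need not itself be open, so one cannot simply read off openness of $U_L+D$ from a product-topology basis, but must argue it directly. The observation that $U_L+D$ is a union of translates of an open subspace is what makes the argument go through, and it is exactly the point at which openness of the c-lattice $L$ is used.
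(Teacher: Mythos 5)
Your proof is correct and follows essentially the same route as the paper's: pick a c-lattice $L$, note it is open, closed, and complete, take an algebraic complement $D$ (a d-lattice by the preceding remark), and use openness of $L$ to get continuity of the projection $V\to L$ along $D$, hence a topological splitting. The only difference is cosmetic: you spell out the continuity computation $\pi_L^{-1}(U_L)=\bigcup_{d\in D}(d+U_L)$ and the converse direction, both of which the paper leaves implicit (``openness of $L$ implies that the projection $V\to L$ along $D$ is continuous'' and ``immediate from the definitions'').
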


\begin{proof}
($\Longrightarrow$) Pick a c-lattice $L\subset V$. Then $L$ is open, and thus closed by
translation invariance. It is also complete, being a closed subspace of a complete space. 
Pick any algebraic complement $D$ of $L$. Then $D$ is a d-lattice. 
Moreover, openness of $L$ implies that the projection $V\to L$ along
$D$ is continuous, so its kernel $D$ is closed. 

($\Longleftarrow$) Immediate from the definitions. 
\end{proof}

\subsection{Equivalence of the two definitions}

\begin{proposition} \label{prop:equivalence-Tate-Beilinson}
A linearly topologized vector space is locally linearly compact if and only if it is Tate. 
\end{proposition}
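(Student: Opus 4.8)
The crux is to reconcile Lefschetz's primitive notion of linear compactness with the Beilinson--Feigin--Mazur description via lattices. The plan is to isolate the single identification that carries the whole proposition, namely that a linearly topologized space $L$ is \emph{linearly compact} (in the sense of Definition~\ref{defi:Lef-linearly-compact}) if and only if it is \emph{complete and linearly bounded}, and then to feed this into the two structural decompositions already available: the Lefschetz splitting (Lemma~\ref{lem:decomp-Lefschetz}) and the Tate splitting (Lemma~\ref{lem:top-splitting}).

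First I would prove this key lemma. For the forward direction, completeness of a linearly compact $L$ follows by reading an element of $\wh L=\lim_{U}L/U$ as a compatible family of cosets $x_U+U$: these are closed linear varieties with the finite intersection property, so linear compactness furnishes a point in their intersection, that is, a preimage in $L$; since $L\to\wh L$ is always injective with dense image, it is then an isomorphism. Linear boundedness follows because for each open subspace $U$ the quotient $L/U$ is discrete and, being a continuous linear image of $L$, again linearly compact, and a discrete linearly compact space is finite-dimensional. This last point I would settle by a direct finite-intersection argument: in an infinite-dimensional discrete space one picks linearly independent $e_1,e_2,\dots$ and forms the nested closed varieties $C_n=\{x:\text{the }e_1,\dots,e_n\text{-coordinates of }x\text{ equal }1\}$, which have the finite intersection property but empty total intersection, since every vector has finite support. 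For the converse, a complete linearly bounded $L$ equals $\lim_U L/U$ with every $L/U$ finite-dimensional discrete, hence linearly compact (a minimal-dimension finite-intersection argument handles the finite-dimensional case); realizing $\lim_U L/U$ as a closed linear subvariety of the product $\prod_U L/U$, linear compactness then follows from the stability of linear compactness under products and under passage to closed linear subvarieties~\cite[II.27]{Lefschetz-book}.

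With the key lemma in hand, both implications are short. If $V$ is locally linearly compact, Lemma~\ref{lem:decomp-Lefschetz} gives $V=L\oplus D$ with $L$ linearly compact and $D$ discrete; here $L$ is open (it contains the basic open set $L\oplus\{0\}$, using discreteness of $D$), hence closed by Lemma~\ref{lem:open}, and by the key lemma it is complete and linearly bounded, so it is a complete c-lattice (openness reconciling the intrinsic and ambient notions of linear boundedness). Since $\Top^c$ is closed under direct sums, $V=L\oplus D$ is complete, and therefore $V$ is Tate. Conversely, if $V$ is Tate, Lemma~\ref{lem:top-splitting} gives $V=L\oplus D$ with $L$ a complete c-lattice; then $L$ is complete and linearly bounded, hence linearly compact by the key lemma, and it is open, so it is a linearly compact open neighborhood of $0$ and $V$ is locally linearly compact.

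I expect the main obstacle to be the converse direction of the key lemma: verifying that the inverse limit $\lim_U L/U$ of finite-dimensional discrete spaces is linearly compact. This is where one must invoke the Lefschetz closure properties (products and closed subvarieties of linearly compact spaces are linearly compact) and check that the limit is genuinely a \emph{closed} linear variety inside the product; the finite-dimensional base case, while elementary, also requires the minimal-dimension argument rather than a naive descending-chain count.
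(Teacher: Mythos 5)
Your proof is correct and takes essentially the same route as the paper: both reduce the proposition, via the splittings of Lemmas~\ref{lem:decomp-Lefschetz} and~\ref{lem:top-splitting}, to the key equivalence that a subspace is linearly compact iff it is closed, complete and linearly bounded, which is precisely the paper's Lemma~\ref{lem:equivalence-lin-compact}. The only difference is that where the paper cites Lefschetz (II.27.4), (II.27.7), (II.27.8) for the individual steps, you supply direct and correct arguments in their place (completeness via the finite intersection property of the compatible cosets $x_U+U$, together with the fact that $L\to\wh L$ is an embedding with dense image; finite dimensionality of discrete linearly compact spaces via the nested varieties $C_n$; the finite-dimensional base case via the minimal-dimension argument), making your write-up somewhat more self-contained.
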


\begin{proof}
In view of the decompositions $L\oplus D$ from Lemmas~\ref{lem:decomp-Lefschetz} and~\ref{lem:top-splitting}, it is enough to prove that a vector subspace $L\subset V$ is linearly compact iff it is closed, complete and linearly bounded. This is the content of Lemma~\ref{lem:equivalence-lin-compact} below. 
\end{proof}
 
\begin{lemma} \label{lem:equivalence-lin-compact}
A vector subspace $L\subset V$ is linearly compact 
iff it is closed, complete and linearly bounded.
\end{lemma}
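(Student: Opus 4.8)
The plan is to prove both implications by reducing linear compactness to its behaviour under the three elementary operations out of which $L$ is built from finite-dimensional pieces: passage to continuous images, passage to closed subspaces, and formation of inverse limits. Throughout I would use the finite-intersection-property (FIP) formulation of Definition~\ref{defi:Lef-linearly-compact}, and I would first record two self-contained facts. \textbf{(a)} A finite-dimensional discrete space is linearly compact: the affine subspaces of a finite-dimensional space satisfy the descending chain condition, so among the finite intersections of a FIP family of linear varieties there is one of minimal dimension, and minimality forces it to be contained in every member of the family. \textbf{(b)} The continuous image of a linearly compact space is linearly compact, because the preimages of a FIP family of closed linear varieties again form a FIP family of closed linear varieties upstairs.

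For the implication ``linearly compact $\Rightarrow$ closed, complete, linearly bounded'': closedness is the cited fact \cite[(II.27.5)]{Lefschetz-book}. For \emph{linear boundedness}, fix an open $U\subset V$; then $L\cap U$ is open in $L$, so $L/(L\cap U)$ is discrete, and by (b) it is a linearly compact continuous image of $L$. A discrete linearly compact space must be finite-dimensional, for otherwise, fixing a basis and a countable subfamily $e_1,e_2,\dots$ with coordinate functionals $x_n$, the affine hyperplanes $S_n=\{x:\ x_n=1\}$ have the FIP but empty intersection, since a vector meeting all of them would have infinite support. Hence $\dim L/(L\cap U)<\infty$. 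For \emph{completeness}, I would identify an element of $\wh L=\lim_{W}L/W$ (over the open subspaces $W\subset L$) with a compatible system of cosets $x_W+W$; compatibility makes $\{x_W+W\}$ a FIP family of closed linear varieties in $L$, so linear compactness yields a point $x$ in its intersection, which is the required preimage, giving surjectivity of $L\to\wh L$.

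For the converse ``closed, complete, linearly bounded $\Rightarrow$ linearly compact'': completeness provides a topological isomorphism $L\cong \wh L=\lim_{W}L/W$, where $W$ ranges over the open subspaces of $L$; linear boundedness makes every $L/W$ finite-dimensional and discrete, hence linearly compact by (a). It then remains to see that an inverse limit of linearly compact spaces is linearly compact. I would obtain this from two structural facts: a closed linear subvariety of a linearly compact space is linearly compact (immediate from the FIP definition, since closed-in-$C$ varieties are closed in the ambient space when $C$ is closed), and the product of any family of linearly compact spaces is linearly compact (the linear analogue of Tychonoff's theorem, \cite[(II.27)]{Lefschetz-book}). Since $\lim_W L/W$ is by construction a closed subspace of $\prod_W L/W$, it is linearly compact, and therefore so is $L$.

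The main obstacle is the linear Tychonoff theorem itself, the only genuinely non-elementary, choice-dependent ingredient; everything else is bookkeeping with the FIP definition and the finite-dimensional base case (a). If one preferred to keep the argument self-contained rather than cite \cite{Lefschetz-book}, I would replace the limit step by a direct FIP argument: given a FIP family $\{S_\alpha\}$ of closed linear varieties in $L$, project to each finite-dimensional quotient $L/W$, use (a) to get nonempty intersections $T_W=\bigcap_\alpha \pi_W(S_\alpha)$, note that the $T_W$ form an inverse system of nonempty finite-dimensional affine varieties, and show by a Mittag-Leffler/Zorn stabilization (using the descending chain condition in each $L/W$) that $\lim_W T_W\neq\varnothing$; any point of this limit lies in $\bigcap_W(S_\alpha+W)=\ol{S_\alpha}=S_\alpha$ for every $\alpha$, since each $S_\alpha$ is closed. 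This reproduces Tychonoff in the present special setting and isolates the stabilization step as the real work.
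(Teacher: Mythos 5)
Your proof is correct, and its overall skeleton coincides with the paper's: closedness is quoted from Lefschetz (II.27.5); linear boundedness comes from the discrete linearly compact quotients $L/(L\cap U)$ being finite dimensional; completeness is read off the canonical map $L\to\wh L$; and the converse is exactly the paper's argument, writing $L\cong\lim_W L/W$ as an inverse limit of finite-dimensional discrete (hence linearly compact) quotients, realized as a closed subspace of the product. The genuine divergences are local, and they make your version more self-contained. For completeness, the paper shows $\iota(L)\subset\wh L$ is linearly compact (II.27.4), hence closed (II.27.5), hence by density all of $\wh L$, and then needs the open-mapping-type fact (II.27.8) to upgrade the continuous bijection to a topological isomorphism; you instead prove surjectivity directly, observing that a compatible system of cosets $(x_W+W)$ is a FIP family of closed linear varieties and extracting a point of the total intersection. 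This bypasses (II.27.8) entirely; it suffices because the canonical map into the completion is always a topological embedding (the preimage of $\ker(\wh L\to L/W)$ is $W$), a point you should state explicitly since the paper's definition of completeness demands a topological isomorphism, not merely a bijection. You also reprove inline what the paper cites: the continuous-image fact (II.27.4) via preimages of FIP families, the fact that discrete linearly compact spaces are finite dimensional (II.27.7) via the hyperplanes $\{x_n=1\}$ (correct, since every vector has finite support in a basis), the finite-dimensional base case via DCC on affine subspaces, and, in your alternative to linear Tychonoff, the limit step via Mittag--Leffler/Zorn stabilization, where your verification that a limit point lies in $\bigcap_W(S_\alpha+W)=\ol{S_\alpha}=S_\alpha$ correctly isolates the one place closedness of the $S_\alpha$ is used. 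In short: the paper buys brevity by leaning on Lefschetz's toolkit (II.27.3--II.27.8), while your version buys self-containedness and, in the completeness step, a more direct argument.
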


\begin{proof}
($\Longrightarrow$)
Let $L$ be linearly compact. 
Then $L$ is closed by~\cite[(II.27.5)]{Lefschetz-book}. 

To see that $L$ is complete, consider the canonical linear map $\iota:L\to\wh L$, which is continuous, injective, and whose image is dense in the completion $\wh L$. Now $\iota(L)$ is
linearly compact 
by~\cite[(II.27.4)]{Lefschetz-book},
and therefore closed in $\wh L$ by~\cite[(II.27.5)]{Lefschetz-book}. 
It follows that $\iota:L\to\wh L$ is a continuous linear bijection, and thus
an isomorphism by~\cite[(II.27.8)]{Lefschetz-book}. Hence, $L$ is complete.

For linear boundedness, 
consider an open linear subspace $U\subset L$. Then the space $L/U$ is discrete, it is linearly
compact 
by~\cite[(II.27.4)]{Lefschetz-book}, and therefore finite dimensional by~\cite[(II.27.7)]{Lefschetz-book}. 

($\Longleftarrow$)
Let $L$ be closed, complete and linearly bounded. 
Then $L\simeq \lim_{U\in\cU} L/L\cap U$, and each $L/L\cap U$ is discrete and finite dimensional, hence linearly compact 
by~\cite[(II.27.7)]{Lefschetz-book}. A limit of linearly compact spaces is linearly compact by~\cite[(II.27.6) and (II.27.3)]{Lefschetz-book}, therefore $L$ is linearly compact.
%
\end{proof}

\subsection{Duality}

For a linearly topologized vector space $V$ we denote by $V^*$ its topological
dual. Following Lefschetz~\cite[(II.28.1)]{Lefschetz-book}, we equip $V^*$ with the linear topology whose neighbourhood
basis of the origin is given by the linear subspaces
$$
   L^\perp=\{\alpha\in V^*\mid \alpha|_L=0\},\qquad
   L\subset V \text{ linearly compact subspace}.
$$
This is precisely the compact-open topology on $\Hom(V,\bk)$, see~\S\ref{ss:hom}.

A continuous linear map $f:V\to W$ induces a continuous linear map $f^*:W^*\to V^*$, $f^*\beta=\beta\circ f$. To prove continuity of $f^*$, consider an open subspace $L^\perp\subset V^*$, with $L\subset V$ linearly compact. Then $(f^*)^{-1}(L^\perp)=f(L)^\perp$, and this is open because $f(L)$ is linearly compact~\cite[(II.27.8)]{Lefschetz-book}.

\begin{theorem}[Lefschetz-Tate duality, Lefschetz~{\cite[(II.28.2-29.1)]{Lefschetz-book}}, Rojas~{\cite[Theorem~1.25]{Rojas}}]\label{thm:Tate-duality}\quad 

(a) If $V$ is discrete, then $V^*$ is linearly compact. 

(b) If $V$ is linearly compact, then $V^*$ is discrete.
  
(c) If $V$ is locally linearly compact, then so is $V^*$ and the canonical map $V\to
  V^{**}$ is a topological isomorphism. \qed
\end{theorem}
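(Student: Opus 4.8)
The plan is to prove parts (b) and (a) directly and then deduce (c) from them together with the structural splitting $V = L\oplus D$ of Lemma~\ref{lem:decomp-Lefschetz}. Part (b) is immediate from the definition of the topology on $V^*$: if $V$ is linearly compact, then $V$ is a linearly compact subspace of itself, so $V^\perp=\{0\}$ is by definition an open subspace of $V^*$, and hence $V^*$ is discrete. For part (a), I would fix a basis $\{e_i\}_{i\in I}$ of the discrete space $V$, identifying $V=\bigoplus_i\bk$ and $V^*=\prod_i\bk$ as vector spaces, every functional being continuous on a discrete space. By Lemma~\ref{lem:equivalence-lin-compact} the linearly compact subspaces of $V$ are exactly the finite-dimensional ones, and for a finite set $F\subset I$ the annihilator $(\SPAN\{e_i\mid i\in F\})^\perp$ is precisely the basic open subspace $\{f\mid f(e_i)=0,\ i\in F\}$ of the product topology; these are cofinal among all $L^\perp$. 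Thus $V^*=\prod_i\bk$ carries the product topology, and as a product -- hence a limit -- of the linearly compact spaces $\bk$ it is linearly compact, by the stability of linear compactness under limits already used in Lemma~\ref{lem:equivalence-lin-compact}.

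For part (c), I would start from $V=L\oplus D$ with $L$ open linearly compact and $D$ discrete. The first step is to check that a functional on $V$ is continuous if and only if its restriction to $L$ is, which holds because $L$ is open and $D$ discrete; this yields an algebraic splitting $V^*=L^\perp\oplus D^\perp$ with $L^\perp\cong D^*$ and $D^\perp\cong L^*$. Since $L$ is linearly compact, $L^\perp$ is open in $V^*$ by the very definition of the topology, and by part (a) it is linearly compact; by part (b) the complement $D^\perp\cong L^*$ is discrete. This exhibits $V^*$ as locally linearly compact and pins down its canonical splitting.

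For reflexivity I would iterate this splitting once more to obtain $V^{**}\cong L^{**}\oplus D^{**}$ compatibly with the canonical map $\iota_V\colon V\to V^{**}$, reducing the claim to the two cases $V=D$ discrete and $V=L$ linearly compact. For discrete $D=\bigoplus_i\bk$ one has $D^*=\prod_i\bk$, and the continuous functionals on this product are exactly the finitely supported ones, so $\iota_D\colon D\to D^{**}$ is an algebraic bijection between two discrete spaces, hence a topological isomorphism. For linearly compact $L=\lim_U L/U$ with finite-dimensional quotients, duality exchanges the limit with a colimit, giving $L^*=\colim_U(L/U)^*$ and then $L^{**}=\lim_U(L/U)^{**}=\lim_U L/U=L$ by finite-dimensional reflexivity, compatibly with $\iota_L$.

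The step I expect to be the main obstacle is not bijectivity of $\iota_V$ but the claim that it is a \emph{homeomorphism}: Example~\ref{example:contlinbij} warns that continuous linear bijections need not be isomorphisms, so I must match the bidual topology -- generated by the annihilators $\cM^\perp$ of linearly compact subspaces $\cM\subset V^*$ -- with the original topology on $V$. The clean way is to set up the polarity $L\mapsto L^\perp$, $\cM\mapsto\cM_\perp$ and show it restricts to an inclusion-reversing bijection between the open subspaces of $V$ and the linearly compact subspaces of $V^*$, so that the sets $\iota_V^{-1}(\cM^\perp)$ range exactly over the open subspaces of $V$. The splitting reduction above then lets me verify this polarity separately on the linearly compact summand $L$ and the discrete summand $D$, where it is transparent.
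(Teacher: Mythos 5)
The paper offers no proof of this theorem to compare against: it is stated with a citation to Lefschetz~\cite[(II.28.2-29.1)]{Lefschetz-book} and Rojas~\cite[Theorem~1.25]{Rojas} and closed with a qed symbol. Your argument is a sound, self-contained reconstruction of the classical Lefschetz proof, and it uses only tools the paper itself provides: Lemma~\ref{lem:decomp-Lefschetz} for the splitting $V=L\oplus D$, Lemma~\ref{lem:equivalence-lin-compact} to see that the linearly compact subspaces of a discrete space are exactly the finite-dimensional ones (so that $V^*=\prod_I\bk$ carries the product topology in part (a)), stability of linear compactness under products and limits, and, for the linearly compact summand, the limit--colimit duality of Corollary~\ref{cor:lim-colim-duality}. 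The latter appears in the paper \emph{after} Theorem~\ref{thm:Tate-duality}, but your use of it is non-circular: you apply it only to systems of finite-dimensional spaces, and its proof via Lemma~\ref{lem:hom-lim} nowhere invokes Theorem~\ref{thm:Tate-duality}. Parts (a) and (b) as you argue them are correct, as is the reduction of (c) to the cases of $D$ and $L$ via naturality of $\iota$ under topological direct sums.

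Two steps deserve more care than the sketch gives them. First, to apply parts (a) and (b) to the summands you need the identifications $L^\perp\cong D^*$ and $D^\perp\cong L^*$ to be \emph{topological}, not merely algebraic, since linear compactness and discreteness depend on the topology: the induced topology on $L^\perp\subset V^*$ has basic opens $K^\perp\cap L^\perp$ for $K\subset V$ linearly compact, and these correspond under restriction precisely to the annihilators of $\pi_D(K)\subset D$ (continuous images of linearly compact subspaces are linearly compact, and every finite-dimensional subspace of a Hausdorff linearly topologized space is discrete, hence linearly compact, so every finite-dimensional $M\subset D$ arises this way). This check is easy but is exactly where a topology mismatch could hide. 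Second, the ``inclusion-reversing bijection'' between open subspaces of $V$ and linearly compact subspaces of $V^*$ is stronger than what the homeomorphism requires, and its surjectivity half (every linearly compact $\cM\subset V^*$ is an annihilator) needs a genuine separation argument. You can avoid it: continuity of $\iota_V$ only needs $\cM_\perp$ open for every linearly compact $\cM\subset V^*$, which follows because the projection of $\cM$ to the discrete factor $D^\perp\cong L^*$ is finite dimensional, so $\cM\subset L^\perp\oplus F$ with $F$ finite dimensional and hence $\cM_\perp\supset L\cap F_\perp$ is open; openness of $\iota_V$ only needs, for each open $U\subset V$, one linearly compact $\cM$ with $\cM_\perp\subset U$, and $\cM=U^\perp\cong(V/U)^*$ works since $(U^\perp)_\perp=U$ is immediate from discreteness of $V/U$ (no Hahn--Banach required). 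With these two refinements your proposal is a complete and correct proof.
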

   
\begin{corollary} \label{cor:duality}
Let $f:V\to W$ be continuous linear and $V$, $W$ locally linearly compact. Then $f$ is an isomorphism if and only if $f^*:W^*\to V^*$ is an isomorphism. 
\end{corollary}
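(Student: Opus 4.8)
The plan is to derive both implications from the Lefschetz-Tate biduality of Theorem~\ref{thm:Tate-duality}(c), using that passage to the dual is a contravariant functor on $\Top$.

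First I would dispatch the forward implication purely formally. We have already seen that a continuous linear map induces a continuous linear dual, and one checks immediately that $\id^* = \id$ and $(g\circ f)^* = f^*\circ g^*$, so that $f\mapsto f^*$ is a contravariant functor. Hence if $f$ is an isomorphism in $\Top$ --- so that $f^{-1}$ is continuous --- then $f^*$ is an isomorphism with inverse $(f^{-1})^*$.

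For the converse I would apply this same observation one level up. Assuming $f^*$ is an isomorphism, I note that $V^*$ and $W^*$ are again locally linearly compact by Theorem~\ref{thm:Tate-duality}(c), so the forward implication applied to $f^*\colon W^*\to V^*$ shows that $f^{**}\colon V^{**}\to W^{**}$ is an isomorphism. I then bring in the canonical biduality maps $\eta_V\colon V\to V^{**}$, $\eta_V(v)(\alpha)=\alpha(v)$, and $\eta_W\colon W\to W^{**}$, which are topological isomorphisms by Theorem~\ref{thm:Tate-duality}(c). The key is that they are natural, i.e.\ $f^{**}\circ\eta_V = \eta_W\circ f$; this follows from the evaluation identity $f^{**}(\eta_V(v))(\beta) = \eta_V(v)(f^*\beta) = (f^*\beta)(v) = \beta(f(v)) = \eta_W(f(v))(\beta)$ for $\beta\in W^*$ and $v\in V$. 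Given this, $f = \eta_W^{-1}\circ f^{**}\circ\eta_V$ is a composite of isomorphisms, hence an isomorphism.

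The only point requiring care --- the main obstacle, such as it is --- is the naturality of the biduality map, since Theorem~\ref{thm:Tate-duality}(c) asserts only that each $\eta_V$ is an isomorphism, not that the family $\{\eta_V\}$ forms a natural transformation. This naturality is exactly the short displayed evaluation identity above; everything else in the argument is formal category theory applied to the biduality isomorphism and the functoriality of $(-)^*$.
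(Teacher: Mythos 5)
Your proof is correct and follows essentially the same route as the paper: functoriality of $(-)^*$ for the forward direction, then $f^{**}$ an isomorphism plus the biduality isomorphisms of Theorem~\ref{thm:Tate-duality}(c) for the converse. The only difference is that you explicitly verify the naturality identity $f^{**}\circ\eta_V=\eta_W\circ f$, which the paper compresses into the phrase ``the map $f^{**}$ is identified with $f$'' --- a welcome bit of added rigor, but not a different argument.
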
   

\begin{proof}
($\Longrightarrow$)
This implication does not use that $V$, $W$ are locally linearly compact, and follows from the fact that taking the dual is a functorial operation. Let $g:W\to V$ be the inverse of $f$. Then $g^*$ is an inverse for $f^*$, which shows that $f^*$ is an isomorphism. 

($\Longleftarrow$)
If $f^*$ is an isomorphism, then $f^{**}:V^{**}\to W^{**}$ is an isomorphism by the direct implication. Under the assumption that $V$, $W$ are locally linearly compact, we have $V\simeq V^{**}$, $W\simeq W^{**}$ and the map $f^{**}$ is identified with $f$, therefore $f$ is an isomorphism.  
\end{proof}

The following result is an abstract version of the splitting theorem
for Rabinowitz Floer homology from~\cite{CHO-reducedSH}. It was proved for Tate$^\omega$ vector spaces by Esposito-Penkov in~\cite[Lemma 2.5(iv)]{Esposito-Penkov23}.

\begin{proposition}[Self-duality]\label{prop:self-duality}
An infinite dimensional Tate vector space $V$ satisfies $V\cong V^*$
if and only if $V\cong D\oplus D^*$ for an infinite dimensional
discrete vector space $D$.
\end{proposition}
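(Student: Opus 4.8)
The plan is to treat the two implications separately. The backward direction is a direct computation with the duality theorem, while the forward direction rests on a commensurability principle for c-lattices together with some cardinal bookkeeping.

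For the backward implication, suppose $V\cong D\oplus D^*$ with $D$ an infinite dimensional discrete vector space. By Theorem~\ref{thm:Tate-duality}(a) the dual $D^*$ is linearly compact, and it is open in $D\oplus D^*$ because $\{0\}$ is open in the discrete factor $D$; hence $V$ is locally linearly compact (with $D^*$ a linearly compact open neighbourhood of $0$), so Tate by Proposition~\ref{prop:equivalence-Tate-Beilinson}, and manifestly infinite dimensional. Dualizing, using that duality exchanges finite direct sums, i.e. $(A\oplus B)^*\cong A^*\oplus B^*$, together with the isomorphism $D^{**}\cong D$ of Theorem~\ref{thm:Tate-duality}(c), I obtain $V^*\cong D^*\oplus D^{**}\cong D^*\oplus D\cong D\oplus D^*\cong V$.

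For the forward implication, I start from the splitting $V=L\oplus D'$ of Lemma~\ref{lem:top-splitting}, with $L$ a complete c-lattice and $D'$ a d-lattice. Dualizing gives $V^*\cong L^*\oplus(D')^*$, where $L^*$ is discrete (Theorem~\ref{thm:Tate-duality}(b)) and $(D')^*=L^\perp$ is linearly compact (Theorem~\ref{thm:Tate-duality}(a)), hence linearly bounded by Lemma~\ref{lem:equivalence-lin-compact}, and open in $V^*$ by definition of the dual topology; thus $L^\perp$ is a c-lattice, with $V^*/L^\perp\cong L^*$ via the surjective restriction map $V^*\to L^*$, so $\dim(V^*/L^\perp)=\dim L^*$. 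Now fix an isomorphism $\Phi\colon V\to V^*$. Since $\Phi$ preserves openness and linear compactness, $\Phi(L)$ is again a c-lattice of $V^*$, and $\Phi$ induces an isomorphism $V/L\cong V^*/\Phi(L)$, so $\dim(V^*/\Phi(L))=\dim(V/L)=\dim D'$.

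The crux is a commensurability statement: any two c-lattices $L_1,L_2$ of a Tate space $W$ satisfy $\dim\bigl(L_1/(L_1\cap L_2)\bigr)<\infty$, because $L_1\cap L_2$ is an open subspace of the linearly bounded space $L_1$; the exact sequence $0\to L_1/(L_1\cap L_2)\to W/(L_1\cap L_2)\to W/L_1\to 0$ and its counterpart for $L_2$ then show that $\dim(W/L_1)$ and $\dim(W/L_2)$ differ by a finite amount. Applying this with $L_1=\Phi(L)$ and $L_2=L^\perp$ in $W=V^*$ yields that $\dim D'$ and $\dim L^*$ differ by a finite amount. Were both finite, $D'$ and $L$ (equivalently $L^*$) would be finite dimensional and so would $V$; since $V$ is infinite dimensional, both are infinite and hence equal to a common infinite cardinal $d$. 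Taking $D$ to be the discrete vector space of dimension $d$, I get $D'\cong D$ and $L^*\cong D$, whence $L\cong L^{**}\cong D^*$ by Theorem~\ref{thm:Tate-duality}(c), so $V=L\oplus D'\cong D^*\oplus D\cong D\oplus D^*$, as desired.

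I expect the main obstacle to be the commensurability principle combined with the cardinal bookkeeping: one must verify that the finite discrepancy between $\dim D'$ and $\dim L^*$ cannot conceal the degenerate case in which one of them is finite. This is precisely the point at which the hypothesis that $V$ is infinite dimensional is used, and it is what forces both invariants to coincide with a single infinite cardinal $d$.
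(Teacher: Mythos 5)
Your proof is correct, and the forward implication takes a genuinely different route from the paper's. The paper argues constructively with the self-duality isomorphism throughout: given $\phi:V\to V^*$ and a splitting $V=L\oplus E$, it replaces $L$ by the adapted lattice $K=L\cap\phi^{-1}(L^\perp)$, so that $\phi(K)\subset K^\perp$; since $K$ and $\phi^{-1}(K^\perp)$ are both open linearly compact, $\phi^{-1}(K^\perp)=K\oplus F$ with $F$ finite dimensional, whence $K\oplus F\cong_{\phi}K^\perp\cong D^*$ for the discrete complement $D$ of $K$, and dualizing (absorbing the finite dimensional $F^*$ into the infinite dimensional discrete $D$) gives $D\cong K^*$ and $V=K\oplus D\cong D^*\oplus D$. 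You instead use $\Phi$ exactly once, to transport the c-lattice $L$ to a c-lattice $\Phi(L)\subset V^*$, and then invoke a commensurability principle for c-lattices --- their codimensions differ by a finite cardinal, because $L_1\cap L_2$ is open in the linearly bounded $L_1$ --- to force $\dim D'=\dim L^*$; infinite dimensionality of $V$ rules out the finite case, cardinal absorption gives a common infinite cardinal $d$, and $D:=L^*$ does the job via $L\cong L^{**}\cong D^*$ (Theorem~\ref{thm:Tate-duality}(c)). Both arguments are complete; the paper's produces an explicit lattice adapted to $\phi$ (the finite defect being handled at the level of subspaces), while yours isolates commensurability of c-lattices as the structural input and in fact proves something slightly sharper: an infinite dimensional Tate space is of the form $D\oplus D^*$ as soon as $\dim(V/L)=\dim L^*$ for one (hence any) c-lattice $L$ --- self-duality enters only to certify this numerical equality. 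Two points you should make explicit: surjectivity of the restriction map $V^*\to L^*$ follows from extension by zero along the topological splitting $V=L\oplus D'$ (or from the Hahn-Banach Corollary~\ref{cor:general-Hahn-Banach}), and the cardinal identity $\kappa+n_1=\lambda+n_2$ with $n_1,n_2$ finite yields $\kappa=\lambda$ precisely because both cardinals are infinite, which is the absorption your bookkeeping relies on.
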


\begin{proof}
If $V\cong D\oplus D^*$ then it is clearly self-dual. Conversely, assume
that there exists an isomorphism $\phi:V\to V^*$. Write $V=L\oplus E$
with $L$ open linearly compact and $E$ discrete. Then
$L^\perp\subset V^*$ is open, and it is also linearly compact because
$L^\perp=\{f\in V^*\mid f|_L=0\}\cong E^*$. Therefore,
$\phi^{-1}(L^\perp)$ and $K:=L\cap\phi^{-1}(L^\perp)$ are open
linearly compact and we can write $V=K\oplus D$ with $D$ discrete.
Note that $D$ is infinite dimensional (otherwise $K\oplus D=V\cong V^*\cong
K^*\oplus D^*$ would be linearly compact and discrete, contradicting
infinite dimensionality of $V$). 

From $\phi(K) = \phi(L)\cap L^\perp \subset L^\perp\subset K^\perp$ we
infer $K\subset\phi^{-1}(K^\perp)$. 
Since $K$ and $\phi^{-1}(K^\perp)$ are both open linearly compact, we
have $\phi^{-1}(K^\perp)=K\oplus F$ with $F$ finite dimensional. Thus
$K\oplus F\cong_{\phi} K^\perp\cong D^*$, and dualizing we obtain
$D\cong K^*\oplus F^*\cong K^*$, where the last isomorphism holds
because $D$ is discrete of infinite dimension and $F^*$ is finite
dimensional. Hence, $V=K\oplus D\cong D^*\oplus D$. 
\end{proof}

\begin{remark}  
Many of the results in this section have analogues in the
Pontrjagin--van Kampen duality theory for locally compact abelian
groups, see~\cite[Chapter II]{Lefschetz-book}.
\end{remark}

\section{Homomorphisms and bilinear maps}\label{sec:hom-bilin}

Here we discuss various notions of homomorphisms and
their relations. Throughout this section $A,B,C$ denote linearly
topologized vector spaces over a discrete field $\bk$. 

\subsection{Homomorphisms}\label{ss:hom}

We denote by $\Hom(A,B)$ the space of continuous linear maps $f:A\to B$.
This becomes a linearly topologized vector space with the {\em 
  compact-open topology} whose neighbourhood basis of zero consists of
the sets 
$$
  S_{K,V}:=\{f\in\Hom(A,B)\mid f(K)\subset V\}
$$
for $K\subset A$ linearly compact and $V\subset B$ linear open. Note
that for $B=\bk$ this recovers the topology on $A^*$ defined in~\S\ref{sec:Tate}.  

\begin{lemma}\label{lem:Hom1}
(a) If $A$ is linearly compact and $B$ is discrete, then $\Hom(A,B)$ is
  discrete. \\
(b) If $A$ is discrete and $B$ is linearly compact, then $\Hom(A,B)$ is
  linearly compact. \\
(c) If $A$ and $B$ are discrete and infinite dimensional, then
  $\Hom(A,B)$ is {\em not} Tate. \\
(d) If $A$ and $B$ are linearly compact such that each nonzero open
  linear subspace contains a strictly smaller open subspace, then
  $\Hom(A,B)$ is {\em not} Tate. \\
(e) If $A$ is Tate, then the subset $\Hom_{\rm fin}(A,B)$ of finite rank operators is dense in $\Hom(A,B)$. 
\end{lemma}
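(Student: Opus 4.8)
The plan is to handle the five assertions in three groups according to the mechanism involved, recalling throughout that the compact--open topology on $\Hom(A,B)$ has the subspaces $S_{K,V}=\{f\mid f(K)\subset V\}$ (for $K\subset A$ linearly compact and $V\subset B$ open) as a neighbourhood basis of $0$.

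\emph{Parts (a) and (b)} I would obtain by directly unwinding this definition. For (a), since $A$ is itself linearly compact it is an admissible $K$, and since $B$ is discrete $\{0\}$ is an admissible open $V$; hence $S_{A,\{0\}}=\{f\mid f(A)=0\}=\{0\}$ is open, so $\Hom(A,B)$ is discrete. For (b), discreteness of $A$ makes every linear map continuous, so fixing a basis $(e_i)_{i\in I}$ gives a vector-space isomorphism $\Hom(A,B)\cong\prod_{i\in I}B$. I would then check that under this identification the compact--open topology is the product topology: the linearly compact subspaces of the discrete space $A$ are exactly the finite-dimensional ones, and $S_{\SPAN(e_{i_1},\dots,e_{i_n}),V}$ corresponds to the basic product-open set constraining the coordinates $i_1,\dots,i_n$ to lie in $V$. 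Since a product (i.e.\ a limit) of linearly compact spaces is linearly compact, $\Hom(A,B)$ is linearly compact.

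\emph{Parts (c) and (d)} are both negative statements, and I would prove them by the same device. Suppose $\Hom(A,B)$ were Tate, with open linearly compact subspace $L$. Then $L$ contains a basic open subspace $S_{K,V}$; by Lemma~\ref{lem:open} this $S_{K,V}$ is closed in $\Hom(A,B)$, hence closed in $L$, hence linearly compact. I then contradict this. In case (c) one may take $V=\{0\}$ (as $B$ is discrete), and $S_{K,\{0\}}\cong\Hom(A/K,B)\cong\prod_{I'}B$ with $I'$ still infinite; the open subspace cutting out one coordinate has quotient $\cong B$, which is infinite dimensional, so $S_{K,\{0\}}$ is not linearly compact. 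In case (d) I would first note $S_{A,V}\subset S_{K,V}$ (since $A$ is linearly compact), so $L\supset S_{A,V}=\Hom(A,V)$ with $V\subset B$ open; by hypothesis $V$ contains a strictly smaller open $W\subsetneq V$. Then $\Hom(A,W)=S_{A,W}$ is open in $\Hom(A,B)$, hence open in $S_{A,V}$, with quotient $S_{A,V}/S_{A,W}\cong\Hom(A,V/W)\cong(A^*)^{\dim(V/W)}$ (surjectivity coming from a splitting $V=W\oplus C$). Since $A^*$ is discrete and infinite dimensional, this quotient is infinite dimensional and $S_{A,V}$ is not linearly compact.

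\emph{Part (e)} I would reduce to approximating the identity. Given $f\in\Hom(A,B)$ and a basic neighbourhood $S_{K,V}$ of $0$, continuity provides an open subspace $U\subset A$ with $f(U)\subset V$, so it suffices to build a continuous finite-rank $p\colon A\to A$ with $(\id-p)(K)\subset U$ and set $g=f\circ p$: then $g$ is finite rank and $(f-g)(K)=f((\id-p)(K))\subset f(U)\subset V$. To build $p$, write $A=L\oplus E$ with $L$ open linearly compact and $E$ discrete, and enlarge $K$ to $L':=K+L$, which is linearly compact (continuous image of $K\times L$) and open (it contains $L$). Since $U\cap L'$ is open in the linearly compact space $L'$, the quotient $L'/(U\cap L')$ is finite dimensional; I take $P\colon L'\to L'$ to be the continuous finite-rank projection onto a finite-dimensional complement of $U\cap L'$, so $(\id-P)(L')\subset U$. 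Finally, because $L'$ is open the projection $\pr_{L'}\colon A\to L'$ along any complement is continuous, and $p:=P\circ\pr_{L'}$ is continuous, finite rank, and agrees with $P$ on $K\subset L'$.

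\emph{Main obstacle.} The genuinely delicate point is the construction of $p$ in (e): a finite-rank projection is readily available on a linearly compact subspace but must be extended continuously to all of $A$, and enlarging $K$ to the \emph{open} linearly compact $L'=K+L$ is exactly what makes this possible, since projection onto an open subspace is automatically continuous. In (d) the corresponding care lies in verifying that $\Hom(A,V)$ really fails linear compactness, which is where the hypothesis that every nonzero open subspace contains a strictly smaller one enters, guaranteeing both $\dim(V/W)\ge 1$ and $\dim A^*=\infty$.
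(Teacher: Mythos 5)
Your proposal is correct and takes essentially the same approach as the paper: (a) is identical, (b) is precisely the paper's own alternative proof via $\Hom(A,B)\cong\prod_I B$ with the product topology, (c) is the paper's quotient computation $K^\perp/(K^\perp\cap L^\perp)\cong\Hom((K+L)/K,B)$ in coordinates (your ``cutting out one coordinate'' is the case $L=K+\bk a$), (d) correctly instantiates the argument the paper leaves as ``similar'', and (e) repackages the paper's construction (define $g$ on a finite dimensional complement $D$ of $K'\cap U$ inside an open linearly compact enlargement $K'$ of $K$, extend by zero via a topological complement of $K'$) as $g=f\circ p$ for the corresponding finite-rank projection $p$ --- the same mechanism, since in both versions openness of the enlarged compact is what makes the projection continuous. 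The one caveat, inherited from the paper's phrasing of (d) rather than from your argument, is that ``strictly smaller open subspace'' must be read as ``strictly smaller \emph{nonzero} open subspace'' (otherwise finite dimensional spaces satisfy the hypothesis vacuously, with $\{0\}$ as the smaller open subspace, and the conclusion fails), and this is exactly the reading you rely on to secure $\dim(V/W)\geq 1$ and $\dim A^*=\infty$.
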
 

\begin{proof}
Part (a) holds because $S_{A,\{0\}}=\{0\}$ is open.
For (b), consider $K\subset A$ linearly compact and $V\subset B$
linear open with projection $\pi_V:B\to B/V$. Then $S_{K,V}$ is the
kernel of the surjective map\footnote{The map $\Hom(A,B)\to \Hom(K,B)$ is surjective because $A$ is discrete, and the map $\Hom(K,B)\to \Hom(K,B/V)$ is surjective because $K$ is discrete.} 
$$
  \Hom(A,B)\to\Hom(K,B/V),\quad f\mapsto \pi_V\circ f|_K,
$$
and therefore $\Hom(A,B)/S_{K,V}\cong \Hom(K,B/V)$. This is finite
dimensional because under the hypotheses both $K$ and $B/V$ are. This proves that $\Hom(A,B)$ is linearly bounded. That $\Hom(A,B)$ is also complete follows from Lemma~\ref{lem:Hom2}(b): the discrete space $A$ is linearly compactly generated since it is the colimit of its finite dimensional subspaces, and the linearly compact space $B$ is complete.

Alternative proof of (b): We write $A=\oplus_I \bk$, where $I$ is an indexing set for a basis of $A$. Then 
$$
\Hom(A,B)=\Hom(\oplus_I \bk,B)=\prod_I\Hom(\bk,B)=\prod_I B,
$$
and we conclude using the fact that a product of linearly compact spaces is linearly compact. The second equality above is Lemma~\ref{lem:hom-lim}(b) below.\footnote{Note that $\Hom(\oplus A_i,B)=\prod \Hom(A_i,B)$ for any $(A_i)$ and $B$, without the Tate assumption from Lemma~\ref{lem:hom-lim}(b).}

For (c), note that the basic open subspaces are $K^\perp=S_{K,\{0\}}$ for
$K\subset A$ linearly compact, i.e., finite dimensional. For another
linearly compact subspace $L\subset A$, the short exact sequence $0\to
(K+L)/K\to A/K\to A/(K+L)\to 0$ gives rise to a commuting diagram
$$
\xymatrix
@C=15pt
{
0 \ar[r] & (K+L)^\perp \ar[d]^\cong \ar[r] & K^\perp \ar[d]^\cong \ar[r] & K^\perp/(K+L)^\perp
\ar[d]^\cong \ar[r] & 0 \\
0 \ar[r] & \Hom(\frac{A}{K+L},B) \ar[r] & \Hom(\frac{A}{K},B) \ar[r] & \Hom(\frac{K+L}{K},B)
\ar[r] & 0 \\
}
$$
Since $\dim B=\infty$, the quotient space $K^\perp/(K^\perp\cap L^\perp)\cong
\Hom(\frac{K+L}{K},B)$ is infinite dimensional unless $L\subset K$.
This means that $K^\perp$ can only be linearly compact if $K$ contains
all finite dimensional subspaces of $A$, which is impossible because
$\dim A=\infty$. 
The proof of (d) is similar.

For (e), we need to show that for each $f\in\Hom(A,B)$, $K\subset A$
linearly compact, and $V\subset B$ linear open, the set $f-S_{K,V}$
contains a finite rank operator. Since $A$ is Tate, each linearly
compact set $K$ is contained in an open linearly compact set $K'$
(take $K'=K+U$ for an open linearly compact set $U$). After replacing
$K$ by $K'$, we may thus assume that $K$ is open. 
Now $U:=f^{-1}(V)$ is open and  
$K/(K\cap U)$ is finite dimensional. Pick a complement $D\subset K$ of
$K\cap U$ and define $g\in\Hom_f(K,B)$ by $g|_D=f|_D$ and $g|_{K\cap U}=0$.
Since $K$ is open, it admits a topological complement $E$ and we can
extend $g$ to a finite rank operator $g\in\Hom_f(A,B)$ by setting it
$0$ on $E$. Then $(f-g)(K)\subset V$, so the open set
$f-S_{K,V}$ contains the finite rank operator $g$. 
\end{proof}

\begin{remark}
Parts (c) and (d) of Lemma~\ref{lem:Hom1} show that
$\Hom(A,B)$ need not be Tate if $A$ and $B$ are Tate. 
\end{remark}

The next lemma gives some conditions under which the $\Hom$ functor
commutes with limits or colimits. 

\begin{lemma}\label{lem:hom-lim}
(a) For each inverse system $(B_i)$ and each $A$ there is a canonical
  topological isomorphism  
$$
  \Hom(A,\lim B_i) \stackrel{\simeq}{\longrightarrow} \lim\Hom(A,B_i).
$$
(b) For each direct system of {\em Tate} vector spaces $(A_i)$ and each $B$
  there is a canonical topological isomorphism  
$$
  \Hom(\colim A_i,B) \stackrel{\simeq}{\longrightarrow} \lim\Hom(A_i,B).
$$
(c) For each inverse system $(A_i)$ and each {\em discrete} $B$
  there is a canonical topological isomorphism  
$$
  \colim\Hom(A_i,B) \stackrel{\simeq}{\longrightarrow} \Hom(\lim A_i,B).
$$
\end{lemma}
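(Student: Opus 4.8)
The plan is to handle all three statements by the same recipe: build the canonical comparison map from the appropriate universal property, check that it is a linear bijection, and then verify it is a homeomorphism by matching neighbourhood bases of $0$. In (a) and (b) the underlying linear bijection is formal and all the work is topological; in (c) even bijectivity will use that $B$ is discrete.

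For (a), the map $\Phi\colon\Hom(A,\lim B_i)\to\lim\Hom(A,B_i)$ sends $f$ to the compatible family $(g_i\circ f)_i$, and it is a linear bijection by the universal property of $\lim B_i$. For the topology I would note that the open linear subspaces of $\lim B_i$ are generated under finite intersection by the $g_i^{-1}(V_i)$ with $V_i\subset B_i$ open, and then compute $\Phi^{-1}\big(\mathrm{pr}_i^{-1}(S_{K,V_i})\big)=S_{K,g_i^{-1}(V_i)}$ together with $S_{K,\bigcap_j g_{i_j}^{-1}(V_{i_j})}=\bigcap_j S_{K,g_{i_j}^{-1}(V_{i_j})}$. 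These identify the two neighbourhood bases of $0$ and show $\Phi$ is a topological isomorphism; no hypotheses on $A$ or the $B_i$ are needed.

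For (b), the comparison map $\Phi\colon\Hom(\colim A_i,B)\to\lim\Hom(A_i,B)$, $f\mapsto(f\circ f_i)_i$, is a linear bijection by the universal property of the colimit, again with no hypothesis. Continuity of $\Phi$ is immediate: $\Phi^{-1}\big(\mathrm{pr}_i^{-1}(S_{K_i,V})\big)=S_{f_i(K_i),V}$, and $f_i(K_i)$ is linearly compact as the continuous image of a linearly compact set \cite[(II.27.4)]{Lefschetz-book}. The real work is to show $\Phi$ is \emph{open}, i.e. that $\Phi(S_{K,V})$ is open for $K\subset\colim A_i$ linearly compact and $V\subset B$ open linear. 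I would reduce this to the geometric fact that, when the $A_i$ are \emph{Tate}, every linearly compact $K\subset\colim A_i$ lies in $f_i(K_i)$ for some index $i$ and some linearly compact $K_i\subset A_i$: granting this, $S_{f_i(K_i),V}\subset S_{K,V}$, so the linear subspace $\Phi(S_{K,V})$ contains the open linear subspace $\Phi(S_{f_i(K_i),V})=\mathrm{pr}_i^{-1}(S_{K_i,V})$ and is therefore itself open (a linear subspace containing an open one is open). \textbf{The main obstacle in (b) is exactly this descent of linearly compact subsets of the colimit to a single stage}; it is here that the Tate hypothesis enters, and I would exploit the c-lattice decompositions $A_i=L_i\oplus D_i$ from Lemma~\ref{lem:top-splitting} to capture the complete, linearly bounded set $K$ at finite level.

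For (c), the canonical map $\Psi\colon\colim\Hom(A_i,B)\to\Hom(\lim A_i,B)$ is built from the universal property of the colimit, and it is continuous because each $g_i^*$ is, via $(g_i^*)^{-1}(S_{K,V})=S_{g_i(K),V}$. Surjectivity is where discreteness of $B$ first matters: a continuous $\phi\colon\lim A_i\to B$ has open kernel, and every open subspace of $\lim A_i$ contains a single $g_i^{-1}(U_i)$ (finite intersections collapse by directedness), so $\phi$ factors through the discrete space $A_i/U_i$ and extends to a $\psi_i\colon A_i\to B$ with $\psi_i\circ g_i=\phi$. \textbf{The main obstacle in (c) is injectivity}: from $\psi_i\circ g_i=0$ one must deduce $[\psi_i]=0$ in the colimit. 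The naive hope that $\psi_i\circ g_{ij}=0$ already at some finite stage $j$ amounts to controlling the gap between $g_i(\lim A_\bullet)$ and $\bigcap_j\mathrm{im}(g_{ij})$, a Mittag-Leffler phenomenon that need not vanish; so I would instead prove the correct weaker statement that $\psi_i$ lies in the \emph{closure} of the relation subspace defining $\colim\Hom(A_i,B)$, using discreteness of $B$ together with the completion built into the colimit. The topology is then matched exactly as in (a)--(b), comparing $S_{g_i(K),V}$-type subspaces with the images of the $g_i^*$.
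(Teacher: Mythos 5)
Your overall architecture is the paper's: in all three parts the paper builds the canonical map from the universal property and then matches neighbourhood bases of $0$, and your part (a) is essentially the paper's argument (one cosmetic point: to absorb a finite intersection $\bigcap_j \mathrm{pr}_{i_j}^{-1}(S_{K_j,V_j})$ with \emph{different} linearly compact sets $K_j$ you need $S_{\sum_j K_j,\,\bigcap_j g_{i_j}^{-1}(V_j)}$, using that a finite sum of linearly compact subspaces is linearly compact, as the paper does; your identity fixes a single $K$). The problems are in (b) and (c), where precisely the steps you flag as ``the main obstacles'' are left unproved.

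In (b), the descent statement --- every linearly compact $K\subset\colim A_i$ satisfies $K\subset f_j(K_j)$ for some $j$ and some linearly compact $K_j\subset A_j$ --- is indeed the crux, and the paper proves it in two separate steps; your hint addresses only the second. The first step, that $K\subset f_j(A_j)$ for a \emph{single} index $j$, is the paper's Lemma~\ref{lem:compact-in-colim}; it holds for arbitrary direct systems with no Tate hypothesis, and its proof is a diagonal argument: assuming $K$ escapes every stage, one picks $x_k\in K\cap f_{i_k}(A_{i_k})\setminus f_{i_{k-1}}(A_{i_{k-1}})$ with representatives $a_k\in A_{i_k}$, chooses opens $U_{i_k}\not\ni a_k$, and shows the classes $[x_k]\in K/(K\cap U)$ are linearly independent for the resulting open $U\subset\colim A_i$, contradicting linear compactness. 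Your plan to ``capture $K$ at finite level'' via the splittings $A_i=L_i\oplus D_i$ of Lemma~\ref{lem:top-splitting} gives no purchase here, since the issue is not the shape of $K$ inside one $A_j$ but whether any single stage contains $K$ at all. The Tate hypothesis enters only afterwards: given $K\subset f_j(A_j)$ and an open $U=\oplus_i U_i$, one shows $K\subset f_j(U_j+D_j)$ with $D_j$ finite dimensional, and Tate-ness of $A_j$ allows one to take $U_j$ open and \emph{linearly compact}, whence $K_j=(U_j+D_j)\cap f_j^{-1}(K)$ is linearly compact with $f_j(K_j)=K$. Without this two-step construction your openness argument has no foundation.

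In (c), your diagnosis is actually sharper than the paper's text, which declares bijectivity ``straightforward'': since the colimit in $\Top$ is $\oplus_i\Hom(A_i,B)/\ol{N}$ with $N$ the relation subspace, injectivity really is the statement $\psi\in\ol{N}$, and the naive finite-stage vanishing $\psi\circ g_{ij}=0$ can fail (your Mittag-Leffler remark is correct, e.g.\ for decreasing systems of discrete spaces with zero limit). Your surjectivity argument (collapse finite intersections by directedness, factor through the discrete $A_i/U_i$, extend linearly, continuity for free) is exactly right. But naming the correct weaker statement is not proving it: to get $\psi\in\ol{N}$ one must, for \emph{every} family of linearly compact $K_i\subset A_i$, exhibit $\nu\in N$ with $\psi-\nu\in\oplus_i K_i^\perp$, and the argument requires real work --- e.g.\ moving $\psi$ to a deep stage via $\psi\equiv\psi\circ g_{jk}\pmod N$, splitting $K_k=(K_k\cap\ker)\oplus D$ with $D$ finite dimensional by discreteness of $B$, and arranging the finite-rank discrepancy to die at a yet deeper stage. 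Your phrase ``using discreteness of $B$ together with the completion built into the colimit'' (note: it is a closure, not a completion) is a pointer, not an argument, so the hardest step you yourself identified remains open; likewise your final sentence on matching topologies in (c) is too vague, where the paper uses discreteness of $B$ to identify the basic opens as $\oplus_i K_i^\perp$ on the source and $K^\perp$ on the target and compares them via $K\subset\prod_i K_i\cap\lim A_i$.
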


\begin{proof}
(a) With $B=\lim B_i$ the inverse system is given for $i\leq j$ by the
  commuting diagram 
$$
\xymatrix{
  B_i & \ar[l]_{g_{ij}} B_j \\ B \ar[u]^{g_i} \ar[ru]_{g_j} 
}
$$
It induces the commuting diagram
$$
\xymatrix{
  \Hom(A,B_i) & \ar[l]_{\wh g_{ij}} \Hom(A,B_j) \\ \Hom(A,B)
  \ar[u]^{\wh g_i} \ar[ru]_{\wh g_j} 
}
$$
with $\wh g_{ij}(f_j)=g_{ij}\circ f_j$ and $\wh g_i(f)=g_i\circ f$,
and thus a canonical map
$$
\wh g:\Hom(A,B)\to \lim\Hom(A,B_i). 
$$
It is straightforward to check that $\wh g$ is a linear bijection, so
it only remains to compare the topologies. The basic linear open
subspaces in $\Hom(A,B)$ are $S_{K,V}$ with $K\subset A$ linearly
compact and $V\subset B$ linear open. Thus, $V=\prod_{j\in J}V_j\cap B$ (the
components for $i\notin J$ are $B_i$ and suppressed from the notation)
with $V_j\subset B_j$ linear open and $J\subset I$ finite.
The basic linear open subspaces of $\lim\Hom(A,B_i)$ are $\prod_{j\in
  J}S_{K_j,V_j}$ with $K_j\subset A$ linearly compact and $V_j\subset
B_j$ linear open, $J\subset I$ finite. Since
$S_{K,V}\supset\prod_jS_{K,V_j}$ and $\prod_jS_{K_j,V_j}\supset
S_{\sum K_j,V}$,\footnote{If $(K_j)$ is a finite family of linearly compact subspaces, then $\sum K_j$ is linearly compact.} 
these correspond to the same topology under $\wh g$.

(b) With $A=\colim A_i$ the direct system is given for $i\leq j$ by the
  commuting diagram 
$$
\xymatrix{
  A_i \ar[d]_{f_i} \ar[r]^{f_{ji}} & A_j \ar[dl]^{f_j} \\ A  
}
$$
It induces the commuting diagram
$$
\xymatrix{
  \Hom(A_i,B) & \ar[l]_{\wh f_{ji}} \Hom(A_j,B) \\ \Hom(A,B)
  \ar[u]^{\wh f_i} \ar[ru]_{\wh f_j} 
}
$$
with $\wh f_{ji}(g_j)=g_j\circ f_{ji}$ and $\wh f_i(g)=g\circ f_i$,
and thus a canonical map
$$
\wh f:\Hom(A,B)\to \lim\Hom(A_i,B). 
$$
By definition $\wh f$ is continuous. It is straightforward to check that $\wh f$ is a linear bijection, so
it only remains to compare the topologies.
The basic linear open subspaces of $\lim\Hom(A_i,B)$ are $\prod_{j\in
  J}S_{K_j,V_j} \cap \lim\Hom(A_i,B)$ with $K_j\subset A_j$ linearly compact and $V_j\subset
B$ linear open, $J\subset I$ finite. After mapping the $K_j$ to some
$B_k$ with $k\geq j$ for all $j\in J$, we may assume that $J$ consists
of a single element $j$.   

The basic linear open subspaces in $\Hom(A,B)$ are $S_{K,V}$ with
$K\subset A$ linearly compact and $V\subset B$ linear open. 
By Lemma~\ref{lem:compact-in-colim} below, there exists some $j$
such that $K\subset f_j(A_j)$.
Consider an open subset $U=\oplus U_i$ in $A$.\footnote{Strictly speaking, an open set is $U=\mathrm{pr}(\oplus U_i)$, where $\mathrm{pr}:\oplus A_i\to \colim A_i$ is the canonical projection, but we simply denote this $\oplus U_i$.} The projection
$\pi:A\to A/U$ maps $K$ onto the finite dimensional space $\pi(K)=K/(K\cap U)$. 
Pick a finite dimensional subspace $D\subset K$ with $\pi(D)=\pi(K)$,
and a finite dimensional subspace $D_j\subset A_j$ with $f_j(D_j)=D$.
Then for each $a\in K$ there exists $d\in D$ with $\pi(d)=\pi(a)$,
thus $d-a\in U$. Writing $a=f_j(a_j)$ and $d=f_j(d_j)$ with $a_j\in
A_j$ and $d_j\in D_j$, we conclude $f_j(a_j-d_j)\in U$ and therefore
$a_j-d_j\in U_j$, thus $a_j\in U_j+D_j$ and $f_j(a_j)=a$. This shows
that $K\subset f_j(U_j+D_j)$. 
Now we invoke the hypothesis that $A_j$ is Tate. It allows us to
take $U_j$ to be open and linearly compact, so that $U_j+D_j$ is
linearly compact. Then $K=f_j(K_j)$ for the linearly compact subset
$K_j=(U_j+D_j)\cap f_j^{-1}(K)$, and we see that the topologies match.  
%
%
%

(c) With $A=\lim A_i$ the inverse system is given for $i\leq j$ by the
  commuting diagram 
$$
\xymatrix{
  A_i & \ar[l]_{f_{ij}} A_j \\ A \ar[u]^{f_i} \ar[ru]_{f_j} 
}
$$
It induces the commuting diagram
$$
\xymatrix{
  \Hom(A_i,B) \ar[d]_{\wh f_i} \ar[r]^{\wh f_{ij}} & \Hom(A_j,B)
  \ar[ld]^{\wh f_j} \\ \Hom(A,B) 
}
$$
with $\wh f_{ij}(g_i)=g_i\circ f_{ij}$ and $\wh f_i(g_i)=g_i\circ f_i$,
and thus a canonical map
$$
\wh f:\colim\Hom(A_i,B)\to \Hom(A,B). 
$$
This map is continuous since it is obtained using the universal properties of $\lim$ and $\colim$. It is straightforward to check that $\wh f$ is a linear bijection, so
it only remains to compare the topologies.
For this, we use the hypothesis that $B$ is discrete. It implies that 
the basic linear open subspaces of $\colim\Hom(A_i,B)$ are $\oplus
K_i^\perp$ with $K_i\subset A_i$ linearly compact, where
$K_i^\perp=S_{K_i,\{0\}}$.
The basic linear open subspaces in $\Hom(A,B)$ are
$K^\perp=S_{K,\{0\}}$ with $K\subset A$ linearly compact. 
This means that $K\subset \prod K_i\cap A$ with $K_i\subset A_i$ linearly compact,
hence $K^\perp \supset \oplus K_i^\perp$ and we see that the topologies match.  
\end{proof}

In the preceding proof we have used the following lemma.

\begin{lemma}\label{lem:compact-in-colim}
Consider a direct system $(A_i,f_{ji})$ with the canonical maps $f_j:A_j\to
\colim A_i$. Then for each linearly compact subspace
$K\subset\colim A_i$ there exists some $j$ such that $K\subset f_j(A_j)$.
\end{lemma}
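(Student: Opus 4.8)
The plan is to argue by contradiction: from a linearly compact $K$ that fails to lie in any single $f_j(A_j)$ I will manufacture an open subspace in whose discrete quotient $K$ has infinite-dimensional image, contradicting linear boundedness. First I would record the algebra. Writing $W=\colim A_i=\oplus_i A_i/\ol{R_0}$ with $R_0=\langle f_{ji}x_i-x_i\rangle$ and projection $\pr$, the subspaces $L_j:=f_j(A_j)=\pr(A_j)$ form an increasing directed family; directedness collapses any finite sum $\sum\iota_i(a_i)$ to a single stage modulo $R_0$, so $\bigcup_j L_j=W$ and in particular $K\subseteq\bigcup_j L_j$. Assuming $K\not\subseteq L_j$ for every $j$, one then chooses recursively indices $j_1<j_2<\cdots$ and vectors $x_n\in K$ with $x_n\in L_{j_n}\setminus L_{j_{n-1}}$; since $\SPAN(x_1,\dots,x_{n-1})\subseteq L_{j_{n-1}}$, these vectors are linearly independent and stay independent modulo $L_{j_{n-1}}$.

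The contradiction I aim for is to find an open linear subspace $U\subseteq W$ for which the images of the $x_n$ in the discrete quotient $W/U$ are linearly independent. Because $K$ is linearly compact it is linearly bounded by Lemma~\ref{lem:equivalence-lin-compact}, so $K/(K\cap U)$ is finite-dimensional for every open $U$; an open $U$ as above would force $K/(K\cap U)$ to be infinite-dimensional, the desired contradiction. For this independence it is enough that $x_n\notin L_{j_{n-1}}+U$ for all $n$, precisely because $\SPAN(x_1,\dots,x_{n-1})\subseteq L_{j_{n-1}}$.

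The cleanest way to build $U$ is to first treat the direct sum, where there is no denominator to fight against. The sub-statement is that a linearly compact subspace of $\oplus_i A_i$ lies in a finite sub-sum: if not, one produces $y_n$ whose supports introduce pairwise new coordinates $i_n$, chooses open $U_{i_n}\subseteq A_{i_n}$ not containing the $i_n$-component of $y_n$ (Hausdorffness) and $U_i=A_i$ otherwise, and observes that in $\oplus_i A_i/U_i$ the class of $y_n$ is nonzero in the $i_n$-coordinate and vanishes in the coordinates $i_m$ for $m>n$, a triangular system forcing linear independence and hence an infinite-dimensional quotient, contradicting linear boundedness. To deduce the general statement I would reduce the colimit to this case by lifting $K$ through $\pr$ to a linearly compact subspace $\tilde K\subseteq\oplus_i A_i$ with $\pr(\tilde K)=K$; the sub-statement then places $\tilde K$ in a finite $\oplus_{i\in I_0}A_i$, whence $K=\pr(\tilde K)\subseteq\sum_{i\in I_0}L_i\subseteq L_j$ for any upper bound $j$ of $I_0$.

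The main obstacle is exactly this reduction, that is, controlling the \emph{closed} relation subspace $\ol{R_0}$ in the colimit denominator. Equivalently one must produce a continuous section of $\pr$ over $K$, so that $\tilde K$ is the linearly compact image of $K$; this amounts to splitting the short exact sequence $0\to\ol{R_0}\to\pr^{-1}(K)\to K\to 0$ with $K$ linearly compact, which is where the topological-complement results are needed. If such a splitting is not available one must instead argue directly in $W/U$: lift each $x_n$ to a representative $\xi_n$ supported at the single index $j_n$, so that the supports $\{j_n\}$ are pairwise distinct, and then verify that the identifications imposed by $\ol{R_0}$ cannot create linear dependences among the disjointly supported classes of the $\xi_n$ in $\oplus_i A_i/(\oplus_i U_i+\ol{R_0})$. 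The delicate point throughout is the closure in $R_0$ rather than the raw relations, and it is there that the argument requires the most care.
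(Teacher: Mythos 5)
Your setup (recursive choice of $x_n\in K\cap L_{j_n}\setminus L_{j_{n-1}}$, then an open $U$ making the classes of the $x_n$ independent in a discrete quotient, against linear boundedness via Lemma~\ref{lem:equivalence-lin-compact}) coincides with the paper's strategy, and your direct-sum sub-statement is correct --- it is essentially the paper's argument in the relation-free case. The genuine gap is in your passage from the direct sum to the colimit. Your main route --- lifting $K$ to a linearly compact $\tilde K\subset\oplus_i A_i$ with $\pr(\tilde K)=K$ via a continuous section, i.e.\ splitting $0\to\ol{R_0}\to\pr^{-1}(K)\to K\to 0$ --- is not available: the splitting and topological-complement results you would invoke (Proposition~\ref{prop:splitting}, Corollaries~\ref{cor:exists-closed-complement} and~\ref{cor:splitting-Positselski}) all require completeness together with a \emph{countable basis}, whereas this lemma must hold for arbitrary direct systems of arbitrary linearly topologized spaces; indeed it is applied in Lemma~\ref{lem:Hom2}(b) to the colimit over \emph{all} linearly compact subspaces of a space, an uncountable directed set, with no completeness hypothesis on the terms. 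Neither $\pr^{-1}(K)$ nor $\ol{R_0}$ is complete or countably based in general, so no continuous section --- hence no linearly compact lift $\tilde K$ --- is guaranteed, and a merely set-theoretic lift carries no linear compactness. The reduction therefore collapses, and with it your main line of proof.

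Your fallback is in substance the paper's actual proof (single-stage representatives $a_n\in A_{j_n}$, open $U_{j_n}\not\ni a_n$, $U=\pi(\oplus_i U_i)$, triangular independence), but you stop exactly where the content lies: you never verify that the relations create no dependences. Note also that the point you single out as delicate --- the closure $\ol{R_0}$ versus $R_0$ --- is actually harmless: since $\oplus_i U_i$ is an open linear subspace, $\oplus_i U_i+\ol{R_0}=\oplus_i U_i+R_0$ (an open subspace absorbs the closure), so only the raw relations matter. The real subtlety sits at the indices \emph{off} the chain $\{j_n\}$: the relations let a representative $\iota_{j_n}(a_n)$ be pushed to $\iota_j(f_{jj_n}a_n)$ for any $j\ge j_n$, so if, as in your direct-sum argument, you set $U_j=A_j$ at all off-chain indices, then any off-chain $j\ge j_n$ places $x_n$ inside $\pi(\oplus_i U_i)$ and kills the class $[x_n]$. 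Thus the deferred verification is not a routine check on $\ol{R_0}$; it requires coordinating the choice of open subspaces across the entire directed system (conditions at all indices $j\ge j_n$, compatible with the maps $f_{jj_n}$), and as written your proposal does not supply this and does not constitute a proof.
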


\begin{proof}
Assume this is not the case and construct a sequence $(x_k)$ in $K$ as
follows. Take any $x_1\in K\setminus\{0\}$, and represent it by
$a_1\in A_{i_1}$, i.e., $x_1=f_{i_1}(a_1)$.  Take any $x_2\in
K\setminus f_{i_1}(A_{i_1})$, which is nonempty by assumption, and
represent it by $a_2\in A_{i_2}$ for $i_2>i_1$. By induction, having
constructed $x_1,\dots,x_k$ and $i_1<\dots<i_k$ such that $x_s\in
K\cap f_{i_s}(A_{i_s})\setminus f_{i_{s-1}}(A_{i_{s-1}})$ for all $s$,
we take any $x_{k+1}\in K\setminus f_{i_k}(A_{i_k})$ and represent it
by $a_{k+1}\in A_{i_{k+1}}$ for some $i_{k+1}>i_k$. Then for each $k$
we have $x_k=f_{i_k}(a_k)\in K\cap f_{i_k}(A_{i_k})\setminus
f_{i_{k-1}}(A_{i_{k-1}})$ with $a_k\in A_{i_k}$. Choose open linear
subspaces $U_{i_k}\subset A_{i_k}$ such that $a_k\notin U_{i_k}$. Let
$\pi:\oplus A_i\to \colim A_i$ be the canonical projection, and
consider the open subspace  
$$
   U := \pi\Bigl(\prod_{k}U_{i_k}\times
   \prod_{i\neq i_k \, \forall k}A_i\Bigr)\subset \colim A_i.
$$
By construction, the projections $[x_1],[x_2],\dots\in K/(K\cap U)$ are
linearly independent. But then $\dim(K/(K\cap U))=\infty$, contradicting
linear compactness of $K$.
\end{proof}

\begin{remark}
Parts (a) and (b) in Lemma~\ref{lem:hom-lim} are expected from general
properties of limits/colimits and $\Hom$ functors, see
e.g.~\cite[Propositions 5.21 and 5.26]{Rotman}. However, the matching
of the topologies does not seem to follow from abstract category
theory, and in (b) appears to require the additional Tate hypothesis (or, alternatively, using Corollary~\ref{cor:exists-closed-complement} in the Appendix, the assumption that the spaces involved in the direct system are complete and have countable basis).
\end{remark}

We will repeatedly use the following special case of
Lemma~\ref{lem:hom-lim}(b) and (c): 

\begin{corollary}\label{cor:lim-colim-duality}
(a) For each direct system of {\em Tate} vector spaces $A_i$ there is a
  canonical topological isomorphism
$$
  (\colim A_i)^* \stackrel{\simeq}{\longrightarrow} \lim A_i^*.
$$
(b) For each inverse system of linearly topologized vector spaces $A_i$ there is a
  canonical topological isomorphism
$$
  \colim A_i^* \stackrel{\simeq}{\longrightarrow} (\lim A_i)^*.
$$
\qed
\end{corollary}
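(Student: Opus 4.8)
The plan is to obtain both isomorphisms by specializing Lemma~\ref{lem:hom-lim} to the one-dimensional discrete target $B=\bk$. The key observation is that the topological dual is by definition $V^*=\Hom(V,\bk)$, and the compact-open topology on $\Hom(V,\bk)$ is precisely the topology equipping $V^*$ defined in~\S\ref{sec:Tate} (as noted right after that definition). Thus every statement about $\Hom(-,\bk)$ is literally a statement about $(-)^*$.

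For part (a), I would apply Lemma~\ref{lem:hom-lim}(b) with $B=\bk$. Since $(A_i)$ is a direct system of Tate vector spaces, the hypothesis of that lemma is met, and it produces the canonical topological isomorphism
$$
(\colim A_i)^* = \Hom(\colim A_i,\bk) \stackrel{\simeq}{\longrightarrow} \lim\Hom(A_i,\bk) = \lim A_i^*.
$$
The structure maps $A_j^*\to A_i^*$ of the limit on the right are the duals $f_{ji}^*$ of the structure maps $f_{ji}:A_i\to A_j$ of the direct system, which is exactly the inverse system $\Hom(A_i,\bk)$ appearing in the lemma (with $\wh f_{ji}(g_j)=g_j\circ f_{ji}$ reducing to $f_{ji}^*$ when $B=\bk$).

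For part (b), I would apply Lemma~\ref{lem:hom-lim}(c), again with $B=\bk$. The only hypothesis there is that $B$ be discrete, which holds since $\bk$ carries the discrete topology; crucially, no condition on the $A_i$ is required, which is why the statement holds for an arbitrary inverse system of linearly topologized vector spaces. The lemma then yields the canonical topological isomorphism
$$
\colim A_i^* = \colim\Hom(A_i,\bk) \stackrel{\simeq}{\longrightarrow} \Hom(\lim A_i,\bk) = (\lim A_i)^*,
$$
where the transition maps $A_i^*\to A_j^*$ of the colimit are the duals $f_{ij}^*$ of the inverse system maps $f_{ij}:A_j\to A_i$, matching $\wh f_{ij}(g_i)=g_i\circ f_{ij}$ from the lemma.

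There is essentially no obstacle to overcome: the entire content is contained in Lemma~\ref{lem:hom-lim}, and the corollary is merely its dualization obtained by setting $B=\bk$. The one point worth recording — and it is immediate from the formulas $\wh f_{ji}(g)=g\circ f_{ji}$ and $\wh f_{ij}(g)=g\circ f_{ij}$ — is the compatibility of the $\Hom$-system structure maps with the dual maps $f_{ji}^*$ (resp.\ $f_{ij}^*$), so that the $\lim$ and $\colim$ of $\Hom(A_i,\bk)$ coincide with $\lim A_i^*$ and $\colim A_i^*$ as written. This is why the result is stated with $\qed$: it follows directly from the lemma.
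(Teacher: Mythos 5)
Your proposal is correct and coincides with the paper's own derivation: the paper introduces Corollary~\ref{cor:lim-colim-duality} explicitly as the special case of Lemma~\ref{lem:hom-lim}(b) and (c) with $B=\bk$, which is exactly what you do, including the observation that the compact-open topology on $\Hom(V,\bk)$ is the dual topology from \S\ref{sec:Tate} and that the $\Hom$-system transition maps reduce to the dual maps $f_{ji}^*$, resp.\ $f_{ij}^*$. Nothing further is needed.
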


Let us call $A$ {\em linearly compactly generated} if the canonical map
$$
  \iota:\colim_{K\subset A\text{ linearly compact}}K\to A
$$
is a topological isomorphism. Note that $\iota$ is always continuous
and bijective, so this condition is equivalent to saying that
$U\subset A$ is open if $U\cap K$ is open for all $K\subset A$ compact.\footnote{
The notions ``compact-open topology'' and  ``linearly compactly
generated'' are analogues of the corresponding notions in topology
and our discussion largely parallels the one
in~\cite[Appendix]{Hatcher}. Similarly for the notion of ``locally
linearly compact".} 
The following lemma shows that compact generation is in some sense
dual to completeness. 

\begin{lemma}\label{lem:Hom2}
(a) Each Tate vector space $A$ is complete and linearly compactly generated. \\
(b) If the space $A$ is linearly compactly generated and $B$ is complete, then
  $\Hom(A,B)$ is complete. In particular, $A^*$ is complete. \\
(c) If $A$ is complete, then $A^*$ is linearly compactly generated.
\end{lemma}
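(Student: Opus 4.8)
The plan is to prove the three parts in the order (a), (b), (c), exploiting the limit/colimit descriptions of completeness and compact generation and feeding them into Lemmas~\ref{lem:hom-lim} and~\ref{lem:Hom1}. For (a), a Tate space is complete by definition, so only compact generation needs proof. Since $\iota$ is always a continuous bijection, I would show that a linear subspace $W\subset A$ with $W\cap K$ open in every linearly compact $K$ is already open. Writing $A=L\oplus D$ with $L$ open linearly compact, I apply the hypothesis to the compact subspace $K=L$: then $W\cap L$ is open in $L$, hence open in $A$ because $L$ is open, and since $W\supset W\cap L$ this forces $W$ to be open. (Alternatively, (a) follows from (c): $A^*$ is Tate hence complete, so $(A^*)^*=A^{**}\cong A$ is linearly compactly generated by (c) and Theorem~\ref{thm:Tate-duality}(c).)

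For (b) the idea is to present $\Hom(A,B)$ as an iterated limit of discrete spaces. Linear compact generation gives a topological isomorphism $A\cong\colim_K K$ over the directed system of linearly compact subspaces $K\subset A$; since every linearly compact space is itself Tate (it is complete and is its own c-lattice), Lemma~\ref{lem:hom-lim}(b) applies and yields $\Hom(A,B)\cong\lim_K\Hom(K,B)$. Completeness of $B$ gives $B\cong\lim_V B/V$, so Lemma~\ref{lem:hom-lim}(a) yields $\Hom(K,B)\cong\lim_V\Hom(K,B/V)$, and each $\Hom(K,B/V)$ is discrete by Lemma~\ref{lem:Hom1}(a) (as $K$ is linearly compact and $B/V$ discrete), hence complete. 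Since $\Top^c$ is closed under limits, $\Hom(K,B)$ and in turn $\Hom(A,B)$ are complete. Taking $B=\bk$, which is discrete and therefore complete, gives completeness of $A^*$.

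For (c), completeness of $A$ means $A\cong\lim_U A/U$ over the open subspaces $U$, so Corollary~\ref{cor:lim-colim-duality}(b) furnishes a topological isomorphism $A^*\cong\colim_U (A/U)^*$. Each $A/U$ is discrete, hence each $(A/U)^*$ is linearly compact by Theorem~\ref{thm:Tate-duality}(a); thus $A^*$ is a colimit of linearly compact spaces. It then remains to establish the general principle that any colimit $\colim_i L_i$ of linearly compact spaces is linearly compactly generated: given $W$ with $W\cap K$ open in every linearly compact $K$, the canonical map $\phi_i\colon L_i\to\colim_i L_i$ is continuous with linearly compact image $\phi_i(L_i)$, so $W\cap\phi_i(L_i)$ is open there and $\phi_i^{-1}(W)$ is open in $L_i$; by the final (colimit) topology $W$ is open, which is exactly compact generation.

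The hard part will be step (c). I must check carefully that the isomorphism of Corollary~\ref{cor:lim-colim-duality}(b) is genuinely \emph{topological} and that it identifies the structure maps of the colimit with continuous maps out of the linearly compact spaces $(A/U)^*$ whose images are linearly compact, since this is precisely what powers the final-topology argument; the abstract vector-space isomorphism alone is not enough. The other key move, easy to overlook, is the recognition in (b) that linearly compact spaces are Tate, so that Lemma~\ref{lem:hom-lim}(b) may be invoked without additional hypotheses.
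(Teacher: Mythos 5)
Your proposal is correct and follows essentially the same route as the paper: (a) is the identical c-lattice argument, (b) uses the same chain of isomorphisms via Lemma~\ref{lem:hom-lim}(a),(b) together with the observation that linearly compact spaces are Tate and that $\Hom(K,B/V)$ is discrete, and (c) is the paper's duality argument via Lemma~\ref{lem:hom-lim}(c) (i.e.\ Corollary~\ref{cor:lim-colim-duality}(b)) with $(A/U)^*$ linearly compact. The only differences are presentational: in (b) you package $\Hom(A,B)$ as an iterated limit of complete spaces and invoke closure of $\Top^c$ under limits, where the paper computes the completion $\lim_{K,V}\Hom(A,B)/S_{K,V}$ directly, and in (c) you make explicit, via the final-topology argument, the step that a colimit of linearly compact spaces is linearly compactly generated, which the paper leaves implicit.
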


\begin{proof}
(a) Completeness holds by definition. 
To show that $A$ is linearly compactly generated, consider $U\subset A$
such that $U\cap K$ is open for all $K\subset A$ 
compact. Take a c-lattice $L\subset A$. Since $L$ is compact and open,
we have $U\cap L\subset_{\rm open} L\subset_{\rm open} A$. Thus $U\cap
L$ is open in $A$ and $U\cap L\subset U$, hence $U$ is open.  

(b) For $K\subset A$ linearly compact and $V\subset B$ linear open, in
the proof of Lemma~\ref{lem:Hom1} we have seen $\Hom(A,B)/S_{K,V}\cong
\Hom(K,B/V)$. Therefore, we get topological isomorphisms 
\begin{align*}
  \lim_{K,V}\Hom(A,B)/S_{K,V}
  &\cong \lim_V\lim_K\Hom(K,B/V) \cr
  &\cong \lim_V\Hom(\colim_KK,B/V) \cr
  &\cong \lim_V\Hom(A,B/V) \cr
  &\cong \Hom(A,\lim_V B/V) \cr
  &\cong \Hom(A,B).
\end{align*}
Here the first isomorphism holds because limits commute
(see~\cite[Theorem 5.1]{Hilton-Stammbach}); the second one by
Lemma~\ref{lem:hom-lim}(b), using that $K$ is linearly compact and
therefore Tate; the third one because $A$ is linearly compactly generated;
the fourth one by Lemma~\ref{lem:hom-lim}(a); and the fifth one
because $B$ is complete.

(c) Consider first general spaces $A,B$. 
For $U\subset A$ open and $L\subset B$ linearly compact, consider the
linear bijection
\begin{equation}\label{eq:TUL}
  \Hom(A/U,L)\to T_{U,L} := \{f\in\Hom(A,B)\mid f|_U=0\text{ and }f(A)\subset L\}
\end{equation}
sending $f$ to $\iota\circ f\circ\pi$, where $\pi:A\to A/U$ is the
projection and $\iota:L\to B$ the inclusion. Now linear open
subsets $\ol V\subset L$ are of the form $\ol V=\iota^{-1}(V)$ for
$V\subset B$ linear open, and linearly compact (hence finite
dimensional) subsets $\ol K\subset A/U$ are of the form $\pi(K)$ for
$K\subset A$ linearly compact. Since the above linear bijection sends
$S_{\ol K,\ol V}$ onto $S_{K,V}$, it is an isomorphism. 
Since $A/U$ is discrete and $L$ is linearly compact,
$\Hom(A/U,L)$ is linearly compact by Lemma~\ref{lem:Hom1}(b). 
Now assume that $A$ is complete and $B=\bk$. Then $A=\lim_{U\text{
    linear open}}A/U$ and we get topological isomorphisms 
\begin{align*}
  \colim_U\Hom(A/U,\bk) 
  \cong \Hom(\lim_U A/U,\bk)
  \cong \Hom(A,\bk) = A^*,
\end{align*}
where the first isomorphism follows from Lemma~\ref{lem:hom-lim}(c) with $B=\bk$.
Since $A^*\supset T_{U,\bk}\cong\Hom(A/U,\bk)$ is linearly compact,
this shows that $A^*$ is linearly compactly generated. 
\end{proof}

\subsection{Compact, discrete, and finite rank homomorphisms}\label{ss:comp-disc-fin}

This subsection follows Beilinson-Drinfeld~\cite[\S 2.7.7]{BD04}.
The space $\Hom(A,B)$ contains several natural subspaces: the space $\Hom_c(A,B)$ of compact homomorphisms (whose
image has linearly compact closure), the space $\Hom_d(A,B)$ of discrete
homomorphisms (whose kernel is open), the space $\Hom_f(A,B)=\Hom_c(A,B)\cap\Hom_d(A,B)$, and also the space
$\Hom_\fin(A,B)\subset \Hom_f(A,B)$ of finite rank homomorphisms (whose image is finite
dimensional). We equip these spaces with the linear
topologies generated respectively by the linear subspaces
$\Hom_c(A,V)$, $\Hom_d(A/K,B)$, and $\Hom_f(A/K,V)$ for $K\subset A$
linearly compact and $V\subset B$ linearly open,
where $\Hom_d(A/K,B)$ and $\Hom_f(A/K,V)$ are viewed as subspaces of
$\Hom(A,B)$ via the isomorphism~\eqref{eq:TUL}. With these
topologies, the spaces fit into the short exact sequence
\begin{equation}\label{eq:hom-ses}
\xymatrix
@C=12pt
{
  0 \ar[r] & \Hom_f(A,B) \ar[r]^{\hspace{-40pt}\alpha} & \Hom_c(A,B)\oplus\Hom_d(A,B)
  \ar[r]^{\hspace{40pt}\beta} & \Hom(A,B)
}
\end{equation}
where $\alpha(f)=(f,f)$ is a homeomorphism onto a closed subspace and
$\beta(g,h)=g-h$ is continuous. If $B$ is Tate, then $\beta$ is open
and surjective (see~\cite[Proposition 1.33]{Rojas}). 

\begin{example}
Let $A$ be an infinite dimensional linearly compact space and denote
by $A_d$ the same vector space equipped with the discrete topology.
Then the identity map $A\to A$ belongs to $\Hom_f(A_d,A)$ but not to
$\Hom_\fin(A_d,A)$, so the inclusion $\Hom_\fin(A_d,A)\subset\Hom_f(A_d,A)$ is strict.
\end{example}

We will need the following variation of Lemma~\ref{lem:hom-lim}.

\begin{lemma}\label{lem:hom-lim2}
(a) For each inverse system $(B_i)$ and each $A$ there is a canonical
  topological isomorphism  
$$
  \Hom_c(A,\lim B_i) \cong \lim\Hom_c(A,B_i).
$$
(b) For each direct system of {\em Tate} vector spaces $(A_i)$ and each $B$
  there is a canonical topological isomorphism  
$$
  \Hom_d(\colim A_i,B) \cong \lim\Hom_d(A_i,B).
$$
\end{lemma}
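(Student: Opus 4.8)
The plan is to deduce both statements from the corresponding parts of Lemma~\ref{lem:hom-lim}, by checking that the topological isomorphisms constructed there restrict to the subspaces $\Hom_c$ and $\Hom_d$, and that the topologies these subspaces carry (which differ from the ambient compact-open topology) match under the restriction.

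For (a), I start from the isomorphism $\Hom(A,\lim B_i)\cong\lim\Hom(A,B_i)$ of Lemma~\ref{lem:hom-lim}(a), given by $f\mapsto(g_i\circ f)$ with $g_i:\lim B_i\to B_i$ the projections, and first check it restricts to a bijection between the $\Hom_c$ spaces, i.e.\ that $f$ is compact iff each $g_i\circ f$ is. The forward direction is immediate, since $g_i(\ol{f(A)})$ is linearly compact as a continuous image of a linearly compact set (\cite[(II.27.4)]{Lefschetz-book}), hence closed, so $\ol{(g_i\circ f)(A)}\subset g_i(\ol{f(A)})$ is linearly compact. For the converse, if each $g_i\circ f$ is compact with $K_i:=\ol{(g_i\circ f)(A)}$, then $f(A)\subset(\prod_iK_i)\cap\lim B_i$; the product $\prod_iK_i$ is linearly compact and $\lim B_i$ is a closed subspace of $\prod_iB_i$ (it is cut out by the continuous Hausdorff equalizer conditions $g_{ji}(x_i)=x_j$), so this intersection, and hence $\ol{f(A)}$, is linearly compact.

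To match the topologies in (a), recall that the neighbourhood basis of $0$ in $\Hom_c(A,\lim B_i)$ consists of the subspaces $\Hom_c(A,V)=\{f\mid f(A)\subset V\}$ for $V\subset\lim B_i$ open, and such $V$ has the form $V=\prod_{j\in J}V_j\cap\lim B_i$ with $J\subset I$ finite and $V_j\subset B_j$ open. Since $f(A)\subset V$ is equivalent to $(g_j\circ f)(A)\subset V_j$ for all $j\in J$, the subspace $\Hom_c(A,V)$ corresponds exactly to $\prod_{j\in J}\Hom_c(A,V_j)\cap\lim\Hom_c(A,B_i)$, a basic open subspace of $\lim\Hom_c(A,B_i)$. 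So the topologies agree; note that, unlike in Lemma~\ref{lem:hom-lim}(a), no subtlety about summing linearly compact subsets of $A$ arises here, because the $\Hom_c$-topology constrains only the image.

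For (b), I start from the isomorphism $\Hom(\colim A_i,B)\cong\lim\Hom(A_i,B)$ of Lemma~\ref{lem:hom-lim}(b) (which already uses the Tate hypothesis), given by $g\mapsto(g\circ f_i)$. The restriction to discrete homomorphisms uses that $\colim A_i$ carries the final topology with respect to the $f_i:A_i\to\colim A_i$: a subspace is open iff its preimage under each $f_i$ is, so $\ker g$ is open iff each $f_i^{-1}(\ker g)=\ker(g\circ f_i)$ is, i.e.\ $g$ is discrete iff each $g\circ f_i$ is; conversely a compatible family $(g_i)$ of discrete (hence continuous) homomorphisms induces a continuous $g$ with open kernel by the universal property. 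This gives the bijection $\Hom_d(\colim A_i,B)\cong\lim\Hom_d(A_i,B)$. The topology comparison is where the main work lies and proceeds as in Lemma~\ref{lem:hom-lim}(b): the basis on the left is $\{g\mid g|_K=0\}$ for $K$ linearly compact, and on the right, after reducing to a single index $j$ via directedness and the relations $g_i=g_j\circ f_{ji}$, it is $\{(g_i)\mid g_j|_{K_j}=0\}$ for $K_j\subset A_j$ linearly compact. Passing from a single-factor constraint to the colimit side is immediate, as $g_j|_{K_j}=0$ is equivalent to $g|_{f_j(K_j)}=0$ with $f_j(K_j)$ linearly compact. The reverse direction is the crux and the main obstacle: given $K\subset\colim A_i$ linearly compact, I must realize it as $K=f_j(K_j)$ for some linearly compact $K_j\subset A_j$. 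This is exactly the step furnished by Lemma~\ref{lem:compact-in-colim} together with the Tate assumption on $A_j$ — which allows one to take an open linearly compact $U_j$, so that $U_j+D_j$, and hence $K_j=(U_j+D_j)\cap f_j^{-1}(K)$, is linearly compact — reproducing the argument in the proof of Lemma~\ref{lem:hom-lim}(b). Granting this, $g|_K=0$ is equivalent to $g_j|_{K_j}=0$, the bases correspond, and the topologies match; this is precisely why the Tate hypothesis on the $A_i$ is indispensable.
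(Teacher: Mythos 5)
Your proof is correct and follows essentially the same route as the paper: restrict the isomorphisms of Lemma~\ref{lem:hom-lim} to the subspaces $\Hom_c$ and $\Hom_d$, check membership is preserved in both directions, and match the bases of open subspaces, with the crux of (b) being the realization $K=f_j(K_j)$ via Lemma~\ref{lem:compact-in-colim} and the Tate hypothesis, exactly as in the paper. The only (harmless) variations are in the converse membership checks: in (a) you use linear compactness of $\prod_i K_i$ plus closedness of $\lim B_i$ in $\prod_i B_i$ where the paper invokes $\ol{\im f}=\lim\ol{\im f_i}$ and Lemma~\ref{lem:colim-lim-Tate}(b), and in (b) you use the final-topology characterization of open linear subspaces of a colimit where the paper argues via exactness of $\colim$ and Lemma~\ref{lem:colim-lim-Tate}(a).
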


\begin{proof}
(a) Set $B=\lim B_i$ and recall from the proof of
  Lemma~\ref{lem:hom-lim}(a) the isomorphism 
$$
  \wh g:\Hom(A,B)\stackrel{\cong}\longrightarrow \lim\Hom(A,B_i)
$$
sending $f$ to $(f_i)$ with $f_i=g_i\circ f$. If $f$ is
linearly compact, then so are the $f_i$.\footnote{We have $\overline{\im(g_i\circ f)}=\overline{g_i(f(A))}=\overline{g_i(\overline{f(A)})}=g_i(\overline{f(A)})$, which is linearly compact if $\overline{f(A)}$ is linearly compact.}
Conversely, if the $f_i$ are
compact, then $\ol{\im f} = \lim\ol{\im f_i}$ is linearly compact by
Lemma~\ref{lem:colim-lim-Tate}(b), hence $f$ is compact. Thus $\wh g$
induces a linear bijection
$$
  \wh g:\Hom_c(A,B)\stackrel{\cong}\longrightarrow \lim\Hom_c(A,B_i).
$$
The topologies match because $\lim\Hom_c(A,B_i)$ has the basic open
sets $\prod_{j\in J}\Hom_c(A,V_i)$ with $V_i\subset B_i$ open linear
and $J$ finite, whereas $\Hom_c(A,B)$ has the basic open sets
$\Hom_c(A,V)$ with $V\subset B$ open linear, i.e., $V=\prod_{j\in
  J}V_i$ with $V_i\subset B_i$ open linear and $J$ finite.

(b) Set $A=\colim A_i$ and recall from the proof of
  Lemma~\ref{lem:hom-lim}(b) (here we use that the $A_i$ are Tate) the isomorphism 
$$
  \wh f:\Hom(A,B)\stackrel{\cong}\longrightarrow \lim\Hom(A_i,B). 
$$
sending $g$ to $(g_i)$ with $g_i=g\circ f_i$. If $\ker g$ is open,
then so are the $\ker g_i=f_i^{-1}(\ker g)$. Conversely, suppose that
the $\ker g_i$ are open and consider the short exact sequence of
direct systems
$$
  0 \to \ker g_i \to A_i \to A_i/\ker g_i \to 0.
$$
Since the colimit is an exact functor~\cite[Theorem 2.6.15]{Weibel},
we get an exact sequence
$$
  0 \to \ker g=\colim\ker g_i \to A=\colim A_i \stackrel{p}\to
  \colim(A_i/\ker g_i) \to 0. 
$$
Since $A_i/\ker g_i$ are discrete, so is $\colim(A_i/\ker g_i)$ by
Lemma~\ref{lem:colim-lim-Tate}(a), hence $\ker g=\ker p$ is open.  
Thus $\wh f$ induces a linear bijection
$$
  \wh f:\Hom_d(A,B)\stackrel{\cong}\longrightarrow \lim\Hom_d(A,B_i). 
$$
To see that the topologies match, note that $\lim\Hom_d(A_i,B)$ has the basic open
sets $\prod_{j\in J}\Hom_d(A_i/K_i,B)$ with $K_i\subset A_i$ linearly compact
and $J$ finite (w.l.o.g.~$J=\{j\}$), while $\Hom_d(A,B)$ has the basic open sets
$\Hom_d(A/K,B)$ with $K\subset A$ linearly compact. By the proof of
Lemma~\ref{lem:hom-lim}(b) we have $K=f_j(K_j)$ for a linearly compact subspace
$K_j\subset A_j$, so the topologies match.
\end{proof}

\begin{lemma}
(a) If $B$ is complete, then $\Hom_c(A,B)$ is complete. \\
(b) If $A$ is Tate, then $\Hom_d(A,B)$ is complete. \\
(c) If $A$ is Tate and $B$ is complete, then $\Hom_f(A,B)$ is complete.
\end{lemma}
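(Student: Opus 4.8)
The plan is to realize each of the three $\Hom$-spaces as a limit of \emph{discrete} spaces and then invoke the facts that a discrete space is complete and that $\Top^c$ is closed under limits. Parts (a) and (b) follow this scheme directly, and part (c) reduces to them via the left-exact sequence~\eqref{eq:hom-ses}.

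For (a), completeness of $B$ lets me write $B=\lim_V B/V$ with $V$ running over the open linear subspaces, and Lemma~\ref{lem:hom-lim2}(a) gives a topological isomorphism $\Hom_c(A,B)\cong\lim_V\Hom_c(A,B/V)$. Each $B/V$ is discrete, so $\{0\}\subset B/V$ is open and the basic neighbourhood $\Hom_c(A,\{0\})=\{0\}$ is open; thus every $\Hom_c(A,B/V)$ is discrete, hence complete, and so is the limit. For (b), since $A$ is Tate it is linearly compactly generated by Lemma~\ref{lem:Hom2}(a), i.e.\ $A=\colim_K K$ over its linearly compact subspaces, directed by inclusion. Each $K$ is linearly compact and therefore Tate (being its own c-lattice), so Lemma~\ref{lem:hom-lim2}(b) applies and yields $\Hom_d(A,B)\cong\lim_K\Hom_d(K,B)$. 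For $K$ linearly compact the full space is an admissible subspace in the defining neighbourhood basis, so $\Hom_d(K/K,B)=\{0\}$ is open; hence each $\Hom_d(K,B)$ is discrete and complete, and so is the limit.

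For (c), I would use that in~\eqref{eq:hom-ses} the map $\alpha(f)=(f,f)$ is a homeomorphism of $\Hom_f(A,B)$ onto a closed subspace of $\Hom_c(A,B)\oplus\Hom_d(A,B)$. Under the present hypotheses parts (a) and (b) make both summands complete (using completeness of $B$ and the Tate property of $A$ respectively); since $\Top^c$ is closed under finite direct sums the sum is complete, and a closed subspace of a complete space is complete. Hence $\Hom_f(A,B)$ is complete.

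I expect no serious obstacle here; the one point requiring care is meeting the Tate hypothesis of Lemma~\ref{lem:hom-lim2}(b), which is precisely why in (b) one presents $A$ as the colimit of its linearly compact subspaces rather than via an arbitrary colimit presentation. The remaining verifications—that the local pieces $\Hom_c(A,B/V)$ and $\Hom_d(K,B)$ are discrete—are immediate from the definitions of the topologies on $\Hom_c$ and $\Hom_d$.
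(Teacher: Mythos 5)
Your proof is correct and takes essentially the same route as the paper's: parts (a) and (b) both rest on Lemma~\ref{lem:hom-lim2}(a) resp.\ (b) together with completeness of $B$ resp.\ linearly compact generation of the Tate space $A$ (Lemma~\ref{lem:Hom2}(a), with each $K$ Tate because it is linearly compact), and part (c) uses the closed embedding $\alpha$ from~\eqref{eq:hom-ses} exactly as in the paper. The only cosmetic difference is that the paper verifies completeness by computing the completion via the quotient identifications $\Hom_c(A,B)/\Hom_c(A,V)\cong\Hom_c(A,B/V)$ and $\Hom_d(A,B)/\Hom_d(A/K,B)\cong\Hom_d(K,B)$, whereas you exhibit each space directly as a limit of discrete (hence complete) spaces and invoke closedness of $\Top^c$ under limits --- equivalent arguments.
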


\begin{proof}
(a) For $V\subset B$ linear open we have a short exact sequence $0\to
  \Hom_c(A,V)\to \Hom_c(A,B)\to \Hom_c(A,B/V)\to 0$. Thus 
\begin{align*}
  \lim_V\frac{\Hom_c(A,B)}{\Hom_c(A,V)}
  &\cong \lim_V\Hom_c(A,B/V) \cr
  &\cong \Hom_c(A,\lim_V B/V) \cong \Hom_c(A,B),
\end{align*}
where the second isomorphism follows from Lemma~\ref{lem:hom-lim2}(a)
and the third one from completeness of $B$. 

(b) For $K\subset A$ linearly compact we have a short exact sequence $0\to
  \Hom_d(A/K,B)\to \Hom_d(A,B)\to \Hom_d(K,B)\to 0$. Thus 
\begin{align*}
  \lim_K\frac{\Hom_d(A,B)}{\Hom_d(A/K,B)}
  &\cong \lim_K\Hom_d(K,B) \cr
  &\cong \Hom_d(\colim_KK,B) \cong \Hom_d(A,B),
\end{align*}
where the second isomorphism follows from Lemma~\ref{lem:hom-lim2}(b)
(using that $K$ is linearly compact and thus Tate)
and the third one from compact generation of $A$. 

(c) Completeness of $\Hom_f(A,B)$ holds because by~\eqref{eq:hom-ses}
it is a closed subspace of $\Hom_c(A,B)\oplus\Hom_d(A,B)$, which is
complete by parts (a) and (b). 
\end{proof}

\subsection{Bilinear maps}\label{ss:bilin}

The following discussion of bilinear maps will be related to tensor products in~\S\ref{sec:tensor}.

\begin{lemma}\label{lem:bil-cont}
A bilinear map $\phi:A\times B\to C$ is continuous (with respect to the product topology on $A\times B$) if and only if, for any $W\subset C$ linear open, the following conditions are satisfied: 
\begin{enumerate}[label=(\roman*)]
\item there exist open linear subspaces $U\subset A$ and $V\subset B$
  such that $U\times V\subset \phi^{-1}(W)$;  
\item for each $a\in A$ there exists an open linear subspace $V\subset
  B$ such that $a\times V\subset \phi^{-1}(W)$;
\item for each $b\in B$ there exists an open linear subspace $U\subset
  A$ such that $U\times b\subset \phi^{-1}(W)$.
\end{enumerate} 
\end{lemma}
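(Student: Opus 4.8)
The plan is to translate continuity into the ``linear'' language adapted to the topologies at hand and then read off the three conditions from a single bilinear identity. Since $C$ is linearly topologized and its topology is translation invariant, the cosets $c+W$ with $W\subset C$ an open linear subspace form a neighbourhood basis at each point $c\in C$; likewise, by the description of the product topology recalled above, the sets $(a+U)\times(b+V)$ with $U\subset A$ and $V\subset B$ open linear subspaces form a neighbourhood basis at $(a,b)\in A\times B$. Hence $\phi$ is continuous if and only if for every $(a,b)\in A\times B$ and every open linear subspace $W\subset C$ there exist open linear subspaces $U\subset A$ and $V\subset B$ with $\phi\big((a+U)\times(b+V)\big)\subset \phi(a,b)+W$. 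First I would record the bilinear expansion
$$
  \phi(a+u,b+v)-\phi(a,b)=\phi(a,v)+\phi(u,b)+\phi(u,v),
$$
so that this inclusion becomes the requirement that $\phi(a,v)+\phi(u,b)+\phi(u,v)\in W$ for all $u\in U$ and $v\in V$.

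For the forward implication I would specialize this reformulated criterion. Applying it at $(0,0)$ yields open $U,V$ with $\phi(u,v)\in W$ for all $u\in U$, $v\in V$, which is~(i). Applying it at $(a,0)$ for a fixed $a$ and then restricting to $u=0$, where the terms involving $u$ vanish, gives an open $V$ with $\phi(a,v)\in W$ for all $v\in V$, which is~(ii); symmetrically, applying it at $(0,b)$ and restricting to $v=0$ gives~(iii).

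For the converse I would fix $(a,b)$ and an open linear subspace $W$, then apply~(i) to obtain $U_0,V_0$ with $\phi(U_0\times V_0)\subset W$, apply~(ii) to $a$ to obtain $V_1$ with $\phi(a,V_1)\subset W$, and apply~(iii) to $b$ to obtain $U_1$ with $\phi(U_1,b)\subset W$. Setting $U:=U_0\cap U_1$ and $V:=V_0\cap V_1$, which are again open linear subspaces as finite intersections, each of the three summands $\phi(a,v)$, $\phi(u,b)$, $\phi(u,v)$ lies in $W$ whenever $u\in U$ and $v\in V$; since $W$ is a linear subspace, and hence closed under addition, their sum lies in $W$ as well, so the reformulated continuity criterion holds at $(a,b)$.

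I do not expect a serious obstacle here: the only point requiring a little care is the initial reduction to open linear subspaces $W$ and to product-form neighbourhoods, which rests on translation invariance of the topologies and on the explicit description of the product topology. The substance of the argument is the elementary observation that the bilinear expansion decouples the increment $\phi(a+u,b+v)-\phi(a,b)$ into exactly the three pieces governed by~(i)--(iii), together with the additive closure of $W$ which lets one recombine them.
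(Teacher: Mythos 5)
Your proof is correct and takes essentially the same route as the paper: both reduce continuity, via translation invariance and the product topology, to the requirement that $\phi(a,v)+\phi(u,b)+\phi(u,v)\in W$ for all $u\in U$, $v\in V$, and then obtain (i)--(iii) by the specializations $a=b=0$, $u=0$, and $v=0$ respectively. The only difference is that the paper declares the converse ``immediate'' while you spell it out (intersecting the subspaces from (i)--(iii) and using that the linear subspace $W$ is closed under addition), which is exactly the intended argument.
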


In particular, continuity of $\phi$ is a stronger condition than just requiring $\phi^{-1}(W)$ to be open for any linear open subspace $W\subset C$ (which would amount to the first among the three conditions above). This difference between the continuity condition for bilinear maps and the one for linear maps arises because the fibers of a bilinear map are generally not affine subspaces of $A\times B$. 

\begin{proof}[Proof of Lemma~\ref{lem:bil-cont}]
Continuity of $\phi$ is equivalent to $\phi^{-1}(c+W)$ being open for
any $c\in C$ and $W\subset C$ linear open. This means that for every
$(a,b)\in A\times B$ and $W\subset C$ linear open there exist
$U\subset A$, $V\subset B$ linear open such that
$(a+U)\times(b+V)\subset \phi^{-1}(c+W)$ with $c=\phi(a,b)$, i.e., 
for any $u\in U$ and $v\in V$ we have
$\phi(a,v)+\phi(u,b)+\phi(u,v)\in W$.
Taking $a=b=0$, $u=0$ and $v=0$, respectively, yields conditions (i-iii).
The converse is immediate.
\end{proof}

Continuity of a bilinear map is a strong constraint. The following result gives an example.

\begin{proposition}
The canonical evaluation map 
$$
\ev:A\times A^*\to \bk,\qquad (a,f)\mapsto f(a)
$$ 
is continuous as a bilinear map if and only if $A$ is Tate. 
\end{proposition}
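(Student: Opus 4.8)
The plan is to apply the continuity criterion for bilinear maps, Lemma~\ref{lem:bil-cont}, to $\phi=\ev$ with target $C=\bk$. Since $\bk$ is discrete, the only open subspace to test is $W=\{0\}$, so the three conditions read: (i) there exist open subspaces $U\subseteq A$ and $V\subseteq A^*$ with $f(u)=0$ for all $u\in U$, $f\in V$; (ii) for each $a\in A$ there is an open $V\subseteq A^*$ with $f(a)=0$ for all $f\in V$; (iii) for each $f\in A^*$ there is an open $U\subseteq A$ with $f(u)=0$ for all $u\in U$. I would first observe that (ii) and (iii) hold for \emph{every} linearly topologized $A$, so that the entire content of the statement resides in (i). Indeed, (iii) holds with $U=\ker f$, which is open because $f$ is continuous and $\bk$ is discrete; and (ii) holds with $V=(\bk a)^\perp$, which is a basic open subspace of $A^*$ once one notes that the line $\bk a$ is finite dimensional and carries the discrete topology (Hausdorffness separates $a$ from $0$), hence is linearly compact.

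For the direction ``$A$ Tate $\Rightarrow\ev$ continuous'', I would verify (i) directly. Choosing a c-lattice $L\subseteq A$ (open and linearly compact, which exists since $A$ is Tate), I set $U=L$ and $V=L^\perp$. Then $V$ is open in $A^*$ by the definition of the dual topology in \S\ref{sec:Tate}, and $f(u)=0$ for all $u\in L$ and $f\in L^\perp$. Together with (ii) and (iii), this gives continuity of $\ev$ via Lemma~\ref{lem:bil-cont}.

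For the converse, suppose $\ev$ is continuous, so (i) holds: there are open $U\subseteq A$ and open $V\subseteq A^*$ annihilating each other. Being a neighbourhood of $0$ in $A^*$, $V$ contains a basic open subspace $L^\perp$ with $L\subseteq A$ linearly compact; hence every $f\in L^\perp$ vanishes on $U$, i.e. $U\subseteq\bigcap_{f\in L^\perp}\ker f$. The key step is the bipolar identity $\bigcap_{f\in L^\perp}\ker f=L$. This rests on the separation statement: for a closed subspace $L$ and a point $a\notin L$ there is a continuous functional vanishing on $L$ and nonzero at $a$. I would prove it by choosing an open subspace $U_0$ with $a\notin L+U_0$ (possible since $L$ is closed), passing to the discrete quotient $A/U_0$, extending a linear functional that separates the image of $a$ from the image of $L$ (extension of functionals is unobstructed on a discrete, i.e. topologically arbitrary, vector space), and pulling it back along the continuous projection. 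Since a linearly compact $L$ is closed, this applies and yields $U\subseteq L$; as $L$ contains the open subspace $U$ it is itself open, and it is linearly compact, so $A$ has an open linearly compact subspace and is therefore Tate.

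I expect the main obstacle to be the bipolar identity (equivalently, the separation/Hahn--Banach step) in the converse: everything else is a direct unwinding of Lemma~\ref{lem:bil-cont}, whereas this step is what genuinely forces the existence of an open linearly compact subspace. One must be careful that the separation argument uses only closedness of $L$ (through extension of functionals on the discrete quotient $A/U_0$) and does not tacitly assume that $A$ is already Tate.
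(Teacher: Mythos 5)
Your proof is correct and follows essentially the same route as the paper: both directions reduce to the bilinear continuity criterion of Lemma~\ref{lem:bil-cont}, with a c-lattice and its annihilator handling the Tate direction, and condition (i) plus the bipolar identity $\bigcap_{f\in L^\perp}\ker f=L$ forcing an open linearly compact subspace in the converse. The only cosmetic difference is that where the paper cites its Hahn--Banach Corollary~\ref{cor:general-Hahn-Banach} to pass from $K^\perp\subset U^\perp$ to $U\subset K$, you re-prove the needed separation statement inline via the discrete quotient $A/U_0$ --- which is precisely the argument used to prove that corollary in the Appendix, so the substance is identical.
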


\begin{proof}
A basis of open neighborhoods of $0$ for $A^*$ is given by $K^\perp$ with $K\subset A$ linearly compact. By Lemma~\ref{lem:bil-cont} continuity of $\ev$ is equivalent to the existence of $U$ open and $K$ linearly compact such that $U\times K^\perp\subset\ev^{-1}(0)$, and the existence, for any $a\in A$, $f\in A^*$, of $U\subset A$ linear open and $K\subset A$ linearly compact such that $a\times K^\perp, U \times f \subset \ev^{-1}(0)$. 

Assume $\ev$ continuous. The condition $U\times K^\perp\subset
\ev^{-1}(0)$ is equivalent to $K^\perp\subset U^\perp$, and this implies $U\subset K$ by the Hahn-Banach Corollary~\ref{cor:general-Hahn-Banach}. Therefore $K$ is open, i.e., $A$ is Tate.

Conversely, assume that $A$ is Tate. The first condition for
continuity is satisfied with $U$ a c-lattice and $K=U$. Given $a\in
A$, $f\in A^*$, we can choose $K={\rm span}\{a\}$ and $U=\ker f$. Then $a\times K^\perp, U\times f\subset \ev^{-1}(0)$, and therefore $\ev$ is continuous. 
\end{proof}

\begin{remark}
The evaluation map $\ev:A\times A^*\to \bk$ is continuous with respect to each of its variables for any topological vector space $A$. 
\end{remark}

We denote by $B(A\times B,C)$
the space of continuous bilinear maps $A\times B\to C$ equipped with
the {\em compact-open topology}, i.e., the linear topology generated
by the open linear subspaces 
$$
  S_{K\times L,W} = \{f\in B(A\times B,C)\mid f(K\times L)\subset W\}
$$
for $K\subset A$, $L\subset B$ linearly compact and $W\subset C$ open. 

\begin{theorem}[Adjunction\footnote{See~\cite[Proposition A.16]{Hatcher}
for a topological counterpart of this result.}]\label{thm:adjunction}
If $B$ is Tate, then we have a canonical topological isomorphism
$$
  B(A\times B,C) \cong \Hom(A,\Hom(B,C)).
$$
\end{theorem}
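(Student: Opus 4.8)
The plan is to exhibit the standard algebraic adjunction at the level of underlying vector spaces and then check that it restricts to the continuous objects on both sides and is a homeomorphism for the compact-open topologies. Concretely, to a bilinear $\phi\colon A\times B\to C$ I associate $\Phi\colon A\to\Hom(B,C)$, $\Phi(a)=\phi(a,-)$, and conversely to $F\colon A\to\Hom(B,C)$ I associate $\phi_F(a,b)=F(a)(b)$. These two assignments are mutually inverse as maps of plain vector spaces by the usual tensor-hom adjunction, so the entire content is topological and splits into three tasks: (1) if $\phi$ is continuous then $\Phi$ lands in $\Hom(B,C)$ and is continuous; (2) if $F$ is continuous then $\phi_F$ is continuous, and here the Tate hypothesis on $B$ will be used; (3) the bijection $\phi\leftrightarrow\Phi$ matches the two compact-open topologies.

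For task (1), that $\Phi(a)=\phi(a,-)$ is a continuous map $B\to C$ is exactly condition (ii) of Lemma~\ref{lem:bil-cont}, so $\Phi(a)\in\Hom(B,C)$. The substantial point is continuity of $\Phi$, which I expect to be \emph{the main obstacle}. Since $\Phi$ is linear and $\Hom(B,C)$ carries the compact-open topology with basic open subspaces $S_{L,W}$ ($L\subset B$ linearly compact, $W\subset C$ open), continuity amounts to a \emph{tube lemma}: for each such $L,W$ I must produce an open linear $U\subset A$ with $\phi(U\times L)\subset W$, for then $U\subset\Phi^{-1}(S_{L,W})$. I would prove this using linear compactness of $L$ rather than any topological compactness (which fails over an infinite field). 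Namely, condition (i) of Lemma~\ref{lem:bil-cont} gives open linear $U_0\subset A$, $V_0\subset B$ with $\phi(U_0\times V_0)\subset W$; since $L$ is linearly compact, $V_0\cap L$ is open in $L$ and $L/(V_0\cap L)$ is finite dimensional, so I may write $L=(V_0\cap L)+{\rm span}\{b_1,\dots,b_n\}$. Condition (iii) supplies open linear $U_i\subset A$ with $\phi(U_i,b_i)\subset W$, and then $U:=U_0\cap U_1\cap\cdots\cap U_n$ works: for $a\in U$ and $b=v+\sum_i\lambda_i b_i\in L$ one has $\phi(a,b)=\phi(a,v)+\sum_i\lambda_i\phi(a,b_i)\in W$ because $W$ is a linear subspace.

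For task (2), given continuous $F$ I verify the three conditions of Lemma~\ref{lem:bil-cont} for $\phi_F$. Condition (ii) is automatic since $F(a)\in\Hom(B,C)$ is continuous, so $V:=F(a)^{-1}(W)$ is open. For (iii), the evaluation $\ev_b\colon\Hom(B,C)\to C$ is continuous because $\ev_b^{-1}(W)=S_{{\rm span}\{b\},W}$ is open, whence $U:=F^{-1}(\ev_b^{-1}(W))$ is open and $\phi_F(U,b)\subset W$. Condition (i) is where the hypothesis that $B$ is Tate enters: choosing a c-lattice $L\subset B$ (open and linearly compact), the subspace $S_{L,W}\subset\Hom(B,C)$ is open, so $U:=F^{-1}(S_{L,W})$ is open in $A$ and $V:=L$ is open in $B$ with $\phi_F(U\times V)=F(U)(L)\subset W$. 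Finally, for task (3) the topologies match on the nose: unwinding definitions, $\Phi\in S_{K,S_{L,W}}$ iff $\Phi(K)(L)\subset W$ iff $\phi(K\times L)\subset W$ iff $\phi\in S_{K\times L,W}$, and since the $S_{K,S_{L,W}}$ form a neighborhood basis of $0$ in $\Hom(A,\Hom(B,C))$ while the $S_{K\times L,W}$ form one in $B(A\times B,C)$, the linear bijection $\phi\leftrightarrow\Phi$ is a topological isomorphism.
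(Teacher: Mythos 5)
Your proposal is correct and follows essentially the same route as the paper: the algebraic tensor-hom bijection, a tube-lemma argument using linear compactness of $L$ (splitting $L$ as $(V_0\cap L)$ plus a finite-dimensional span) for continuity of $\Phi$, the Tate hypothesis on $B$ to make $S_{L,W}$ pulled back through $F$ open in the converse direction, and the observation $S_{K,S_{L,W}}\leftrightarrow S_{K\times L,W}$ to match topologies. The only (cosmetic) difference is that you verify the converse by checking the three conditions of Lemma~\ref{lem:bil-cont} separately, with a single fixed c-lattice in condition (i), whereas the paper verifies the combined neighborhood containment~\eqref{eq:open} directly, choosing the open linearly compact $V$ inside $\wh f(a)^{-1}(W)$.
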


\begin{proof}
We have a canonical vector space isomorphism between bilinear maps
$f:A\times B\to C$ and linear maps $\wh f:A\to\Hom_{\rm alg}(B,C)$
related by the formula 
$$
  f(a,b) = \wh f(a)(b). 
$$
It remains to verify that the notions of continuity agree under this
isomorphism. 

(1) If $f:A\times B\to C$ is continuous, then for each $a\in A$ the
map $f_a=\wh f(a):B\to C$ is continuous.
(The map $i_a:B\to C$, $b\mapsto(a,b)$ is
continuous, hence for each open $W\subset C$ the set
$f_a^{-1}(W)=i_a^{-1}(f^{-1}(W))\subset A$ is open.)
Thus the above isomorphism gives a map
$$
  B(A\times B,C) \to \Hom_{\rm alg}(A,\Hom(B,C)).
$$
(2) For $f\in B(A\times B,C)$ the map $\wh f:A\to\Hom(B,C)$ is continuous.
For this, we need to show that for all $L\subset B$ linearly compact
and $W\subset C$ linear open the linear subspace
$$
  \wh f^{-1}(S_{L,W}) = \{a\in A\mid f(a\times L)\subset W\}
$$
is open. Since $f^{-1}(W)\subset A\times B$ is open with respect
to the product topology, there are linear open subsets $U\subset A$
and $V\subset B$ such that $U\times V\subset f^{-1}(W)$.
Since $L$ is linearly compact, $L=(L\cap V)\oplus D$ for
a finite dimensional subspace $D={\rm span}\{b_1,\dots,b_n\}\subset L$. 
For each $i=1,\dots,n$ the map $a\mapsto f(a,b_i)$ is continuous, so the set
$U_i:=\{a\in A\mid f(a,b_i)\in W\}$ is open. It follows that $\wh
f^{-1}(S_{L,W})$ contains the open linear set $U\cap U_1\cap\cdots\cap
U_n$ and is therefore open. 

(3) Conversely, if $\wh f\in\Hom(A,\Hom(B,C))$ then $f\in B(A\times B,C)$. 
For this, given $(a,b)\in A\times B$ with $f(a,b)=c$ and an open
linear subset $W\subset C$ we need to show that $f^{-1}(c+W)$ is an
open neighbourhood of $(a,b)$. So we need to find open linear subsets
$U\subset A$ and $V\subset B$ such that $(a+U)\times(b+V)\subset
f^{-1}(c+W)$, or equivalently,
\begin{equation}\label{eq:open}
  a\times V+U\times b+U\times V\subset f^{-1}(W).
\end{equation}
Since $\wh f(a)\in\Hom(B,C)$, the linear subspace
$$
  V_1:=\wh f(a)^{-1}(W) = \{b'\in B\mid f(a,b')\in W\}
$$
is open. Since $B$ is Tate, there exists an open linearly compact 
subspace $V\subset V_1$, so that $f(a\times V)\subset W$. 
Since $\wh f$ is continuous, the linear subspace
$$
  U_1 := \wh f^{-1}(S_{V,W}) = \{a'\in A\mid f(a'\times V)\subset W\}
$$
is open and satisfies $f(U_1\times V)\subset W$. 
Since the evaluation map $\ev_b:\Hom(B,C)\to C$, $g\mapsto g(b)$ is
continuous, so is the composition $\ev_b\circ\hat f:A\to C$, $a'\mapsto f(a',b)$.
Thus the linear subspace
$$
  U_0 := (\ev_b\circ\hat f)^{-1}(W) = \{a'\in A\mid f(a',b)\in W\}
$$
is open and satisfies $f(U_0\times b)\subset W$. It follows
that~\eqref{eq:open} holds for the open sets $U:=U_0\cap U_1$ and $V$. 

(4) From (2) and (3) we have a linear bijection
$$
  B(A\times B,C) \cong \Hom(A,\Hom(B,C)).
$$
Since the basic linear open sets in $\Hom(A,\Hom(B,C))$ are given for
$K\subset A$, $L\subset B$ linearly compact and $W\subset C$ linear
open by
$$
  \{\wh f\mid \wh f(K)\subset S_{L,W}\} = \{\wh f\mid f(K\times
  L)\subset W\} = \{\wh f\mid f\in S_{K\times L,W}\},
$$
the topologies match and the isomorphism is topological. 
\end{proof}

\section{Tensor products}\label{sec:tensor}

\subsection{Beilinson's three topologies}

Let $A,B$ be linearly topologized vector spaces over a discrete field $\bk$. Beilinson describes in~\cite[\S1.1]{Beilinson} three topologies on the algebraic tensor product $A\otimes B$. A recent and very useful reference is Positselski~\cite{Positselski}. These topologies are also mentioned, albeit with the wrong definition for the $*$ topology and the $^\leftarrow$ topology, in Beilinson-Drinfeld~\cite[\S2.7.7]{BD04}. See also~\cite{Positselski-slides}.  

\begin{definition}\label{def:*top-alex}
A linear subspace $Q\subset A\otimes B$ is open in the {\em $*$
  topology} iff it satisfies the following conditions:
\begin{enumerate}[label=(\roman*)]
\item there exist open linear subspaces $U\subset A$ and $V\subset B$
  such that $U\otimes V\subset Q$;  
\item for each $a\in A$ there exists an open linear subspace $V\subset
  B$ such that $a\otimes V\subset Q$;
\item for each $b\in B$ there exists an open linear subspace $U\subset
  A$ such that $U\otimes b\subset Q$.
\end{enumerate}
\end{definition}

\begin{definition}\label{def:<-top}
A linear subspace $Q\subset A\otimes B$ is open in the {\em $^\leftarrow$
  topology} iff it satisfies the following conditions:
\begin{enumerate}[label=(\roman*)]
\item there exist open linear subspaces $U\subset A$ and $V\subset B$
  such that $U\otimes V\subset Q$;  
\item for each $a\in A$ there exists an open linear subspace $V\subset
  B$ such that $a\otimes V\subset Q$.
\end{enumerate}
\end{definition}

\begin{definition}\label{def:!top}
A linear subspace $Q\subset A\otimes B$ is open in the {\em $!$ topology} iff there exists open linear subspaces $U\subset A$ and $V\subset B$ such that $U\otimes B + A\otimes V\subset Q$.
\end{definition}

We denote $A\otimes^* B$, $A\otimes^\leftarrow B$, $A\otimes^!B$ the tensor product $A\otimes B$ endowed with the respective topologies and $A\hatotimes^*B$, $A\hatotimes^\leftarrow B$, $A\hatotimes^!B$ their completions.\footnote{Here we use the notation of Positselski~\cite{Positselski}.} 

Among the three topologies $*$, $^\leftarrow$, $!$ on $A\otimes B$, the $!$ topology is the coarsest, the $*$ topology is the finest, and the $^\leftarrow$ topology is in between.\footnote{The $^\leftarrow$ topology will only serve as an intermediate between the $*$ and $!$ topologies and will not play any central role, unlike the other two. Note also that the $^\leftarrow$ topology is not symmetric, hence the notation $A\otimes^\rightarrow B=B\otimes^\leftarrow A$ from~\cite[\S1.1]{Beilinson}.} 
In particular, the identity map induces continuous linear maps 
$$
A\otimes^* B\to A\otimes^\leftarrow B\to A\otimes^! B,
$$
and also continuous linear maps  
$$
A\hatotimes^* B\to A\hatotimes^\leftarrow B\to A\hatotimes^! B.
$$

The following lemma is immediate from the definitions. 

\begin{lemma}[{\cite[Lemma~12.9]{Positselski}}]\label{lem:dense}
If $K\subset A$ and $L\subset B$ are dense linear subspaces, then
$K\otimes L\subset A\otimes B$ is dense in $A\otimes^* B$,
$A\otimes^\leftarrow B$, and $A\otimes^! B$. As a consequence
$K\hatotimes^* L=A\hatotimes^*B$, and similarly for
$\hatotimes^\leftarrow$ and $\hatotimes^!$. \qed 
\end{lemma}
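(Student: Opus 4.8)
The plan is to prove density in the finest of the three topologies, namely the $*$ topology, from which density in the $^\leftarrow$ and $!$ topologies follows for free: a subspace meeting the density criterion for the finest topology a fortiori meets it for any coarser one, since passing to a coarser topology only removes open subspaces $Q$ from the requirement $H+Q=V$. Recall from the proof of Lemma~\ref{lem:completion-dense} that a linear subspace $H$ of a linearly topologized vector space is dense precisely when $H+Q$ equals the whole space for every open linear subspace $Q$. So I must show that $(K\otimes L)+Q = A\otimes B$ for every $Q$ open in the $*$ topology. As both sides are linear subspaces and $A\otimes B$ is spanned by elementary tensors, it suffices to show $a\otimes b\in(K\otimes L)+Q$ for each $a\in A$, $b\in B$.

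The key computation uses the three defining conditions of the $*$ topology (Definition~\ref{def:*top-alex}) together with density of $K$ in $A$ and of $L$ in $B$, in a carefully ordered sequence of choices. Fix $a\otimes b$ and a $*$-open $Q$. By condition (iii) there is an open $U\subset A$ with $U\otimes b\subset Q$; since $K$ is dense, $K+U=A$, so I may choose $k\in K$ with $a-k\in U$, whence $(a-k)\otimes b\in Q$. With $k$ now fixed, condition (ii) applied to the element $k$ yields an open $V\subset B$ with $k\otimes V\subset Q$; since $L$ is dense, $L+V=B$, so I may choose $l\in L$ with $b-l\in V$, whence $k\otimes(b-l)\in Q$. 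The telescoping identity
$$
  a\otimes b - k\otimes l = (a-k)\otimes b + k\otimes(b-l)
$$
then shows $a\otimes b - k\otimes l\in Q$, i.e. $a\otimes b\in k\otimes l + Q\subset (K\otimes L)+Q$. This establishes density in the $*$ topology, and hence in all three.

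For the consequence, I would invoke Lemma~\ref{lem:completion-dense}: since $K\otimes L$ is dense in $A\otimes^* B$, the canonical map on completions $\wh{K\otimes L}\to\wh{A\otimes^* B}=A\hatotimes^* B$ is an isomorphism, and likewise for $^\leftarrow$ and $!$. The one point requiring a small check is that the completion $K\hatotimes^* L$, defined via the intrinsic $*$ topology built from the subspace topologies of $K\subset A$ and $L\subset B$, agrees with the completion of $K\otimes L$ endowed with the topology induced from $A\otimes^* B$; one inclusion of open subspaces is immediate from the definitions (intersecting each generating set $U\otimes V$, $a\otimes V$, $U\otimes b$ with $K\otimes L$), and this is the only place where the comparison of the two topologies enters.

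I expect the main obstacle to be precisely this ordering of choices in the density argument: because the ``fibers'' of the tensor pairing are not affine (the same phenomenon noted for bilinear maps after Lemma~\ref{lem:bil-cont}), one cannot approximate $a$ by $k$ and $b$ by $l$ independently. The resolution is that condition (iii) fixes $U$ using only $b$, allowing $k$ to be chosen first, after which condition (ii) supplies a $V$ tailored to the now-determined $k$. The non-symmetry of this procedure, and the fact that condition (iii) is unavailable in the $^\leftarrow$ and $!$ topologies, is exactly why it is cleanest to argue in the finest topology and let density descend.
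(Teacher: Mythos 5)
Your density argument is correct, and it is essentially the computation the paper leaves implicit: the paper offers no written proof at all, declaring the lemma ``immediate from the definitions'' and citing Positselski. Your ordering of choices --- condition (iii) of Definition~\ref{def:*top-alex} to fix $U$ from $b$ alone, then choose $k$, then condition (ii) at the now-determined $k$ --- is the right way around, the telescoping identity is valid, and density in the finest topology does descend to the two coarser ones, since the criterion $H+Q=V$ from the proof of Lemma~\ref{lem:completion-dense} only has to be checked against fewer open subspaces.

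The genuine gap is in the consequence. You correctly reduce $K\hatotimes^*L=A\hatotimes^*B$ to comparing the \emph{intrinsic} $*$ topology on $K\otimes L$ (built from the subspace topologies of $K$ and $L$) with the topology \emph{induced} from $A\otimes^*B$, but you prove only the easy inclusion: that the trace $Q\cap(K\otimes L)$ of an ambient $*$-open $Q$ is intrinsically open. That shows the intrinsic topology is \emph{finer}, which yields only a canonical continuous map $K\hatotimes^*L\to A\hatotimes^*B$ --- and by the paper's own Example~\ref{example:contlinbij} a continuous bijection from a finer to a coarser topology need not induce an isomorphism of completions, so this direction alone proves nothing. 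The missing inclusion --- that every intrinsically open subspace $Q'\subset K\otimes L$ contains $Q\cap(K\otimes L)$ for some $*$-open $Q\subset A\otimes B$ --- is the real content and does \emph{not} follow by intersecting generating sets: condition (ii) for an ambient open must hold for \emph{every} $a\in A$, whereas the intrinsic data controls only $k\in K$, and transporting it across an algebraic complement of $K$ fails because projections onto dense subspaces are not continuous (the same non-affineness phenomenon you cite works against you here). Two repairs: for $\otimes^!$ the comparison is an honest computation, $(U\otimes B+A\otimes V)\cap(K\otimes L)=(U\cap K)\otimes L+K\otimes(V\cap L)$, using $K/(K\cap U)\cong A/U$ by density; for $\otimes^*$ one can bypass the topology comparison entirely via the universal property of Proposition~\ref{prop:*topology-universal}: show that restriction $B(A\times B,C)\to B(K\times L,C)$ is bijective for every complete $C$ (injectivity by density and Hausdorffness, surjectivity by extending along Cauchy nets, using that open linear subspaces of $C$ are closed by Lemma~\ref{lem:open} and the three continuity conditions of Lemma~\ref{lem:bil-cont}), and then conclude $K\hatotimes^*L\cong A\hatotimes^*B$ by a Yoneda argument in the category of complete spaces. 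As written, your ``small check'' is exactly where the lemma's difficulty is concentrated, and it is left open.
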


As a consequence, we get the following commutativity and associativity properties. 

\begin{proposition} \label{prop:comm-ass}
For linearly topologized vector spaces $A,B,C$ we have canonical isomorphisms
\begin{gather*}
  A\hatotimes^*B\simeq B\hatotimes^*A, \qquad
  (A\hatotimes^*B)\hatotimes^*C \simeq A\hatotimes^*(B\hatotimes^*C), \cr
  A\hatotimes^!B\simeq B\hatotimes^!A, \qquad
  (A\hatotimes^!B)\hatotimes^!C \simeq A\hatotimes^!(B\hatotimes^!C).
\end{gather*}
Moreover, the identity induces a continuous linear map
$$
   A\hatotimes^*(B\hatotimes^!C) \to (A\hatotimes^*B)\hatotimes^!C.
$$
\end{proposition}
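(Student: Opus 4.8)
The plan is to reduce every assertion to a statement about the single algebraic tensor product $A\otimes B\otimes C$ (respectively $A\otimes B$), which by Lemma~\ref{lem:dense} sits densely in each iterated completion, and then invoke Lemma~\ref{lem:completion-dense}. Commutativity is the easy case: the linear swap $a\otimes b\mapsto b\otimes a$ carries the conditions of Definition~\ref{def:*top-alex} for $(A,B)$ to those for $(B,A)$ — condition (i) is visibly symmetric, while (ii) and (iii) are interchanged — so it is a homeomorphism $A\otimes^* B\to B\otimes^* A$; the same holds for the manifestly symmetric single condition of Definition~\ref{def:!top}. Passing to completions yields $A\hatotimes^*B\simeq B\hatotimes^*A$ and $A\hatotimes^!B\simeq B\hatotimes^!A$. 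For associativity I would first record, using Lemma~\ref{lem:dense} applied to the dense inclusions $A\otimes B\hookrightarrow A\hatotimes^*B$ and $B\otimes C\hookrightarrow B\hatotimes^*C$ (and likewise for $!$), that $(A\hatotimes^*B)\hatotimes^*C$ and $A\hatotimes^*(B\hatotimes^*C)$ are both completions of $A\otimes B\otimes C$, the first for the topology $\tau_L$ it inherits from $(A\otimes^*B)\otimes^*C$ and the second for the topology $\tau_R$ it inherits from $A\otimes^*(B\otimes^*C)$. By Lemma~\ref{lem:completion-dense} it then suffices to compare these topologies on $A\otimes B\otimes C$.

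The $!$-case is clean. Unwinding Definition~\ref{def:!top} twice and using that the subspaces $U\otimes B+A\otimes V$ form a neighbourhood basis of $0$ in $A\otimes^!B$, one checks that \emph{both} bracketings have the common neighbourhood basis of $0$ given by the subspaces $U\otimes B\otimes C + A\otimes V\otimes C + A\otimes B\otimes W$, for $U\subset A$, $V\subset B$, $W\subset C$ open. Since this family is symmetric in the three factors, $\tau_L^!=\tau_R^!$ and the completions coincide.

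The $*$-case is the heart of the matter. Here I would introduce the intrinsic triple-$*$ topology on $A\otimes B\otimes C$ — the multi-index analogue of Definition~\ref{def:*top-alex}, declaring $Q$ open when it contains a box $U\otimes V\otimes W$ and, for every evaluation of one or two of the variables at fixed vectors, the corresponding partially-open set — and try to identify both $\tau_L^*$ and $\tau_R^*$ with it. The partial-evaluation conditions transfer in both directions by the same bookkeeping as in the proof of the Adjunction Theorem~\ref{thm:adjunction}, iterating conditions (i)--(iii) of the inner and outer $*$-topologies. The main obstacle is the box condition (i): an open subspace of $(A\otimes^*B)\otimes^*C$ must contain $\tilde U\otimes W$ for a \emph{single} $*$-open $\tilde U\subset A\otimes B$ and a \emph{single} open $W\subset C$, and reproducing such a uniform box from the data of the other bracketing (where the analogous uniformity sits on the $A$-factor) is exactly where the argument is delicate — unlike the $!$-topology, the $*$-open subspaces of $A\otimes B$ admit no simple generating family. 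I expect this step to require a careful cofinality argument at the level of neighbourhood bases, showing that the two systems of open subspaces, though not identical as topologies, are mutually coinitial and hence define the same completion.

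Finally, for the comparison map $A\hatotimes^*(B\hatotimes^!C)\to(A\hatotimes^*B)\hatotimes^!C$ it suffices, the target being complete and $A\otimes B\otimes C$ dense in both, to check that the identity of $A\otimes B\otimes C$ is continuous from the source topology (the $*$-topology with factors $A$ and $B\otimes^!C$) to the target topology (the $!$-topology with factors $A\otimes^*B$ and $C$). Concretely, a basic target neighbourhood $\tilde U\otimes C + (A\otimes B)\otimes W$, with $\tilde U\subset A\otimes B$ being $*$-open and $W\subset C$ open, is shown to satisfy the three conditions of Definition~\ref{def:*top-alex} in the source by testing against the $!$-open subspaces $V\otimes C + B\otimes W\subset B\otimes C$: conditions (i) and (ii) follow from the box and single-variable conditions defining $\tilde U$, while condition (iii) follows from the remaining condition on $\tilde U$ together with a finite intersection over the tensor components of a given $y\in B\otimes C$. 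The resulting continuous map extends to the completions, giving the asserted arrow; only this one direction is continuous, reflecting the asymmetry between the finer $*$- and the coarser $!$-topology.
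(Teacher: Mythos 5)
Your commutativity argument, your identification of the common neighbourhood basis $U\otimes B\otimes C+A\otimes V\otimes C+A\otimes B\otimes W$ for the two $!$-bracketings, and your treatment of the mixed comparison map are all correct. The last of these is in substance identical to the paper's proof, which likewise reduces via Lemma~\ref{lem:dense} to showing that the identity $A\otimes^*(B\otimes^!C)\to(A\otimes^*B)\otimes^!C$ is continuous on the algebraic triple tensor product, and checks that the four conditions characterizing the open subspaces of the target imply the three conditions characterizing those of the source; your ``finite intersection over the tensor components of $y$'' is exactly the step the paper uses for the per-$y$ condition. Note that the paper proves \emph{only} this mixed assertion and dismisses the four displayed isomorphisms as obvious, so on commutativity and $!$-associativity you actually supply more detail than the source.

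The genuine gap is precisely where you stopped: $*$-associativity is announced but not proved, and neither of your two fallback ideas can close it. First, identifying both bracketings with an intrinsic triple-$*$ topology fails at the uniformity you yourself flag, and it fails in \emph{both} comparisons: unwinding Definition~\ref{def:*top-alex} twice, a subspace $Q$ open for $(A\otimes^*B)\otimes^*C$ is characterized by the existence of a \emph{single} open $W_0\subset C$ for which $\{x\in A\otimes B:\ x\otimes W_0\subset Q\}$ is $*$-open, whereas the trilinear topology only provides, at each point $(a,0,0)$, an inclusion $a\otimes V_a\otimes W_a\subset Q$ with $W_a$ depending on $a$; symmetrically, passing to the other bracketing requires one open $U\subset A$ with $U\otimes b\otimes W_b\subset Q$ for \emph{every} $b$ and $U\otimes V_c\otimes c\subset Q$ for \emph{every} $c$, while the first bracketing only yields subspaces $U_b$, $U_c$ depending on $b$ and $c$. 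Second, the proposed rescue --- ``though not identical as topologies, mutually coinitial, hence the same completion'' --- is self-contradictory: for linear topologies given by filters of open linear subspaces, mutual coinitiality of the neighbourhood filters of $0$ \emph{is} equality of the topologies; and conversely, since the open subspaces of a completion $\widehat{X}$ are in natural bijection with those of $X$, the identity of $A\otimes B\otimes C$ induces a topological isomorphism of the two completions only if the two iterated $*$-topologies coincide on the nose. So there is no weaker statement to retreat to: one must prove that equality (i.e.\ carry out the uniformity transfer above), and your proposal does not do so. To be fair, the paper offers no argument here either --- its proof consists of the single sentence that everything except the mixed map is obvious --- so the difficulty you sensed is real; but as written, your proof of $(A\hatotimes^*B)\hatotimes^*C\simeq A\hatotimes^*(B\hatotimes^*C)$ is missing, not merely deferred.
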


\begin{proof}
Everything is obvious except the last assertion. 
In view of Lemma~\ref{lem:dense}, for this we only need to show that 
$$
   \id:A\otimes^*(B\otimes^!C) \to (A\otimes^*B)\otimes^!C
$$
is continuous. In the following $U\subset A$, $V\subset B$ and
$W\subset C$ will always denote open linear subspaces. 
Unravelling the definitions, we find that a linear subspace
$Q\subset A\otimes^*(B\otimes^!C)$ is open iff if satisfies:
\begin{enumerate}[label=(\roman*)]
\item there exist $U\subset A$, $V\subset B$ and $W\subset C$ 
  such that $U\otimes V\otimes C + U\otimes B\otimes W\subset Q$;  
\item for each $a\in A$ there exists $V\subset
  B$ and $W\subset C$ such that $a\otimes V\otimes C + a\otimes
  B\otimes W\subset Q$;
\item for each $b\in B$, $c\in C$ there exists $U\subset
  A$ such that $U\otimes b\otimes c\subset Q$.
\end{enumerate}
On the other hand, $Q\subset (A\otimes^*B)\otimes^!C$ is open iff it satisfies:
\begin{enumerate}
\item there exists $W\subset C$ such that $A\otimes B\otimes W\subset Q$;
\item there exist $U\subset A$ and $V\subset B$
  such that $U\otimes V\otimes C\subset Q$;  
\item for each $a\in A$ there exists $V\subset
  B$ such that $a\otimes V\otimes C\subset Q$;
\item for each $b\in B$ there exists $U\subset
  A$ such that $U\otimes b\otimes C\subset Q$.
\end{enumerate}
We see that conditions (1--4) imply conditions (i--iii). 
\end{proof}

\subsection{The $*$ topology}

The algebraic tensor product $A\otimes B$ (devoid of topology) is endowed with a canonical bilinear map $\pi:A\times B\to A\otimes B$ and is characterized by the following universal property: for any vector space $C$ and any bilinear map $\phi:A\times B\to C$ there is a unique linear map $\phi^\otimes:A\otimes B\to C$ such that $\phi=\phi^\otimes\circ\pi$. 
$$
\xymatrix{
A\times B\ar[r]^-\phi \ar[d]_-\pi & C \\
A\otimes B\ar[ur]_-{\phi^\otimes} &  
}
$$ 

The $*$ topology on $A\otimes B$ is designed to render this correspondence topological. 

\begin{proposition}\label{prop:*topology-universal}
The $*$-topology on $A\otimes B$ is uniquely characterized by any one of the following two conditions. 

(a) It is the finest linear topology such that the canonical bilinear map $\pi:A\times B\to A\otimes B$
is continuous. 

(b) For each linearly topologized space $C$ the assignment $\phi^\otimes\mapsto \phi=\phi^\otimes\circ\pi$ defines a linear bijection
\begin{equation}\label{eq:lin-bilin}
  \pi^*:\Hom(A\otimes^*B,C) \to B(A\times B,C).
\end{equation}
\end{proposition}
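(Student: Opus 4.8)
The plan is to exploit the fact that the three defining conditions (i)--(iii) of the $*$-topology in Definition~\ref{def:*top-alex} are \emph{verbatim} the three conditions characterizing continuity of a bilinear map in Lemma~\ref{lem:bil-cont}. The bridge is the following elementary translation: since $Q\subset A\otimes B$ is a linear subspace and $\pi(a,b)=a\otimes b$, we have
$$
U\otimes V\subset Q \iff U\times V\subset\pi^{-1}(Q),\qquad a\otimes V\subset Q\iff a\times V\subset\pi^{-1}(Q),\qquad U\otimes b\subset Q\iff U\times b\subset\pi^{-1}(Q).
$$
Hence, for \emph{any} bilinear $\phi=\phi^\otimes\circ\pi:A\times B\to C$ and any linear open $W\subset C$, the statement ``$(\phi^\otimes)^{-1}(W)$ is $*$-open'' is literally identical to ``$\phi^{-1}(W)=\pi^{-1}\bigl((\phi^\otimes)^{-1}(W)\bigr)$ satisfies the three conditions of Lemma~\ref{lem:bil-cont}''. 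This single observation drives both parts.

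For (a), continuity of $\pi:A\times B\to A\otimes^*B$ is immediate: by Lemma~\ref{lem:bil-cont} it suffices to check, for each $*$-open $Q$, that $\pi^{-1}(Q)$ satisfies (i)--(iii), which is exactly the definition of $Q$ being $*$-open. For finest-ness, let $\tau$ be any linear topology on $A\otimes B$ making $\pi$ continuous and let $Q$ be $\tau$-open; applying Lemma~\ref{lem:bil-cont} to $\pi$ shows $\pi^{-1}(Q)$ satisfies (i)--(iii), so $Q$ is $*$-open and $\tau$ is coarser than the $*$-topology. Thus the $*$-topology is the finest linear topology making $\pi$ continuous, and uniqueness of a finest topology is automatic.

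For (b), I would first note that $\pi^*$ is well defined and linear: for $\phi^\otimes\in\Hom(A\otimes^*B,C)$ the composite $\phi^\otimes\circ\pi$ is bilinear and, by part (a), continuous, hence lies in $B(A\times B,C)$. Injectivity holds because $\im\pi$ spans $A\otimes B$. For surjectivity, a continuous bilinear $\phi$ yields, by the algebraic universal property, a unique linear $\phi^\otimes$ with $\phi=\phi^\otimes\circ\pi$; its $*$-continuity is precisely the translation above, since $\phi^{-1}(W)$ satisfies (i)--(iii) by continuity of $\phi$. To see that (b) characterizes the topology \emph{uniquely}, I would run a short Yoneda-style argument: if a linear topology $\tau$ makes the corresponding $\pi^*$ valued in (and bijective onto) $B(A\times B,C)$ for every $C$, then well-definedness forces $\pi$ to be $\tau$-continuous (take $\phi^\otimes=\id$ on $C=(A\otimes B,\tau)$), so $\tau$ is coarser than $*$ by part (a); and surjectivity for $C=A\otimes^*B$ produces $\psi$ with $\psi\circ\pi=\pi$, forcing $\psi=\id$ and hence $\tau$ finer than $*$. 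Therefore $\tau$ equals the $*$-topology.

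The one genuine obstacle to watch for is \emph{not} conflating continuity of the bilinear map $\pi$ with the naive requirement that preimages of open subspaces be open --- a distinction the paper stresses immediately after Lemma~\ref{lem:bil-cont}. Everything hinges on using the full three-condition criterion of that lemma rather than condition (i) alone, which is exactly why the $*$-topology is defined by three conditions; once this is kept straight, both characterizations reduce to the bookkeeping translation displayed above and no further analytic input is needed.
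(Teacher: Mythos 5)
Your proposal is correct and takes essentially the same route as the paper: both parts rest on the verbatim match between the three conditions of Definition~\ref{def:*top-alex} and the continuity criterion of Lemma~\ref{lem:bil-cont}, translated through $\pi^{-1}(Q)$. Your Yoneda-style uniqueness argument for (b) --- taking $\phi^\otimes=\id$ with $C=(A\otimes B,\tau)$ to get $\tau$ coarser than $*$, and surjectivity with $C=A\otimes^*B$ to get $\tau$ finer --- is precisely the paper's pair of commutative-diagram arguments, so there is nothing to add.
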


\begin{proof}
(a) In view of Lemma~\ref{lem:bil-cont}, the finest linear topology
such that $\pi$ is continuous is characterized by the following
condition: a subspace $E\subset A\otimes B$ is open if and only if
there exist $U\subset A$, $V\subset B$ linear open such that $U\times V\subset \pi^{-1}(E)$, for any $a\in A$ there exists $V\subset B$ linear open such that $a\times V\subset \pi^{-1}(E)$, and for any $b\in B$ there exists $U\subset A$ linear open such that $U\times b\subset\pi^{-1}(E)$. Applying $\pi$ we find exactly the conditions that define the open sets in the $*$ topology. 

(b) It is enough to show that (b) is equivalent to the fact that $A\otimes B$ is endowed with the finest topology such that $\pi$ is continuous. Note that (b) can be rephrased by saying that $\phi$ is continuous if and only if $\phi^\otimes$ is continuous. 

($\Rightarrow$) Let $A\otimes^*B$ be the finest topology that renders $\pi$ continuous, and let $A\otimes'B$ be some topology that satisfies (b). By applying (b) to the commutative diagram 
$$
\xymatrix{
A\times B \ar[r]^-\pi \ar[d]_-\pi & A\otimes' B \\
A\otimes'B \ar[ur]_-{\pi^\otimes=\Id}
}
$$
we infer that the horizontal arrow $\pi$ is continuous because $\Id:A\otimes'B\to A\otimes'B$ is continuous. Thus the topology $A\otimes'B$ is coarser than $A\otimes^*B$. By applying (b) to the commutative diagram 
$$
\xymatrix{
A\times B \ar[r]^-\pi \ar[d]_-\pi & A\otimes^* B \\
A\otimes'B \ar[ur]_-{\pi^\otimes=\Id}
}
$$ 
with the horizontal map $\pi$ continuous, we infer that $\Id:A\otimes'B\to A\otimes^*B$ is continuous, which means that the topology $A\otimes'B$ is finer than $A\otimes^*B$. We conclude that the topologies $A\otimes'B$ and $A\otimes^*B$ are equal. 

($\Leftarrow$) Let now $A\otimes^*B$ be the finest topology such that $\pi$ is continuous. We need to show that, given any bilinear map $\phi:A\times B\to C$ with associated linear map $\phi^\otimes:A\otimes B\to C$, we have that $\phi$ is continuous if and only if $\phi^\otimes$ is continuous. Assuming $\phi^\otimes$ continuous, the continuity of $\phi=\phi^\otimes\circ\pi$ follows from that of $\pi$. Assuming $\phi$ continuous, let $W\subset C$ open. Then for all $c\in C$ we have that $\phi^{-1}(c+W)=\pi^{-1}((\phi^\otimes)^{-1}(c+W))$ is open. By linearity of $\phi^\otimes$, this is equivalent to the fact that $\pi^{-1}(d+(\phi^\otimes)^{-1}(W))$ is open for all $d\in A\otimes B$, which means that $(\phi^\otimes)^{-1}(W)$ is open by the definition of the finest topology $A\otimes^*B$. This in turn amounts to the continuity of $\phi$. 
\end{proof}

\begin{proposition}\label{prop:*otimes-lincompact}
(a) If $A$ and $B$ are discrete, then $A\hatotimes^*B=A\otimes^*B$ is discrete.\\
(b) If $A$ and $B$ are linearly compact, then $A\hatotimes^*B$ is linearly compact. 
\end{proposition}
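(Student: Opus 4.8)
The plan is to treat the two parts separately, with part (a) being essentially immediate and part (b) reducing the $*$ topology to the more tractable $!$ topology. For part (a), I would verify directly that $\{0\}$ is open in the $*$ topology. Since $A$ and $B$ are discrete, the zero subspaces $\{0\}\subset A$ and $\{0\}\subset B$ are open; taking $U=V=\{0\}$ satisfies condition (i) of Definition~\ref{def:*top-alex}, while taking $V=\{0\}$ (resp.\ $U=\{0\}$) for every $a\in A$ (resp.\ $b\in B$) satisfies (ii) and (iii). Hence $A\otimes^*B$ is discrete, and since discrete spaces are complete, $A\hatotimes^*B=A\otimes^*B$.

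For part (b), the first and main step is to show that when $A$ and $B$ are linearly compact the $*$ and $!$ topologies on $A\otimes B$ coincide. As the $*$ topology is always finer than the $!$ topology, only one inclusion needs proof: given a $*$-open subspace $Q$, I would produce open $U\subset A$ and $V\subset B$ with $U\otimes B+A\otimes V\subset Q$. Starting from condition (i) with some open $U_0,V_0$, I would use linear compactness of $A$ (so that $A/U_0$ is finite dimensional) to write $A=U_0\oplus\mathrm{span}\{a_1,\dots,a_m\}$, apply condition (ii) to each $a_i$ to obtain open $V_i\subset B$ with $a_i\otimes V_i\subset Q$, and set $V:=V_0\cap V_1\cap\cdots\cap V_m$; since $Q$ is a subspace this yields $A\otimes V\subset Q$. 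Symmetrically, using linear compactness of $B$ together with condition (iii), I would obtain an open $U$ with $U\otimes B\subset Q$. This is the step where linear compactness is genuinely used, as it converts the pointwise conditions (ii) and (iii) into the single uniform $!$-condition, and I expect it to be the crux of the argument.

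Once $*=!$, the completion is computed over the neighbourhood basis $\{U\otimes B+A\otimes V\}$ of the $!$ topology, giving
$$
A\hatotimes^*B=A\hatotimes^!B=\lim_{U,V}(A\otimes B)/(U\otimes B+A\otimes V)\cong\lim_{U,V}(A/U)\otimes(B/V),
$$
where the last isomorphism is the standard right-exactness identity $(A\otimes B)/(U\otimes B+A\otimes V)\cong(A/U)\otimes(B/V)$, valid since every vector space is flat over the field $\bk$. Finally, since $A$ and $B$ are linearly compact, each $A/U$ and $B/V$ is finite dimensional, so each $(A/U)\otimes(B/V)$ is finite dimensional, hence discrete and linearly compact. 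A limit of linearly compact spaces is linearly compact (as used in the proof of Lemma~\ref{lem:equivalence-lin-compact}, via~\cite[(II.27.6) and (II.27.3)]{Lefschetz-book}), so $A\hatotimes^*B$ is linearly compact, completing the proof.
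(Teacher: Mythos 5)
Your proof is correct. Part (a) and the crux of part (b) --- showing that on linearly compact $A,B$ the $*$ topology coincides with the $!$ topology by using finite dimensional complements of $U_0$ in $A$ and $V_0$ in $B$ to convert the pointwise conditions (ii) and (iii) of Definition~\ref{def:*top-alex} into the uniform $!$-condition --- are essentially identical to the paper's argument (Lemma~\ref{lem:A*otimesBlinearlycompact}). Where you genuinely diverge is the final step. The paper concludes by invoking Proposition~\ref{prop:!otimes-Hom} to identify $A\hatotimes^!B\cong\Hom(B^*,A)$, which is linearly compact by Lemma~\ref{lem:Hom1}(b) since $B^*$ is discrete; this route depends on the duality machinery behind Proposition~\ref{prop:!otimes-Hom} (density of finite rank homomorphisms, completeness of $\Hom$ spaces), but it buys the explicit identification $A\hatotimes^*B\cong\Hom(B^*,A)$, which the paper uses elsewhere. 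You instead compute the completion directly over the cofinal family $U\otimes B+A\otimes V$ of $!$-open subspaces, obtaining
$$
A\hatotimes^*B=A\hatotimes^!B=\lim_{U,V}\,(A/U)\otimes(B/V),
$$
a limit of finite dimensional (hence linearly compact) spaces, which is linearly compact by the same Lefschetz facts the paper cites in Lemma~\ref{lem:colim-lim-Tate}(b). This finish is more elementary and self-contained, and it is in fact the same computation the paper performs later for inverse systems of discrete spaces in the proof of Proposition~\ref{prop:duality_intertwines}(b), so your argument is fully consistent with the paper's toolkit while avoiding the heavier duality proposition; its only cost is that you do not get the $\Hom(B^*,A)$ description as a byproduct.
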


\begin{proof}
Part (a) holds because by definition $A\otimes^*B$ is discrete, hence complete, so that $A\hatotimes^*B=A\otimes^*B$. 
Part (b) is Lemma~\ref{lem:A*otimesBlinearlycompact} below.
\end{proof}

Note that $A\otimes^*B$ is in general not complete, hence not linearly compact. For example $\bk[[t]]\otimes^*\bk[[s]]$ is not complete, but $\bk[[t]\hatotimes^*\bk[[s]]=\bk[[t,s]]$ is complete and linearly compact. 

We now discuss the extent to which the linear
bijection~\eqref{eq:lin-bilin} can be made topological, where $B(A,B)$
is equipped with the compact-open topology defined
in~\S\ref{ss:bilin}. Given a bilinear map $\phi:A\times B\to C$ consider the diagram 
$$
\xymatrix{
A\times B\ar[rr]^-\phi \ar[d]_-\pi \ar@/_2pc/[dd]_-{\hat\pi} & & C \\
A\otimes B\ar[urr]_-{\phi^\otimes} \ar[d]_-\iota & &  \\
A\hatotimes^*B\ar@/_/[uurr]_-{\hat\phi^\otimes} & & 
}
$$ 
in which $\iota$ is the canonical injection into the completion and $\hat\pi=\iota\circ\pi$. 

\begin{proposition}\label{prop:*topological}
Assume $C$ to be complete. Then the map 
$$
\hat\pi^*:\Hom(A\hatotimes^*B,C)\to B(A\times B,C),\qquad \hat\phi^\otimes\mapsto \hat\phi^\otimes\circ\hat\pi
$$
is a continuous linear bijection. 
Moreover, $\hat\pi^*$ is a topological isomorphism in either of the following two cases: 

(a) $A,B$ are linearly compact and $C$ is discrete.

(b) $A,B$ are discrete and $C$ is linearly compact. 
\end{proposition}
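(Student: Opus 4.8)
The plan is to factor $\hat\pi^*$ through the universal bijection of Proposition~\ref{prop:*topology-universal}(b) together with restriction along the completion map $\iota\colon A\otimes^*B\to A\hatotimes^*B$. Writing $\iota^*\colon\Hom(A\hatotimes^*B,C)\to\Hom(A\otimes^*B,C)$ for precomposition with $\iota$, the identity $\hat\pi=\iota\circ\pi$ gives $\hat\pi^*=\pi^*\circ\iota^*$. Here $\pi^*$ is a linear bijection by Proposition~\ref{prop:*topology-universal}(b), so it suffices to see that $\iota^*$ is a bijection when $C$ is complete. Injectivity is immediate since $\iota(A\otimes B)$ is dense in $A\hatotimes^*B$, and surjectivity follows by extending any $\phi^\otimes\in\Hom(A\otimes^*B,C)$ along $\iota$ to $\widehat{\phi^\otimes}\colon A\hatotimes^*B\to\wh C=C$, using completeness of $C$. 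This shows $\hat\pi^*$ is a linear bijection.

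For continuity I would compare the compact-open topologies directly. Fix a basic open subspace $S_{K\times L,W}\subset B(A\times B,C)$ with $K\subset A$, $L\subset B$ linearly compact and $W\subset C$ open. The bilinear map $K\times L\hookrightarrow A\times B\xrightarrow{\hat\pi}A\hatotimes^*B$ is continuous into a complete space, so by the universal property (Proposition~\ref{prop:*topology-universal}(b)) and passage to completions it extends to a continuous linear map $K\hatotimes^*L\to A\hatotimes^*B$. By Proposition~\ref{prop:*otimes-lincompact}(b) the source $K\hatotimes^*L$ is linearly compact, hence its image $\tilde K\subset A\hatotimes^*B$ is linearly compact \cite[(II.27.4)]{Lefschetz-book} and contains every $\hat\pi(a,b)$ with $a\in K$, $b\in L$. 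Consequently $S_{\tilde K,W}\subset(\hat\pi^*)^{-1}(S_{K\times L,W})$; since $S_{\tilde K,W}$ is an open linear subspace and the preimage is a linear subspace containing it, the preimage is open. Thus $\hat\pi^*$ is continuous.

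It remains to upgrade the continuous bijection to a topological isomorphism in cases (a) and (b), and this is where the hypotheses on $A,B,C$ enter. In case (a), $A\hatotimes^*B$ is linearly compact by Proposition~\ref{prop:*otimes-lincompact}(b), so $\Hom(A\hatotimes^*B,C)$ is discrete by Lemma~\ref{lem:Hom1}(a); likewise $B(A\times B,C)$ is discrete because $S_{A\times B,\{0\}}=\{0\}$ is open (as $A,B$ are linearly compact and $C$ is discrete). A continuous linear bijection between discrete spaces is automatically a topological isomorphism. In case (b), $A\hatotimes^*B=A\otimes^*B$ is discrete by Proposition~\ref{prop:*otimes-lincompact}(a), so $\Hom(A\hatotimes^*B,C)$ is linearly compact by Lemma~\ref{lem:Hom1}(b); its continuous image $B(A\times B,C)$ under $\hat\pi^*$ is then linearly compact, and a continuous linear bijection of a linearly compact space onto a linearly topologized (hence Hausdorff) space is a topological isomorphism by Lefschetz~\cite[(II.27.8)]{Lefschetz-book}. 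The main obstacle is the continuity argument of the second paragraph: one must exhibit a single linearly compact subspace $\tilde K$ of the completion dominating all simple tensors $a\otimes b$ with $a\in K$, $b\in L$, which is exactly what Proposition~\ref{prop:*otimes-lincompact}(b) and the universal property supply; the two special cases then reduce to the linearly-compact/discrete dichotomy and are essentially formal.
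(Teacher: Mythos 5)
Your proof is correct and follows essentially the same route as the paper's: bijectivity via Proposition~\ref{prop:*topology-universal}(b) plus unique extension from the dense subspace $A\otimes^*B$ into the complete space $C$; continuity by showing $(\hat\pi^*)^{-1}(S_{K\times L,W})$ contains an open set of the form $S_{\tilde K,W}$ with $\tilde K$ linearly compact coming from $K\hatotimes^*L$ (Proposition~\ref{prop:*otimes-lincompact}(b)); and the two special cases settled by the discrete/linearly compact dichotomy, with~\cite[(II.27.8)]{Lefschetz-book} in case (b). Your only deviation is a welcome extra precision: where the paper writes $S_{K\hatotimes^*L,W}$ directly, you explicitly realize $\tilde K$ as the image of the canonical map $K\hatotimes^*L\to A\hatotimes^*B$, which sidesteps any question about whether that map is an embedding.
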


\begin{proof}
That $\hat\pi^*$ is a linear bijection follows from Proposition~\ref{prop:*topology-universal}(b), in view of the fact that $\Hom(A\otimes^*B,C)=\Hom(A\hatotimes^*B,C)$ as vector spaces. Indeed, any map defined on the dense subspace $A\otimes^*B\subset A\hatotimes^*B$ extends uniquely to the completion, with values in the completion $\widehat C=C$. (Note however that the topologies on $\Hom(A\otimes^*B,C)$ and $\Hom(A\hatotimes^*B,C)$ differ in general, since there is no canonical correspondence between linearly compact spaces in $A\otimes^*B$ and in $A\hatotimes^*B$.)

We prove that $\hat\pi^*$ is continuous. A basis of open sets for $B(A\times B,C)$ is given by $S_{K\times L,W}=\{\phi\mid \phi(K\times L)\subset W\}$, with $K\subset A$, $L\subset B$ linearly compact and $W\subset C$ linear open. A basis for $\Hom(A\hatotimes^*B,C)$ is given by $S_{E,W}=\{\hat\phi^\otimes\mid \hat\phi^\otimes(E)\subset W\}$, with $E\subset A\hatotimes^*B$ linearly compact and $W\subset C$ linear open. We have that $(\hat\pi^*)^{-1}(S_{K\times L,W})=\{\hat\phi^\otimes\mid \hat\phi^\otimes\circ\hat\pi(K\times L)\subset W\}$. Now $K\hatotimes^*L$ is linearly compact by Proposition~\ref{prop:*otimes-lincompact}, so that $(\hat\pi^*)^{-1}(S_{K\times L,W})$ contains the open set $S_{K\hatotimes^*L,W}$ and is therefore open.

In case (a) both the source and the target of $\hat\pi^*$ are discrete, and therefore $\hat\pi^*$ is a topological isomorphism. 

In case (b) the source is linearly compact by Proposition~\ref{prop:*otimes-lincompact} and Lemma~\ref{lem:Hom1}(b). Since $\hat\pi^*$ is linear bijective and continuous, we infer that it is a topological isomorphism~\cite[II.27.8]{Lefschetz-book}. 
\end{proof}

\subsection{The $!$ topology}

\begin{proposition}\label{prop:characterization!topologyTate}
If $A$ and $B$ are Tate, then the $!$ topology is the coarsest topology on $A\otimes B$ such that the linear map 
$$
\beta^\otimes:A\otimes B\to B(A^*\times B^*,\bk),\qquad a\otimes b \mapsto \big( (\varphi,\psi)\mapsto \varphi(a)\psi(b)\big)
$$
is continuous.
\end{proposition}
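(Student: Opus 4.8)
The plan is to identify the coarsest topology making $\beta^\otimes$ continuous, namely the initial topology pulled back from the compact-open topology on $B(A^*\times B^*,\bk)$ of \S\ref{ss:bilin}, with the $!$ topology of Definition~\ref{def:!top}, by matching neighborhood bases of $0$. Since $\bk$ is discrete, its only proper open linear subspace is $\{0\}$, so the basic open subspaces of $B(A^*\times B^*,\bk)$ are the sets $S_{K\times L,\{0\}}=\{f\mid f(K\times L)=0\}$ with $K\subset A^*$ and $L\subset B^*$ linearly compact. (That $\beta^\otimes$ really lands in continuous bilinear forms follows from Lemma~\ref{lem:bil-cont}: for fixed $a$ the functional $\varphi\mapsto\varphi(a)$ on $A^*$ has open kernel, being the image of $a$ under the topological isomorphism $A\cong A^{**}$ of Theorem~\ref{thm:Tate-duality}, and similarly on $B^*$.) Using the canonical pairing between $A\otimes B$ and $A^*\otimes B^*$ determined by $\langle a\otimes b,\varphi\otimes\psi\rangle=\varphi(a)\psi(b)$, one reads off
\[
(\beta^\otimes)^{-1}(S_{K\times L,\{0\}})=\{x\in A\otimes B\mid \langle x,\varphi\otimes\psi\rangle=0\ \ \forall\varphi\in K,\ \psi\in L\}=(K\otimes L)^\perp,
\]
so everything reduces to computing these annihilators.

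The crux is the identity $(K\otimes L)^\perp = K^\perp\otimes B + A\otimes L^\perp$, where $K^\perp\subset A$ and $L^\perp\subset B$ are the annihilators for the evaluation pairings. The inclusion $\supseteq$ is immediate, since an elementary tensor with one factor in $K^\perp$ (resp.\ $L^\perp$) is killed by every $\varphi\otimes\psi$ with $\varphi\in K$ (resp.\ $\psi\in L$). For $\subseteq$ I would pass to the quotients $\bar A=A/K^\perp$ and $\bar B=B/L^\perp$: the purely algebraic fact that $\ker\!\big(A\otimes B\to\bar A\otimes\bar B\big)=K^\perp\otimes B+A\otimes L^\perp$ over a field reduces the claim to showing that the image $\bar x\in\bar A\otimes\bar B$ of any $x\in(K\otimes L)^\perp$ vanishes. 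Each $\varphi\in K$ descends to a functional $\bar\varphi$ on $\bar A$, and by the very definition of $K^\perp$ the resulting family separates points of $\bar A$; likewise $L$ separates points of $\bar B$. The vanishing $\langle\bar x,\bar\varphi\otimes\bar\psi\rangle=0$ for all $\varphi,\psi$ then forces $\bar x=0$ by a linear-independence argument: writing $\bar x=\sum_i\bar a_i\otimes\bar b_i$ with the $\bar b_i$ independent and fixing $\varphi$, separation on $\bar B$ gives $\sum_i\bar\varphi(\bar a_i)\bar b_i=0$, hence $\bar\varphi(\bar a_i)=0$ for all $i$ and all $\varphi$, and separation on $\bar A$ gives $\bar a_i=0$. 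This nondegeneracy step is the only real content, and is where I expect the main effort to go.

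It then remains to feed in Tate duality to convert between open subspaces and linearly compact annihilators. If $K\subset A^*$ is linearly compact, then $K^\perp\subset A$ is open: the inclusion $K\hookrightarrow A^*$ dualizes to a continuous map $A\cong A^{**}\to K^*$ whose kernel is exactly $K^\perp$, and this kernel is open because $K^*$ is discrete by Theorem~\ref{thm:Tate-duality}(b),(c). Conversely, if $U\subset A$ is open then it is closed by Lemma~\ref{lem:open}, the quotient $A/U$ is discrete, and $U^\perp$, being the image of the linearly compact space $(A/U)^*$ under the continuous dual of the projection $A\to A/U$, is linearly compact by Theorem~\ref{thm:Tate-duality}(a); moreover $U^{\perp\perp}=U$, since the separating functionals on the discrete quotient $A/U$ detect every point outside $U$. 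Combining these facts with the crux identity gives both comparisons simultaneously: every basic initial-open subspace $(K\otimes L)^\perp=K^\perp\otimes B+A\otimes L^\perp$ is a basic $!$-open subspace (as $K^\perp,L^\perp$ are open), and conversely every basic $!$-open subspace $U\otimes B+A\otimes V$ equals $(\beta^\otimes)^{-1}(S_{U^\perp\times V^\perp,\{0\}})=(U^\perp\otimes V^\perp)^\perp=U^{\perp\perp}\otimes B+A\otimes V^{\perp\perp}=U\otimes B+A\otimes V$. Hence the two neighborhood bases coincide, the two topologies agree, and the $!$ topology is the coarsest topology on $A\otimes B$ making $\beta^\otimes$ continuous.
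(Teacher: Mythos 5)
Your proof is correct and follows essentially the same route as the paper, which (via Lemma~\ref{lem:characterization!topology}) identifies the basic opens of the coarsest topology making $\beta^\otimes$ continuous as $\cK^{\perp_A}\otimes B + A\otimes \cL^{\perp_B}$ for $\cK\subset A^*$, $\cL\subset B^*$ linearly compact, and then matches these with the $!$ topology using Tate duality. The differences are minor: your quotient-plus-separation argument actually proves the annihilator identity $(K\otimes L)^\perp=K^{\perp}\otimes B+A\otimes L^{\perp}$ that the paper dispatches with ``one then checks directly'', and your direct verifications that $K^{\perp_A}$ is open for $K$ linearly compact and that $U^{\perp\perp}=U$ for $U$ open replace the paper's equivalent c-lattice bookkeeping (every linearly compact $\cK\subset A^*$ sits inside some $U^\perp$ with $(U^\perp)^{\perp_A}=U$).
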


This will follow from the more general statement below. Given a subspace $E\subset A^*$, let 
$$
E^{\perp_A}=\{a\in A\mid \ev(a\otimes E)=0\}.
$$ 

\begin{lemma}\label{lem:characterization!topology}
For linearly topologized spaces $A,B$, 
the coarsest linear topology on $A\otimes B$ such that the linear map 
$$
\beta^\otimes:A\otimes B\to B(A^*\times B^*,\bk),\qquad a\otimes b \mapsto \big( (\varphi,\psi)\mapsto \varphi(a)\psi(b)\big)
$$
is continuous has a basis of open sets consisting of $\cK^{\perp_A}\otimes B + A\otimes \cL^{\perp_B}$, where $\cK\subset A^*$ and $\cL\subset B^*$ are linearly compact.
\end{lemma}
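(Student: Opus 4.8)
The plan is to identify the coarsest linear topology making $\beta^\otimes$ continuous as the \emph{initial topology} with respect to $\beta^\otimes$, and then to compute the preimages of the basic open sets of the target. Since $\bk$ is discrete, the compact-open topology on $B(A^*\times B^*,\bk)$ has as neighbourhood basis of $0$ the subspaces $S_{\cK\times\cL,\{0\}}=\{f\mid f(\cK\times\cL)=0\}$ with $\cK\subset A^*$ and $\cL\subset B^*$ linearly compact (the only relevant open subspace of $\bk$ being $\{0\}$). As $\beta^\otimes$ is linear, the initial topology it induces is linear, with neighbourhood basis of $0$ given by the preimages $(\beta^\otimes)^{-1}(S_{\cK\times\cL,\{0\}})$ and their finite intersections. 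Everything therefore reduces to the identity
$$
  (\beta^\otimes)^{-1}(S_{\cK\times\cL,\{0\}}) = \cK^{\perp_A}\otimes B + A\otimes\cL^{\perp_B},
$$
which I would establish first. (Along the way I would note that $\beta^\otimes$ is well defined, i.e.\ $\beta^\otimes(x)$ is continuous bilinear for each $x$: for a pure tensor $a\otimes b$ this is clear since $\mathrm{span}\{a\}$, $\mathrm{span}\{b\}$ are linearly compact, so their perpendiculars are open in $A^*$, $B^*$, and one checks conditions (i)--(iii) of Lemma~\ref{lem:bil-cont}; the general case follows by linearity.)

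For the displayed identity, the inclusion $\supseteq$ is immediate: if $a\in\cK^{\perp_A}$ then $\varphi(a)=0$ for every $\varphi\in\cK$, so $\beta^\otimes(a\otimes b)(\varphi,\psi)=\varphi(a)\psi(b)=0$, and symmetrically for $A\otimes\cL^{\perp_B}$; linearity finishes this direction. The inclusion $\subseteq$ is the heart of the argument. Here I would use the standard identification of the kernel of a tensor product of quotient maps,
$$
  \cK^{\perp_A}\otimes B + A\otimes\cL^{\perp_B}
  = \ker\bigl(A\otimes B\to \ol A\otimes\ol B\bigr),
  \qquad \ol A:=A/\cK^{\perp_A},\quad \ol B:=B/\cL^{\perp_B},
$$
which reduces the claim to showing that $\ol x=0$ in $\ol A\otimes\ol B$ whenever $\beta^\otimes(x)$ vanishes on $\cK\times\cL$. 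By the very definition of $\cK^{\perp_A}$ and $\cL^{\perp_B}$, the pairings with $\cK$ and $\cL$ embed $\ol A$ and $\ol B$ into the algebraic duals $\Hom_{\rm alg}(\cK,\bk)$ and $\Hom_{\rm alg}(\cL,\bk)$. Since over a field both the tensor product of injective linear maps and the canonical map $\Hom_{\rm alg}(\cK,\bk)\otimes\Hom_{\rm alg}(\cL,\bk)\to\Hom_{\rm alg}(\cK\otimes\cL,\bk)$ are injective, their composite realizes $\ol A\otimes\ol B$ as a subspace of $\Hom_{\rm alg}(\cK\otimes\cL,\bk)$, sending $\ol x$ to the functional $\varphi\otimes\psi\mapsto\beta^\otimes(x)(\varphi,\psi)$. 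The hypothesis says this functional vanishes on all pure tensors $\varphi\otimes\psi$ with $\varphi\in\cK$, $\psi\in\cL$, hence on all of $\cK\otimes\cL$, so $\ol x=0$ and $x$ lies in the kernel.

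Finally I would check that the subspaces $\cK^{\perp_A}\otimes B + A\otimes\cL^{\perp_B}$ already form a neighbourhood basis, i.e.\ that no further finite intersections are needed. Given two such subspaces with data $(\cK_1,\cL_1)$ and $(\cK_2,\cL_2)$, the sums $\cK_1+\cK_2\subset A^*$ and $\cL_1+\cL_2\subset B^*$ are again linearly compact (a finite sum of linearly compact subspaces is linearly compact), and since $(\cK_1+\cK_2)^{\perp_A}=\cK_1^{\perp_A}\cap\cK_2^{\perp_A}$ and likewise for $\cL$, the corresponding subspace is contained in the intersection of the two given ones. Thus the family is directed downward and constitutes a basis of $0$-neighbourhoods for the coarsest linear topology making $\beta^\otimes$ continuous, as claimed. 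The only genuinely delicate point is the injectivity/nondegeneracy step in the second paragraph; the remaining verifications are routine.
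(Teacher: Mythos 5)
Your proposal is correct and follows essentially the same route as the paper: both identify the coarsest topology as the initial topology with respect to $\beta^\otimes$, note that discreteness of $\bk$ reduces the basic neighbourhoods to the sets $S_{\cK\times\cL,\{0\}}$, and boil everything down to the identity $(\beta^\otimes)^{-1}(S_{\cK\times\cL,\{0\}})=\cK^{\perp_A}\otimes B+A\otimes\cL^{\perp_B}$. The only difference is that the paper dismisses this identity with ``one checks directly,'' whereas you actually supply the verification (via the kernel of $A\otimes B\to (A/\cK^{\perp_A})\otimes(B/\cL^{\perp_B})$ and the injection into $\Hom_{\rm alg}(\cK\otimes\cL,\bk)$), together with the directedness of the family --- a correct and welcome completion of the omitted step.
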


\begin{proof}
To show that $\beta^\otimes$ is well-defined (i.e., it lands in
$B(A^*\times B^*,\bk)$), we consider the associated bilinear map
$\beta:A\times B\to B(A^*\times B^*,\bk)$ given by $\beta(a,b)=\big(
(\varphi,\psi)\mapsto \varphi(a)\psi(b)\big)$ and prove that this is
well-defined. We need to show that $\beta(a,b)$ is a continuous
bilinear map. By Lemma~\ref{lem:bil-cont}, this amounts to the conditions that $\beta(a,b)^{-1}(0)$ contains $K^\perp\times L^\perp$ for some $K\subset A$, $L\subset B$ linearly compact, for any $\varphi\in A^*$ there exists $L_\varphi\subset B$ linearly compact such that $\varphi\times L_\varphi^\perp\subset \beta(a,b)^{-1}(0)$, and for any $\psi\in B^*$ there exists $K_\psi\subset A$ linearly compact such that $K_\psi^\perp\times\psi\subset \beta(a,b)^{-1}(0)$. These conditions are satisfied with $K=K_\psi\ni a$ and $L=L_\varphi\ni b$ linearly compact.

A basis of neighborhoods of $0$ in $B(A^*\times B^*,\bk)$ is given by $S_{\cK\times \cL,0}=\{\gamma:A^*\times B^*\to\bk\mid \gamma(\cK\times\cL)=0\}$, with $\cK\subset A^*$, $\cL\subset B^*$ linearly compact. One then checks directly that 
$(\beta^\otimes)^{-1}(S_{\cK\times\cL,0}) = \cK^{\perp_A}\otimes B + A\otimes \cL^{\perp_B}$. 
\end{proof}

\begin{proof}[Proof of Proposition~\ref{prop:characterization!topologyTate}]
If $A$ is Tate then $A^{**}=A$, a basis of c-lattices for $A^*$ is
given by $\{U^\perp\}$, where $U$ ranges over the c-lattices of $A$,
and $(U^\perp)^{\perp_A}=U^{\perp\perp}=U$ for any c-lattice. Since
any linearly compact subspace $\cK\subset A^*$ is contained in a
c-lattice, the topology from
Lemma~\ref{lem:characterization!topology} with basis of open
neighborhoods $\cK^{\perp_A}\otimes B + A\otimes\cL^{\perp_B}$ with
$\cK\subset A^*$, $\cL\subset B^*$ linearly compact, can be
equivalently described as having a basis of open neighborhoods of zero
given by $(U^{\perp})^{\perp_A}\otimes B  + A\otimes
(V^\perp)^{\perp_B} = U\otimes B + A \otimes V$, where $U\subset A$
and $V\subset B$ are c-lattices. Since the c-lattices in a Tate vector
space form a neighborhood basis of zero, we infer that this is the ! topology. 
\end{proof}

\begin{proposition}\label{prop:!otimes-Hom}
For $A,B$ Tate we have canonical isomorphisms
$$
A\hatotimes^! B\simeq \Hom(B^*,A)\simeq \Hom(A^*,B).
$$
\end{proposition}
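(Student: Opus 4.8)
The plan is to realize both $\Hom(B^*,A)$ and $\Hom(A^*,B)$ as the single space $B(A^*\times B^*,\bk)$ of continuous bilinear forms, and then to exhibit $A\hatotimes^! B$ as the completion of the image of $A\otimes B$ sitting densely inside that space. Throughout I use that, by Theorem~\ref{thm:Tate-duality}(c), the duals $A^*$ and $B^*$ are again Tate and the canonical maps $A\to A^{**}$, $B\to B^{**}$ are topological isomorphisms; in particular $A^*$ separates the points of $A$ and $B^*$ separates the points of $B$.

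First I would dispose of the second isomorphism, which is purely formal. Applying the Adjunction Theorem~\ref{thm:adjunction} with the Tate space $B^*$ in the second slot gives
$$
\Hom(A^*,B)=\Hom(A^*,\Hom(B^*,\bk))\cong B(A^*\times B^*,\bk),
$$
using $\Hom(B^*,\bk)=B^{**}=B$. Symmetrically, applying adjunction with the Tate space $A^*$ in the second slot and then swapping the (canonically symmetric) roles of the two variables yields
$$
\Hom(B^*,A)=\Hom(B^*,\Hom(A^*,\bk))\cong B(B^*\times A^*,\bk)\cong B(A^*\times B^*,\bk).
$$
Composing these two canonical topological isomorphisms produces $\Hom(B^*,A)\cong\Hom(A^*,B)$. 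It therefore remains to prove the first isomorphism $A\hatotimes^! B\cong B(A^*\times B^*,\bk)$.

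Next I would analyze the map $\beta^\otimes\colon A\otimes B\to B(A^*\times B^*,\bk)$, $a\otimes b\mapsto\big((\varphi,\psi)\mapsto\varphi(a)\psi(b)\big)$, from Lemma~\ref{lem:characterization!topology}. Under the adjunction isomorphism $B(A^*\times B^*,\bk)\cong\Hom(B^*,A)$ it corresponds to $j\colon a\otimes b\mapsto(\psi\mapsto\psi(b)a)$. I claim $\beta^\otimes$ is injective: writing an element of $A\otimes B$ as $\sum_i a_i\otimes b_i$ with the $a_i$ linearly independent, the vanishing of $\beta^\otimes$ forces $\varphi\big(\sum_i\psi(b_i)a_i\big)=0$ for all $\varphi\in A^*$, hence $\sum_i\psi(b_i)a_i=0$ since $A^*$ separates points, hence $\psi(b_i)=0$ for all $i$ by linear independence; as this holds for every $\psi\in B^*$ and $B^*$ separates points, each $b_i=0$. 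By Proposition~\ref{prop:characterization!topologyTate} the $!$ topology is the coarsest one making $\beta^\otimes$ continuous; since $\beta^\otimes$ is injective, this coarsest topology is exactly the initial (subspace) topology, so $\beta^\otimes$ is a topological embedding of $A\otimes^! B$ onto its image carrying the subspace topology from $B(A^*\times B^*,\bk)$.

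Finally I would identify the image and pass to completions. The image of $j$ consists of finite rank homomorphisms $B^*\to A$; conversely any such $f$ has image spanned by finitely many $a_i\in A$, and its coordinate functionals lie in $(B^*)^*=B^{**}=B$, so $f(\psi)=\sum_i\psi(b_i)a_i$ for suitable $b_i\in B$. Thus $\im(j)=\Hom_\fin(B^*,A)$, which is dense in $\Hom(B^*,A)$ by Lemma~\ref{lem:Hom1}(e) since $B^*$ is Tate; equivalently, $\im(\beta^\otimes)$ is dense in $B(A^*\times B^*,\bk)$. Moreover $\Hom(B^*,A)$ is complete by Lemma~\ref{lem:Hom2}(b), as $B^*$ is Tate hence linearly compactly generated (Lemma~\ref{lem:Hom2}(a)) and $A$ is Tate hence complete. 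Therefore Lemma~\ref{lem:completion-dense} identifies the completion of the dense subspace $\im(\beta^\otimes)\cong A\otimes^! B$ with the complete space $B(A^*\times B^*,\bk)$, giving $A\hatotimes^! B\cong B(A^*\times B^*,\bk)\cong\Hom(B^*,A)\cong\Hom(A^*,B)$. I expect the main obstacle to be the middle step: reconciling the abstract coarsest-topology characterization of the $!$ topology with the concrete subspace topology inherited from $B(A^*\times B^*,\bk)$, which hinges on the injectivity of $\beta^\otimes$ and hence on $A^*,B^*$ separating points; the precise identification $\im(\beta^\otimes)=\Hom_\fin(B^*,A)$ (so that Lemma~\ref{lem:Hom1}(e) applies) is the other place demanding care.
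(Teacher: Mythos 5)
Your proof is correct, and it reaches the conclusion by a genuinely different mechanism than the paper. The paper first reduces via biduality to showing $A^*\hatotimes^!B\simeq\Hom(A,B)$, then identifies $A^*\otimes B$ with $\Hom_\fin(A,B)$ through the explicit map $\Ev:\varphi\otimes b\mapsto(a\mapsto\varphi(a)b)$ and verifies \emph{by hand} that the $!$ topology matches the induced topology: the basic opens $S_{U,V}$ (for c-lattices $U,V$) are matched with $U^\perp\otimes B+A^*\otimes V$, the nontrivial direction requiring an explicit construction of complements and dual bases. You instead leave $A\otimes B$ untouched and derive the topological identification abstractly: Proposition~\ref{prop:characterization!topologyTate} says the $!$ topology is the coarsest one making $\beta^\otimes$ continuous, and since the initial topology with respect to a linear map into a linearly topologized space is itself a linear topology, injectivity of $\beta^\otimes$ (which you correctly reduce to $A^*,B^*$ separating points, available since $A\cong A^{**}$, $B\cong B^{**}$) turns this into the subspace topology, i.e., a topological embedding into $B(A^*\times B^*,\bk)$. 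The Adjunction Theorem~\ref{thm:adjunction} then converts that bilinear-form model into $\Hom(B^*,A)$ and $\Hom(A^*,B)$ simultaneously, which also gives you the second isomorphism for free, where the paper simply invokes biduality. Both arguments share the same endgame -- density of finite-rank operators (Lemma~\ref{lem:Hom1}(e)), completeness of the $\Hom$ space (Lemma~\ref{lem:Hom2}(b)), and Lemma~\ref{lem:completion-dense} -- and your identification $\im(j)=\Hom_\fin(B^*,A)$ plays the role of the paper's $\Ev$. What your route buys is the elimination of the paper's explicit open-set computation in favor of category-style arguments; what it costs is reliance on two heavier prior results (the adjunction theorem and the coarsest-topology characterization), whereas the paper's proof needs only the raw definition of the $!$ topology. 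No circularity arises, since both of those results are established before Proposition~\ref{prop:!otimes-Hom} and do not use it.

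Two small steps deserve the care you partly flagged. First, the passage from ``coarsest topology making $\beta^\otimes$ continuous'' to ``subspace topology'' should explicitly note that the initial topology of a linear map into a linearly topologized space is again a linear topology (preimages of open linear subspaces form a linear neighborhood basis of $0$), and is Hausdorff precisely because $\beta^\otimes$ is injective; this is why the coarsest \emph{linear} topology of Lemma~\ref{lem:characterization!topology} coincides with the initial one. Second, in identifying $\im(j)=\Hom_\fin(B^*,A)$ you need the coordinate functionals of a finite-rank continuous $f:B^*\to A$ to be \emph{continuous}, so that they lie in $(B^*)^*\cong B$; this follows because a finite dimensional subspace of a Hausdorff linearly topologized vector space is discrete, so the coordinate projections on $\im(f)$ are automatically continuous. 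With these glosses supplied, your argument is complete.
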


\begin{proof}
Since Tate spaces are isomorphic to their biduals it is enough to prove that 
\begin{equation} \label{eq:A*!otimesB}
A^*\hatotimes^!B\simeq \Hom(A,B).
\end{equation}
Recall from~\S\ref{ss:comp-disc-fin} the subspace
$\Hom_\fin(A,B)\subset \Hom(A,B)$ of finite rank homomorphisms. This is a dense subspace (Lemma~\ref{lem:Hom1}(e) with $A$ Tate) of the space $\Hom(A,B)$ which is complete (Lemma~\ref{lem:Hom2}), so that $\widehat{\Hom_\fin(A,B)}=\Hom(A,B)$. 

On the other hand we have a canonical linear bijective correspondence 
$$
\Ev:A^*\otimes B\simeq \Hom_\fin(A,B), \qquad \varphi\otimes b\mapsto (a\mapsto \varphi(a)b).
$$ 
In order to prove~\eqref{eq:A*!otimesB} it is therefore enough to show that, under the identification $\Ev$, the $!$ topology on $A^*\otimes B$ corresponds to the topology induced on $\Hom_\fin(A,B)$ from $\Hom(A,B)$. 

A neighborhood basis of $0$ in $\Hom(A,B)$ is given by
$S_{U,V}=\{f:A\to B\mid f(U)\subset V\}$ for $U\subset A$ and
$V\subset B$ c-lattices. A neighborhood basis of $0$ in $A^*\otimes^! B$ is given by $U^\perp\otimes B + A\otimes V$ for $U\subset A$ and $V\subset B$ c-lattices. Let $f\in \Hom_\fin(A,B)$. We claim that $f(U)\subset V$ if and only if $f\in\Ev(U^\perp\otimes B + A\otimes V)$. 

($\Leftarrow$) This is clear by definition of $\Ev$. 

($\Rightarrow$) Let $D=f(A)$. Since $f\in \Hom_\fin(A,B)$ we infer that $\ker f$ is open, hence $\ker f\cap U$ has finite codimension in $U$. We choose $D^U$ a finite dimensional complement of $\ker f\cap U$ in $U$, $H$ a (discrete) complement of $U\cap \ker f$ in $\ker f$, and $D'$ a finite dimensional complement of $D^U\oplus \ker f$ in $A$. Then $f:D^U\oplus D'\stackrel\simeq\longrightarrow D$. Choose bases $e_1,\dots,e_k$ of $D^U$, $f_1,\dots,f_\ell$ of $D'$, with $f$-images $v_1,\dots,v_k\in V$ and $b_1,\dots,b_\ell\in B$. Extend the map $e_i\mapsto 1$ by $0$ on $\ker f$ and on the other basis elements of $D^U\oplus D'$, and denote this extension $e_i^*$ for $i=1,\dots,k$. Define $f_j^*$ similarly for $j=1,\dots,\ell$. 
We then have $f=\Ev(z)$ with $z= \sum_{j=1}^\ell f_j^*\otimes b_j +
\sum_{i=1}^k e_i^*\otimes v_i \in U^\perp\otimes B + A^*\otimes V$. This concludes the proof.
\end{proof}

\begin{remark}[Tensor products of Banach spaces]\label{rem:tensor-Banach}
It is pointed out by Positselski~\cite{Positselski-slides} that the
$*$ and $!$ topologies above are analogues of the projective and
injective topologies on tensor products of real Banach spaces $X,Y$.
Let us recall the definitions from~\cite[\S2 and \S3]{Ryan}.

(i) The {\em projective norm} of $u\in X\otimes Y$ is 
$$
  \pi(u) := \inf\Bigl\{\sum_{i=1}^n\|x_i\|\,\|y_i\|\;\Bigl|\;
   u=\sum_{i=1}^nx_i\otimes y_i \Bigr\}.
$$
In view of Proposition~\ref{prop:*topology-universal}, this is
analogous to the $*$ topology. An analogue of our Proposition~\ref{prop:*topological} 
for the projective norm on Banach spaces is given in~\cite[Theorem 2.9]{Ryan}.

(ii) The {\em injective norm} of $u=\sum_{i=1}^nx_i\otimes y_i\in
X\otimes Y$ is 
$$
  \eps(u) := \sup\Bigl\{\bigl|\sum_{i=1}^n\varphi(x_i)\psi(y_i)\bigr|\;\Bigl|\;
    \varphi\in X^*,\psi\in Y^*,\|\varphi\|=\|\psi\|=1\Bigr\}.
$$
In view of Proposition~\ref{prop:characterization!topologyTate}, this
is analogous to the $!$ topology on tensor products of Tate vector
spaces. Away from Tate spaces, the correct analogue of the injective
tensor product appears to be the new linear topology from
Lemma~\ref{lem:characterization!topology}. As such, it may be
important \emph{per se}. 
\end{remark}

\subsection{Linearly compact spaces and discrete spaces} \label{sec:lincomp-discr-tensor}

The following result is stated without proof in~\cite[Remark~12.1]{Positselski} and~\cite[\S1.1]{Beilinson}.

\begin{lemma}\label{lem:A*otimesBlinearlycompact}
Assume $A$ and $B$ are linearly compact. Then all three topologies on $A\otimes B$ coincide, and 
$$
A\hatotimes^*B=A\hatotimes^\leftarrow B=A\hatotimes^!B=\Hom(B^*,A)
$$
is linearly compact. 
\end{lemma}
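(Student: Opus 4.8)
The plan is to first establish that the three topologies coincide on the algebraic tensor product $A\otimes B$, and then to identify the common completion with $\Hom(B^*,A)$. Since $A$ and $B$ are linearly compact they are in particular Tate---each is its own c-lattice---so Proposition~\ref{prop:!otimes-Hom} will be available. Recall that among the three the $*$ topology is the finest and the $!$ topology the coarsest, so that every $!$-open subspace is automatically $*$-open. To prove equality it therefore suffices to establish the reverse inclusion: that every subspace $Q\subset A\otimes B$ which is open in the $*$ topology is also open in the $!$ topology. Here the finite dimensionality of the quotients $A/U$ and $B/V$, guaranteed by linear compactness, is exactly what makes the two notions collapse; the $^\leftarrow$ topology is then squeezed between them.

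So I would assume $Q\subset A\otimes B$ is $*$-open. By condition (i) of Definition~\ref{def:*top-alex} there are open linear subspaces $U\subset A$ and $V\subset B$ with $U\otimes V\subset Q$. Since $A$ and $B$ are linearly compact, $A/U$ and $B/V$ are finite dimensional; I would choose representatives $a_1,\dots,a_m\in A$ and $b_1,\dots,b_n\in B$ of bases of these quotients, so that $A=U+{\rm span}\{a_i\}$ and $B=V+{\rm span}\{b_j\}$. Conditions (ii) and (iii) of Definition~\ref{def:*top-alex} then supply open subspaces $V_i\subset B$ with $a_i\otimes V_i\subset Q$ and $U_j\subset A$ with $U_j\otimes b_j\subset Q$. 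Setting $U':=U\cap\bigcap_j U_j$ and $V':=V\cap\bigcap_i V_i$, both open, I would verify $U'\otimes B+A\otimes V'\subset Q$: for $u\in U'$ and $b\in B$, writing $b=\sum_j\lambda_j b_j+v$ with $v\in V$ gives $u\otimes b=\sum_j\lambda_j(u\otimes b_j)+u\otimes v\in Q$, using $u\in U_j$ and $U\otimes V\subset Q$; the inclusion $A\otimes V'\subset Q$ is symmetric. Hence $Q$ contains a basic $!$-open subspace and is itself $!$-open by Definition~\ref{def:!top}. This is the computational heart of the argument.

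It follows that the $*$, $^\leftarrow$ and $!$ topologies agree on $A\otimes B$, whence so do their completions: $A\hatotimes^*B=A\hatotimes^\leftarrow B=A\hatotimes^!B$. To see that this common space is linearly compact and equals $\Hom(B^*,A)$, I would invoke Proposition~\ref{prop:!otimes-Hom}, which gives $A\hatotimes^!B\simeq\Hom(B^*,A)$ since $A,B$ are Tate. By Theorem~\ref{thm:Tate-duality}(b) the dual $B^*$ is discrete, and $A$ is linearly compact, so $\Hom(B^*,A)$ is linearly compact by Lemma~\ref{lem:Hom1}(b); this simultaneously discharges the claim deferred from Proposition~\ref{prop:*otimes-lincompact}(b). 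The only delicate point is the second paragraph: the collapse of the topologies genuinely uses both that $A/U$ and $B/V$ are finite dimensional and that $Q$ is a linear subspace, and one must intersect over the finitely many basis representatives on each factor. Everything else is formal.
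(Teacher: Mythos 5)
Your proof is correct and follows essentially the same route as the paper's: the same reduction to showing that every $*$-open subspace is $!$-open by taking finite dimensional complements of $U$ and $V$ and intersecting the finitely many opens $U_{b_j}$ and $V_{a_i}$, followed by the same appeal to Proposition~\ref{prop:!otimes-Hom} for the identification with $\Hom(B^*,A)$ and to Lemma~\ref{lem:Hom1} for linear compactness. Incidentally, you supply the explicit verification of $U'\otimes B+A\otimes V'\subset Q$ that the paper leaves implicit, and your citation of Lemma~\ref{lem:Hom1}(b) is the correct one (the paper's proof cites part (a), a slip, since here the source $B^*$ is discrete and the target $A$ is linearly compact).
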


\begin{proof}
By Proposition~\ref{prop:!otimes-Hom} we have
$A\hatotimes^!B=\Hom(B^*,A)$, which is linearly compact by
Lemma~\ref{lem:Hom1}(a) because $B^*$ is discrete. 
It remains to show that the $*$ and $!$ topologies coincide, which
amounts to proving that any linear subspace $E\subset A\otimes B$ that
is open in the $*$ topology is also open in the $!$ topology. By
assumption there exist $U\subset A$, $V\subset B$ linear open such
that $U\otimes V\subset E$, for any $a\in A$ there exists $V_a\subset
B$ linear open such that $a\otimes V_a\subset E$, and for any $b\in B$
there exists $U_b\subset A$ linear open such that $U_b\otimes b\subset
E$. Since $A$ and $B$ are linearly compact we find finite dimensional
complements of $U$ in $A$ and of $V$ in $B$, say $A=U\oplus{\rm
  span}\{a_1,\dots,a_k\}$ and $B=V\oplus{\rm
  span}\{b_1,\dots,b_\ell\}$. Let $U'=U\cap U_{b_1}\cap\dots\cap
U_{b_\ell}$ and $V'=V\cap V_{a_1}\cap\dots\cap V_{a_k}$. These are
open linear subspaces such that $U'\otimes B+A\otimes V'\subset E$,
which means that $E$ is open in the $!$ topology. 
\end{proof}

\begin{lemma} \label{lem:tensor_discrete}
Assume that $A$ and $B$ are discrete. Then all three topologies on $A\otimes B$ coincide, and 
$$
A\otimes B = A\hatotimes^*B=A\hatotimes^\leftarrow B=A\hatotimes^!B  \simeq \Hom(B^*,A) \simeq \Hom(A^*,B)
$$
is discrete.
\end{lemma}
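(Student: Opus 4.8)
The plan is to reduce everything to the single observation that, for discrete $A$ and $B$, the coarsest of Beilinson's topologies --- the $!$ topology --- is already discrete. Since the $!$ topology is the coarsest of the three and the discrete topology is the finest possible linear topology, this forces the $*$, $^\leftarrow$ and $!$ topologies to all coincide with the discrete topology, after which the completed tensor products require no completion.

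First I would show directly that $A\otimes^! B$ is discrete. As $A$ and $B$ are discrete, the zero subspaces $\{0\}\subset A$ and $\{0\}\subset B$ are open. By Definition~\ref{def:!top}, a linear subspace $Q\subset A\otimes B$ is $!$-open as soon as $U\otimes B+A\otimes V\subset Q$ for some open $U\subset A$ and $V\subset B$; taking $U=\{0\}$ and $V=\{0\}$ gives $\{0\}\otimes B+A\otimes\{0\}=\{0\}\subset Q$, which holds for every linear subspace $Q$. Hence every linear subspace is $!$-open, i.e.\ $\{0\}$ is $!$-open, so the $!$ topology is discrete. Because the $!$ topology is the coarsest among the three and $\{0\}$ is already open in it, $\{0\}$ is open in the finer $^\leftarrow$ and $*$ topologies as well; since no linear topology is finer than the discrete one, all three topologies equal the discrete topology. (As a cross-check, discreteness of $A\otimes^*B$ is also recorded in Proposition~\ref{prop:*otimes-lincompact}(a).)

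Next, discrete spaces are complete, so the three completions are trivial and $A\otimes B=A\hatotimes^*B=A\hatotimes^\leftarrow B=A\hatotimes^!B$. For the identifications with $\Hom$ spaces I would invoke Proposition~\ref{prop:!otimes-Hom}, which requires $A$ and $B$ to be Tate. The one point deserving an explicit remark is precisely that a discrete space is Tate: its zero subspace $\{0\}$ is open (by discreteness) and linearly bounded (being $0$-dimensional, so $\dim(\{0\}/\{0\}\cap U)=0$ for every open $U$), hence a c-lattice, while discreteness also gives completeness. Applying Proposition~\ref{prop:!otimes-Hom} to the Tate spaces $A,B$ then yields $A\hatotimes^!B\simeq\Hom(B^*,A)\simeq\Hom(A^*,B)$. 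As a consistency check, $B^*$ is linearly compact by Theorem~\ref{thm:Tate-duality}(a) and $A$ is discrete, so $\Hom(B^*,A)$ is discrete by Lemma~\ref{lem:Hom1}(a), in agreement with the left-hand side being discrete.

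The argument presents no genuine obstacle: the only steps requiring a little care are keeping the directions of ``finest/coarsest'' straight when concluding that all three topologies collapse onto the discrete one, and noticing that discreteness implies the Tate property, so that Proposition~\ref{prop:!otimes-Hom} is actually available. Everything else is a direct unwinding of the definitions.
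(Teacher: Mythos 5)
Your proposal is correct and follows essentially the same route as the paper: the paper's proof also declares the coincidence and discreteness of the three topologies ``immediate from the definitions'' (your $U=V=\{0\}$ argument in the coarsest $!$ topology is exactly the right way to make this explicit) and likewise obtains the $\Hom$ identifications from Proposition~\ref{prop:!otimes-Hom}. Your added observation that discrete spaces are Tate, which licenses that proposition, is a worthwhile detail the paper leaves implicit.
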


\begin{proof}
That all three topologies on $A\otimes B$ are discrete and coincide
follows immediately from the definitions. This proves the first three
equalities. The last equality follows from Proposition~\ref{prop:!otimes-Hom}.
\end{proof}

\begin{proposition} \label{prop:duality*!}
Assume that $A$, $B$ are both linearly compact, or both discrete. Then we have topological isomorphisms
$$
(A\hatotimes^* B)^*\simeq A^*\hatotimes^! B^*,\qquad (A\hatotimes^! B)^*\simeq A^*\hatotimes^*B^*.
$$
\end{proposition}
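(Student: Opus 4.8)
The plan is to observe that in each of the two cases the two asserted isomorphisms collapse into a single one, and then to obtain that single isomorphism as a composite of the $\Hom$-descriptions of the $*$ and $!$ completed tensor products already established.

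First I would carry out the reduction. When $A$ and $B$ are both linearly compact, Lemma~\ref{lem:A*otimesBlinearlycompact} shows that the $*$ and $!$ topologies on $A\otimes B$ agree, so $A\hatotimes^*B=A\hatotimes^!B$; at the same time $A^*$ and $B^*$ are discrete by Theorem~\ref{thm:Tate-duality}(b), so Lemma~\ref{lem:tensor_discrete} gives $A^*\hatotimes^*B^*=A^*\hatotimes^!B^*$. Hence the two displayed isomorphisms become literally the same statement. Symmetrically, when $A$ and $B$ are both discrete, Lemma~\ref{lem:tensor_discrete} collapses the tensor products of $A,B$, while $A^*,B^*$ are linearly compact by Theorem~\ref{thm:Tate-duality}(a) and Lemma~\ref{lem:A*otimesBlinearlycompact} collapses the tensor products of the duals. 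In either regime it therefore suffices to produce one topological isomorphism
$$(A\hatotimes^*B)^* \simeq A^*\hatotimes^!B^*.$$

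Next I would build this isomorphism by chaining three canonical topological isomorphisms. Taking $C=\bk$ in Proposition~\ref{prop:*topological} identifies $(A\hatotimes^*B)^*=\Hom(A\hatotimes^*B,\bk)$ with $B(A\times B,\bk)$; the key point is that this arrow is genuinely \emph{topological}, not merely a continuous bijection, because $\bk$ is both discrete and (being finite dimensional and discrete) linearly compact, so case (a) of Proposition~\ref{prop:*topological} applies when $A,B$ are linearly compact and case (b) applies when $A,B$ are discrete. Since $B$ is Tate in both regimes, the Adjunction Theorem~\ref{thm:adjunction} then gives $B(A\times B,\bk)\simeq\Hom(A,\Hom(B,\bk))=\Hom(A,B^*)$. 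Finally, $A^*$ and $B^*$ are Tate, so Proposition~\ref{prop:!otimes-Hom} applied to them yields $A^*\hatotimes^!B^*\simeq\Hom((A^*)^*,B^*)$, and reflexivity $(A^*)^*\simeq A$ from Theorem~\ref{thm:Tate-duality}(c) rewrites this as $\Hom(A,B^*)$. Composing,
$$(A\hatotimes^*B)^* \simeq B(A\times B,\bk) \simeq \Hom(A,B^*) \simeq A^*\hatotimes^!B^*.$$

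I would close by tracing an elementary tensor through the chain to confirm that the composite is the expected canonical map: $\varphi\otimes\psi\in A^*\otimes B^*$ is sent by $\Ev$ to $a\mapsto\varphi(a)\psi$, which under adjunction is the bilinear form $(a,b)\mapsto\varphi(a)\psi(b)$, which under Proposition~\ref{prop:*topological} is exactly the functional $a\otimes b\mapsto\varphi(a)\psi(b)$ on $A\hatotimes^*B$; thus the isomorphism is induced by the natural pairing. The main obstacle here is not any single hard estimate but the careful bookkeeping of hypotheses — one must check that every cited result applies in \emph{both} the linearly compact and the discrete regime, and in particular notice that $\bk$ simultaneously satisfies the conditions of cases (a) and (b) of Proposition~\ref{prop:*topological}, which is precisely what makes the first arrow a topological isomorphism rather than a mere continuous bijection. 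One must likewise invoke reflexivity only on Tate spaces, which is legitimate since both linearly compact and discrete spaces are Tate.
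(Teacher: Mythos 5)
Your proof is correct and takes essentially the same route as the paper's: the paper establishes the first isomorphism by exactly your chain $(A\hatotimes^* B)^*\simeq B(A\times B,\bk)\simeq \Hom(A,B^*)\simeq A^*\hatotimes^! B^*$, citing Proposition~\ref{prop:*topological}, Theorem~\ref{thm:adjunction}, and Proposition~\ref{prop:!otimes-Hom}, and your hypothesis bookkeeping (in particular that $\bk$ is simultaneously discrete and linearly compact, so both cases of Proposition~\ref{prop:*topological} apply) is exactly what makes that chain work. The only minor difference is that the paper deduces the second isomorphism from the first ``by duality'', whereas you note that in both regimes the two claims collapse into one via Lemma~\ref{lem:A*otimesBlinearlycompact} and Lemma~\ref{lem:tensor_discrete} --- an equally valid and slightly more economical shortcut, since it avoids invoking biduality of the tensor product.
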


\begin{proof}
The first claim follows from the sequence of topological isomorphisms 
$$
(A\hatotimes^* B)^*\simeq B(A\times B,\bk)\simeq \Hom(A,B^*)\simeq A^*\hatotimes^! B^*.
$$
Here the first isomorphism is Proposition~\ref{prop:*topological}, the second one Theorem~\ref{thm:adjunction}, and the third one Proposition~\ref{prop:!otimes-Hom}. 
The second claim follows from the first one by duality.
\end{proof}

The following example shows that the completed $*$ and $!$ tensor products may be different.

\begin{example}[\cite{Positselski-slides}]
Let $A$ be complete and $B$ discrete with basis $\{b_x\mid x\in
X\}$. Denote 
$$
  A[X] := \bigoplus_{x\in X}A,\qquad 
  A[[X]] := \{(a_x)\in\prod_{x\in X}A \text{ zero-convergent}\},
$$
where $(a_x)$ is called {\em zero-convergent} if $\{x\mid a_x\notin
U\}$ is finite for each open linear subspace $U\subset A$. Then
$$
  A\hatotimes^*B = A\otimes^*B = A[X]\quad\text{and}\quad A\hatotimes^!B = A[[X]].
$$
For example, for $A=\bk[[s]]$ (linearly compact) and $B=\bk[t]$ we get
$$
  A\hatotimes^*B = \bk[[s]][t] \subsetneq A[[\N_0]] = A\hatotimes^!B
$$
because $\sum_{n\geq 0}s^nt^n\in A[[\N_0]]\setminus \bk[[s]][t]$. 
\end{example}

\subsection{Ind-linearly compact and pro-discrete spaces} \label{sec:ind-pro}

In order to upgrade Proposition~\ref{prop:duality*!} to Tate spaces $A,B$ we need to work outside the category of Tate spaces. The fundamental reason is that, if $A$ and $B$ are Tate, then $A\hatotimes^! B\simeq \Hom(A^*,B)$ need not be Tate, cf.~Lemma~\ref{lem:Hom1}(c-d), and $A\hatotimes^* B$ need not be Tate either, cf.~\cite[Example~13.1(1)]{Positselski}.
Following Esposito-Penkov~\cite{Esposito-Penkov22,Esposito-Penkov23}, we consider the following categories of ind-linearly compact spaces and pro-discrete spaces.\footnote{Esposito-Penkov work under various countability assumptions, which we do not impose. We discuss specific consequences of countability in~\S\ref{sec:count}.} We work in the category of complete
linearly topologized Hausdorff spaces, with limits and colimits over arbitrary index sets.

\begin{definition}
A linearly topologized Hausdorff space $A$ is called
\begin{itemize}
\item {\em ind-linearly compact} if $A=\wh\colim A_i$ for a direct system
  of linearly compact spaces $A_i$;
\item {\em pro-discrete} if $A=\lim A_i$ for an inverse system
  of discrete spaces $A_i$. 
\end{itemize}  
\end{definition}
Here $\wh\colim A_i$ denotes the completion of the colimit as in~\S\ref{sec:completeness} (this is
the correct notion of a colimit in the category of complete linearly
topologized spaces).
These notions taken from~\cite{Esposito-Penkov22} (minus the
countability hypotheses) correspond to our earlier notions as follows: 
$$
\xymatrix
@C=50pt
{
\text{ind-linearly compact} \ar@{<->}[r] \ar@{<->}[d] &
\text{linearly compactly generated} \ar@{<->}[d] \\
\text{pro-discrete} \ar@{<->}[r] & \text{complete} \\
}
$$
Here the vertical arrows are duality, where the left one
is~\cite{Esposito-Penkov22} while the right one is our
Lemma~\ref{lem:Hom2}. 
The horizontal arrows follow from Corollary~\ref{cor:lim-colim-duality}.

\subsection{Tensor products on the categories $\cI$ and $\cP$} \label{sec:tensor-ind-pro}

We denote the categories of ind-linearly compact and pro-discrete
spaces (with continuous linear maps as morphisms) by $\cI$ and $\cP$,
respectively. Let $Tate$ be the category of Tate vector spaces.

\begin{lemma} \label{lem:IPTate} 
We have 
$$
\cI\cap \cP\supseteq Tate.
$$
\end{lemma}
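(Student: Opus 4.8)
The plan is to deduce the inclusion directly from Lemma~\ref{lem:Hom2}(a), which tells us that every Tate vector space $A$ is simultaneously complete and linearly compactly generated. These are precisely the two properties sitting in the right-hand column of the correspondence diagram of \S\ref{sec:ind-pro}, whose horizontal partners are ``pro-discrete'' and ``ind-linearly compact''. So the content reduces to translating completeness into membership in $\cP$ and linear compact generation into membership in $\cI$; I would carry out both translations by hand rather than merely invoking the diagram, since the argument is short and makes the structure transparent.

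First I would treat the pro-discrete direction. Completeness of $A$ means that the canonical map $A \to \wh A = \lim_{U \in \cU} A/U$ is an isomorphism, where $\cU$ is the collection of open linear subspaces of $A$ (cf.\ \S\ref{sec:completeness}). Each quotient $A/U$ is discrete, since the preimage of $0$ under the projection $A \to A/U$ is the open subspace $U$, so that $\{0\}$ is open in $A/U$. Hence $A \cong \lim_{U} A/U$ exhibits $A$ as the limit of an inverse system of discrete spaces, which is exactly the definition of $A \in \cP$.

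For the ind-linearly compact direction, linear compact generation provides a topological isomorphism $\colim_{K} K \stackrel{\simeq}{\longrightarrow} A$, where $K$ runs over the linearly compact subspaces of $A$. This family is directed, because the sum of two linearly compact subspaces is again linearly compact (as recorded in the footnote to Lemma~\ref{lem:hom-lim}), so any two members admit an upper bound. Since the isomorphism transports completeness of $A$ back to $\colim_K K$, the latter is already complete, whence $\wh\colim_K K \cong \colim_K K \cong A$. Writing $A = \wh\colim_K K$ with each $K$ linearly compact is precisely the statement $A \in \cI$. The one point demanding care is this last identification of the completed colimit $\wh\colim$ (the genuine colimit in the category of complete spaces, used in the definition of $\cI$) with $A$ itself; completeness of $A$ resolves it cleanly, and is the only place that hypothesis is really used. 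Beyond that the argument is formal, so I do not expect a substantive obstacle — indeed one could bypass these two paragraphs entirely by citing the horizontal equivalences of \S\ref{sec:ind-pro}, which follow from Corollary~\ref{cor:lim-colim-duality}.
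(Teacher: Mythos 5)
Your proof is correct, but it takes a different route from the paper's. The paper's proof starts from the topological splitting $V=L\oplus D$ of Lemma~\ref{lem:top-splitting} and exhibits completely explicit systems: $V=\colim_F(L\oplus F)$ over the finite dimensional subspaces $F\subset D$ (each $L\oplus F$ linearly compact) for membership in $\cI$, and $V=\lim_U(L/U\oplus D)$ over the open subspaces $U\subset L$ (each $L/U\oplus D$ discrete, using completeness of $L$) for membership in $\cP$. You instead invoke Lemma~\ref{lem:Hom2}(a) (Tate $\Rightarrow$ complete and linearly compactly generated) and translate those two properties by hand: $A=\lim_{U}A/U$ with discrete quotients for $\cP$, and $A=\colim_K K$ over all linearly compact subspaces for $\cI$. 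Both arguments are sound, and you correctly identify and handle the one real subtlety in the $\cI$ direction — that the definition of $\cI$ uses the completed colimit $\wh\colim$, so completeness of $A$ is needed to identify $\wh\colim_K K$ with $\colim_K K\cong A$ (the paper's proof glosses over this same point, where it is resolved identically by completeness of $V$). Your directedness check for the family of linearly compact subspaces, via the footnoted fact that a finite sum of linearly compact subspaces is linearly compact, is also the right justification. What each approach buys: yours is more economical in that it reuses an established lemma and makes visible that the lemma is just the conjunction of the two vertical-to-horizontal translations in the diagram of \S\ref{sec:ind-pro}; the paper's is self-contained at this point in the text, needs only the elementary splitting $L\oplus D$ rather than the compact-generation machinery of Lemma~\ref{lem:Hom2}, and produces systems indexed by concrete, simple families, which is closer in spirit to the countable refinements used later in Proposition~\ref{prop:IPTateomega}.
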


\begin{proof}
Given a Tate vector space $V$ we present it as $V=L\oplus D$ with $L$ linearly compact and $D$ discrete (Lemma~\ref{lem:top-splitting}). To prove that $V\in\cI$ we note that $D=\colim F$ with $F\subset D$ finite dimensional, so that $V=\colim (L\oplus F)$ with $L\oplus F$ linearly compact. To prove that $V\in\cP$ we note that, by completeness of $L$, we have $L=\lim L/U$ with $U\subset L$ open, hence $L/U$ discrete, and therefore $V=\lim (L/U\oplus D)$ with $L/U\oplus D$ discrete. 
\end{proof}
 
\begin{remark}
It is not know to us at the time of writing whether the inclusion from Lemma~\ref{lem:IPTate} is an equality. We prove an equality under countability assumptions in Proposition~\ref{prop:IPTateomega}.
\end{remark}

The next result can be interpreted as a generalization of the Duality Theorem~\ref{thm:Tate-duality} for Tate vector spaces.

\begin{corollary}[{\cite{Esposito-Penkov23}}] \label{cor:dualityIP} Topological duality $V\mapsto V^*$ induces an involutive equivalence of categories $\cI\simeq \cP$. 
\end{corollary}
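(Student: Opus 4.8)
The plan is to treat topological duality as a contravariant functor $(-)^*$, which is well defined and continuous on morphisms by the discussion of $f\mapsto f^*$ in \S\ref{sec:Tate}, and to establish three things: that it carries $\cP$ into $\cI$, that it carries $\cI$ into $\cP$, and that the canonical biduality map $V\to V^{**}$ is a topological isomorphism for every $V$ in either category. Together these exhibit $(-)^*\colon\cI\to\cP$ and $(-)^*\colon\cP\to\cI$ as mutually quasi-inverse contravariant equivalences, which is precisely an involutive equivalence.

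First I would record the auxiliary fact that the dual is insensitive to completion: for any $W$, restriction induces a topological isomorphism $(\wh W)^*\stackrel{\simeq}{\to} W^*$. Indeed, a continuous functional on $W$ has open kernel since $\bk$ is discrete, hence factors through some quotient $W/U$ with $U$ open; writing $\wh W=\lim_U W/U$ and applying Corollary~\ref{cor:lim-colim-duality}(b) gives $(\wh W)^*\simeq\colim_U(W/U)^*\simeq W^*$. With this in hand, the two containments are short computations. For $\cP\to\cI$, given $V=\lim A_i$ with the $A_i$ discrete, Corollary~\ref{cor:lim-colim-duality}(b) yields $V^*=(\lim A_i)^*\simeq\colim A_i^*$, where each $A_i^*$ is linearly compact by Theorem~\ref{thm:Tate-duality}(a); since $V^*$ is complete by Lemma~\ref{lem:Hom2}(b), taking completions gives $V^*\simeq\wh\colim A_i^*$, so $V^*\in\cI$. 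For $\cI\to\cP$, given $V=\wh\colim A_i$ with the $A_i$ linearly compact, completion-invariance of the dual gives $V^*\simeq(\colim A_i)^*$, and Corollary~\ref{cor:lim-colim-duality}(a)—applicable because linearly compact spaces are Tate—identifies this with $\lim A_i^*$, where each $A_i^*$ is discrete by Theorem~\ref{thm:Tate-duality}(b); hence $V^*\in\cP$.

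For involutivity I would compute the bidual by iterating the two corollaries. For $V=\lim A_i\in\cP$ the first containment gives $V^*\simeq\colim A_i^*$ with the $A_i^*$ Tate, so applying Corollary~\ref{cor:lim-colim-duality}(a) to the direct system $(A_i^*)$ yields $V^{**}\simeq(\colim A_i^*)^*\simeq\lim A_i^{**}\simeq\lim A_i=V$, the last step using the Tate biduality $A_i^{**}\cong A_i$ of Theorem~\ref{thm:Tate-duality}(c). The case $V=\wh\colim A_i\in\cI$ is dual: $V^*\simeq\lim A_i^*$ with the $A_i^*$ discrete, and Corollary~\ref{cor:lim-colim-duality}(b) gives $V^{**}\simeq(\lim A_i^*)^*\simeq\colim A_i^{**}\simeq\colim A_i$, which equals $\wh\colim A_i=V$ since $V^{**}$ is complete.

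The hard part will be naturality: verifying that this chain of canonical isomorphisms is genuinely the evaluation map $V\to V^{**}$, and not merely an abstract isomorphism $V\cong V^{**}$. This reduces to checking that the isomorphisms of Corollary~\ref{cor:lim-colim-duality} are compatible with the evaluation maps on the building blocks $A_i$—for which biduality is canonical by Theorem~\ref{thm:Tate-duality}(c)—and that evaluation commutes with the relevant limits, colimits, and completions. I expect this to be a routine but somewhat tedious diagram chase, and it is the only genuinely delicate point; once it is settled, the canonical biduality map is an isomorphism on both $\cI$ and $\cP$, and the equivalence follows.
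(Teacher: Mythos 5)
Your overall architecture coincides with the paper's: the paper proves this corollary in one line as a consequence of Corollary~\ref{cor:lim-colim-duality}, applied levelwise together with Theorem~\ref{thm:Tate-duality} and naturality of biduality, exactly as you propose. The genuine gap is your auxiliary fact that restriction $(\wh W)^*\to W^*$ is a topological isomorphism \emph{for any} $W$: this is false. Take $W=\bk[t]$ with the topology induced from $\wh W=\bk[[t]]$. Every linearly compact subspace of $W$ is finite dimensional (an infinite dimensional one, being complete, would be closed in $\bk[[t]]$ and would contain the limit of a convergent series of polynomials of unbounded degree, which is not a polynomial), so the compact-open topology on $W^*$ has only the subspaces $F^\perp$ with $F$ finite dimensional as basic opens and is \emph{not} discrete; but $(\wh W)^*$ is discrete by Theorem~\ref{thm:Tate-duality}(b), since $\wh W$ is linearly compact. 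The error in your derivation is the final step: Corollary~\ref{cor:lim-colim-duality}(b) identifies $\colim_U(W/U)^*$ with $(\lim_U W/U)^*=(\wh W)^*$, not with $W^*$; the canonical map $\colim_U(W/U)^*\to W^*$ is a continuous bijection but need not be open, precisely because $W$ may have too few linearly compact subspaces. Indeed, using Hahn--Banach (Corollary~\ref{cor:general-Hahn-Banach}) one checks that openness of $(\wh W)^*\to W^*$ is equivalent to every linearly compact $L\subset\wh W$ being contained in a linearly compact subspace of $W$, which fails in the example with $L=\bk[[t]]$.

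In the one place you apply the fact, namely $W=\colim A_i$ with $A_i$ linearly compact, the needed statement is plausible but requires an analogue of Lemma~\ref{lem:compact-in-colim} for the completion $\wh\colim A_i$, which you do not supply (the paper itself passes silently from $(\wh\colim)^*$ to $(\colim)^*$ in the proof of Proposition~\ref{prop:duality_intertwines}(c), so the lacuna is shared, but a self-contained proof must close it). There is a second, related gap: in the $\cP\to\cI$ direction you invoke Lemma~\ref{lem:Hom2}(b) to conclude that $V^*$ is complete, but that lemma requires the \emph{source} $V$ to be linearly compactly generated, and for a pro-discrete $V=\lim A_i$ this is not established anywhere in the paper; completeness of $\colim A_i^*$ is exactly the delicate point — it is the reason $\cI$ is defined via the completed colimit, and it is only proved under the countability hypotheses of Section~\ref{sec:count} (Lemma~\ref{lem:colim_countable}(c)). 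Your concluding naturality discussion is fine and correctly flagged as routine; the missing work sits in these two completeness/compactness steps, not there.
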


\begin{proof}
This is a straightforward consequence of Corollary~\ref{cor:lim-colim-duality}.
\end{proof}

\begin{proposition}[{cf.~\cite[Proposition 2.3]{Esposito-Penkov23}}] \label{prop:duality_intertwines}
(a) The category $\cI$ is closed under the tensor product
  $\hatotimes^*$. \\ 
(b) The category $\cP$ is closed under the tensor product
  $\hatotimes^!$. \\ 
(c) The duality between $\cI$ and $\cP$ intertwines $\hatotimes^*$ and
  $\hatotimes^!$. \\
\end{proposition}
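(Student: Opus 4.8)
The plan is to reduce the proposition to a single substantial fact—that the completed tensor product $\hatotimes^*$ commutes with completed colimits $\wh\colim$—and to read off the rest from the base cases in Proposition~\ref{prop:*otimes-lincompact} and Lemma~\ref{lem:tensor_discrete}, the base-case duality in Proposition~\ref{prop:duality*!}, and Corollary~\ref{cor:lim-colim-duality}. Part (b) is essentially formal: for any complete $A$ and $B$, unravelling Definition~\ref{def:!top} gives
\[
  A\hatotimes^! B \;=\; \lim_{U,V}\,(A/U)\otimes(B/V),
\]
where $U\subset A$ and $V\subset B$ range over the open linear subspaces, since the $U\otimes B+A\otimes V$ form a neighbourhood basis of $0$ and $(A\otimes B)/(U\otimes B+A\otimes V)\cong(A/U)\otimes(B/V)$. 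The right-hand side is a limit of discrete spaces, so $A\hatotimes^! B$ is pro-discrete; in particular $\cP$ is closed under $\hatotimes^!$.

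First I would establish the commutation principle behind (a). Writing $A=\wh\colim_i A_i$ and $B=\wh\colim_j B_j$ with $A_i,B_j$ linearly compact, the goal is
\[
  A\hatotimes^* B \;\cong\; \wh\colim_{i,j}\,(A_i\hatotimes^* B_j);
\]
since each $A_i\hatotimes^* B_j$ is linearly compact by Proposition~\ref{prop:*otimes-lincompact}(b), this exhibits $A\hatotimes^* B$ as ind-linearly compact and proves (a). I would prove the displayed isomorphism by testing against an arbitrary complete space $C$. Using the universal property of the $*$ topology (Proposition~\ref{prop:*topology-universal}(b)) on both sides, the claim becomes a natural identification
\[
  B\bigl(A\times B,\,C\bigr)\;\cong\;\lim_{i,j}\,B\bigl(A_i\times B_j,\,C\bigr),
\]
after which a Yoneda argument (both $A\hatotimes^* B$ and $\wh\colim_{ij}(A_i\hatotimes^* B_j)$ being complete) upgrades it to the desired topological isomorphism. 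A continuous bilinear map on $A\times B$ restricts to a compatible family on the $A_i\times B_j$; conversely, since $\colim A_i\otimes\colim B_j=\colim_{ij}(A_i\otimes B_j)$ algebraically and $\colim A_i$, $\colim B_j$ are dense in their completions, a compatible family assembles to a bilinear map on the dense subspace, which then extends to $A\times B$ and to the completion $A\hatotimes^* B\to C$. When one factor is kept linearly compact, hence Tate, the Adjunction Theorem~\ref{thm:adjunction} rewrites $B(A_i\times B_j,C)$ as $\Hom(A_i,\Hom(B_j,C))$ and the matching of limits follows from Lemma~\ref{lem:hom-lim}(b).

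Part (c) then combines the two ingredients. For $A,B\in\cI$ I would compute, using (a), Corollary~\ref{cor:lim-colim-duality}(a) (each $A_i\hatotimes^* B_j$ being linearly compact and hence Tate), and the base-case duality Proposition~\ref{prop:duality*!},
\[
  (A\hatotimes^* B)^*
  \;\cong\;\lim_{i,j}\,(A_i\hatotimes^* B_j)^*
  \;\cong\;\lim_{i,j}\,(A_i^*\hatotimes^! B_j^*).
\]
On the other hand $A^*=\lim_i A_i^*$ and $B^*=\lim_j B_j^*$ with $A_i^*,B_j^*$ discrete, and the completion formula for $\hatotimes^!$ from (b), together with the cofinality of the projection kernels among the open subspaces of $A^*$ and $B^*$, identifies $A^*\hatotimes^! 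B^*$ with the same limit $\lim_{ij}(A_i^*\hatotimes^! B_j^*)$. Hence $(A\hatotimes^* B)^*\cong A^*\hatotimes^! B^*$, and the symmetric computation gives $(A\hatotimes^! B)^*\cong A^*\hatotimes^* B^*$ for $A,B\in\cP$; by Corollary~\ref{cor:dualityIP} this says precisely that the equivalence $\cI\simeq\cP$ intertwines $\hatotimes^*$ and $\hatotimes^!$.

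The main obstacle is the continuity step inside (a): verifying via Lemma~\ref{lem:bil-cont} that a compatible family of continuous bilinear maps on the $A_i\times B_j$ assembles to a \emph{continuous} bilinear map on $A\times B$. Conditions (ii) and (iii) of that lemma follow readily from the defining property of the colimit topology, but the joint condition (i)—producing a single pair of open subspaces $U\subset A$, $V\subset B$ with $\phi(U\times V)\subset W$—together with the passage between $\colim$ and $\wh\colim$ (so that maps into the complete target $C$ extend uniquely) is where the linear compactness of the $A_i,B_j$, and the Tate/adjunction machinery, must be used.
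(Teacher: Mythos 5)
Your part (b) is correct and in fact slightly stronger than the paper's: the identification $A\hatotimes^!B=\lim_{U,V}(A/U)\otimes(B/V)$, valid for \emph{arbitrary} $A,B$ because the subspaces $U\otimes B+A\otimes V$ form a neighbourhood basis of $0$ with $(A\otimes B)/(U\otimes B+A\otimes V)\cong (A/U)\otimes(B/V)$, exhibits every completed $!$-tensor product as pro-discrete, whereas the paper runs the same computation only through the given presentations $A=\lim A_i$, $B=\lim B_j$. Your Yoneda skeleton for (a) is also legitimate in principle: naturality in $C$ holds, and $\Hom(\colim Z_k,C)\cong\lim\Hom(Z_k,C)$ is a bijection of sets by the universal property of the colimit alone (no Tate hypothesis is needed at the set level), so everything reduces, as you say, to the single identification $B(A\times B,C)\cong\lim_{i,j}B(A_i\times B_j,C)$.

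But that identification is precisely where your proof stops, and the gap is genuine. You acknowledge that the joint continuity condition (i) of Lemma~\ref{lem:bil-cont} --- producing a \emph{single} pair of opens $U=\oplus_iU_i\subset\colim A_i$, $V=\oplus_jV_j\subset\colim B_j$ with $\phi(U\times V)\subset W$, i.e.\ choices of $U_i$ independent of $j$ and $V_j$ independent of $i$, where continuity of each $\phi_{ij}$ only yields pair-dependent choices --- is ``the main obstacle,'' but the machinery you invoke cannot close it: running the adjunction chain via Theorem~\ref{thm:adjunction} and Lemma~\ref{lem:hom-lim} lands you at $\Hom(\colim A_i,\Hom(\colim B_j,C))$, and converting this back into $B(A\times B,C)$ requires Theorem~\ref{thm:adjunction} with second factor $\colim B_j$, which need \emph{not} be Tate (Example~\ref{example:limits-colimits-of-Tate}(b)); swapping the factors fails for the same reason. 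Equivalently, your unproven step is exactly the statement that the $*$-topology on $A\otimes B$ agrees with the colimit topology of the $A_i\otimes^*B_j$ (so that the completions coincide), which is the heart of the paper's proof: there it is established directly, by writing out the basic opens of the colimit topology as $\oplus_{i,j}Q_{ij}$ and matching them against the three defining conditions of the $*$-topology, with the linear compactness of the $A_i,B_j$ entering through this comparison (one can also close condition (i) by hand using that $*$-opens equal $!$-opens on linearly compact factors, Lemma~\ref{lem:A*otimesBlinearlycompact}, together with a uniformization over the index set). Until that commutation $A\hatotimes^*B\cong\wh\colim_{i,j}(A_i\hatotimes^*B_j)$ is actually proved, your (a) is incomplete, and since your (c) dualizes it via Corollary~\ref{cor:lim-colim-duality} and Proposition~\ref{prop:duality*!} --- the same ingredients as the paper --- the gap propagates to (c) as well.
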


\begin{proof}
(a) Let $A=\colim A_i$ and $B=\colim B_j$ for direct systems of
linearly compact spaces $A_i,B_j$, so that $\wh A$ and $\wh B$ belong
to $\cI$. The algebraic tensor products satisfy
$$
  A\otimes B = \colim_{i,j}A_i\otimes B_j. 
$$
We claim that with topologies we have
$$
  A\otimes^*B = \colim_{i,j}A_i\otimes^*B_j. 
$$
To see this, recall that a subset $Q\subset A\otimes^*B$ is open iff
it satisfies the following conditions:
\begin{enumerate}[label=(\roman*)]
\item there exist open linear subspaces $U\subset A$ and $V\subset B$
  such that $U\otimes V\subset Q$;  
\item for each $a\in A$ there exists an open linear subspace $V\subset
  B$ such that $a\otimes V\subset Q$;
\item for each $b\in B$ there exists an open linear subspace $U\subset
  A$ such that $U\otimes b\subset Q$.
\end{enumerate}
Now the basic linear open subsets of $A=\colim A_i$ have the form
$U=\bigoplus U_i$ with $U_i\subset A_i$ linear open, and similarly
$V=\bigoplus V_j$ with $V_j\subset B_j$ linear open. Therefore, the
conditions become:
\begin{enumerate}[label=(\roman*)]
\item there exist open linear subspaces $U_i\subset A_i$ and $V_j\subset B_j$
  such that $\bigoplus_{i,j}U_i\otimes V_j\subset Q$;  
\item for each $a_i\in A_i$ there exist open linear subspaces $V_j\subset
  B_j$ such that $\bigoplus_ja_i\otimes V_j\subset Q$;
\item for each $j\in B_j$ there exist open linear subspaces $U_i\subset
  A_i$ such that $\bigoplus_iU_i\otimes b_j\subset Q$.
\end{enumerate}
On the other hand, the basic linear open subsets of
$\colim_{i,j}A_i\otimes^*B_j$ (with respect to the colimit topology)
have the form $Q=\bigoplus_{i,j}Q_{ij}$ with $Q_{ij}\subset
A_i\otimes^*B_j$ linear open. Now openness of $Q_{ij}$ means:
\begin{enumerate}[label=(\roman*)]
\item there exist open linear subspaces $U_i\subset A_i$ and $V_j\subset B_j$
  such that $U_i\otimes V_j\subset Q_{ij}$;  
\item for each $a_i\in A_i$ there exists an open linear subspace $V_j\subset
  B_j$ such that $a_i\otimes V_j\subset Q_{ij}$;
\item for each $j\in B_j$ there exists and open linear subspace $U_i\subset
  A_i$ such that $U_i\otimes b_j\subset Q_{ij}$.
\end{enumerate}
We see that the topologies match, so the claim is proved. Taking
completions of both sides of the claim we get
$$
  \wh A\hatotimes^*\wh B = A\hatotimes^*B = \wh\colim_{i,j}A_i\otimes^*B_j =
  \wh\colim_{i,j}A_i\hatotimes^*B_j \in \cI.
$$
Here the first isomorphism is Lemma~\ref{lem:dense} applied to the
dense subspaces $A\subset \wh A$ and $B\subset \wh B$.
The second isomorphism is the claim above. 
The third isomorphism uses the easy observation that, for a direct system $C_i$ of linearly topologized vector spaces, $\colim_i C_i$ is dense in $\colim_i \wh C_i$, hence their completions are the same (Lemma~\ref{lem:completion-dense}).
The last space belongs to $\cI$ because $A_i\hatotimes^*B_j$ is linearly compact by
Lemma~\ref{lem:A*otimesBlinearlycompact}.

(b) Let $A=\lim A_i$ and $B=\lim B_j$ for inverse systems of discrete
spaces $A_i,B_j$. 
Recall that the basic linear open sets in $A\otimes^!B$
are of the form $Q=U\otimes B+A\otimes V$ with $U\subset A$, $V\subset B$
linear open. Since the $A_i$ are discrete, the basic linear open sets
in $A=\lim A_i$ are finite intersections of the sets
$\prod_{i'\neq i}A_{i'}$ (where the $i$-the component is zero),
and similarly in $B$. So the basic linear open sets in $A\otimes^!B$
are finite intersections of the sets 
$$
  Q_{ij} = \prod_{i'\neq i}A_{i'}\otimes B + A\otimes\prod_{j'\neq
    j}A_{j'},\qquad i\in I,\,j\in J. 
$$
Since $A\otimes^!B/Q_{ij} = A_i\otimes B_j$ with the discrete
topology, by definition of the completion we get   
$$
   A\hatotimes^!B = \lim_{i,j}(A\otimes^!B/Q_{ij}) =
   \lim_{i,j}A_i\otimes B_j \in \cP.
$$
(c) Let $A=\colim A_i$ and $B=\colim B_j$ for direct systems of
linearly compact spaces $A_i,B_j$, so that $\wh A$ and $\wh B$ belong
to $\cI$. Recall from the proof of (a) that
$$
  \wh A\hatotimes^*\wh B = \wh\colim_{i,j}A_i\hatotimes^*B_j
$$
with $A_i\hatotimes^*B_j$ linearly compact. Dualizing this, we compute
\begin{align*}
  (\wh A\hatotimes^*\wh B)^*
  &= (\wh\colim_{i,j}A_i\hatotimes^*B_j)^* 
  = (\colim_{i,j}A_i\hatotimes^*B_j)^* \cr
  &= \lim_{i,j}(A_i\hatotimes^*B_j)^* 
  = \lim_{i,j}A_i^*\otimes B_j^* \cr
  &= (\lim_iA_i^*)\hatotimes^!(\lim_jB_j^*) 
  = A^*\hatotimes^!B^*.
\end{align*}
Here the third equality follows from Lemma~\ref{lem:hom-lim}(b), applied
with the linearly compact (hence Tate) space $A_i\hatotimes^*B_j$ as
source and $\bk$ as target.
The fourth equality follows from Proposition~\ref{prop:duality*!} and Lemma~\ref{lem:tensor_discrete},
the fifth one from the proof of (b) applied to the discrete spaces
$A_i^*,B_j^*$,
and the last one again by Lemma~\ref{lem:hom-lim}(b). 
This proves the first assertion. 

For the second assertion, let $A=\lim A_i$ and $B=\lim B_j$ for inverse systems of discrete spaces $A_i$ and $B_j$, so that $A,B\in \cP$.
By Corollary~\ref{cor:dualityIP} we have $A=A^{**}$ and $B=B^{**}$, so that we can apply the first assertion to $A^*$ and $B^*$, which belong to $\cI$, to obtain
$$
  (A^*\hatotimes^*B^*)^*= A\hatotimes^!B.
$$
Dualizing both sides and using involutivity in Corollary~\ref{cor:dualityIP} we get
$$
(A \hatotimes^! B)^*= A^*\hatotimes^* B^*. 
$$
\end{proof}

\section{Countability conditions}\label{sec:count}

The subcategories $Tate\subset\Top^c\subset\Top$ of Tate resp.~complete
linearly topologized vector spaces have several shortcomings:
they are not closed under cokernels and colimits, closed subspaces
need not have complements, short exact sequences need not split, 
and bijective morphisms need not be isomorphisms. 
In this section and Appendix~\ref{sec:HBseq} we will see that all of these get remedied under
suitable countability assumptions. Since these countability
assumptions are satisfied for Rabinowitz Floer homology, some results
from this section will be useful in~\S\ref{sec:RFH}. 

\subsection{Countability hypotheses} \label{sec:countability_hypotheses}

We will consider three countability hypotheses: 
\begin{enumerate}
\item a linearly topologized vector space $A$ has
  \emph{countable basis} if it has a countable basis of linear
  neighborhoods of $0$; 
\item $A$ has {\em countable dimension} if it has
  countable dimension as a $\bk$-vector space;
\item a direct or inverse system $A_i$ is {\em countably indexed} if
  it has the index set $(\N,\leq)$.  
\end{enumerate}
A space $A$ has countable basis iff it has a nested countable basis
$A\supset U_1\supset U_2 \supset \dots\supset \{0\}$ with $\bigcap_i
U_i = \{0\}$. The completion $\wh A=\lim_i A/U_i$ then also has
countable basis.   

A space $A$ has countable dimension iff there exist nested finite
dimensional subspaces $F_1\subset F_2\subset\cdots$ such that
$\colim_iF_i=\bigcup_iF_i=A$.

These two conditions are dual to each other under additional restrictions: the dual of a linearly compact space with countable basis is discrete of countable dimension, and vice-versa. 

If $A_i$ is a countably indexed inverse system of spaces with countable
bases, then $\lim A_i$ has countable basis.
If $A_i$ is a countably indexed direct system of spaces of countable
dimension, then $\colim A_i$ has countable dimension.
These are direct consequences of the fact that the set of finite
subsets of $\N$ is countable.

These countability hypotheses give rise to nice categories.

\begin{definition}
A linearly topologized vector space $A$ is called
\begin{itemize}
\item {\em ind$^\omega$-linearly compact} if $A=\wh\colim A_i$ for a
  countably indexed direct system
  of linearly compact spaces $A_i$ with countable bases;
\item {\em pro$^\omega$-discrete} if $A=\lim A_i$ for a countably indexed inverse system
  of discrete spaces $A_i$ of countable dimension. 
\end{itemize}  
We denote $\cI^\omega$ the category of ind$^\omega$-linearly compact
spaces, and $\cP^\omega$ the category of pro$^\omega$-discrete spaces.\footnote{  
These correspond to the categories $\cI$ and $\cP$ in~\cite{Esposito-Penkov22,Esposito-Penkov23}.}
\end{definition}

Many important results hold in $\cI^\omega$ and $\cP^\omega$ (see also Appendix~\ref{sec:HBseq}):
\begin{itemize}
\item open mapping theorem in $\cI^\omega$ and $\cP^\omega$~\cite[Lemma
  2.7]{Esposito-Penkov22};
\item topological splitting, see Corollary~\ref{cor:exists-closed-complement}
  and~\cite[Lemma 2.8]{Esposito-Penkov22}.   
\item morphisms in $\cI^\omega$ and $\cP^\omega$ have kernels and cokernels,
  see~\cite[\S2.2]{Esposito-Penkov22} and~\cite[Lemma 1.2 and
    Proposition 1.4]{Positselski}; 
\end{itemize}

\begin{remark}[{\cite[Lemma 2.3]{Esposito-Penkov22}}]
If $L$ is linearly compact with countable basis $U_1\supset
U_2\supset\cdots$ such that $\dim(L/U_i)\to\infty$ as $i\to\infty$,
then $\dim L$ is uncountable, so with the discrete
topology $L$ does not belong to $\cP^\omega$. 
In particular, the main counterexample in Appendix~\ref{sec:HBseq},
the space $\bk[[t]]$ with the discrete topology, does not belong to $\cP^\omega$.
\end{remark}

The following results are the ``countable" counterparts of
Corollary~\ref{cor:dualityIP} and Proposition~\ref{prop:duality_intertwines}.  

\begin{proposition}[{\cite[Proposition 2.6]{Esposito-Penkov22}}] \label{cor:dualityIP_omega} 
Topological duality $V\mapsto V^*$ induces an involutive equivalence
of categories $\cI^\omega\simeq \cP^\omega$. \qed 
\end{proposition}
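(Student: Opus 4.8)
The plan is to follow the proof of Corollary~\ref{cor:dualityIP} essentially verbatim, adding only the bookkeeping needed to control the countability hypotheses. Since duality $V\mapsto V^*$, $f\mapsto f^*$ is a contravariant functor on $\Top$, it suffices to verify three things: that it carries $\cI^\omega$ into $\cP^\omega$, that it carries $\cP^\omega$ into $\cI^\omega$, and that the canonical evaluation $V\to V^{**}$ is a topological isomorphism for $V$ in either class. I would dispose of the last point first: one has $\cI^\omega\subset\cI$ and $\cP^\omega\subset\cP$, so the involutivity already established in Corollary~\ref{cor:dualityIP} (ultimately Theorem~\ref{thm:Tate-duality}(c) applied to the linearly compact building blocks) restricts to give a natural isomorphism $V\cong V^{**}$. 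Hence the functor $(-)^*$ will yield the asserted involutive equivalence as soon as the first two inclusions are checked.

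For the inclusion $(-)^*\colon\cI^\omega\to\cP^\omega$, I would take $A=\wh\colim A_i\in\cI^\omega$ with $(A_i)_{i\in\N}$ a countably indexed direct system of linearly compact spaces having countable bases. Since $\colim A_i$ is dense in its completion $A$, restriction identifies $A^*$ with $(\colim A_i)^*$ (a continuous functional extends uniquely across a dense subspace, $\bk$ being discrete). The $A_i$ are linearly compact, hence Tate, so Corollary~\ref{cor:lim-colim-duality}(a) supplies a topological isomorphism $A^*\cong(\colim A_i)^*\cong\lim A_i^*$. By Theorem~\ref{thm:Tate-duality}(b) each $A_i^*$ is discrete, and as $A_i$ is linearly compact with countable basis, $A_i^*$ is in fact discrete of countable dimension (the exchange between ``countable basis'' and ``countable dimension'' recorded in~\S\ref{sec:countability_hypotheses}). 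The inverse system $(A_i^*)$ is still indexed by $\N$, so $A^*=\lim A_i^*$ is pro$^\omega$-discrete, i.e.~$A^*\in\cP^\omega$.

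For the inclusion $(-)^*\colon\cP^\omega\to\cI^\omega$, I would take $A=\lim A_i\in\cP^\omega$ with $(A_i)_{i\in\N}$ a countably indexed inverse system of discrete spaces of countable dimension. Corollary~\ref{cor:lim-colim-duality}(b) gives $A^*=(\lim A_i)^*\cong\colim A_i^*$, and by Theorem~\ref{thm:Tate-duality}(a) each $A_i^*$ is linearly compact, indeed linearly compact with countable basis (dual to $A_i$ discrete of countable dimension). The one point needing care is that membership in $\cI^\omega$ is phrased through the \emph{completed} colimit $\wh\colim$, whereas Corollary~\ref{cor:lim-colim-duality}(b) produces the plain colimit; this is harmless because $A^*$ is complete for any $A$ (Lemma~\ref{lem:Hom2}(b)), so $\colim A_i^*\cong A^*=\wh{A^*}=\wh\colim A_i^*$. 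Thus $A^*\in\cI^\omega$.

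I expect the only genuine subtleties to be exactly this interplay between completion and colimit in the second inclusion, together with confirming that the two countability conditions are interchanged by duality; everything else is formal once Corollary~\ref{cor:lim-colim-duality} and Theorem~\ref{thm:Tate-duality} are invoked. Combining the two inclusions with the natural isomorphism $V\cong V^{**}$ then exhibits $(-)^*$ as an involutive equivalence $\cI^\omega\simeq\cP^\omega$.
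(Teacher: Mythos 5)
Your overall strategy is the natural one and matches the paper: the paper itself gives no proof of this proposition (it is quoted from Esposito--Penkov), but its uncountable analogue, Corollary~\ref{cor:dualityIP}, is proved exactly as you propose, by reducing to Corollary~\ref{cor:lim-colim-duality} together with the observation of \S\ref{sec:countability_hypotheses} that duality interchanges ``linearly compact with countable basis'' and ``discrete of countable dimension''; restricting the involutivity $V\cong V^{**}$ from $\cI$ and $\cP$ is also fine. However, both places where you negotiate the discrepancy between $\colim$ and $\wh\colim$ contain genuine justification gaps.

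In the direction $\cI^\omega\to\cP^\omega$ you claim that restriction identifies $A^*$ with $(\colim A_i)^*$ because functionals extend uniquely across a dense subspace. That argument yields a continuous linear bijection, not a topological isomorphism, and the topological statement is false in general: for $V=\bk[t]$ with the topology induced from $\bk[[t]]$ one has $\wh V=\bk[[t]]$, so $(\wh V)^*$ is discrete by Theorem~\ref{thm:Tate-duality}(b), whereas every linearly compact subspace of $V$ is finite dimensional (an infinite-dimensional complete subspace of $V$ would be closed in $\bk[[t]]$ and would then contain non-polynomial limits of sums with strictly increasing valuations), so $V^*$ is not discrete and $(\wh V)^*\to V^*$ is a continuous bijection that is not open. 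In the direction $\cP^\omega\to\cI^\omega$ you quote Lemma~\ref{lem:Hom2}(b) as saying $A^*$ is complete ``for any $A$''; that lemma assumes $A$ linearly compactly generated, which is not established for pro$^\omega$-discrete spaces, and completeness of duals can genuinely fail: in the same example, $V^*\cong\bk^{(\N)}$ is a proper dense subspace of its completion $\lim_F F^*\cong\bk^{\N}$ (the limit over finite-dimensional $F\subset V$, using the Hahn--Banach Corollary~\ref{cor:general-Hahn-Banach} to identify $V^*/F^\perp\cong F^*$). Both gaps are repaired by the same in-paper countability input, which is precisely what makes the $\omega$-version work: by Lemma~\ref{lem:colim_countable}(c), a countably indexed colimit of linearly compact spaces with countable bases is already complete. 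Hence in the first direction $A=\wh\colim A_i=\colim A_i$ and there is nothing to identify, and in the second direction $\colim A_i^*$, with each $A_i^*$ linearly compact with countable basis, is complete, so $A^*\cong\colim A_i^*=\wh\colim A_i^*\in\cI^\omega$. With these two substitutions your argument is correct.
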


\begin{proposition}[{\cite[Proposition 2.3]{Esposito-Penkov23}}] \label{prop:duality_intertwines_omega}
\qquad 

(a) The category $\cI^\omega$ is closed under the tensor product
  $\hatotimes^*$. \\ 
(b) The category $\cP^\omega$ is closed under the tensor product
  $\hatotimes^!$. \\ 
(c) The duality between $\cI^\omega$ and $\cP^\omega$ intertwines $\hatotimes^*$ and
  $\hatotimes^!$. \qed
\end{proposition}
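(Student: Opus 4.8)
The plan is to reduce each of the three parts to the corresponding assertion of Proposition~\ref{prop:duality_intertwines}, whose proof I would rerun essentially verbatim, and then to verify that the countability data defining $\cI^\omega$ and $\cP^\omega$ propagate through the constructions. The arithmetic backbone is that a product of two countable sets is countable; this single fact controls both the double index sets $\N\times\N$ arising from tensoring two singly-indexed systems, and the bases (resp.\ dimensions) of the tensor factors. I would also use twice the elementary principle that passing to a cofinal chain in a countable directed set, which is order-isomorphic to $(\N,\le)$, leaves a completed colimit (resp.\ a limit) unchanged, so that an $\N\times\N$-indexed system may be replaced by an $(\N,\le)$-indexed one.

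For part (a) I would write $A=\wh\colim_i A_i$ and $B=\wh\colim_j B_j$ over countably indexed direct systems of linearly compact spaces with countable bases. The proof of Proposition~\ref{prop:duality_intertwines}(a) then yields $A\hatotimes^* B=\wh\colim_{i,j}A_i\hatotimes^* B_j$, each factor being linearly compact by Lemma~\ref{lem:A*otimesBlinearlycompact}. To land in $\cI^\omega$ I must supply two things: a countable basis for each $A_i\hatotimes^* B_j$, and the reduction of the $\N\times\N$-index to $(\N,\le)$. For the former I would use that on linearly compact spaces all three topologies coincide (Lemma~\ref{lem:A*otimesBlinearlycompact}), so that the subspaces $U\otimes B_j+A_i\otimes V$, with $U,V$ drawn from the countable bases of $A_i,B_j$, already form a countable basis, which persists in the completion. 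Part (b) is dual and slightly simpler: writing $A=\lim_i A_i$, $B=\lim_j B_j$ over countably indexed inverse systems of discrete spaces of countable dimension, the proof of Proposition~\ref{prop:duality_intertwines}(b) gives $A\hatotimes^! B=\lim_{i,j}A_i\otimes B_j$ with each $A_i\otimes B_j$ discrete, and since $\dim(A_i\otimes B_j)=\dim A_i\cdot\dim B_j$ is countable, the reindexed limit is pro$^\omega$-discrete.

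For part (c) the dualization computation in the proof of Proposition~\ref{prop:duality_intertwines}(c), culminating in the identity $(A\hatotimes^*B)^*=A^*\hatotimes^! B^*$, uses only Corollary~\ref{cor:lim-colim-duality}, Proposition~\ref{prop:duality*!}, Lemma~\ref{lem:tensor_discrete}, and Lemma~\ref{lem:hom-lim}(b), none of which are sensitive to countability. I would therefore reproduce that chain of equalities, now observing by parts (a) and (b) that every intermediate space remains inside $\cI^\omega$ or $\cP^\omega$, and that the relevant involutive duality is the countable version Proposition~\ref{cor:dualityIP_omega} rather than Corollary~\ref{cor:dualityIP}. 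The reverse intertwining $(A\hatotimes^! B)^*=A^*\hatotimes^* B^*$ I would then deduce by applying the first identity to $A^*,B^*\in\cI^\omega$ and dualizing, exactly as in the non-countable case.

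The hard part will not be any single estimate but the careful bookkeeping at precisely the two places where the countability hypotheses are consumed: checking that the tensor factors genuinely inherit a \emph{countable} basis (resp.\ a \emph{countable} dimension), and that replacing the $\N\times\N$-indexed system by a cofinal chain does not alter the completed colimit or the limit. Both are elementary, but I would state them explicitly rather than bury them inside the reference to Proposition~\ref{prop:duality_intertwines}, since they are the sole content distinguishing the countable statement from its uncountable predecessor.
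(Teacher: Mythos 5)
Your proposal is correct, but it does not follow the paper's route, because the paper gives no internal proof of this proposition at all: the statement is imported from Esposito--Penkov~\cite[Proposition 2.3]{Esposito-Penkov23} (hence the \textit{qed} sign directly after it), whereas you derive it as a corollary of the paper's own uncountable generalization, Proposition~\ref{prop:duality_intertwines}. Your two ``consumption points'' for countability are exactly the right ones, and both check out. For (a): since all three topologies coincide on linearly compact spaces (Lemma~\ref{lem:A*otimesBlinearlycompact}), the subspaces $U\otimes B_j+A_i\otimes V$, with $U,V$ ranging over countable bases of $A_i,B_j$, do form a countable basis of $A_i\otimes^*B_j=A_i\otimes^!B_j$, and countability of the basis persists under completion, as noted in \S\ref{sec:countability_hypotheses}. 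For (b): $\dim(A_i\otimes B_j)=\dim A_i\cdot\dim B_j$ is countable in all cases. In both parts the $\N\times\N$-indexed system produced by the proof of Proposition~\ref{prop:duality_intertwines} may be replaced by its cofinal diagonal $(n,n)_{n\in\N}$, which is order-isomorphic to $(\N,\le)$; cofinal reindexing changes neither the colimit nor the limit, including their topologies (each structure map of the full system factors continuously through the diagonal), hence neither the completed colimit. For (c), your observation that the chain of identifications in the proof of Proposition~\ref{prop:duality_intertwines}(c) — Corollary~\ref{cor:lim-colim-duality}, Proposition~\ref{prop:duality*!}, Lemma~\ref{lem:tensor_discrete}, Lemma~\ref{lem:hom-lim}(b) — is countability-insensitive is accurate, and combining it with (a), (b) and the involutive duality of Proposition~\ref{cor:dualityIP_omega} closes the argument, including the reverse identity $(A\hatotimes^!B)^*=A^*\hatotimes^*B^*$ by dualizing. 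What your route buys is self-containedness: the countable proposition becomes a formal consequence of the paper's general framework, with the exact locus of the countability hypotheses made explicit; what the paper's citation buys is economy, since Esposito--Penkov prove the countable statement directly in $\cI^\omega$ and $\cP^\omega$ and the authors see no need to duplicate it.
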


Let $Tate^\omega$ be the category of Tate vector spaces that satisfy the following two conditions: 
\begin{itemize}
\item they have countable basis;
\item they admit a c-lattice of countable codimension. 
\end{itemize}
It is easy to see that a linearly topologized vector space
belongs to $Tate^\omega$ if and only if it can be written as a direct
sum $L\oplus D$, where $L$ is linearly compact with countable basis
and $D$ is discrete of countable dimension.  

\begin{proposition} \label{prop:IPTateomega}
We have 
$$
\cI^\omega\cap \cP^\omega = Tate^\omega.
$$
\end{proposition}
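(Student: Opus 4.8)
The plan is to prove the two inclusions separately. The inclusion $Tate^\omega\subseteq\cI^\omega\cap\cP^\omega$ is the countable refinement of Lemma~\ref{lem:IPTate}, and the reverse inclusion is the substantial one. For $Tate^\omega\subseteq\cI^\omega\cap\cP^\omega$ I would argue exactly as in Lemma~\ref{lem:IPTate}, keeping track of countability. Write $V=L\oplus D$ with $L$ linearly compact of countable basis and $D$ discrete of countable dimension. Choosing nested finite-dimensional $F_1\subset F_2\subset\cdots$ with $\bigcup_n F_n=D$ exhibits $V=\wh\colim_n(L\oplus F_n)$ as a countably indexed colimit of linearly compact spaces $L\oplus F_n$ of countable basis, so $V\in\cI^\omega$; choosing a nested countable basis $U_1\supset U_2\supset\cdots$ of $L$ exhibits $V=\lim_n(L/U_n\oplus D)$ as a countably indexed limit of discrete spaces $L/U_n\oplus D$ of countable dimension, so $V\in\cP^\omega$. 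The only points to check are that the index sets are $\N$ and that the bases and dimensions stay countable, all of which is immediate.

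For the reverse inclusion let $A\in\cI^\omega\cap\cP^\omega$. From $A\in\cP^\omega$ the space $A$ is complete with a nested countable basis $\{V_n\}$ of open subspaces, $\bigcap_n V_n=0$, and each $A/V_n$ discrete of countable dimension; since every open linear subspace contains some $V_n$, in fact $A/U$ is of countable dimension for every open linear $U\subset A$. From $A\in\cI^\omega$ the space $A$ is linearly compactly generated (correspondence of \S\ref{sec:ind-pro}), and by Proposition~\ref{cor:dualityIP_omega} the dual $A^*$ again lies in $\cI^\omega\cap\cP^\omega$; dualizing the countable basis of $A^*$ produces a countable exhaustion $K_1\subseteq K_2\subseteq\cdots$ of $A$ by linearly compact subspaces of countable basis with $A=\wh\colim_m K_m$.

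I would then reduce the goal to producing a c-lattice of countable codimension, observing that this is equivalent to finding a nested open basis $\{U_n\}$ with $\dim(U_n/U_{n+1})<\infty$ for all $n$. Indeed, such a basis makes each $U_N=\lim_{n\geq N}U_N/U_n$ a limit of finite-dimensional discrete spaces, hence linearly compact (Lemma~\ref{lem:equivalence-lin-compact}) and open, i.e.\ a c-lattice, whose codimension $\dim(A/U_N)$ is countable by the previous paragraph; the complementary summand is then discrete of countable dimension, so $A\in Tate^\omega$. The heart of the argument, and the step I expect to be the main obstacle, is the construction of such a finite-jump basis by reconciling the open filtration $\{V_n\}$ coming from $\cP^\omega$ with the linearly compact exhaustion $\{K_m\}$ coming from $\cI^\omega$.

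The key local fact I would exploit is that each image $K_m/(K_m\cap V_n)=\mathrm{im}(K_m\to A/V_n)$ is finite-dimensional, being the continuous image of a linearly compact space in a discrete one. This is precisely the finiteness that must be threaded, through a diagonal (Mittag--Leffler type) selection over the two countable indices $m,n$, into a single nested open basis with finite-dimensional successive quotients. Here I would invoke the normal form for countably indexed ind-/pro-objects of Esposito--Penkov~\cite[Lemma~2.5(i)]{Esposito-Penkov23} to arrange the two exhaustions compatibly, together with the new criteria for Tate vector spaces of \S\ref{sec:lim-colim-Tate}--\S\ref{sec:lim-colim-duality} to recognize the resulting limit--colimit presentation as Tate. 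The essential use of countability on both sides is exactly what one expects: without it the diagonal selection breaks down, consistent with the Remark following Lemma~\ref{lem:IPTate} that the analogous equality $\cI\cap\cP=Tate$ is open.
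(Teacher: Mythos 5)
Your treatment of the inclusion $Tate^\omega\subseteq\cI^\omega\cap\cP^\omega$ is correct and is exactly the paper's (a countable bookkeeping of Lemma~\ref{lem:IPTate}), and your reduction of the converse to producing a nested open basis $\{U_n\}$ with $\dim(U_n/U_{n+1})<\infty$ is sound: such $U_N$ are indeed c-lattices, and countable codimension follows from $A/V_n$ having countable dimension. The problem is that the step you yourself call ``the main obstacle'' --- the diagonal selection that is supposed to produce this finite-jump basis --- is never carried out, and the ingredients you line up for it cannot power it. Your ``key local fact'' that $K_m/(K_m\cap V_n)$ is finite dimensional is automatic for \emph{every} linearly compact subspace and \emph{every} open subspace of \emph{any} linearly topologized space; in particular it holds in $\colim_n\bk[[t_1,\dots,t_n]]$ (Example~\ref{example:limits-colimits-of-Tate}(b)), a space in $\cI^\omega$ that is not Tate. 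So no Mittag--Leffler-style diagonalization over this fact alone can succeed: the hypothesis $A\in\cP^\omega$ must be played globally, through the interplay of the countable basis of $A$ with a colimit \emph{presentation} of $A$, and your text never specifies how this is done --- the only concrete mechanism you name is \cite[Lemma~2.5(i)]{Esposito-Penkov23}, whose content you misstate.

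That lemma is not a ``normal form \dots to arrange the two exhaustions compatibly'' but a criterion: for a countably indexed direct system of linearly compact spaces $A_i$ with countable bases and \emph{injective} transition maps $f_{ji}$, the space $\wh\colim A_i$ lies in $\cP^\omega$ if and only if $\coker f_{ji}$ is finite dimensional for all sufficiently large $i\le j$. To invoke it one must first arrange injectivity --- the paper does this by replacing $A_i$ with $A_1\oplus\cdots\oplus A_i$ --- and then the hard inclusion is immediate: $A\in\cP^\omega$ forces the cokernels to be eventually finite dimensional, Corollary~\ref{cor:limit-colimit-Tate}(b) shows $A$ is Tate, and the splittings $V_i=U_i\oplus D_i$ in its proof exhibit $A$ as a sum of a linearly compact space with countable basis and a discrete space of countable dimension, i.e.\ $A\in Tate^\omega$. (Completeness is no obstruction here: by Lemma~\ref{lem:colim_countable}(c) such colimits are already complete, so $\wh\colim=\colim$.) Once the lemma is stated and applied this way, your entire $\{V_n\}$/$\{K_m\}$ apparatus, including the detour through $A^*$ via Proposition~\ref{cor:dualityIP_omega}, becomes unnecessary; as written, your proof stops exactly where the mathematics starts.
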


\begin{proof}
The inclusion $\supset$ is proved as in Lemma~\ref{lem:IPTate}. 
For the converse inclusion $\subset$, consider $A\in\cI^\om\cap\cP^\om$.
Then $A=\wh\colim A_i$ for a countably indexed direct system of
linearly compact spaces $A_i$ with countable bases. After replacing
$A_i$ by $A_1\oplus\cdots\oplus A_i$, we may assume that the maps
$f_{ji}:A_i\to A_j$ in the direct system are injective. 
According to~\cite[Lemma 2.5(i)]{Esposito-Penkov23}, $A$ belongs to
$\cP^\om$ if and only if $\coker f_{ji}$ is finite dimensional for all
sufficiently large $i\leq j$. Then Corollary~\ref{cor:limit-colimit-Tate}(b)
implies that $A$ is Tate, and the proof shows that it actually belongs
to $Tate^\om$. 
\end{proof}

\begin{remark}
Esposito-Penkov claim in~\cite[Definition~2.4]{Esposito-Penkov23} the equality $Tate^\omega=\cI^\omega\cap \cP^\omega$ but, as far as we can tell, this is not proved in \emph{loc.~cit.}
\end{remark}

\subsection{Completeness}

The following observations about completeness are relevant in our context: 
\begin{itemize}
\item it is preserved under limits and direct sums~\cite[Lemma 1.2]{Positselski};
\item it is in general {\em not} preserved under cokernels and
  colimits~\cite[Theorem 2.5]{Positselski};
\item it is preserved under cokernels and countably indexed
  colimits of spaces with countable bases~\cite[Proposition 1.4]{Positselski}.
\end{itemize}
The following corollary is used in the proof
of~\cite[Proposition 2.3(i)]{Esposito-Penkov23}: 

\begin{corollary}
For a countably indexed direct system of spaces $V_i$ with countable
bases, their completions satisfy
$$
  \colim\wh V_i = \wh{\colim V_i}. 
$$
\end{corollary}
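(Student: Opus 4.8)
The plan is to sandwich the two sides between completions and exploit the fact that, under the stated countability hypotheses, the ordinary colimit $\colim\wh V_i$ is \emph{already} complete, so that completing it changes nothing. First I would record the bookkeeping: each $\wh V_i$ again has countable basis (as observed in \S\ref{sec:countability_hypotheses}, since $\wh A=\lim_i A/U_i$ inherits a countable basis from a countable basis of $A$), the functoriality of completion turns $\{V_i\}$ into a direct system $\{\wh V_i,\wh f_{ji}\}$, and this new system is again countably indexed.

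Next I would invoke the density observation already used in the proof of Proposition~\ref{prop:duality_intertwines}(a): for any direct system $C_i$ of linearly topologized vector spaces, $\colim C_i$ is dense in $\colim\wh C_i$, because each $C_i\to\wh C_i$ has dense image and the colimit structure maps are continuous. Applying this with $C_i=V_i$ shows that $\colim V_i$ is dense in $\colim\wh V_i$, whence by Lemma~\ref{lem:completion-dense} the two ambient spaces have the same completion:
$$
  \wh{\colim V_i}\;\cong\;\wh{\colim\wh V_i}.
$$

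The remaining, and only substantive, point is that $\colim\wh V_i$ is complete, so that the completion on the right is redundant. This is exactly where the countability hypotheses are used: by Positselski's result that completeness is preserved under countably indexed colimits of spaces with countable bases \cite[Proposition 1.4]{Positselski} (the third bullet in \S\ref{sec:count}), the countably indexed colimit of the complete spaces $\wh V_i$, each with countable basis, is complete. Hence $\wh{\colim\wh V_i}=\colim\wh V_i$, and combining this with the previous display yields $\wh{\colim V_i}=\colim\wh V_i$, as claimed.

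I expect the genuine content to be concentrated entirely in the completeness step, which rests on the imported Positselski theorem; the countable basis of $\wh V_i$, the density, and the completion-of-a-dense-subspace lemma are all routine and already available in the text. The only subtlety worth flagging is that the $\colim$ in the statement is to be read in $\Top$ rather than in $\Top^c$ — the whole content of the corollary being precisely that, under these hypotheses, the two notions of colimit agree.
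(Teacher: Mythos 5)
Your proof is correct and is essentially identical to the paper's: both establish that $\colim V_i$ is dense in $\colim\wh V_i$ so that the two completions agree by Lemma~\ref{lem:completion-dense}, and then conclude via Positselski's result \cite[Proposition 1.4]{Positselski} that the countably indexed colimit of the complete spaces $\wh V_i$ with countable bases is already complete. Your closing remark that the entire content lies in the completeness step, the colimit being taken in $\Top$ rather than $\Top^c$, accurately reflects the point of the corollary.
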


\begin{proof}
Since $\colim V_i$ is dense in $\colim \wh V_i$, their completions are
equal (Lemma~\ref{lem:completion-dense}). The conclusion then follows
from the fact mentioned above that $\colim \wh V_i$ is
complete~\cite[Proposition 1.4]{Positselski}. 
\end{proof}

\begin{lemma} \label{lem:colim_countable}
Let $A=\colim A_i$ for a countably indexed direct system
$(A_i,f_{ji})$ of spaces with countable bases. 

(a) $A$ is isomorphic to the colimit of such a system in which all maps are
injective.

(b) $A$ is isomorphic to the colimit of such a system in which all
maps are injective with closed image if and only if it is isomorphic
to a countable direct sum of spaces with countable bases.

(c) If the $A_i$ are in addition linearly compact, then $A$ is
isomorphic to a countable direct sum of linearly compact spaces with
countable bases. In particular, $A=\colim A_i$ is complete.   
\end{lemma}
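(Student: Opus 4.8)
The plan is to prove the three parts in order, deducing (b) and (c) from (a). For part~(a) the key observation is that each canonical map $f_i\colon A_i\to A$ is continuous into the Hausdorff space $A$ (recall that $A=\colim A_i$ is a quotient of $\bigoplus_i A_i$ by a \emph{closed} subspace, hence Hausdorff), so that $\ker f_i=f_i^{-1}(0)$ is closed. Thus $C_i:=A_i/\ker f_i$ is a legitimate object of $\Top$; it has a countable basis, namely the images of a countable basis of $A_i$ under the open quotient map. The structure maps descend to $\bar f_{ji}\colon C_i\to C_j$, and these are injective: from $f_j\circ f_{ji}=f_i$ one reads off $f_{ji}^{-1}(\ker f_j)=\ker f_i$. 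I would then note that the surjections $A_i\to C_i$ induce a continuous map $A\to\colim C_i$, while the maps $C_i\to A$ coming from $f_i$ induce a continuous map $\colim C_i\to A$; checking on structure maps shows these are mutually inverse, and two mutually inverse continuous linear bijections are a topological isomorphism. Hence $A\cong\colim C_i$ with all maps injective.

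For part~(b) I would first treat the implication ``direct sum $\Rightarrow$ colimit presentation''. If $A\cong\bigoplus_n B_n$ with each $B_n$ of countable basis, set $A_i:=B_1\oplus\cdots\oplus B_i$ with the evident inclusions: each $A_i$ has countable basis, each inclusion is injective with closed image (it is the kernel of the continuous projection onto the complementary summands), and $\colim_i A_i=\bigoplus_n B_n=A$ since the initial segments are cofinal among finite subsets. For the converse, I would use part~(a) to assume the maps are inclusions $A_1\subset A_2\subset\cdots$ with each $A_i$ closed in $A_{i+1}$, and then split inductively: invoking the topological splitting result (Corollary~\ref{cor:exists-closed-complement}) to write $A_{i+1}=A_i\oplus B_{i+1}$ as a topological direct sum with $B_{i+1}$ closed and $B_{i+1}\cong A_{i+1}/A_i$ of countable basis (with $B_1:=A_1$). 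Then $A_i=B_1\oplus\cdots\oplus B_i$ compatibly, whence $A=\colim_i A_i=\bigoplus_n B_n$.

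For part~(c) I would again apply part~(a): the quotients $C_i=A_i/\ker f_i$ are linearly compact by~\cite[(II.27.4)]{Lefschetz-book} and have countable bases, and $A\cong\colim C_i$ with injective structure maps. Now a continuous injective map between linearly compact spaces is a closed embedding, since its image is linearly compact~\cite[(II.27.4)]{Lefschetz-book} hence closed~\cite[(II.27.5)]{Lefschetz-book}, and the corestriction onto its image is a topological isomorphism by~\cite[(II.27.8)]{Lefschetz-book}. So we are in the situation of part~(b), with the extra feature that linearly compact spaces are complete; thus Corollary~\ref{cor:exists-closed-complement} applies and gives $A\cong\bigoplus_n B_n$ with each $B_n\cong C_n/C_{n-1}$ linearly compact. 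Finally, since each $B_n$ is complete and $\Top^c$ is closed under direct sums~\cite[Lemma 1.2]{Positselski}, the space $A$ is complete.

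The main obstacle is the inductive step in parts~(b) and~(c): producing at each stage a \emph{closed topological} complement of $A_i$ in $A_{i+1}$. This is precisely where the countable-basis hypothesis is indispensable, through the splitting result Corollary~\ref{cor:exists-closed-complement}; without it a closed subspace need not admit a topological complement, and the coherent family of level-wise complements would only assemble inside the completion rather than inside the space itself. All remaining ingredients---closedness of $\ker f_i$, the matching of colimit topologies in part~(a), and the preservation of countable bases and of linear compactness under the relevant quotients---are routine.
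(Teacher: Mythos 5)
Your overall strategy is essentially the paper's: part (a) by replacing the system with one of injective maps (you take the quotients $C_i=A_i/\ker f_i$, the paper takes the images $f_i(A_i)\subset A$ --- the same system, and your quotient-topology formulation is in fact the more careful rendering, since it visibly preserves countable bases, which the subspace topology on $f_i(A_i)$ would not obviously do); part (b) by initial segments $B_1\oplus\cdots\oplus B_i$ in one direction and inductive topological complements in the other; part (c) by reduction to (b) plus closedness of complete spaces under direct sums~\cite[Lemma 1.2]{Positselski}. In (c) you even supply a justification the paper leaves implicit, namely that an injective continuous map between linearly compact spaces is a closed embedding via (II.27.4), (II.27.5) and (II.27.8) of~\cite{Lefschetz-book}.

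There are, however, two soft spots in your converse direction of (b). First, Corollary~\ref{cor:exists-closed-complement} requires the \emph{ambient} space $A_{i+1}$ to be complete with countable basis, and completeness is not among the hypotheses of (b); the paper instead invokes Corollary~\ref{cor:splitting-Positselski}, which places the requirements on the subspace $f_{i+1,i}(A_i)$ rather than on the ambient space. (In part (c) the distinction evaporates --- linearly compact spaces and their continuous images are complete --- so your citation is legitimate exactly where the lemma is used later in the paper.) Second, the reduction ``use part (a) to assume the maps are inclusions'' is not licensed by the hypothesis of (b): an injective continuous map with closed image need not be a topological embedding, so replacing $A_i$ by its image with the subspace topology may change the topology and, with it, the countable-basis property. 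This is not merely cosmetic: to assemble the level-wise complements into the isomorphism $A\cong\bigoplus_n B_n$, the continuous projections $A_i\to B_n$ for $n<i$ require inverting $f_{i,i-1}$ on its image, which is continuous for embeddings but not in general (the paper's proof, which works directly with complements of the images $f_{i,i-1}(A_{i-1})\subset A_i$, is terse at the same point). In part (c) your argument closes both gaps, since there the ambient spaces are complete and the structure maps are genuine closed embeddings, as you prove; so the downstream application is sound, but as written your part (b) needs the splitting corollary with hypotheses on the subspace and an honest treatment of non-embedded images.
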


\begin{proof}
(a) Define the direct system $B_i:=f_i(A_i)\subset A$ with the
inclusions $B_i\into B_j$. Then $A_i\ni a_i\mapsto f_i(a_i)\in
B_i$ defines a map of direct systems which induces an isomorphism
$A\to\colim B$. 

(b) A direct sum $A=\oplus_{i\in\N}A_i$ is isomorphic to $\colim B_i$
for the system $B_i=A_1\oplus\cdots\oplus A_i$ with the inclusions 
$B_i\into B_j$ for $i<j$. Conversely, suppose that the maps
$f_{ji}:A_i\to A_j$ are injective with closed image. Since the $A_i$
have countable bases, by Corollary~\ref{cor:splitting-Positselski} we
find topological complements $B_i$ to $f_{i,i-1}(A_{i-1})$ such that (with $B_1=A_1)$
$$
  A_i = f_{i,i-1}(A_{i-1})\oplus B_i = f_{i1}(B_1)\oplus
  f_{i2}(B_2)\oplus\cdots\oplus f_{i,i-1}(B_{i-1})\oplus B_i.
$$
Then the map $B_i\ni b_i'\mapsto f_i(b_i)$ defines an isomorphism
$\oplus B_i\to A$.  

(c) If the $A_i$ are linearly compact, then so are the
$B_i=f_i(A_i)$ in the proof of (a), so the inclusions $B_i\into B_j$
have closed image and $A$ is isomorphic to a countable direct sum of
spaces with countable bases by (b). By the proof of (b), the summands
are closed subspaces of linearly compact spaces and therefore linearly compact.
Completeness follows from~\cite[Lemma 1.2]{Positselski}.  
\end{proof}

By dualizing Lemma~\ref{lem:colim_countable}(c) we obtain 

\begin{corollary} \label{cor:lim_countable}
Let $A=\lim A_i$ for a countable inverse system of discrete vector
spaces of countable dimension. Then $A$ is isomorphic to a countable
direct product of discrete spaces of countable dimension.  \qed 
\end{corollary}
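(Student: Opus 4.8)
The plan is to obtain the statement by dualizing Lemma~\ref{lem:colim_countable}(c), exploiting that the hypothesis exhibits $A=\lim A_i$ as a pro$^\omega$-discrete space. Indeed the assumption ``$A=\lim A_i$ for a countable inverse system of discrete vector spaces of countable dimension'' is precisely the definition of $\cP^\omega$, so $A\in\cP^\omega$ and in particular the canonical map $A\to A^{**}$ is an isomorphism by the involutive duality of Proposition~\ref{cor:dualityIP_omega}. This double-dual identification is what will let me read off the structure of $A$ from the structure of $A^*$.

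First I would dualize the given inverse system. Each $A_i$ is discrete of countable dimension, so its dual $A_i^*$ is linearly compact with countable basis, as recorded in~\S\ref{sec:countability_hypotheses}. Reversing arrows, $\{A_i^*\}$ becomes a countably indexed \emph{direct} system of linearly compact spaces with countable bases, and Corollary~\ref{cor:lim-colim-duality}(b) supplies a topological isomorphism
$$
  A^* = (\lim A_i)^* \cong \colim A_i^*.
$$
Next I would apply Lemma~\ref{lem:colim_countable}(c) to the system $\{A_i^*\}$, which produces an isomorphism $\colim A_i^*\cong\bigoplus_n L_n$ with each $L_n$ linearly compact with countable basis. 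Combining the two, $A^*\cong\bigoplus_n L_n$.

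Finally I would dualize back. Since $A\in\cP^\omega$, involutivity gives $A\cong A^{**}\cong(\bigoplus_n L_n)^*$, and the identity $(\bigoplus_n L_n)^*\cong\prod_n L_n^*$ (the case $B=\bk$ of $\Hom(\oplus_i A_i,B)=\prod_i\Hom(A_i,B)$, cf.~the footnote to Lemma~\ref{lem:Hom1}(b)) turns the direct sum into a direct product. Each $L_n^*$ is discrete, being the dual of a linearly compact space by Theorem~\ref{thm:Tate-duality}(b), and is of countable dimension since it is the dual of a linearly compact space with countable basis. Hence $A\cong\prod_n L_n^*$ is a countable direct product of discrete spaces of countable dimension, as claimed.

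The main obstacle I anticipate is not computational but lies in handling the double dual correctly: I must ensure that applying the dual functor to $A^*\cong\bigoplus_n L_n$ recovers $A$ itself rather than merely an abstract $A^{**}$. This is exactly where the pro$^\omega$-discreteness hypothesis is indispensable, since $A\to A^{**}$ is an isomorphism only under such a local-linear-compactness/pro-discreteness assumption and fails for general complete spaces (cf.~Example~\ref{example:contlinbij}). A secondary point worth checking is that the countability properties survive the entire chain of dualities, which follows from the duality between countable basis and countable dimension noted in~\S\ref{sec:countability_hypotheses}.
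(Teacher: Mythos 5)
Your proposal is correct and is essentially the paper's own argument: the paper proves this corollary precisely by dualizing Lemma~\ref{lem:colim_countable}(c), and your write-up simply makes explicit the steps the paper leaves implicit (duality exchanging discrete spaces of countable dimension with linearly compact spaces of countable basis, Corollary~\ref{cor:lim-colim-duality}(b), the identity $(\bigoplus_n L_n)^*\cong\prod_n L_n^*$, and involutivity of duality on $\cP^\omega$ from Proposition~\ref{cor:dualityIP_omega} to recover $A\cong A^{**}$). Your emphasis on the double-dual step being the crux is well placed, since that is exactly where the pro$^\omega$-discreteness hypothesis is used.
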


\subsection{Limits and colimits of Tate vector spaces} \label{sec:lim-colim-Tate}

In this subsection we investigate under which conditions limits and
colimits of Tate vector spaces are Tate. 
We begin with a simple lemma. 

\begin{lemma}\label{lem:colim-lim-Tate}
(a) If $V_i$ is a direct system of discrete vector spaces,
  then $\colim V_i$ is discrete, hence Tate. 

(b) If $W_i$ is an inverse system of linearly compact vector spaces,
  then $\lim W_i$ is linearly compact, hence Tate.
\end{lemma}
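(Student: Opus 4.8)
The plan is to handle the two parts separately, in each case producing a space of one of the two \emph{extreme} types — discrete in~(a), linearly compact in~(b) — and then invoking the trivial observation that both discrete and linearly compact spaces are Tate.

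For part~(a) I would argue directly from the colimit topology, which by construction is the final topology with respect to the structure maps $f_i\colon V_i\to\colim V_i$: a subset $U\subset\colim V_i$ is open exactly when each preimage $f_i^{-1}(U)$ is open in $V_i$. Since every $V_i$ is discrete, each such preimage is automatically open, so \emph{every} subset of $\colim V_i$ is open; in particular $\{0\}$ is open and the colimit is discrete. (Equivalently, $\oplus_i V_i$ is already discrete, as $\{0\}=\oplus_i\{0\}$ is a basic open set, and a quotient of a discrete space is discrete; so no subtlety arises from the closure in the quotient model, and the colimit is automatically Hausdorff.) Finally a discrete space is Tate: it is complete, and $\{0\}$ is an open, linearly bounded subspace, hence a c-lattice.

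For part~(b) I would use the standard model of the limit as the linear subspace
\[
  \lim W_i = \{(x_i)\in \prod_i W_i \mid g_{ji}(x_i)=x_j \ \text{for}\ i\ge j\} \subset \prod_i W_i,
\]
carrying the subspace topology (which, as recalled in the construction of limits, coincides with the intrinsic initial topology for the maps $g_i$). The direct product $\prod_i W_i$ is linearly compact, since a product of linearly compact spaces is linearly compact by Lefschetz~\cite[(II.27.3)]{Lefschetz-book}. The defining relations cut out a closed set: for each pair $i\ge j$ the condition $g_{ji}(x_i)=x_j$ is the vanishing locus of the continuous map $(x_k)\mapsto g_{ji}(x_i)-x_j$ into the Hausdorff space $W_j$, hence closed, and $\lim W_i$ is the intersection of these closed linear varieties. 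A closed linear subvariety of a linearly compact variety is again linearly compact — this is immediate from the finite-intersection-property definition (Definition~\ref{defi:Lef-linearly-compact}), since a family of closed linear varieties of the subvariety with the finite intersection property is a fortiori a family of closed linear varieties of the ambient space, so its total intersection is nonempty and lies in the subvariety. Thus $\lim W_i$ is linearly compact, and a linearly compact space is Tate, being complete and serving as its own c-lattice.

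I do not expect a serious obstacle in either part: each step is a routine consequence of the material already developed. The only points requiring care are bookkeeping ones in~(b) — confirming that the subspace-topology model agrees with the abstract limit topology, and correctly separating the two stability properties of linear compactness (under products, cited from Lefschetz, and under passage to closed subvarieties, which I prefer to derive directly from the defining finite-intersection property). As an alternative to spelling out the product-and-closed-subspace argument, one could simply cite the combined statement \cite[(II.27.6) and~(II.27.3)]{Lefschetz-book} already used in the proof of Lemma~\ref{lem:equivalence-lin-compact}, that a limit of linearly compact spaces is linearly compact.
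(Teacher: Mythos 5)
Your proposal is correct and takes essentially the same route as the paper: part~(a) is the paper's argument verbatim (the preimage of $\{0\}$ under each structure map is open, hence $\{0\}$ is open and the colimit is discrete), and for part~(b) the paper simply cites \cite[(II.27.6) and (II.27.3)]{Lefschetz-book} -- which is precisely the closed-subspace-of-a-linearly-compact-product argument you spell out, including the derivation of closed-subvariety stability from the finite-intersection-property definition. Unpacking that citation is sound but not a different method.
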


\begin{proof}
(a) The preimage of $\{0\}$ under each map $V_i\to \colim V_i$ 
  is open, hence $\{0\}$ is open and $\colim V_i$ is discrete. 

(b) This is~\cite[(II.27.6) and (II.27.3)]{Lefschetz-book}. 
\end{proof}

The following example shows that countably indexed limits of discrete spaces and
colimits of linearly compact spaces are in general not Tate. 

\begin{example}\label{example:limits-colimits-of-Tate}
(a) Let $(W_i,g_{ij})$ be an inverse system of discrete spaces indexed
over $\N$ with $g_{ij}$ surjective and $\ker g_{ij}$ infinite
dimensional for all $j>i$. Then $\lim_{i\in\N}W_i$ is not Tate. 

(b) Let $V_i=\bk[[t_1,\dots,t_i]]$ with the inclusions
$f_{ij}:V_i\into V_j$, $i\le j$. Then each $V_i$ is linearly compact,
but $\colim_{i\in\N}V_i$ is not Tate.
\end{example}

Recall that a continuous linear map $f:A\to B$ between linearly topologized spaces is {\em open} if images of open linear subspaces are open. We call $f$ {\em proper} if preimages of linearly compact subspaces are linearly compact. These notions are dual: if $f$ is open then $f^*$ is proper, and vice versa. Note that if $f$ is open and $A$ is Tate then $B$ is Tate, and if $f$ is proper and $B$ is Tate then $A$ is Tate.

The next proposition is the main result of this subsection. 

\begin{proposition}\label{prop:limit-colimit-Tate}
(a) If $(g_{ij}:W_j\to W_i)$ is an inverse system of Tate vector
spaces indexed over $\N$ and $g_{ij}$ is proper
for each $i\leq j$, then $\lim W_i$ is Tate.

(b) If $(f_{ji}:V_i\to V_j)$ is a direct system of Tate vector
spaces indexed over $\N$ and $f_{ji}$ is open
for each $i\leq j$, then $\colim V_i$ is Tate.
\end{proposition}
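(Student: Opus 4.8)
The plan is to exhibit, in each case, an explicit open linearly compact subspace, so that $\lim W_i$ (resp.\ $\colim V_i$) is locally linearly compact and hence Tate by Proposition~\ref{prop:equivalence-Tate-Beilinson}. The two parts are formally dual: under $V\mapsto V^*$ a Tate space is sent to a Tate space (Theorem~\ref{thm:Tate-duality}), limits and colimits are interchanged (Corollary~\ref{cor:lim-colim-duality}), and open maps become proper maps. Thus in principle (b) could be deduced from (a). Since passing back from $(\colim V_i)^*$ being Tate to $\colim V_i$ being Tate would require knowing a priori that $\colim V_i$ is complete, I would instead prove both parts directly by the mutually dual constructions below, using that $\N$ has a least element $1$.

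For (a), I would first choose a c-lattice $L_1\subset W_1$; it is open and, being a closed complete linearly bounded subspace of the Tate space $W_1$, it is linearly compact (Lemma~\ref{lem:equivalence-lin-compact}). Set $M_j:=g_{1j}^{-1}(L_1)\subset W_j$ for every $j$, with $M_1=L_1$. Continuity of $g_{1j}$ makes $M_j$ open, and properness of $g_{1j}$ makes $M_j$ linearly compact, so each $M_j$ is a c-lattice in $W_j$. The cocycle identity $g_{1j}=g_{1i}\circ g_{ij}$ gives $M_j=g_{ij}^{-1}(M_i)$, hence $g_{ij}(M_j)\subset M_i$, so $(M_j)$ is an inverse subsystem of $(W_j)$ consisting of linearly compact spaces. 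I would then check that, inside $\lim W_i$, the subspace $g_1^{-1}(L_1)$ (preimage under the canonical projection $g_1\colon\lim W_i\to W_1$) coincides with $\lim M_j$: for $(x_i)\in\lim W_i$ the single condition $x_1\in L_1$ is equivalent, via $x_1=g_{1j}(x_j)$, to $x_j\in M_j$ for all $j$. Now $\lim M_j$ is linearly compact by Lemma~\ref{lem:colim-lim-Tate}(b), while $g_1^{-1}(L_1)$ is open since $g_1$ is continuous and $L_1$ is open. This produces the desired open linearly compact subspace.

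For (b), I would dually choose a c-lattice $L_1\subset V_1$ and consider $N:=f_1(L_1)\subset\colim V_i$. Linear compactness of $N$ is automatic, as $N$ is the continuous image of the linearly compact space $L_1$ (Lefschetz~\cite[(II.27.4)]{Lefschetz-book}). The point is openness, for which I would use the description of the colimit topology as the final topology with respect to the maps $f_j$: a subspace is open iff its preimage under each $f_j$ is open. Here $f_{j1}(L_1)$ is open in $V_j$ because $f_{j1}$ is an open map and $L_1$ is open, and $f_j\circ f_{j1}=f_1$ gives $f_j(f_{j1}(L_1))=f_1(L_1)=N$, so $f_{j1}(L_1)\subset f_j^{-1}(N)$. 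Thus $f_j^{-1}(N)$ is a linear subspace containing an open subspace, hence open, for every $j$; therefore $N$ is open. This again yields an open linearly compact subspace and shows $\colim V_i$ is Tate.

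The main obstacle in both parts is securing openness and linear compactness of the candidate subspace simultaneously. In (a) openness is immediate but linear compactness is the issue, and this is exactly where properness enters (to keep the preimages $M_j$ linearly compact) together with the stability of linear compactness under inverse limits (Lemma~\ref{lem:colim-lim-Tate}(b)). In (b) the situation is reversed: linear compactness is free from continuity, while openness is the delicate point, and it is precisely what the openness hypothesis on the $f_{ji}$ buys, in conjunction with the final-topology description of $\colim V_i$. I would also flag that the argument uses nothing about $\N$ beyond the existence of a least index comparable to every other; the full force of countability is not needed here.
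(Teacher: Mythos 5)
Your proof is correct, but it takes a genuinely different route from the paper's. The paper proves both parts by inductively building \emph{compatible topological splittings} $W_i=U_i\oplus D_i$ (resp.\ $V_i=U_i\oplus D_i$) along the system, using the splitting machinery of Definition-Lemma~\ref{defi:equivalence-splitting} and Lemma~\ref{lem:diagram}, and then concludes from $\lim W_i=\lim U_i\oplus\lim D_i$ (resp.\ $\colim V_i=\colim U_i\oplus\colim D_i$) that the limit is a topological direct sum of a linearly compact and a discrete piece. You instead bypass all splitting arguments and directly exhibit an open linearly compact subspace --- $g_1^{-1}(L_1)=\lim M_j$ in (a), $f_1(L_1)$ in (b) --- and invoke the equivalence of locally linearly compact with Tate (Proposition~\ref{prop:equivalence-Tate-Beilinson}). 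Both key steps check out: in (a) the subspace topology on $g_1^{-1}(L_1)\subset\lim W_i$ agrees with the inverse-limit topology on $\lim M_j$ (both are induced from $\prod_j W_j$), so Lemma~\ref{lem:colim-lim-Tate}(b) applies; in (b) your final-topology criterion is valid for \emph{linear} subspaces, since a linear subspace of $\oplus_i V_i$ whose trace on each $V_i$ is open contains $\oplus_i\cO_i$ and is hence open --- it would be worth making these two small verifications explicit. What each approach buys: yours is shorter, avoids the appendix material on splittings entirely, and, as you correctly flag, needs only a least index comparable to all others rather than induction over $\N$ (in fact, by cofinality one may restrict to indices above any fixed $i_0$, so it works over an arbitrary directed set, where the paper's inductive construction does not directly apply); the paper's argument is heavier but yields more, namely an explicit decomposition of the (co)limit into a c-lattice and a d-lattice compatible with the structure maps of the system, in the spirit of Lemma~\ref{lem:top-splitting}. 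Your observation that deducing (b) from (a) by duality would require knowing completeness of $\colim V_i$ a priori is also accurate, and justifies treating the two parts by mutually dual direct arguments.
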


\begin{proof}
(a) Pick an open linearly compact subspace $U_1\subset W_1$. By hypothesis, the subspaces $U_j:=g_{1j}^{-1}(U_1)\subset W_j$ are open and linearly compact for each $j\geq 2$. Thus for all $i<j$ we get a commutative diagram of short exact sequences 
$$
\xymatrix{
  0 \ar[r] & U_i \ar[r] & W_i \ar[r] & W_i/U_i \ar[r] & 0 \\
  0 \ar[r] & U_j \ar[r] \ar@{->>}[u]^-{\tilde g_{ij}} & W_j  \ar[u]_-{g_{ij}} \ar[r] & W_j/U_j \ar@{^(->}[u]_-{\bar g_{ij}} \ar[r] & 0
}
$$
where $\tilde g_{ij}$ is surjective and $\bar g_{ij}$ is injective, and the spaces $W_i/U_i$ are discrete. For $i=1$ choose a linear splitting map $s_1:W_1/U_1\to W_1$, which is continuous because $W_1/U_1$ is discrete. Its image $D_1$ is a complement of $U_1$ in $W_1$, and it is closed because $U_1$ is open. According to Definition~\ref{defi:equivalence-splitting}, we get a topological splitting $W_1=U_1\oplus D_1$, and in particular a splitting map $\pi_1:W_1\to U_1$. Applying Lemma~\ref{lem:diagram} to the diagram for $i=1$, $j=2$ and reasoning as above, we find a topological splitting $W_2=U_2\oplus D_2$ such that $g_{12}$ sends $U_2$ to $U_1$ and $D_2$ to $D_1$. Continuing inductively, we find topological splittings $W_i=U_i\oplus D_i$ such that $g_{ij}$ sends $U_j$ to $U_i$ and $D_j$ to $D_i$ for all $i<j$. It follows that
$$
  \lim W_i = \lim U_i\oplus \lim D_i.
$$
Now $\lim U_i$ is linearly compact because the $U_i$ are linearly compact. For the second summand, recall that the $D_i$ are discrete and the restrictions $h_{ij}=g_{ij}|_{D_j}:D_j\to D_i$ are injective by construction. Thus for the induced map $h_i:\lim D_j\to D_i$ the set $h_i^{-1}(0)=\{0\}$ is open, so $\lim D_i$ is discrete. Hence, $\lim W_i$ is Tate. 

(b) Pick an open linearly compact subspace $U_1\subset V_1$. By hypothesis, the subspaces $U_j:=f_{j1}(U_1)\subset V_j$ are open and linearly compact for each $j\geq 2$. Thus for all $i<j$ we get a commutative diagram of short exact sequences 
$$
\xymatrix{
  0 \ar[r] & U_j \ar[r] & V_j \ar[r] & V_j/U_j \ar[r] & 0 \\
  0 \ar[r] & U_i \ar[r] \ar@{->>}[u]^-{\tilde f_{ji}} & V_i  \ar[u]_-{f_{ji}} \ar[r] & V_i/U_i \ar@{^(->}[u]_-{\bar f_{ji}} \ar[r] & 0
}
$$
where $\tilde f_{ij}$ is surjective and the spaces $V_i/U_i$ are discrete. As in (a), we find topological splittings $V_i=U_i\oplus D_i$ such that $f_{ji}$ sends $U_i$ to $U_j$ and $D_i$ to $D_j$ for all $i<j$. It follows that
$$
  \colim V_i = \colim U_i\oplus \colim D_i.
$$
Now $\colim D_i$ is discrete because the $D_i$ are discrete. For the first summand, recall that the $U_i$ are linearly compact and the restrictions $\tilde f_{ji}=f_{ji}|_{U_i}:U_i\to U_j$ are surjective by construction. Thus for the induced map $\tilde f_i:U_i\to \colim U_j$ the set $\tilde f_i(U_i)=\colim U_j$ is linearly compact. Hence, $\colim V_i$ is Tate. 
\end{proof}

We will use in the sequel the following special case of
Proposition~\ref{prop:limit-colimit-Tate}. 

\begin{corollary}\label{cor:limit-colimit-Tate}
(a) If $(g_{ij}:W_j\to W_i)$ is an inverse system of discrete vector
spaces indexed over $\N$ and $\ker g_{ij}$ is finite dimensional
for each $i\leq j$, then $\lim W_i$ is Tate.

(b) If $(f_{ji}:V_i\to V_j)$ is a direct system of linearly compact vector
spaces indexed over $\N$ and $\coker f_{ji}$ is finite dimensional
for each $i\leq j$, then $\colim V_i$ is Tate.
\end{corollary}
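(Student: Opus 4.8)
The plan is to deduce both statements from Proposition~\ref{prop:limit-colimit-Tate} by checking that the finiteness hypotheses on kernels, respectively cokernels, translate into the properness, respectively openness, hypotheses required there, bearing in mind that discrete and linearly compact spaces are in particular Tate.

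For part (a), I would first note that each $W_i$, being discrete, is Tate, so that only properness of the maps $g_{ij}$ remains to be verified. In a discrete space the linearly compact subspaces are exactly the finite-dimensional ones: by Lemma~\ref{lem:equivalence-lin-compact} linear compactness reduces to linear boundedness, which for a discrete space means finite dimension. Given a linearly compact, hence finite-dimensional, subspace $C\subset W_i$, the preimage $g_{ij}^{-1}(C)$ sits in a short exact sequence
\[
  0\to\ker g_{ij}\to g_{ij}^{-1}(C)\to C\cap\im g_{ij}\to 0,
\]
whose outer terms are finite dimensional (the left one by hypothesis, the right one as a subspace of $C$). Hence $g_{ij}^{-1}(C)$ is finite dimensional and therefore linearly compact, so $g_{ij}$ is proper and Proposition~\ref{prop:limit-colimit-Tate}(a) gives that $\lim W_i$ is Tate.

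For part (b), each $V_i$ is linearly compact, hence Tate, so it suffices to prove that each $f_{ji}$ is open. The image $f_{ji}(V_i)$ is linearly compact, being the continuous image of a linearly compact space~\cite[(II.27.4)]{Lefschetz-book}, and therefore closed~\cite[(II.27.5)]{Lefschetz-book}; since $\coker f_{ji}=V_j/f_{ji}(V_i)$ is finite dimensional, this closed subspace has finite codimension and is thus open in $V_j$ by Lemma~\ref{lem:open}. It then remains to see that the corestriction $V_i\to f_{ji}(V_i)$ is open. It is a continuous surjection of linearly compact spaces, so it factors as the open quotient map $V_i\to V_i/\ker f_{ji}$ followed by a continuous linear bijection $V_i/\ker f_{ji}\to f_{ji}(V_i)$ of linearly compact spaces, which is an isomorphism by~\cite[(II.27.8)]{Lefschetz-book}. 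Consequently open subspaces of $V_i$ map to open subspaces of the \emph{open} subspace $f_{ji}(V_i)\subset V_j$, so $f_{ji}$ is open and Proposition~\ref{prop:limit-colimit-Tate}(b) gives that $\colim V_i$ is Tate.

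The step I expect to be the main obstacle is the openness claim in (b): converting the cokernel condition into genuine openness of $f_{ji}$ rests on the fact that a continuous linear bijection between linearly compact spaces is automatically a topological isomorphism. An alternative route would be to derive (b) from (a) by duality, observing that $(f_{ji}^*:V_j^*\to V_i^*)$ is an inverse system of discrete spaces (Theorem~\ref{thm:Tate-duality}) with $\ker f_{ji}^*\cong(\coker f_{ji})^*$ finite dimensional, whence $\lim V_i^*=(\colim V_i)^*$ is Tate by part (a) together with Corollary~\ref{cor:lim-colim-duality}(a); recovering from this that $\colim V_i$ itself is Tate, however, requires extra care with reflexivity for the possibly non-complete colimit, so I would present the direct argument above as the primary route.
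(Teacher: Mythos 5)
Your proposal is correct and follows the paper's strategy exactly: both statements are deduced from Proposition~\ref{prop:limit-colimit-Tate}, with part (a) verified by the same finite-dimensionality count for preimages (the paper merely asserts properness where you spell out the exact sequence, which is a welcome addition). The only divergence is in the openness verification for (b): the paper argues per subspace, noting that any open linear $U\subset V_i$ is itself linearly compact with $\dim(V_i/U)<\infty$, so that $f_{ji}(U)$ is closed of finite codimension in $V_j$ and hence open directly by Lemma~\ref{lem:open}; you instead apply Lemma~\ref{lem:open} only to the full image $f_{ji}(V_i)$ and obtain openness of the corestriction by factoring through the quotient $V_i/\ker f_{ji}$ and invoking the open-mapping property \cite[(II.27.8)]{Lefschetz-book} for continuous bijections of linearly compact spaces. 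Both routes are sound and use the same toolbox; the paper's is slightly more economical (no quotient factorization, no appeal to (II.27.8)), while yours makes explicit that $f_{ji}$ is open as a map onto its image, which is a marginally stronger structural statement. Your caution about the alternative duality route for (b) is also well placed: reflexivity of the not-yet-known-to-be-Tate colimit is precisely the obstruction, and the paper likewise avoids that path here.
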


\begin{proof}
We only need to verify the hypotheses of Proposition~\ref{prop:limit-colimit-Tate}.

(a) If $W_i$ is discrete and $\ker g_{ij}$ is finite dimensional, then
$g_{ij}:W_j\to W_i$ is proper.

(b) If $V_i$ is linearly compact and $\coker f_{ji}$ is finite
dimensional, then each open linear subspace $U\subset V_i$ is linearly
compact and $f_{ji}(U)\subset V_j$ is a closed linear subspace of
finite codimension, hence open by Lemma~\ref{lem:open}. 
\end{proof}

\subsection{Limits, colimits, and duality of Tate vector spaces} \label{sec:lim-colim-duality}

In this subsection we revisit Proposition~\ref{prop:limit-colimit-Tate} 
in relation with duality.

\begin{proposition}\label{prop:lim-colim-dual}
(a) Consider a direct system $(V_i,f_{ji})$ of Tate vector 
spaces indexed over $\N$ such that $f_{ji}$ is
open for each $i\leq j$. Then the canonical map 
$$
   f: (\colim V_i)^*\to \lim V_i^* 
$$
is an isomorphism of Tate vector spaces. 

(b) Consider an inverse system $(W_i,g_{ij})$ of Tate vector 
spaces indexed over $\N$ such that $g_{ij}$ is proper
for each $i\leq j$. Then the canonical map 
$$
   g: \colim W_i^*\to (\lim W_i)^*
$$
is an isomorphism of Tate vector spaces. 
\end{proposition}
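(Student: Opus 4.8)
The plan is to recognize that this proposition is obtained by combining two results already in hand: the purely topological duality isomorphisms of Corollary~\ref{cor:lim-colim-duality} with the Tate-ness criteria of Proposition~\ref{prop:limit-colimit-Tate}. In each part the canonical map is \emph{already} a topological isomorphism by Corollary~\ref{cor:lim-colim-duality}, so the only new content is that all four spaces in sight are Tate; the openness/properness hypotheses enter solely to supply this.

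For part (a) I would first invoke Corollary~\ref{cor:lim-colim-duality}(a): since the $V_i$ are Tate, the canonical map $f\colon (\colim V_i)^*\to\lim V_i^*$ is a topological isomorphism, with no condition on the transition maps $f_{ji}$ needed for this. The openness of the $f_{ji}$ is then used exactly once, through Proposition~\ref{prop:limit-colimit-Tate}(b), to conclude that $\colim V_i$ is Tate. By Lefschetz--Tate duality (Theorem~\ref{thm:Tate-duality}(c)) its dual $(\colim V_i)^*$ is therefore Tate, and transporting along the isomorphism $f$ shows that $\lim V_i^*$ is Tate as well. Hence $f$ is an isomorphism of Tate vector spaces.

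For part (b) I would argue symmetrically: Corollary~\ref{cor:lim-colim-duality}(b) gives the canonical map $g\colon\colim W_i^*\to(\lim W_i)^*$ as a topological isomorphism for any inverse system, and the properness of the $g_{ij}$ is used only via Proposition~\ref{prop:limit-colimit-Tate}(a) to show $\lim W_i$ is Tate; its dual $(\lim W_i)^*$ is then Tate by Theorem~\ref{thm:Tate-duality}(c), and transporting along $g$ makes $\colim W_i^*$ Tate. Equivalently, I would point out that (b) is the exact dual of (a): dualizing the proper inverse system $(W_i,g_{ij})$ yields the \emph{open} direct system $(W_i^*,g_{ij}^*)$ of Tate spaces (openness and properness are interchanged under dualization, as noted before Proposition~\ref{prop:limit-colimit-Tate}, and $W_i^*$ is Tate since $W_i$ is), so that part (a) applied to this system, together with reflexivity $W_i^{**}=W_i$, gives (b).

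The only thing to check is that the hypotheses line up correctly with the two halves of Proposition~\ref{prop:limit-colimit-Tate} — openness of the transition maps is precisely what is needed for the colimit, properness for the limit — and I expect no genuine obstacle here: the substantive work of showing that these limits and colimits stay Tate was already carried out in Proposition~\ref{prop:limit-colimit-Tate}, and the present statement merely repackages it with the reflexivity of Tate spaces and the functoriality of the topological dual.
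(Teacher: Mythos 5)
Your proposal is correct and follows essentially the same route as the paper: both rest on Corollary~\ref{cor:lim-colim-duality} for the canonical isomorphism, Proposition~\ref{prop:limit-colimit-Tate} for Tate-ness, and Theorem~\ref{thm:Tate-duality}(c), and the paper itself records your dualization alternative for part (b). The only cosmetic difference is that where you transport Tate-ness of the second space along the isomorphism $f$ (resp.\ $g$), the paper verifies it directly by applying Proposition~\ref{prop:limit-colimit-Tate} to the dual system, using that the duals $f_{ji}^*$ are proper (resp.\ $g_{ij}^*$ are open) --- an equally valid and interchangeable step.
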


\begin{proof}
(a) The space $\colim V_i$ is Tate by
Proposition~\ref{prop:limit-colimit-Tate}(b), so its dual is Tate by 
Theorem~\ref{thm:Tate-duality}(c). The duals $f_{ji}^*:V_j^*\to V_i^*$
are proper maps between Tate vector spaces, so $\lim V_i^*$ is Tate by
Proposition~\ref{prop:limit-colimit-Tate}(a).
The isomorphism follows from Corollary~\ref{cor:lim-colim-duality}(a).

(b) The space $\lim W_i$ is Tate by
Proposition~\ref{prop:limit-colimit-Tate}(a), so its dual is Tate by 
Theorem~\ref{thm:Tate-duality}(c). The duals $g_{ij}^*:W_i^*\to W_j^*$
are open maps between Tate vector spaces, so
$\colim W_i^*$ is Tate by Proposition~\ref{prop:limit-colimit-Tate}(b).
The isomorphism follows from Corollary~\ref{cor:lim-colim-duality}(b)
(alternatively, it can be deduced from part (a) via
Theorem~\ref{thm:Tate-duality}(c)). 
\end{proof}

We will use in the sequel the following special case of
Proposition~\ref{prop:lim-colim-dual},
which is deduced as in the proof of Corollary~\ref{cor:limit-colimit-Tate}.

\begin{corollary}\label{cor:lim-colim-dual}
(a) Consider a direct system $(V_i,f_{ji})$ of linearly compact vector 
spaces indexed over $\N$ such that $\coker f_{ji}$ is
finite dimensional for each $i\leq j$. Then the canonical map 
$$
   f: \lim V_i^* \to (\colim V_i)^*
$$
is an isomorphism of Tate vector spaces. 

(b) Consider an inverse system $(W_i,g_{ij})$ of discrete vector 
spaces indexed over $\N$ such that $\ker g_{ij}$ is
finite dimensional for each $i\leq j$. Then the canonical map 
$$
   g: \colim W_i^*\to (\lim W_i)^*
$$
is an isomorphism of Tate vector spaces. 
\qed
\end{corollary}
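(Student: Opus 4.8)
The plan is to deduce both statements from Proposition~\ref{prop:lim-colim-dual} by checking that its hypotheses are met, in exact parallel to how Corollary~\ref{cor:limit-colimit-Tate} is obtained from Proposition~\ref{prop:limit-colimit-Tate}. Concretely, for part (a) I must verify that each structure map $f_{ji}\colon V_i\to V_j$ is open, and for part (b) that each $g_{ij}\colon W_j\to W_i$ is proper; everything else, including Tateness of both sides and the isomorphism, is then supplied by the proposition.

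For (a) I would argue as in the proof of Corollary~\ref{cor:limit-colimit-Tate}(b). Fix $i\le j$ and an open linear subspace $U\subset V_i$. Since $V_i$ is linearly compact, $U$ is open, hence closed by Lemma~\ref{lem:open}, and therefore a closed subspace of a linearly compact space, so $U$ is itself linearly compact; in particular $\dim(V_i/U)<\infty$. Its image $f_{ji}(U)$ is linearly compact, hence closed in $V_j$, and of finite codimension there: indeed $f_{ji}(V_i)$ is closed with $V_j/f_{ji}(V_i)=\coker f_{ji}$ finite dimensional by hypothesis, while $f_{ji}(V_i)/f_{ji}(U)$ is a quotient of the finite dimensional $V_i/U$. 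A closed subspace of finite codimension is open by Lemma~\ref{lem:open}, so $f_{ji}(U)$ is open and $f_{ji}$ is an open map. As the $V_i$ are linearly compact, hence Tate, Proposition~\ref{prop:lim-colim-dual}(a) applies and shows that the canonical map $(\colim V_i)^*\to\lim V_i^*$ is an isomorphism of Tate vector spaces. Its inverse is precisely the canonical map $f\colon\lim V_i^*\to(\colim V_i)^*$, which is therefore also an isomorphism of Tate vector spaces.

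For (b) I would argue as in the proof of Corollary~\ref{cor:limit-colimit-Tate}(a). The $W_i$ are discrete, hence Tate, and I claim each $g_{ij}\colon W_j\to W_i$ is proper. A linearly compact subspace $K\subset W_i$ is finite dimensional because $W_i$ is discrete, and then $g_{ij}^{-1}(K)$ sits in a short exact sequence $0\to\ker g_{ij}\to g_{ij}^{-1}(K)\to K\cap g_{ij}(W_j)\to 0$ whose outer terms are both finite dimensional (the kernel by hypothesis, the image as a subspace of $K$). Hence $g_{ij}^{-1}(K)$ is finite dimensional, thus linearly compact in the discrete space $W_j$, proving properness. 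Proposition~\ref{prop:lim-colim-dual}(b) then yields directly that the canonical map $g\colon\colim W_i^*\to(\lim W_i)^*$ is an isomorphism of Tate vector spaces; note that here no inversion is needed, as the direction already matches.

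Since everything reduces to the two elementary verifications above, I do not expect a serious obstacle; the only point requiring care is the direction of the canonical map in (a), where Proposition~\ref{prop:lim-colim-dual}(a) produces an isomorphism $(\colim V_i)^*\to\lim V_i^*$ and one must record that $f$ is its inverse. As a cross-check, in both parts one could bypass the proposition entirely: the duals $f_{ji}^*$ (resp.\ $g_{ij}^*$) have finite dimensional kernels (resp.\ cokernels) and are maps between discrete (resp.\ linearly compact) spaces, so that $\lim V_i^*$ and $\colim W_i^*$ are Tate directly by Corollary~\ref{cor:limit-colimit-Tate}, while the isomorphism itself is the canonical topological isomorphism of Corollary~\ref{cor:lim-colim-duality}.
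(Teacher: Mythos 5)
Your proposal is correct and matches the paper's proof, which simply says the corollary ``is deduced as in the proof of Corollary~\ref{cor:limit-colimit-Tate}'' from Proposition~\ref{prop:lim-colim-dual}: your two verifications (openness of $f_{ji}$ via Lemma~\ref{lem:open} and linear compactness of $f_{ji}(U)$, properness of $g_{ij}$ via finite dimensionality of $g_{ij}^{-1}(K)$) are exactly the intended reductions. Your remark about the reversed direction of the canonical map in (a) relative to Proposition~\ref{prop:lim-colim-dual}(a) is a point the paper glosses over, and your resolution (recording $f$ as the inverse of the canonical isomorphism) is the right one.
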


\section{Rabinowitz Floer homology is Tate$^\omega$}\label{sec:RFH}

In this section we apply the previous sections to Rabinowitz Floer
homology and prove Theorems~\ref{thm:main1} and \ref{thm:main2} from
the Introduction (except for the statement on the Poincaré duality isomorphism from Theorem~\ref{thm:main1}, which we prove as part of Theorem~\ref{thm:RFHmulambdaPD} in the next section). 

Rabinowitz Floer homology $RFH_*(\p V)$ is a homological invariant associated to the boundary of a Liouville domain $V$. It is defined via its action truncated versions $RFH_*^{(a,b)}(\p V)$, $-\infty<a<b<\infty$ as 
$$
RFH_*(\p V) = \colim_b \lim_a \, RFH_*^{(a,b)}(\p V),\qquad a\to-\infty, \qquad b\to\infty.
$$
Here the collection $RFH_*^{(a,b)}=RFH_*^{(a,b)}(\p V)$ fits into a \emph{bidirected system}, meaning that we have commutative diagrams 
$$
\xymatrix{
RFH_*^{(a_1,b_1)}\ar[d] \ar[r] & RFH_*^{(a_2,b_1)} \ar[d] \\
RFH_*^{(a_1,b_2)} \ar[r] & RFH_*^{(a_2,b_2)}
}
$$ 
for $a_1\le a_2$ and $b_1\le b_2$. We interpret this bidirected system as being an inverse system in $a\to-\infty$ and a direct system in $b\to\infty$. 

Rabinowitz Floer cohomology $RFH^*_{(a,b)}$ is also defined via its action truncated versions $RFH^*_{(a,b)}(\p V)$, $-\infty<a<b<\infty$ as 
$$
RFH^*(\p V) = \colim_a \lim_b \, RFH^*_{(a,b)}(\p V),\qquad a\to-\infty, \qquad b\to\infty.
$$
The collection $RFH^*_{(a,b)}=RFH^*_{(a,b)}(\p V)$ also fits into a bidirected system, with commutative diagrams 
$$
\xymatrix{
RFH^*_{(a_1,b_1)}  & RFH^*_{(a_2,b_1)} \ar[l] \\
RFH^*_{(a_1,b_2)} \ar[u]  & RFH^*_{(a_2,b_2)} \ar[u] \ar[l]
}
$$ 
for $a_1\le a_2$ and $b_1\le b_2$. We interpret this bidirected system as being a direct system in $a\to-\infty$ and an inverse system in $b\to\infty$.

The action truncated Rabinowitz Floer (co)homology groups $RFH_*^{(a,b)}$ and $RFH^*_{(a,b)}$ are finite dimensional and thus naturally linearly topologized as discrete vector spaces. As a consequence, both $RFH_*(\p V)$ and $RFH^*(\p V)$ carry natural linear topologies.

Our main result in this section is the following

\begin{theorem} \label{thm:RFH-self-dual}
Rabinowitz Floer homology and cohomology of any Liouville domain $V$ are Tate$^\omega$ vector spaces such that $RFH^*(\p V)$ is the topological dual of $RFH_*(\p V)$. 
\end{theorem}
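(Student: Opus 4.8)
The plan is to reduce the statement to the algebra of Tate$^\omega$ vector spaces developed above, the only genuinely geometric inputs being the finiteness of the action-truncated groups and the degree-wise duality $RFH^*_{(a,b)}\cong(RFH_*^{(a,b)})^*$ (universal coefficients over the field $\bk$, valid because each $RFH_*^{(a,b)}$ is finite dimensional). First I would record the inner limit: for fixed $b$ the space $W_b:=\lim_a RFH_*^{(a,b)}$ is an inverse limit of finite-dimensional discrete, hence linearly compact, spaces, so it is linearly compact by Lemma~\ref{lem:colim-lim-Tate}(b), and choosing a cofinal sequence $a\in-\N$ shows $W_b$ has a countable basis. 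Dually, $\colim_a RFH^*_{(a,b)}$ is a colimit of finite-dimensional discrete spaces, hence discrete, and Corollary~\ref{cor:lim-colim-duality}(b) together with the degree-wise duality gives $W_b^*\cong\colim_a RFH^*_{(a,b)}$.

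Next I would establish that $RFH_*(\p V)=\colim_b W_b$ is Tate$^\omega$. This is a countably indexed direct system of linearly compact spaces with countable bases, so Corollary~\ref{cor:limit-colimit-Tate}(b) applies once each structure map $W_b\to W_{b'}$ ($b\le b'$) is known to have finite-dimensional cokernel; the proof of Proposition~\ref{prop:limit-colimit-Tate}(b) then exhibits $RFH_*(\p V)$ as a topological direct sum of a linearly compact space with countable basis and a discrete space of countable dimension, i.e.\ as a Tate$^\omega$ space. Equivalently, this Tate$^\omega$ statement is exactly the assertion contained in Theorem~\ref{thm:main2}, that the source of $\kappa$ is Tate$^\omega$, so I would take the homology half of the theorem from there rather than re-proving it.

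With the homology side in hand the duality is purely formal. Applying Corollary~\ref{cor:lim-colim-duality}(a) to the direct system of Tate spaces $W_b$, and then the identification of $W_b^*$ above, yields
$$
  (RFH_*(\p V))^* = (\colim_b W_b)^* \cong \lim_b W_b^* \cong \lim_b\colim_a RFH^*_{(a,b)},
$$
whereas by definition $RFH^*(\p V)=\colim_a\lim_b RFH^*_{(a,b)}$. Thus $RFH^*(\p V)\cong(RFH_*(\p V))^*$ amounts to the commutation of $\lim_b$ and $\colim_a$ on the cohomology system, which is the dual of Theorem~\ref{thm:main2}: dualizing $\kappa$ via Corollary~\ref{cor:duality} (legitimate since both source and target of $\kappa$ are Tate) and computing each side with Corollary~\ref{cor:lim-colim-duality} and the degree-wise duality identifies $\kappa^*$ with the canonical comparison map $RFH^*(\p V)\to(RFH_*(\p V))^*$, hence shows it is an isomorphism. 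Finally, since $RFH_*(\p V)$ is Tate$^\omega$, its dual is again Tate$^\omega$ (duals of Tate spaces are Tate by Theorem~\ref{thm:Tate-duality}(c), and under duality countable basis interchanges with countable codimension as noted in \S\ref{sec:count}), which simultaneously gives that $RFH^*(\p V)$ is Tate$^\omega$.

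The main obstacle is not in this formal chain but in the finiteness it rests on: the fact that the maps $W_b\to W_{b'}$ have finite-dimensional cokernel, equivalently that the limits and colimits defining $RFH_*$ commute (Theorem~\ref{thm:main2}). This is where the Floer-theoretic structure of the action-truncated groups must be used, and I expect it to be the one genuinely nontrivial ingredient; once it is granted, the passage from homology to cohomology and the self-duality statement follow mechanically from the duality formalism of Sections~\ref{sec:Tate}--\ref{sec:tensor}.
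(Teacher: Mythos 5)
Your proposal is correct and follows the paper's proof almost step for step: the paper likewise takes the Tate$^\omega$ property of both sides from Theorem~\ref{thm:kappa-top} (= Theorem~\ref{thm:main2}), and its duality argument is exactly your chain $(RFH_*(\p V))^*\cong\lim_b(\lim_a RFH_*^{(a,b)})^*\cong\lim_b\colim_a RFH^*_{(a,b)}\cong\colim_a\lim_b RFH^*_{(a,b)}$, justified by Corollary~\ref{cor:lim-colim-duality}(a) (using that $W_b=\lim_a RFH_*^{(a,b)}$ is linearly compact, hence Tate), Corollary~\ref{cor:lim-colim-duality}(b), universal coefficients, and the commutation of $\lim$ and $\colim$ on the cohomology system. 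You also correctly isolate the finite-dimensional-cokernel condition on the maps $W_b\to W_{b'}$ as the genuinely geometric input; in the paper this is supplied by the short exact sequence~\eqref{eq:SES-ab} with reduced symplectic (co)homology, fed into Proposition~\ref{prop:kappa-abstract}.

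The one genuine divergence is how the cohomological commutation is obtained. The paper proves Theorem~\ref{thm:kappa-top}(b) directly, by running Proposition~\ref{prop:kappa-abstract} on the cohomology bidirected system (``the proof of (b) is entirely analogous''), whereas you derive it from part (a) by dualizing the homological $\kappa$ and identifying $\kappa^*$ with the canonical comparison map $\colim_a\lim_b RFH^*_{(a,b)}\to\lim_b\colim_a RFH^*_{(a,b)}$ under the isomorphisms of Corollary~\ref{cor:lim-colim-duality}. This works (only the easy, functorial direction of Corollary~\ref{cor:duality} is needed, since $\kappa$ is already known to be an isomorphism of Tate spaces), and it buys you economy: one abstract commutation argument instead of two parallel ones. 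The price is a naturality check the paper never has to make, namely that the canonical isomorphisms of Corollary~\ref{cor:lim-colim-duality} and the degree-wise universal-coefficient isomorphisms $(RFH_*^{(a,b)})^*\cong RFH^*_{(a,b)}$ are compatible with the structure maps of the bidirected systems and carry $\kappa^*$ to the cohomological $\kappa$; this is routine but should be stated, as without it the dualization argument does not literally yield Theorem~\ref{thm:kappa-top}(b). Your closing observation that the dual of a Tate$^\omega$ space is again Tate$^\omega$ (linearly compact with countable basis and discrete of countable dimension being interchanged under duality) is also a valid alternative to the paper's direct appeal to Theorem~\ref{thm:kappa-top}(b) for the Tate$^\omega$ property of $RFH^*(\p V)$.
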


We recall from~\S\ref{sec:countability_hypotheses} that Tate$^\omega$ vector spaces are Tate spaces that have countable basis and possess a c-lattice of countable codimension. 

The following theorem provides the key technical ingredient for the proof and is a strengthening of Theorem~\ref{thm:main2} from the Introduction. It contains in particular a commutation result for $\lim$ and $\colim$.\footnote{In general, limits and colimits do not commute for bidirected systems, see e.g.~\cite{CF}.} 

\begin{theorem}[Commuting $\lim$ and $\colim$]\label{thm:kappa-top} \qquad 

(a) The source and target of the canonical linear continuous map
$$
  \kappa\colon 
  \colim_b \lim_a \, RFH_*^{(a,b)}
  \to \lim_a \colim_b \, RFH_*^{(a,b)}
$$
are Tate$^\omega$ vector spaces, and $\kappa$ is a topological isomorphism.

(b) The source and target of the canonical linear continuous map
$$
  \kappa\colon 
  \colim_a \lim_b \, RFH^*_{(a,b)}
  \to \lim_b \colim_a \, RFH^*_{(a,b)}
$$
are Tate$^\omega$ vector spaces, and $\kappa$ is a topological isomorphism.
\end{theorem}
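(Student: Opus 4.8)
The plan is to prove part (a) in full and then deduce part (b) from it by duality, exploiting that the action-truncated groups are finite dimensional, so that $(RFH_*^{(a,b)})^* = RFH^*_{(a,b)}$. Applying the dualization isomorphisms of Corollary~\ref{cor:lim-colim-duality} termwise, one identifies the dual of the target of (a), namely $(\lim_a\colim_b RFH_*^{(a,b)})^*$, with $\colim_a\lim_b RFH^*_{(a,b)}$, the source of (b); and the dual of the source of (a) with $\lim_b\colim_a RFH^*_{(a,b)}$, the target of (b). By naturality of the canonical comparison map this yields $\kappa_{(b)} = \kappa_{(a)}^*$, so once $\kappa_{(a)}$ is shown to be a topological isomorphism of Tate$^\omega$ spaces, functoriality of $(-)^*$ makes $\kappa_{(b)}$ one as well, while Theorem~\ref{thm:Tate-duality}(c) (together with the fact that the countability conditions are preserved under duality) guarantees that the source and target of (b) are again Tate$^\omega$. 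This reduces everything to (a).

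For (a) I would first identify the intermediate objects. For fixed $b$ the space $\lim_a RFH_*^{(a,b)}$ is a limit over $\N$ of finite dimensional discrete spaces, hence linearly compact with countable basis (Lemma~\ref{lem:colim-lim-Tate}(b)); for fixed $a$ the space $\colim_b RFH_*^{(a,b)}$ is a colimit of discrete spaces, hence discrete of countable dimension (Lemma~\ref{lem:colim-lim-Tate}(a)). The source is then the colimit over $b$ of the linearly compact spaces $\lim_a RFH_*^{(a,b)}$, which by Lemma~\ref{lem:colim_countable}(c) is already complete, so no completion is needed. To conclude that the source is Tate$^\omega$ I would invoke Corollary~\ref{cor:limit-colimit-Tate}(b), and dually Corollary~\ref{cor:limit-colimit-Tate}(a) for the target; the hypotheses to verify are that the transition maps $\lim_a RFH_*^{(a,b)} \to \lim_a RFH_*^{(a,b')}$ have finite dimensional cokernel and that the maps $\colim_b RFH_*^{(a',b)} \to \colim_b RFH_*^{(a,b)}$ have finite dimensional kernel.

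These finite dimensionality statements are the genuine geometric input, and I would extract them from the structure of the Rabinowitz Floer bidirected system: enlarging an action window by a bounded amount changes the truncated complex by finitely many generators, so the induced maps on homology, and their stabilized images as the remaining parameter runs to infinity, differ only in finite dimensional kernels and cokernels. The same stabilization is what drives the bijectivity of $\kappa$, which is the main obstacle. Working with the explicit models of the source as a quotient of $\bigoplus_b \lim_a RFH_*^{(a,b)}$ and of the target as a subspace of $\prod_a \colim_b RFH_*^{(a,b)}$, both injectivity and surjectivity require upgrading an $a$-indexed collection of local choices of a bounding, respectively lifting, parameter $b$ to a single $b$ valid for all $a$ at once; this uniform choice is possible precisely because the relevant data is finite dimensional and stabilizes. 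This is the exact point where the general phenomenon that limits need not commute with colimits is defeated by the specific tameness of the Floer system, and it is the only place where properties of $V$ beyond formal structure enter.

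Finally, continuity of $\kappa$ is automatic from the universal properties that define it as the canonical comparison map. Once $\kappa$ is known to be a continuous bijection and both its source and target have been established to be Tate$^\omega$, they are in particular objects of $\cI^\omega$ by Proposition~\ref{prop:IPTateomega}, and the open mapping theorem in $\cI^\omega$ upgrades the continuous bijection $\kappa$ to a topological isomorphism. This completes (a), and hence, via the duality reduction of the first paragraph, part (b) as well.
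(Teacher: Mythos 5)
There is a genuine gap at the point you yourself flag as the crux: the bijectivity of $\kappa$. Your justification --- that the finite dimensionality of the action-truncated groups, together with ``stabilization'' and uniformly bounded kernels/cokernels of the transition maps, permits upgrading $a$-indexed local choices of $b$ to a uniform one --- is not a mechanism, and as stated it is false. Consider the bidirected system of finite dimensional vector spaces $V_{i,j}=\bk$ for $i>j$ and $V_{i,j}=0$ otherwise (with $i$ the inverse index, $j$ the direct index, and identity maps wherever source and target are both nonzero). Every space has dimension at most $1$, all transition maps have kernel and cokernel of dimension at most $1$ (so the hypotheses of Corollary~\ref{cor:limit-colimit-Tate} hold and both iterated (co)limits are Tate), yet
$$
\colim_j\lim_i V_{i,j}\cong\bk \qquad\text{while}\qquad \lim_i\colim_j V_{i,j}=0,
$$
so $\kappa$ fails even to be injective. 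Thus no amount of finite dimensionality or uniform bounds on the truncated Floer groups can, by itself, defeat the non-commutation of $\lim$ and $\colim$; a genuinely stronger structural input about the Rabinowitz system is required, and your proposal never identifies it.

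The paper's actual input is the short exact sequence of bidirected systems~\eqref{eq:SES-ab},
$0\to\ol{SH}_*^{(-\infty,b)}\to RFH_*^{(a,b)}\to\ol{SH}^{1-*}_{(-\infty,-a)}\to 0$,
from~\cite[Proposition 2.9]{Cieliebak-Frauenfelder-Oancea}: the kernel system is \emph{constant in $a$} and the cokernel system is \emph{constant in $b$}. Proposition~\ref{prop:kappa-abstract} then shows by an inductive choice of compatible complements that any such bidirected system of finite dimensional spaces splits as $V_{i,j}\cong V_j\oplus W_i$, after which $\kappa$ is visibly the identity on $\colim_j V_j\oplus\lim_i W_i$ and the Tate$^\omega$ property and the topological isomorphism drop out simultaneously (note my counterexample above admits no such two-sided constancy structure). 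For the record, the salvageable parts of your outline: the reduction of (b) to (a) by dualizing via Corollary~\ref{cor:lim-colim-duality} and universal coefficients is a legitimate mild variant (the paper instead runs the analogous argument in cohomology); the finite dimensionality of $\coker\bigl(\lim_a RFH_*^{(a,b)}\to\lim_a RFH_*^{(a,b')}\bigr)$ and the dual kernel statement can indeed be proved from uniform bounds coming from the action filtration, giving the Tate$^\omega$ claims via Corollary~\ref{cor:limit-colimit-Tate}; and the open-mapping-theorem upgrade of a continuous bijection in $\cI^\omega$ would be an acceptable substitute for the paper's direct identification --- but all of this is moot until bijectivity is actually established, and for that you need~\eqref{eq:SES-ab} or an equivalent splitting statement.
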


In this statement, the relevant vector spaces carry natural linear topologies induced by the discrete linear topologies on the action truncated Rabinowitz Floer (co)homology groups, which are finite dimensional.

\begin{proof}[Proof of Theorem~\ref{thm:RFH-self-dual} assuming Theorem~\ref{thm:kappa-top}] \qquad 

That $RFH_*(\p V)$ and $RFH^*(\p V)$ are Tate$^\omega$ vector spaces is the content of Theorem~\ref{thm:kappa-top}. 
The duality statement follows from the canonical isomorphisms
\begin{align*}
  (RFH_*(\p V))^* &= (\colim_b\lim_a RFH_*^{(a,b)})^* \\
  &\cong \lim_b(\lim_a RFH_*^{(a,b)})^* \\
  &\cong \lim_b\colim_a (RFH_*^{(a,b)})^* \\
  &\cong \lim_b\colim_a RFH^*_{(a,b)} \\
  &\cong \colim_a\lim_b RFH^*_{(a,b)} \\
  &= RFH^*(\p V).
\end{align*}
Here the equalities in the first and last line are the
definitions.
The first isomorphism follows from Corollary~\ref{cor:lim-colim-duality}(a)
(where $\lim_a RFH_*^{(a,b)}$ is linearly compact (hence Tate) as a limit of finite dimensional (hence linearly compact) vector spaces); the second one from Corollary~\ref{cor:lim-colim-duality}(b); the third one from the universal coefficient theorem and finite
dimensionality of $RFH^*_{(a,b)}$; and the fourth one from Theorem~\ref{thm:kappa-top}(b). 
\end{proof}

\subsection{Basis of open neighborhoods of $0$} \label{sec:basis-of-opens}

The description of Rabinowitz Floer homology as the \emph{target of $\kappa$} from Theorem~\ref{thm:kappa-top}(a) allows an easy and intuitive description of a basis of open neighborhoods of $0$. Let 
$$
RFH_*^{(c,\infty)}=\colim_b RFH_*^{(c,b)}
$$
and consider the canonical map 
$$
\pi_c:RFH_*(\p V)\to RFH_*^{(c,\infty)}
$$
stemming from $\lim_c\colim_b RFH_*^{(c,b)} \to \colim_b RFH_*^{(c,b)}$. A basis of open neighborhoods of $0$ for $RFH_*(\p V)$ is given by 
\begin{equation} \label{eq:Uc}
U_c = \ker\left(\pi_c:RFH_*(\p V)\to RFH_*^{(c,\infty)}\right), \qquad c\in\R.
\end{equation}
Note that, by general considerations, a basis of $RFH_*(\p V)$ is given by the open subspaces of the form $\pi_c^{-1}(O_c)$, where $c\in\R$ and $O_c\subset RFH_*^{(c,\infty)}$ belongs to a basis of neighborhoods of $0$ in $RFH_*^{(c,\infty)}$. Since this space is discrete, we can take such a basis to consist only of $\{0\}$ and therefore only consider the open subspaces $U_c=\pi_c^{-1}(0)$.

The description of $RFH_*(\p V)$ as the \emph{source of $\kappa$} from Theorem~\ref{thm:kappa-top}(a) yields an alternative description of this basis of open sets in terms of the linearly compact vector spaces 
$$
RFH_*^{(-\infty,c)} = \lim_a RFH_*^{(a,c)}
$$
and of the canonical maps 
$$
\iota_c:RFH_*^{(-\infty,c)}\to RFH_*(\p V)
$$
stemming from $\lim_a RFH_*^{(a,c)}\to \colim_c\lim_a RFH_*^{(a,c)}$. The system of canonical long exact sequences of finite dimensional vector spaces $\dots \to RFH_*^{(a,c)} \to RFH_*^{(a,b)}\to RFH_*^{(c,b)}\to\dots$, defined for $a<c<b$, yields by first passing to the limit over $a\to-\infty$ and then to the colimit as $b\to\infty$ a long exact sequence 
$$
\dots\to RFH_*^{(-\infty,c)} \stackrel{\iota_c}\to RFH_*(\p V)\stackrel{\pi_c}\to RFH_*^{(c,\infty)}\to\dots
$$
(The identification of the map $RFH_*(\p V)\to RFH_*^{(c,\infty)}$ with $\pi_c$ is straightforward from the definition of $\kappa$.) Thus 
$$
U_c = \ker \pi_c = \im \iota_c.
$$

\begin{remark}
There is of course a similar description of a basis of linear open neighborhoods of $0$ for Rabinowitz Floer cohomology, relying on Theorem~\ref{thm:kappa-top}(b). We leave the details to the interested reader. 
\end{remark}

\subsection{Commuting $\lim$ and $\colim$ for Rabinowitz Floer homology}

Our proof of Theorem~\ref{thm:kappa-top} relies on the following more abstract statement. 

\begin{proposition} \label{prop:kappa-abstract} Let $V_{i,j}$ be a bidirected system of finite dimensional vector spaces indexed by $i\in\N^-$, $j\in \N^+$. Assume that there exist an inverse system $W_i$, $i\in \N^-$ and a direct system $V_j$, $j\in\N^+$ with a short exact sequence of bidirected systems\footnote{Here we interpret $V_j$ and $W_i$ as bidirected systems constant in $i$, resp. $j$.}   
$$
0\to V_j\to V_{i,j}\to W_i\to 0.
$$ 
Then the source and target of the canonical map
$$
\kappa:\colim_j\lim_i V_{i,j}\to \lim_i  \colim_j V_{i,j}
$$ 
are Tate$^\omega$ vector spaces, and $\kappa$ is a topological isomorphism. 
\end{proposition}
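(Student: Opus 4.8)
The plan is to compute the two iterated operations by feeding the short exact sequence of bidirected systems $0\to V_j\to V_{i,j}\to W_i\to 0$ through one variable at a time, and then to compare source and target through a commuting ladder built by naturality. I would set $B_j:=\lim_i V_{i,j}$, $A_i:=\colim_j V_{i,j}$, $W:=\lim_i W_i$ and $D:=\colim_j V_j$, and write $S=\colim_j\lim_i V_{i,j}$, $T=\lim_i\colim_j V_{i,j}$.

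First I would fix $j$ and apply $\lim_i$. Since $V_j$ is constant in $i$ with identity transition maps, one has $\lim_i V_j=V_j$ and $\lim^1_i V_j=0$, so left exactness of $\lim_i$ gives a short exact sequence $0\to V_j\to B_j\to W\to 0$. By Lemma~\ref{lem:colim-lim-Tate}(b), $W$ is linearly compact (with countable basis, being a countably indexed limit of finite dimensional spaces) and each $B_j$ is linearly compact. Dually, fixing $i$ and applying the exact filtered colimit $\colim_j$ (\cite[Theorem~2.6.15]{Weibel}) gives $0\to D\to A_i\to W_i\to 0$, where $A_i$ is discrete by Lemma~\ref{lem:colim-lim-Tate}(a) and $D$ is discrete of countable dimension. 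Taking $\colim_j$ of the first family and $\lim_i$ of the second (again using $\lim^1_i D=0$) then produces the two short exact sequences
$$0\to D\to S\to W\to 0,\qquad 0\to D\to T\to W\to 0.$$
Here no extra completion is needed for $S$: Tate-ness (established next) guarantees it is already complete.

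Next I would show $S$ and $T$ are Tate$^\omega$. Applying the snake lemma to the ladder of sequences $0\to V_j\to B_j\to W\to 0$ for $j\le j'$, in which the map on the $W$-factor is the identity, yields $\coker(B_j\to B_{j'})\cong\coker(V_j\to V_{j'})$, which is finite dimensional; Corollary~\ref{cor:limit-colimit-Tate}(b) then shows $S=\colim_j B_j$ is Tate. Symmetrically, the snake lemma applied to $0\to D\to A_i\to W_i\to 0$ gives $\ker(A_i\to A_{i'})\cong\ker(W_i\to W_{i'})$ finite dimensional, and Corollary~\ref{cor:limit-colimit-Tate}(a) shows $T=\lim_i A_i$ is Tate. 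Since all systems are countably indexed, with the $B_j$ of countable basis and the $A_i$ of countable dimension, inspecting the proofs of Corollary~\ref{cor:limit-colimit-Tate} exactly as in the proof of Proposition~\ref{prop:IPTateomega} upgrades this to $S,T\in Tate^\omega$; in particular $S,T\in\cI^\omega$.

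Finally I would prove $\kappa$ is a topological isomorphism. The datum $0\to V_\bullet\to V_{\bullet,\bullet}\to W_\bullet\to 0$ is a short exact sequence of bidirected systems, and the canonical comparison map $\kappa$ is natural in the bidirected system. Evaluating naturality on the subsystem $V_\bullet$ (constant in $i$) and on the quotient system $W_\bullet$ (constant in $j$), for which the comparison map is visibly the identity of $D$, respectively of $W$, yields a commuting ladder between the two short exact sequences above whose outer vertical maps are $\id_D$ and $\id_W$. The five lemma then forces the middle map $\kappa$ to be bijective. As $\kappa$ is a continuous bijection between objects of $\cI^\omega$, the open mapping theorem in $\cI^\omega$ (\cite[Lemma~2.7]{Esposito-Penkov22}) makes it open, hence a topological isomorphism. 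The main obstacle is less the five lemma than the topological bookkeeping of the first paragraph—ensuring the relevant $\lim^1$ vanishes, that the topological limits and colimits coincide with their algebraic counterparts so that the exact-sequence computations are legitimate in $\Top$, and that the naturality squares for $\kappa$ hold on the nose; once this is in place, Tate$^\omega$-ness and the open mapping theorem do the rest.
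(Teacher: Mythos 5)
Your proof is correct, and it takes a genuinely different route from the paper's at the decisive step. The paper's proof also begins with the snake-lemma observations that $\coker(B_j\to B_{j'})\cong\coker(V_j\to V_{j'})$ and $\ker(A_{i'}\to A_i)\cong\ker(W_{i'}\to W_i)$ are finite dimensional and invokes Corollary~\ref{cor:limit-colimit-Tate} for Tate-ness, exactly as you do; but for the isomorphism it then constructs, by a double induction adjusting complements via graphs of correction maps, an isomorphism of bidirected systems $V_{i,j}\simeq V_j\oplus W_i$, after which $\kappa$ is visibly the identity on $\colim_j V_j\oplus\lim_i W_i$ and the Tate$^\omega$ property of source and target is read off directly from this model, with no external input. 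You instead keep only the homological shadow of that splitting: naturality of $\kappa$ evaluated on the subsystem $V_\bullet$ and the quotient system $W_\bullet$ (where the comparison maps are $\id_D$ and $\id_W$) gives a commuting ladder between the two exact rows $0\to D\to S\to W\to 0$ and $0\to D\to T\to W\to 0$, the five lemma gives bijectivity, and the open mapping theorem in $\cI^\omega$~\cite[Lemma~2.7]{Esposito-Penkov22} gives openness. This is shorter and avoids the delicate inductive bookkeeping, at the price of (i) importing the Esposito--Penkov open mapping theorem, which the paper quotes in \S\ref{sec:count} but its own proof never needs, and (ii) the bookkeeping you yourself flag: since colimits in $\Top$ are quotients by the \emph{closure} of the relation subspace, you should say explicitly that Lemma~\ref{lem:colim_countable}(c) (a countably indexed colimit of linearly compact spaces with countable bases is complete, in particular Hausdorff) is what lets you treat $S=\colim_j B_j$ algebraically, while your $\lim^1$ vanishings are Mittag--Leffler for constant systems and are unproblematic; note that the paper's cited exactness of $\lim$ for finite-dimensional systems does not cover $0\to D\to A_i\to W_i\to 0$, where the terms have only countable dimension, so your appeal to Mittag--Leffler is actually needed there. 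Your ``inspect the proofs'' upgrade from Tate to Tate$^\omega$ is legitimate and is the same level of rigor the paper itself uses in Proposition~\ref{prop:IPTateomega}. The trade-off in the end: the paper's constructive splitting is self-contained and yields the stronger system-level decomposition $V_{i,j}\simeq V_j\oplus W_i$ (hence the topological splitting $S\simeq D\oplus W$ for free), whereas your argument is more categorical and robust but recovers the decomposition only at the level of $S$ and $T$; both approaches use the countable indexing essentially, yours through Mittag--Leffler and the open mapping theorem, the paper's through its induction over $\N$.
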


\begin{proof} The existence of the map $\kappa$ is a straightforward consequence of the universal properties of limits and colimits: for fixed $j$ we have a map of inverse systems $V_{i,j}\to \colim_j V_{i,j}$, hence a map $\lim_i V_{i,j}\to \lim_i \colim_j V_{i,j}$ with source the direct system $\lim_i V_{i,j}$, wherefrom the map $\kappa: \colim_j \lim_i V_{i,j} \to \lim_i \colim_j V_{i,j}$. 

Since the $V_{i,j}$ are finite dimensional, so are the $V_j$ and the $W_i$. 

That $\colim_j\lim_i V_{i,j}$ is Tate is a consequence of Corollary~\ref{cor:limit-colimit-Tate}(b). To this effect, we use that $\lim$ is exact on inverse systems that consist of finite dimensional vector spaces (see~\cite[Lemma~3.6]{CF} and~\cite[Proposition~3.5.7]{Weibel}). We infer a direct system of short exact sequences $0\to V_j \to \lim_i V_{i,j}\to \lim_iW_i\to 0$, hence isomorphisms $\coker(\lim_i V_{i,j}\to \lim_i V_{i,j+1})\simeq \coker(V_j\to V_{j+1})$. The latter space is finite dimensional, so Corollary~\ref{cor:limit-colimit-Tate}(b) applies to show that $\colim_j \lim_i V_{i,j}$ is Tate.  

That $\lim_i\colim_j V_{i,j}$ is Tate is a consequence of Corollary~\ref{cor:limit-colimit-Tate}(a). Since $\colim$ is exact we infer an inverse system of short exact sequences $0\to \colim_j V_j\to \colim_j V_{i,j}\to W_i\to 0$, so that we have isomorphisms $\ker (\colim_j V_{i-1,j}\to \colim_j V_{i,j})\simeq \ker (W_{i-1}\to W_i)$. The latter space is finite dimensional, so Corollary~\ref{cor:limit-colimit-Tate}(a) applies to show that $\lim_i \colim_j  V_{i,j}$ is Tate. 

We claim that there is an isomorphism of bidirected systems $V_{i,j}\simeq V_j\oplus W_i$. This implies the conclusion of the proposition as follows. We have $\colim_j\lim_i (V_j\oplus W_i)\simeq \colim_j V_j\oplus \lim_i W_i \simeq \lim_i \colim_j (V_j\oplus W_i)$, and the composition of these isomorphisms is $\kappa$. Note that the middle term is Tate because $\colim_j V_j$ is discrete as a colimit of discrete spaces, and $\lim_i W_i$ is linearly compact as a limit of linearly compact spaces. This gives an alternative proof of the fact that $\colim_j\lim_i V_{i,j}$ and $\lim_i \colim_j  V_{i,j}$ are Tate. 

The space $\colim_j V_j\oplus \lim_i W_i$ is actually Tate$^\omega$. Indeed $\colim_j V_j$ is discrete of countable dimension as a countable colimit of finite dimensional spaces, and $\lim_i W_i$ is linearly compact with countable basis as a countable limit of finite dimensional spaces. Moreover $\lim_i W_i$ is open, and hence a c-lattice, because $\colim_j V_j$ is discrete. This proves that $\colim_j V_j\oplus \lim_i W_i$ is Tate $^\omega$, and the same holds for the source and target of $\kappa$. 

We now prove the claim. Let $g_{i,i'}:W_{i'}\to W_i$, $i'\le i$, $i,i'\in\N^-$ be the maps in the inverse system $(W_i)$, and let $f_{j',j}:V_j\to V_{j'}$, $j\le j'$, $j,j'\in \N^+$ be the maps in the direct system $(V_j)$. The space $V_j$ injects into $V_{i,j}$ and we identify it canonically with a subspace of $V_{i,j}$. We need to find a direct summand $E_{i,j}$ for $V_j$ in $V_{i,j}$ such that, upon writing $V_{i,j}=V_j\oplus E_{i,j}$ and identifying $E_{i,j}$ with $W_i$ via the projection $V_{i,j}\to W_i$, the maps $V_{i-1,j}\to V_{i,j}$ are identified with $1\oplus g_{i,i-1}$ and the maps $V_{i,j}\to V_{i,j+1}$ are identified with $f_{j+1,j}\oplus 1$. 

We construct the requested decompositions for $-n\le i\le -1$ and arbitrary $j$ by induction over $n$. 

Let $n=-1$. We construct by induction over $j$ decompositions $V_{-1,j}=V_j\oplus E_{-1,j}$ such that the maps $V_{-1,j}\to V_{-1,j+1}$ are identified with $f_{j+1,j}\oplus 1$. We start with an arbitrary direct sum decomposition $V_{-1,1}=V_1\oplus E_{-1,1}$. Having chosen the direct summand  $E_{-1,j}$ of $V_j$ in $V_{-1,j}$, let $E'_{-1,j+1}$ be some direct summand for $V_{j+1}$ in $V_{-1,j+1}$. The map $V_{-1,j}\to V_{-1,j+1}$ is then identified with $\begin{pmatrix} f_{j+1,j} & \tau_{j+1,j}\\ 0 & 1\end{pmatrix}$ for some $\tau_{j+1,j}:E_{-1,j}\to V_{j+1}$, and we choose $E_{-1,j+1}=\mathrm{graph}(-\tau_{j+1,j})$. This completes the induction step. 

Assume we have constructed such decompositions for $-n\le i\le -1$ and arbitrary $j$. We construct them for $i=-n-1$ and arbitrary $j$ as follows. For each $j$ let $E'_{-n-1,j}$ be some direct summand of $V_j$ in $V_{-n-1,j}$. The maps $V_{-n-1,j}\to V_{-n,j}$ are identified with $\begin{pmatrix} 1 & \sigma_{j,-n-1}\\ 0 & g_{-n,-n-1}\end{pmatrix}$ for some $\sigma_{j,-n-1}:E'_{-n-1,j}\to V_j$. Let $E_{-n-1,j}=\mathrm{graph}(-\sigma_{j,-n-1})$, so that the map $V_{-n-1,j}\to V_{-n,j}$ is identified with $1\oplus g_{-n,-n-1}$. Consider the commutative square 
$$
\xymatrix{
V_{-n,j} \ar[r] & V_{-n,j+1}\\
V_{-n-1,j}\ar[u] \ar[r] & V_{-n-1,j+1} \ . \ar[u]
} 
$$
The composition of the left vertical and upper horizontal maps is $f_{j+1,j}\oplus g_{-n,-n-1}$. The right vertical map is $1\oplus g_{-n,-n-1}$, and the bottom horizontal map is upper triangular of the form $\begin{pmatrix} f_{j+1,j} & a \\ 0 & 1\end{pmatrix}$ for some $a:E_{-n-1,j}\to V_{j+1}$. Commutativity of the square implies $a=0$ and the bottom horizontal map is therefore $f_{j+1,j}\oplus 1$. This completes the induction step, and therefore the proof of the claim, and therefore the proof of the proposition.
\end{proof}

To prove Theorem~\ref{thm:kappa-top} we rely on the following finite action enhancement of the decomposition~\eqref{eq:RFH-SH-bar}. For $a<b$ consider the symplectic homology $SH_*^{(a,b)}$,  
symplectic cohomology $SH^*_{(a,b)}$, and Rabinowitz
Floer homology $RFH_*^{(a,b)}$ of the Liouville domain $V$ in the action interval $(a,b)$.  
According to~\cite[Proposition 2.9]{Cieliebak-Frauenfelder-Oancea},
for all $a<0<b$ we have a long exact sequence of finite dimensional vector spaces
\begin{equation*}
\xymatrix
@C=15pt
{
\ldots \ar[r] &  \!SH^{-*}_{(-\infty,-a)}\! \ar[r]^-\eps & \!SH_*^{(-\infty,b)}\! \ar[r] &
\!RFH_*^{(a,b)}\! \ar[r]
& \!SH^{1-*}_{(-\infty,-a)}\! \ar[r]^-\eps & \ldots
}
\end{equation*}
Passing to reduced symplectic homology $\ol{SH}_*=\coker\eps$ and cohomology
$\ol{SH}^*=\ker\eps$, this becomes a short exact sequence
\begin{equation}\label{eq:SES-ab}
\xymatrix
@C=15pt
{
  0 \ar[r] & \!\ol{SH}_*^{(-\infty,b)}\! \ar[r] & \!RFH_*^{(a,b)}\! \ar[r]
  & \!\ol{SH}^{1-*}_{(-\infty,-a)}\! \ar[r] & 0\,.
}
\end{equation}
The groups $RFH_*^{(a,b)}$ for varying $a,b$ define a bidirected system,
the groups $\ol{SH}_*^{(-\infty,b)}$ and
$\ol{SH}^{1-*}_{(-\infty,-a)}$ define a direct and an inverse system respectively, and~\eqref{eq:SES-ab} defines a short exact sequence of bidirected systems. 

Similar considerations apply to the cohomology groups $RFH^*_{(a,b)}$.

\begin{proof}[Proof of Theorem~\ref{thm:kappa-top}]
We give the details only for (a), since the proof of (b) is entirely analogous. Consider the short exact sequence~\eqref{eq:SES-ab}, 
\begin{equation*}
\xymatrix
@C=15pt
{
  0 \ar[r] & \!\ol{SH}_*^{(-\infty,b)}\! \ar[r] & \!RFH_*^{(a,b)}\! \ar[r]
  & \!\ol{SH}^{1-*}_{(-\infty,-a)}\! \ar[r] & 0\,,
}
\end{equation*}
defined for $a<0<b$.  
These groups, and also $RFH_*^{(a,b)}$, can only possibly change when $|a|, b>0$ belong to the action spectrum of $\p V$, which is discrete, so that we can assume without loss of generality that these (bi)directed systems are of the form $V_j=\ol{SH}_*^{(-\infty,b_j)}$, $W_i=\ol{SH}^{1-*}_{(-\infty,-a_i)}$, $V_{i,j}=RFH_*^{(a_i,b_j)}$ for $i\in \N^-$, $j\in \N^+$ and $|a_i|, b_j$ chosen appropriately outside of the spectrum of $\p V$. Each of these groups is finite dimensional because the underlying Floer chain complexes are finite dimensional, so Proposition~\ref{prop:kappa-abstract} applies to show that $\kappa$ is a topological isomorphism of Tate$^\omega$ vector spaces. 
\end{proof}

\begin{remark} Passing in~\eqref{eq:SES-ab} first to the limit over $a\to -\infty$ and then to the colimit over $b\to\infty$ we obtain the short exact sequence 
\begin{equation*}
\xymatrix
@C=15pt
{
  0 \ar[r] & \!\ol{SH}_*\! \ar[r] & \!RFH_*\! \ar[r]
  & \!\ol{SH}^{1-*}\! \ar[r] & 0\,.
}
\end{equation*}
This short exact sequence splits in the sense of Definition~\ref{defi:equivalence-splitting}, resulting in a topological decomposition
\begin{equation*}
   RFH_* = \ol{SH}_* \oplus \ol{SH}^{1-*}.
\end{equation*}
See also~\eqref{eq:RFH-SH-bar}. The splitting $\ol{SH}^{1-*}\into RFH_*$ can be chosen to
contain the open subspace $U_{-\eps}$ for $\eps>0$ from~\eqref{eq:Uc}, so that 
$\ol{SH}^{1-*}\subset RFH_*$ is an open complement of the d-lattice
$\ol{SH}_*$, and therefore a c-lattice. 
\end{remark}

\section{Product and coproduct on Rabinowitz Floer homology} \label{sec:RFHprodcoprod}

In this section we prove Theorem~\ref{thm:main3} and the remaining statement from Theorem~\ref{thm:main1}. For readability we split these statements in two. The first part is 

\begin{theorem}\label{thm:RFHmulambdacont}
For any Liouville domain $V$, the pair-of-pants product $\mu$ and the
secondary pair-of-pants coproduct $\lambda$ on Rabinowitz Floer
homology $RFH_*=RFH_*(\p V)$ define continuous linear maps
\begin{gather*}
  \mu: RFH_*\hatotimes^*RFH_*\to RFH_* \quad\text{and}\quad
  \lambda: RFH_*\to RFH_*\hatotimes^!RFH_*. 
\end{gather*}
\end{theorem}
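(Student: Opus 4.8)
I need to show that the pair-of-pants product $\mu$ is continuous as a map $RFH_*\hatotimes^*RFH_*\to RFH_*$, and that the secondary coproduct $\lambda$ is continuous as a map $RFH_*\to RFH_*\hatotimes^!RFH_*$. Let me think about how to approach this.

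**Setting up the structure.** The key facts I have available are: (1) $A:=RFH_*$ is a Tate$^\omega$ vector space (Theorem~\ref{thm:RFH-self-dual}/\ref{thm:kappa-top}); (2) it is defined via action-truncated pieces $RFH_*^{(a,b)}$, with the colim-lim / lim-colim descriptions from Theorem~\ref{thm:kappa-top}(a); (3) there is a good basis of open neighborhoods from Section~\ref{sec:basis-of-opens}, namely $U_c = \ker\pi_c = \im\iota_c$, where $\pi_c: A\to RFH_*^{(c,\infty)}$ and $\iota_c: RFH_*^{(-\infty,c)}\to A$; (4) the adjunction theorem and the characterizations of the $*$ and $!$ topologies.

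**For the product $\mu$.** The natural strategy is to use the universal property of $\hatotimes^*$. By Proposition~\ref{prop:*topology-universal}(b) (extended to the completion via Proposition~\ref{prop:*topological}, since $A$ is complete), a continuous linear map $A\hatotimes^*A\to A$ is the same as a continuous *bilinear* map $A\times A\to A$. So the real content is that the bilinear pair-of-pants product $A\times A\to A$ is continuous in the sense of Lemma~\ref{lem:bil-cont}. The pair-of-pants product respects the action filtration: the product of classes of actions $<b_1$ and $<b_2$ lands in action $<b_1+b_2$ (this is the geometric input from \cite{CHO-algebra} that I would cite). Translating into the basis $\{U_c\}$: I expect that for any $c$, the set $\mu^{-1}(U_c)$ contains $U_{c'}\otimes A + A\otimes U_{c'}$-type data, and the three conditions of Lemma~\ref{lem:bil-cont} should follow from the filtration estimate, since $\pi_c\circ\mu$ factors through finitely many action-truncated pieces.

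**For the coproduct $\lambda$.** Here I would instead use the description of the $!$ topology. By Proposition~\ref{prop:characterization!topologyTate}, since $A$ is Tate, the target $A\hatotimes^!A$ has neighborhood basis $U\otimes A + A\otimes U$ for c-lattices $U$, and by Proposition~\ref{prop:!otimes-Hom}, $A\hatotimes^!A\simeq\Hom(A^*,A)$. The most structurally transparent route is \emph{duality}: the secondary coproduct $\lambda$ should be dual to the product $\mu$ on $A^*=RFH^*$ under Poincaré duality — this is exactly the intertwining asserted in Theorem~\ref{thm:main3} and proved later as Theorem~\ref{thm:RFHmulambdaPD}. But since that result is stated as coming \emph{after}, for a self-contained proof of mere continuity I would argue directly: continuity of $\lambda: A\to A\hatotimes^!A$ means that for each c-lattice $U\subset A$, the preimage $\lambda^{-1}(U\otimes A + A\otimes U)$ is open in $A$, i.e.\ contains some $U_c$. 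Again this reduces to an action estimate for the secondary coproduct (it is \emph{secondary}, hence the relevant bound involves the difference of actions, explaining why the values land only in the completion $\hatotimes^!$ and not in the algebraic tensor product). I would verify this by expressing $\lambda$ on the truncated pieces and using that $U\otimes A + A\otimes U$ is coarse enough to absorb the ``spreading'' of the coproduct across action windows.

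**The main obstacle.** The genuinely hard part is bridging the gap between the \emph{chain-level} geometric action estimates for $\mu$ and $\lambda$ coming from \cite{CHO-algebra} and the \emph{topological} statements phrased via the open-subspace basis $\{U_c\}$. Concretely, for $\lambda$ I must pin down exactly why the secondary coproduct lands in the $!$-completion rather than the algebraic tensor product, and check that the filtration behavior of $\lambda$ matches the defining conditions of the $!$ topology (Definition~\ref{def:!top}) after passing through the colim/lim descriptions — keeping careful track of which direction ($a\to-\infty$ versus $b\to+\infty$) each tensor factor contributes to. I expect to organize this by first establishing the bilinear/linear continuity on a cofinal system of truncated pieces $RFH_*^{(a,b)}$ and then passing to the (co)limits, invoking Lemma~\ref{lem:hom-lim} and the stability of the tensor-product categories $\cI,\cP$ under $\hatotimes^*,\hatotimes^!$ (Proposition~\ref{prop:duality_intertwines}) to guarantee the limiting maps remain continuous with values in the correct completed tensor product.
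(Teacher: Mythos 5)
Your overall architecture coincides with the paper's: everything is built from the truncated pieces $RFH_*^{(a,b)}$, continuity of $\mu$ is established in the presentation $\colim_b\lim_a$ while continuity of $\lambda$ is established in $\lim_a\colim_b$, Theorem~\ref{thm:kappa-top}(a) reconciles the two, and the stability of $\cI$ under $\hatotimes^*$ and of $\cP$ under $\hatotimes^!$ (Proposition~\ref{prop:duality_intertwines}) is the engine for passing to (co)limits. Where you genuinely diverge is on $\mu$: you reduce to continuity of the bilinear map $A\times A\to A$ via Lemma~\ref{lem:bil-cont} and the universal property (Propositions~\ref{prop:*topology-universal} and~\ref{prop:*topological}, applicable since $A$ is complete), checked against the basis $\{U_c\}$. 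This route is valid --- note that each $U_c=\im\iota_c$ is in fact a c-lattice, being open and the continuous image of the linearly compact space $RFH_*^{(-\infty,c)}$, and your verification of conditions (ii)--(iii) of Lemma~\ref{lem:bil-cont} requires the \emph{asymmetric} chain-level estimate (action of a product bounded by the sum of the input actions, from~\cite{CHO-algebra}), which is finer than the symmetric truncated maps $\mu^{(a,b)}:RFH_*^{(a,b)}\otimes RFH_*^{(a,b)}\to RFH_*^{(a+b,2b)}$ that the paper records. The paper instead never touches bilinear continuity: it forms $\mu^{<b}=\lim_a\mu^{(a,b)}$ on $RFH_*^{<b}\hatotimes^*RFH_*^{<b}$, exploiting that all three tensor topologies agree on linearly compact spaces (Lemma~\ref{lem:A*otimesBlinearlycompact}) so the inner limit is a $\hatotimes^!$/$\cP$-step, and then takes $\colim_b$ as a $\hatotimes^*$/$\cI$-step; that is slicker and avoids the $U_c$ bookkeeping, while your version is more pointwise and self-explanatory.

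For $\lambda$, however, your text stops precisely where the paper does its work. You rightly discard the duality route (it would be circular: the intertwining with $\mu^\vee$ in Theorem~\ref{thm:RFHmulambdaPD} presupposes the continuity being proved here), but the direct argument is left at ``I expect an action estimate''. The content the paper supplies is the explicit truncated form: for a fixed admissible Hamiltonian the secondary coproduct factors through $FH_*^{<b/2}(H)\otimes FH_*(H)+FH_*(H)\otimes FH_*^{<b/2}(H)$, and after quotienting one obtains $\lambda^{(a,b)}:RFH_*^{(a,b)}\to RFH_*^{(a/2,\,b/2)}\otimes RFH_*^{>a/2}+RFH_*^{>a/2}\otimes RFH_*^{(a/2,\,b/2)}\subset RFH_*^{>a/2}\otimes RFH_*^{>a/2}$; colimiting over $b$ gives $\lambda^{>a}:RFH_*^{>a}\to RFH_*^{>a/2}\otimes RFH_*^{>a/2}$ between discrete spaces (where algebraic and completed tensor products agree, Lemma~\ref{lem:tensor_discrete}), and $\lim_a$ then lands in $\hatotimes^!$ by $\cP$-stability. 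In your language this is exactly the statement $\lambda(U_{2c})\subset\ol{U_c\otimes A+A\otimes U_c}$, i.e.\ continuity with the factor-$2$ rescaling of action windows --- the halving of \emph{both} endpoints is where the ``secondary'' nature of $\lambda$ enters and why the image only lies in the $!$-completion. Without pinning down this specific estimate, your continuity claim for $\lambda$ is not yet proved; so this is the one genuine gap to fill, though it is a missing computation along the paper's own path rather than a wrong turn.
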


The key takeaway from the following proof is that continuity of the product is established by presenting $RFH_*$ as $\colim_b\lim_aRFH_*^{(a,b)}$, whereas continuity of the coproduct is established by presenting $RFH_*$ as $\lim_a\colim_bRFH_*^{(a,b)}$. That the two presentations are equivalent is the content of the previous Theorem~\ref{thm:kappa-top}(a). 

\begin{proof}[Proof of Theorem~\ref{thm:RFHmulambdacont}]
We first treat the pair-of-pants product $\mu$. This is induced by the action-truncated pair-of-pants products, i.e., the family of linear maps 
$$
\mu^{(a,b)}:RFH_*^{(a,b)}\otimes RFH_*^{(a,b)}\to RFH_*^{(a+b,2b)} 
$$
indexed by $-\infty<a<b<\infty$. Specifically, we have 
$\mu=\colim_b\lim_a\mu^{(a,b)}$ (with $RFH_*=\colim_b\lim_aRFH_*^{(a,b)}$).  

The source and target of the action-truncated pair-of-pants product $\mu^{(a,b)}$ are finite dimensional and hence both discrete and linearly compact. Let $RFH_*^{<b}=\lim_aRFH_*^{(a,b)}$, $a\to-\infty$. Since the category $\cP$ of pro-discrete spaces is stable under the $\hatotimes^!$ tensor product (Proposition~\ref{prop:duality_intertwines}), and since the three flavors of completed tensor products coincide on linearly compact spaces (Proposition~\ref{lem:A*otimesBlinearlycompact}), we obtain a family of linear continuous maps 
$$
\mu^{< b}=\lim_a \mu^{(a,b)}: RFH_*^{<b}\hatotimes^* RFH_*^{<b} = RFH_*^{<b}\hatotimes^! RFH_*^{<b}\to RFH_*^{<2b}.
$$
Since the category $\cI$ of ind-linearly compact spaces is stable under the $\hatotimes^*$ tensor product (Proposition~\ref{prop:duality_intertwines}), we find that 
$$
\mu=\colim_b \mu^{<b}:RFH_*\hatotimes^*RFH_*\to RFH_*
$$
is linear continuous. 

We now treat the secondary pair-of-pants coproduct $\lambda$. For a given admissible Hamiltonian $H$ the action-truncated secondary pair-of-pants coproduct acts as
\begin{align*}
\lambda^{<b}: FH_*^{<b}(H) &\to FH_*^{<\frac b 2}(H)\otimes FH_*(H) + FH_*(H)\otimes FH_*^{<\frac b 2}(H)\\
& \to FH_*(H)\otimes FH_*(H),
\end{align*}
and further as 
\begin{align*}
& \lambda^{(a,b)}: FH_*^{(a,b)}(H) \to \frac{FH_*^{<\frac b 2}(H)\otimes FH_*(H) + FH_*(H)\otimes FH_*^{<\frac b 2}(H)}{FH_*^{<\frac a 2}(H)\otimes FH_*(H) + FH_*(H)\otimes FH_*^{<\frac a 2}(H)}\\
& \qquad\qquad = FH_*^{(\frac a 2,\frac b 2)}(H)\otimes FH_*^{>\frac a 2}(H) + FH_*^{>\frac a 2}(H)\otimes FH_*^{(\frac a 2,\frac b 2)}(H)\\
& \qquad\qquad  \to FH_*^{>\frac a 2}(H)\otimes FH_*^{>\frac a 2}(H). 
\end{align*}
The source and target of this map are finite dimensional, hence both discrete and linearly compact. Using the fact that the category $\cI$ of ind-linearly compact spaces is stable under the $\hatotimes^*$ tensor product (Proposition~\ref{prop:duality_intertwines}), and the fact that the $\hatotimes^*$ tensor product coincides with the algebraic tensor product on discrete spaces (Lemma~\ref{lem:tensor_discrete}), by taking the colimit over $H$ we find 
\begin{align*}
\lambda^{(a,b)}:RFH_*^{(a,b)}\to & RFH_*^{(\frac a 2,\frac b 2)}\otimes RFH_*^{>\frac a 2} + RFH_*^{>\frac a 2}\otimes RFH_*^{(\frac a 2,\frac b 2)}\\
& \qquad \qquad \subset RFH_*^{>\frac a 2}\otimes RFH_*^{>\frac a 2},
\end{align*}
and by further taking the colimit over $b\to\infty$ we find 
$$
\lambda^{>a}:RFH_*^{>a}\to RFH_*^{>\frac a 2}\otimes RFH_*^{>\frac a 2}.
$$ 
Here the source and target are discrete spaces, and the algebraic tensor product coincides with any of the completed tensor products. To conclude we take the limit over $a\to-\infty$ and, using that the category $\cP$ of pro-discrete vector spaces is stable under the $\hatotimes^!$ tensor product, we find that the pair-of-pants product 
$$
\lambda=\lim_a\lambda^{>a}:RFH_*\to RFH_*\hatotimes^! RFH_*
$$
is linear and continuous.
\end{proof}

The second part 
is

\begin{theorem}\label{thm:RFHmulambdaPD} For any Liouville domain $V$, the Poincar\'e duality isomorphism~\eqref{eq:PD} is a topological isomorphism that intertwines the pair-of-pants product $\mu$ and the
secondary pair-of-pants coproduct $\lambda$ on Rabinowitz Floer
homology $A=RFH_*(\p V)$, 
\begin{gather*}
  \mu: A\hatotimes^*A\to A \quad\text{and}\quad
  \lambda: A\to A\hatotimes^!A. 
\end{gather*}
with their topological duals on $A^*=RFH^*(\p V)$,  
\begin{gather*}
  \lambda^\vee: A^*\hatotimes^*A^* = (A\hatotimes^!A)^* \to A^*, \cr
  \mu^\vee: A^*\to (A\hatotimes^*A)^* = A^*\hatotimes^!A^*.
\end{gather*}
\end{theorem}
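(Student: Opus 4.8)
The plan is to prove both assertions—that $PD$ is a topological isomorphism and that it intertwines $(\mu,\lambda)$ with $(\lambda^\vee,\mu^\vee)$—by reducing everything to the action-truncated level, where all spaces are finite dimensional, $PD$ is a classical linear-algebra isomorphism, and the duality between the product and the coproduct is the Floer-theoretic input established in \cite{CHO-PD, CHO-algebra}. Beyond these inputs, all the content is the passage to the limit/colimit, for which the machinery of Sections~\ref{sec:hom-bilin}--\ref{sec:count} is precisely designed.

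For the first assertion, I would start from the finite-action Poincar\'e duality isomorphisms $PD^{(a,b)}\colon RFH_*^{(a,b)}\to RFH^{1-*}_{(-b,-a)}$ of \cite{CHO-PD}. The crucial observation is that the action-window flip $(a,b)\mapsto(-b,-a)$ matches the bidirected system on homology with the one on cohomology: sending $b\to\infty$ on the homology side corresponds to the colimit direction $-b\to-\infty$ on the cohomology side, while $a\to-\infty$ corresponds to the limit direction $-a\to+\infty$. Thus the $PD^{(a,b)}$ assemble into an isomorphism of bidirected systems, and passing to $\colim_b\lim_a$ produces a topological isomorphism $PD\colon RFH_*(\p V)\to RFH^{1-*}(\p V)$. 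Since $RFH^{1-*}(\p V)=(RFH_*(\p V))^*$ by Theorem~\ref{thm:RFH-self-dual}, this realizes $PD$ as a topological isomorphism $A\to A^*$.

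For the intertwining, it suffices by reflexivity (Theorem~\ref{thm:Tate-duality}(c) and Corollary~\ref{cor:duality}) to establish one of the two identities, say $(PD\hatotimes^! PD)\circ\lambda=\mu^\vee\circ PD$, the other following by dualizing and using the symmetry $PD^\vee=\pm PD$ of Poincar\'e duality from \cite{CHO-PD}. I would dualize the product through its presentation $\mu=\colim_b\lim_a\mu^{(a,b)}$ from the proof of Theorem~\ref{thm:RFHmulambdacont}: since dualization sends $\colim\lim$ to $\lim\colim$ (Corollary~\ref{cor:lim-colim-duality}) and exchanges $\hatotimes^*$ with $\hatotimes^!$ (Proposition~\ref{prop:duality_intertwines}), one obtains $\mu^\vee=\lim_a\colim_b(\mu^{(a,b)})^\vee$ as a map $A^*\to A^*\hatotimes^! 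A^*$. At each finite action window, $(\mu^{(a,b)})^\vee$ is identified, via $PD^{(a,b)}$ and the universal coefficient theorem, with the coproduct $\lambda$ at the flipped window; this is exactly the finite-action duality of \cite{CHO-PD, CHO-algebra}. Matching this cofinal family with the presentation $\lambda=\lim_a\colim_b\lambda^{(a,b)}$ (again from the proof of Theorem~\ref{thm:RFHmulambdacont}) yields the desired identity, the coincidence of the two presentations of $A$ being guaranteed by $\kappa$ (Theorem~\ref{thm:kappa-top}).

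The hard part will be the bookkeeping of action windows. The product naturally appears on windows of the form $(a+b,2b)$ and the coproduct on windows of the form $(\tfrac a2,\tfrac b2)$, whereas the dual of one must be matched with the other at the flipped windows $(-b,-a)$; one must verify that these families are cofinal and that the finite-level Poincar\'e duality intertwines them compatibly with every structure map of the bidirected systems, including the correct grading shift and signs. This compatibility is precisely the Floer-theoretic content imported from \cite{CHO-PD, CHO-algebra}; once it is secured at finite action, the categorical results—duality of the completed tensor products, commutation of the dual with $\lim$ and $\colim$, and the identification $\kappa$—ensure that the identity survives the passage to the (co)limit.
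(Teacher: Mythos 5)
Your proposal is correct and takes essentially the same route as the paper: both reduce the statement to the finite-action commutative square from~\cite{CHO-PD} (Theorem~4.8(c) there) in which the truncated Poincar\'e duality isomorphisms intertwine $\mu^{(a,b)}$ with the dual coproduct at the flipped action windows, and then pass to $\lim_a$ followed by $\colim_b$, with continuity and the topological-isomorphism property of $PD$ coming for free because every map arises from action-truncated versions. The only organizational difference is that you establish the $(\lambda,\mu^\vee)$ identity by dualizing the presentation $\mu=\colim_b\lim_a\mu^{(a,b)}$ and then invoke reflexivity, whereas the paper proves the $(\mu,\lambda^\vee)$ square directly and declares the other case analogous.
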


\begin{proof}
We only discuss the intertwining of $\mu$ and $\lambda^\vee$, since the case of $\lambda$ and $\mu^\vee$ is analogous. 

The proof is verbatim the same as that of the last assertion (c) in the proof of Theorem~4.8 in~\cite{CHO-PD}. We summarize it: Using the previous notation, the action-truncated versions of the Poincaré duality isomorphism $f_*$ and its inverse $g_*$ from~\cite{CHO-PD} fit into a commutative diagram  
$$
\xymatrix
@C=50pt
@R=20pt
{
   RFH^{(a,b)}_*\otimes RFH^{(a,b)}_* \ar[r]^{\ \ \ \ \ \ \mu^{(a,b)}} & RFH^{(a+b,2b)}_* \ar[d]^{f_*}_\cong \\
   RFH_{(-b,-a)}^{1-*}\otimes RFH_{(-b,-a)}^{1-*} \ar[u]^{g_*\otimes g_*}_\cong \ar[r]^{\ \ \ \ \ \ \lambda^\vee_{(-2b,-a-b)}} & RFH_{(-2b,-a-b)}^{1-*}.
}
$$
Here $\lambda^\vee_{(-2b,-a-b)}$ is the dual of the map $\lambda^{(a+b,2b)}$ from before.
\footnote{Note that here the choice of action intervals for the coproduct
differs from the one used in the proof of Theorem~\ref{thm:RFHmulambdacont}.}
The maps in the diagram are compatible with action filtrations, so taking the limit as $a\to-\infty$ and then the colimit as $b\to\infty$ we obtain a commutative diagram 
$$
\xymatrix
@C=30pt
@R=20pt
{
   RFH_*\otimes RFH_* \ar[r]^{\ \ \ \ \ \ \mu} 
   & RFH_* \ar[d]^{PD}_\cong \\
   RFH^{1-*}\otimes RFH^{1-*}  \ar[u]^{PD^{-1}\otimes PD^{-1}}_\cong  \ar[r]^{\ \ \ \ \ \ \ \ \ \lambda^\vee} & RFH^{1-*}.
}
$$
All the maps in the diagram are linear and continuous since they arise from action-truncated versions by taking $\colim/\lim$. In particular, $PD$ is a topological isomorphism, and it intertwines $\mu$ and $\lambda^\vee$. 
\end{proof}

\appendix

\section{Short exact sequences, topological complements, and Hahn-Banach} \label{sec:HBseq}

Throughout this appendix $A,B,C$ are linearly topologized vector
spaces over a discrete field $\bk$ and all maps are continuous linear.

\subsection{Short exact sequences}

\begin{definition}\label{def:ses}
A {\em short exact sequence} 
$$
0\to A \stackrel{i}\to B \stackrel{p}\to C\to 0
$$
is a short exact sequence of vector spaces such that $i$ is a closed
embedding and $p$ is an open surjection.
\end{definition}

Here $i$ being a closed embedding means that it is an isomorphism onto $i(A)$ which is closed; this is equivalent to $i$ being injective and continuous and mapping closed sets to closed sets. 
That $p$ is an open surjection is equivalent to it inducing an isomorphism from $B/A$ endowed with the quotient topology to $C$.

\begin{definition-lemma} \label{defi:equivalence-splitting}
A short exact sequence $0\to A \stackrel{i}\to B \stackrel{p}\to C\to
0$ {\em splits} if it satisfies one of the following equivalent conditions: 
\begin{enum}
\item there exists a surjection $\pi:B\to A$ such that $\pi i =\mathrm{Id}_A$. 
\item there exists an open surjection $\pi:B\to A$ such that $\pi i =\mathrm{Id}_A$. 
\item there exists a closed embedding $s:C\to B$ such that $ps=\mathrm{Id}_C$.
\item $B\simeq A\oplus C$ via $\pi$ and $s$ as in (ii) and (iii) such that $\pi s=0$. 
\end{enum}
$$
\xymatrix{
0\ar[r] &  A \ar[r]^i &  \ar@/^.5pc/[l]^-\pi B \ar[r]^-p &  \ar@/^.5pc/[l]^-s C \ar[r]  &  0.
}
$$
\end{definition-lemma}

\begin{proof}
(i) $\Rightarrow$ (ii). A continuous surjection $\pi:B\to A$ such that $\pi i =\mathrm{Id}_A$ is automatically open. To see this, note that $\pi|_{i(A)}:i(A)\to A$ is the inverse of $i:A\to i(A)$, hence open. Let $K=\ker \pi$, a closed complement of $i(A)$. Let $U\subset B$ be open. Then $U+K$ is open, and $\pi(U)=\pi(U+K)=\pi((U+K)\cap i(A))$ is open because $\pi|_{i(A)}$ is open.

(ii) $\Rightarrow$ (iii). Given $\pi$, the kernel $K=\ker\pi$ is a closed complement to $i(A)$ and $B=i(A)\oplus K$ is a topological direct sum. The isomorphism is realized by the maps $\xymatrix{\varphi:B\ar@<.5ex>[r] & \ar@<.5ex>[l] i(A)\oplus K:\psi}$ given by $\varphi(b)=(i\pi(b),b-i\pi(b))$ and $\psi(i(a),k)=i(a)+k$, which are continuous and inverse to each other. The map $p|_K:K\to C$ is an open continuous bijection, hence an isomorphism, and $s=(p|_K)^{-1}$ is a closed embedding. 

(ii) $\Rightarrow$ (iv). We have seen above that $B\simeq i(A)\oplus K$ and $p|_K:K\stackrel\simeq\longrightarrow C$. In view of $i:A\stackrel\simeq\longrightarrow i(A)$, we find that $B\simeq A\oplus C$. The image of the section $s:C\to B$ is by definition $K=\ker \pi$, so that $\pi s=0$. Specifically, the isomorphism $B\simeq A\oplus C$ is realized by the maps $\xymatrix{\varphi:B\ar@<.5ex>[r] & \ar@<.5ex>[l] A\oplus C:\psi}$ given by $\varphi(b)=(\pi(b),p(b))$ and $\psi(a,c)=i(a)+s(c)$, which are continuous and inverse to each other.

(iii) $\Rightarrow$ (ii). Assuming the existence of $s$, its image $K=s(C)$ is a closed complement of $i(A)$ and $B=i(A)\oplus K$ is a topological direct sum. The isomorphism is realized by the maps $\xymatrix{\varphi:B\ar@<.5ex>[r] & \ar@<.5ex>[l] i(A)\oplus K:\psi}$ given by $\varphi(b)=(b-sp(b),sp(b))$ and $\psi(i(a),k)=i(a)+k$, which are continuous and inverse to each other. 
Then the projection $\pi_1:B\to i(A)$ onto the first summand is an open surjection and $i:A\to i(A)$ is an isomorphism, so that $\pi=i^{-1}\pi_1:B\to A$ is an open surjection that satisfies $\pi i=\mathrm{Id}_A$ and $\pi s=0$. 

(iv) $\Rightarrow$ (ii), (iii). Obvious.
\end{proof}

Definition~\ref{def:ses} and Definition-Lemma~\ref{defi:equivalence-splitting} 
actually hold more generally for topological vector spaces. From now
on the restriction to linearly topologized spaces will be essential.

\begin{lemma} \label{lem:completeness-exact}
Let $0\to A \stackrel{i}\to B \stackrel{p}\to C\to 0$ be an exact sequence. If $B$ is complete, then $A$ and $C$ are complete.
\end{lemma}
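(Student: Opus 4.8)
The plan is to treat $A$ and $C$ separately; the first is immediate, and the real content lies in $C$. For $A$: since $i$ is a closed embedding, $i(A)$ is a closed linear subspace of the complete space $B$ and $i\colon A\to i(A)$ is a topological isomorphism. The Lemma in~\S\ref{sec:completeness} characterizing complete subspaces shows that $i(A)$, being closed in a complete space, is complete; hence $A$ is complete.

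For $C$: by the characterization of open surjections following Definition~\ref{def:ses}, $p$ induces a topological isomorphism $B/i(A)\xrightarrow{\ \sim\ }C$ for the quotient topology, so it suffices to prove that $B/A$ is complete, where I abbreviate $A:=i(A)$. Let $\cU$ be the collection of open linear subspaces of $B$. The open linear subspaces of $B/A$ are exactly the $(U+A)/A$, $U\in\cU$, whence $\wh{B/A}=\lim_{U\in\cU}B/(U+A)$, and $B/A$ is complete precisely when the canonical map $B/A\to\lim_U B/(U+A)$ is an isomorphism. For each $U\in\cU$ the quotient $B/U$ is discrete, and the second isomorphism theorem gives a short exact sequence of discrete vector spaces $0\to A/(A\cap U)\to B/U\to B/(U+A)\to 0$, natural in $U$; that is, a short exact sequence of inverse systems over $(\cU,\supseteq)$ whose left-hand transition maps $A/(A\cap U')\twoheadrightarrow A/(A\cap U)$ (for $U'\subset U$) are surjective.

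I would then pass to the associated six-term exact sequence relating $\lim$ and $\lim^1$. Since $B$ is complete we have $\lim_U B/U=B$, and since $A$ is complete (by the previous step) we have $\lim_U A/(A\cap U)=\wh A=A$. The exact segment $0\to A\to B\to\lim_U B/(U+A)\to\lim^1_U A/(A\cap U)$ then exhibits $B/A\to\lim_U B/(U+A)=\wh{B/A}$ as injective with cokernel a subspace of $\lim^1_U A/(A\cap U)$. Thus the whole statement reduces to the vanishing $\lim^1_U A/(A\cap U)=0$, and this is the main obstacle: it is precisely the measure of whether a Cauchy filter in $B/A$ can be lifted to one in $B$. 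The vanishing follows from the Mittag-Leffler property, since the transition maps of the system $A/(A\cap U)$ are surjective; this is standard and clean when $\cU$ has countable cofinality (in particular when $B$ has a countable basis, which is the case relevant to Rabinowitz Floer homology via~\cite[Proposition~3.5.7]{Weibel}), and is the step I would scrutinize most carefully before asserting it in the fully general directed setting.
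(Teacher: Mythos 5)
Your split is exactly the paper's: for $A$ the paper gives the same one-line argument (a closed subspace of a complete space is complete, via the lemma in \S\ref{sec:completeness}), and for $C$ it runs the same chain of identifications you do, arriving at $\wh C\cong\lim_{V\in\cV}(B/V)/(A/(A\cap V))$ and then asserting that this equals $B/A$ ``because $A$ and $B$ are complete''. That unexplained final isomorphism is precisely the point you isolate: completeness of $A$ and $B$ only identifies the first two limits, $\lim_V A/(A\cap V)=\wh A=A$ and $\lim_V B/V=B$, while surjectivity of $B\to\lim_V B/(A+V)=\wh{B/A}$ is governed by $\lim^1_V A/(A\cap V)$, for which the paper offers no argument over the (in general uncountable) directed set of all open subspaces. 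So your proposal follows the paper's route but, unlike the paper, makes the crucial step visible.

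Moreover, your reluctance to assert the $\lim^1$-vanishing in full generality is vindicated rather than being a defect of your argument: without a countability hypothesis the statement of Lemma~\ref{lem:completeness-exact} is contradicted by the paper's own appendix. By the Example on topological complements in Appendix~\ref{sec:HBseq}, which invokes~\cite[Theorem~3.1]{Positselski}, \emph{every} linearly topologized vector space $Q$ --- in particular an incomplete one --- can be realized as $B/A$ with $B$ complete; since the quotient projection is automatically open and $A$ is closed ($Q$ being Hausdorff), $0\to A\to B\to Q\to 0$ is then a short exact sequence in the sense of Definition~\ref{def:ses} with $B$ complete and $C=Q$ incomplete. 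Hence no proof of the lemma as stated can exist, and the Mittag-Leffler step you decline to take in the general directed setting genuinely fails there. Under a countable basis for $B$ (equivalently, countable cofinality of $\cU$), your argument --- cofinal nested chain, surjective transition maps $A/(A\cap U')\twoheadrightarrow A/(A\cap U)$, and $\lim^1=0$ by~\cite[Proposition~3.5.7]{Weibel} --- is complete and correct, and that hypothesis is present in every place the paper actually uses the lemma (notably Proposition~\ref{prop:splitting}, where $B$ is assumed complete with countable basis, and throughout the Rabinowitz Floer application). The lemma should carry that countability hypothesis, and your writeup, with the hedge promoted to a hypothesis, is the correct proof.
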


\begin{proof}
The space $A$ is complete because it is isomorphic to $i(A)$, and a closed subspace of a complete space is complete. For $C$ we use that $p$ is an open surjection: let $\cW$ be a basis of open neighborhoods of $0$ for $C$, and $\cV$ a basis of open neighborhoods of $0$ for $B$. Then 
\begin{align*}
\lim_{W\in\cW} C/W & \simeq \lim_{W\in \cW}(B/A)/(p^{-1}(W)/A)\simeq \lim_{W\in\cW} B/p^{-1}(W)\\
& \simeq \lim_{V\in\cV} B/(V+A) \simeq \lim_{V\in\cV} (B/V)/(A/A\cap V) \simeq B/A\simeq C. 
\end{align*}  
Here the last isomorphism holds because $A$ and $B$ are complete. 
\end{proof}

\begin{proposition}[{Splitting~\cite[\S1.1, Remark]{Beilinson}}] \label{prop:splitting}
Let $0\to A \stackrel{i}\to B \stackrel{p}\to C\to 0$ be an exact sequence. Assume that $B$ is complete and has countable basis. Then the short exact sequence splits. 
\end{proposition}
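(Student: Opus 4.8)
The plan is to split the sequence by producing a continuous linear section $s\colon C\to B$ of $p$ and then invoking Definition-Lemma~\ref{defi:equivalence-splitting}(iii). Identifying $A$ with the closed subspace $i(A)\subset B$, the open surjection $p$ becomes the quotient map $B\to B/A\cong C$. Since $B$ is complete with countable basis, I would choose a nested countable basis of open linear subspaces $B\supset U_1\supset U_2\supset\cdots$ with $\bigcap_k U_k=\{0\}$, so that $B=\lim_k B/U_k$. As $p$ is an open surjection, the subspaces $W_k:=p(U_k)$ form a countable basis of $C$, and $C$ is complete by Lemma~\ref{lem:completeness-exact}, hence $C=\lim_k C/W_k$. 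Under these identifications $p=\lim_k q_k$, where $q_k\colon B/U_k\to C/W_k$ is the surjection induced by $p$, with $\ker q_k=(U_k+A)/U_k\cong A/(A\cap U_k)$.

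First I would build, by induction on $k$, linear sections $s_k\colon C/W_k\to B/U_k$ of $q_k$ that are compatible with the transition maps, i.e.\ $\pi_k\circ s_{k+1}=s_k\circ\rho_{k+1}$, where $\pi_k\colon B/U_{k+1}\to B/U_k$ and $\rho_{k+1}\colon C/W_{k+1}\to C/W_k$ are the projections. The base case is any linear splitting of the vector-space surjection $q_1$. For the inductive step, set $\phi:=s_k\circ\rho_{k+1}$ and consider $T\colon B/U_{k+1}\to (C/W_{k+1})\oplus(B/U_k)$, $x\mapsto(q_{k+1}(x),\pi_k(x))$. Its image is exactly the fibre product $\{(\bar c,b)\mid \rho_{k+1}(\bar c)=q_k(b)\}$; the nontrivial inclusion here amounts to the surjectivity of $\pi_k$ restricted to kernels, $\ker q_{k+1}\to\ker q_k$, which under the identifications above is the canonical surjection $A/(A\cap U_{k+1})\to A/(A\cap U_k)$. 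Because $s_k$ is a section, the map $\bar c\mapsto(\bar c,\phi(\bar c))$ lands in this fibre product, so composing it with any vector-space splitting of $T$ onto its image yields a linear $s_{k+1}$ satisfying $q_{k+1}s_{k+1}=\mathrm{Id}$ and $\pi_k s_{k+1}=\phi$, as required. The family $(s_k)$ is then a morphism of inverse systems, so it induces a continuous linear map $s=\lim_k s_k\colon C\to B$ with $ps=\lim_k(q_k s_k)=\mathrm{Id}_C$.

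Finally I would observe that such a continuous section is automatically a closed embedding: $sp\colon B\to B$ is a continuous idempotent, its image $s(C)=\ker(\mathrm{Id}-sp)$ is closed, and $s\colon C\to s(C)$ is a continuous bijection with continuous inverse $p|_{s(C)}$. Thus $s$ is a closed embedding splitting $p$, and Definition-Lemma~\ref{defi:equivalence-splitting} yields the splitting. The main obstacle is the inductive lifting step, where linearity of $s_{k+1}$, the section property, and compatibility with the tower must be arranged simultaneously; this is a Mittag-Leffler type phenomenon, and what makes it work is precisely that the tower of kernels $\ker q_k\cong A/(A\cap U_k)$ has surjective transition maps (so the relevant $\lim^1$ obstruction vanishes), together with the countability of the basis, which allows the construction to proceed one index at a time.
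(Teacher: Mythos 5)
Your proof is correct, and it is a dual implementation of the paper's argument rather than a verbatim reproduction of it. Both proofs share the same skeleton: pass to the tower of discrete quotients over a nested countable basis, inductively construct compatible splittings of the quotient sequences --- the inductive step being enabled in both cases by surjectivity of the kernel transition maps $A/(A\cap U_{k+1})\twoheadrightarrow A/(A\cap U_k)$, i.e.\ the Mittag-Leffler mechanism you name --- and then pass to the limit using completeness. The difference is in which half of the splitting is lifted. The paper lifts \emph{retractions} $\pi_k\colon B/V_k\to A/(A\cap V_k)$ through Lemma~\ref{lem:diagram} (where the kernel surjectivity enters as the factorization $\alpha=f\circ\theta$), and must then invoke exactness of inverse limits of towers with surjective transition maps to recognize the limit of the quotient sequences as a short exact sequence, together with completeness of all three spaces $A$, $B$, $C$ (Lemma~\ref{lem:completeness-exact}) to identify it with the original one. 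You lift \emph{sections} $s_k\colon C/W_k\to B/U_k$ via the fibre-product argument (whose surjectivity claim you correctly reduce to the kernel tower), so that $ps=\Id_C$ passes to the limit by bare functoriality: no exactness-of-$\lim$ input is needed, and completeness of $A$ is never used --- only that of $B$ (by hypothesis) and of $C$ (Lemma~\ref{lem:completeness-exact}, needed to identify $C$ with $\lim_k C/W_k$; note that openness of $p$ is what makes the $W_k=p(U_k)$ a basis). The price of the section-side route is the extra verification that $s$ is a closed embedding, which you dispatch correctly with the continuous idempotent $sp$; alternatively you could have skipped this step by observing that $\pi:=i^{-1}\circ(\Id_B-sp)$ is a continuous linear retraction with $\pi i=\Id_A$, so that Definition-Lemma~\ref{defi:equivalence-splitting}(i) applies directly.
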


\begin{proof}
We can assume w.l.o.g. that the basis of neighborhoods of $0$ in $B$ is nested, i.e., it has the form $V_1\supset V_2\supset\dots\supset V_k\supset \dots$. 

Let $V\subset U\subset B$ be open subspaces. We then have a commutative diagram
$$
\xymatrix{
0 \ar[r] & A/A\cap U \ar[r] & B/U \ar[r] & C/p(U) = B/(A+U)\ar[r] & 0 \\
0 \ar[r] & A/A\cap V \ar[r] \ar@{->>}[u] & B/V \ar[r] \ar@{->>}[u] & C/p(V) = B/(A+V)\ar[r] \ar@{->>}[u] & 0\,. 
}
$$ 
Since all the quotients are taken with respect to open subspaces, they are all discrete. In this context all linear maps are continuous. Given a splitting $\pi_U:B/U\to A/A\cap U$, there exists a splitting $\pi_V:B/V\to A/A\cap V$ that makes the first square in the above diagram commutative (Lemma~\ref{lem:diagram} below).

By induction, given our nested basis of open neighborhoods $V_k\supset V_{k+1}$ we can construct in this way a sequence of splittings $\pi_k:B/V_k\to A/A\cap V_k$ that define a map of inverse systems $\{\pi_k\}:\{B/V_k\}\to\{A/A\cap V_k\}$. The limit of the short exact sequences $0\to A/A\cap V_k\to B/V_k\to C/p(V_k)\to 0$ is exact because all the maps that define the inverse system $\{A/A\cap V_k\}$ are surjective~\cite[Lemma~3.5.3]{Weibel}, and $\pi=\lim \pi_k$, being a limit of splittings, is a splitting of the exact sequence  
$$
\xymatrix{
0\ar[r] & \lim_k A/A\cap V_k \ar[r] & \lim_k B/V_k \ar[r] \ar@/^/[l]^-{\pi} & \lim_k C/p(V_k) \ar[r] & 0\,.
}
$$   
Completeness of $B$ implies completeness of $A$ and $C$ by Lemma~\ref{lem:completeness-exact}. The map $\pi$ is therefore equivalent to a splitting of the initial exact sequence. 
\end{proof}

\begin{lemma} \label{lem:diagram}
Consider the following commutative diagram of short exact sequences of {\em
  discrete} vector spaces
$$
\xymatrix{
  0 \ar[r] & A_1 \ar[r]^-{i_1} & B_1  \ar@/^/[l]^-{\pi_1} \ar[r]^{p_1} & C_1 \ar@/^/@{.>}[l]^-{s_1} \ar[r] & 0 \\
0 \ar[r] & A_2 \ar[r]^-{i_2} \ar@{->>}[u]^-{f} & B_2  \ar[u]_-{g} \ar@/^/@{.>}[l]^-{\pi_2} \ar[r]^{p_2} & C_2 \ar[u]_-{h} \ar@/^/@{.>}[l]^-{s_2} \ar[r] & 0
}
$$
where $f$ is surjective and $\pi_1i_1=\Id_{A_1}$. Then there exist splitting maps $\pi_2,s_1,s_2$ that make the diagram commutative.
\end{lemma}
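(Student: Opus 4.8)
The plan is to reduce everything to linear algebra, since all the spaces in sight are \emph{discrete} and hence every linear map between them is automatically continuous; it thus suffices to produce linear splittings and to verify the commutativity relations $f\pi_2 = \pi_1 g$ and $gs_2 = s_1 h$, together with the splitting identities $p_1 s_1 = \mathrm{Id}_{C_1}$, $\pi_2 i_2 = \mathrm{Id}_{A_2}$ and $p_2 s_2 = \mathrm{Id}_{C_2}$. I start from the given splitting $\pi_1$ of the top row and set $K_1 := \ker\pi_1$, so that $B_1 = i_1(A_1)\oplus K_1$; the section $s_1$ is then defined as the inverse of the isomorphism $p_1|_{K_1}\colon K_1 \to C_1$, which satisfies $p_1 s_1 = \mathrm{Id}_{C_1}$ and has image $K_1$.

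The heart of the argument is to produce a complement $K_2$ of $i_2(A_2)$ in $B_2$ with the extra property $g(K_2)\subseteq K_1$; all remaining splittings and commutativities will follow formally from this. First I would show $B_2 = i_2(A_2) + g^{-1}(K_1)$: given $b\in B_2$, decompose $g(b) = i_1(a_1) + k_1$ according to $B_1 = i_1(A_1)\oplus K_1$, lift $a_1$ to some $a_2 \in A_2$ using surjectivity of $f$, and observe that $g(b - i_2(a_2)) = g(b) - i_1 f(a_2) = k_1 \in K_1$. Next I would compute the overlap: since $i_1(A_1)\cap K_1 = 0$ and $i_1$ is injective, $i_2(a) \in g^{-1}(K_1)$ iff $i_1 f(a) \in K_1$ iff $f(a)=0$, so $i_2(A_2)\cap g^{-1}(K_1) = i_2(\ker f)$. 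Choosing $K_2$ to be any vector-space complement of $i_2(\ker f)$ inside $g^{-1}(K_1)$ then yields $K_2\cap i_2(A_2) = 0$ (an element of the intersection lies in $i_2(A_2)\cap g^{-1}(K_1) = i_2(\ker f)$, hence is $0$) and $K_2 + i_2(A_2)\supseteq g^{-1}(K_1) + i_2(A_2) = B_2$. Thus $B_2 = i_2(A_2)\oplus K_2$ with $g(K_2)\subseteq K_1$.

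With $K_2$ in hand I would define $\pi_2 \colon B_2 \to A_2$ as $i_2^{-1}$ composed with the projection onto $i_2(A_2)$ along $K_2$, and $s_2 := (p_2|_{K_2})^{-1}\colon C_2 \to B_2$; here $p_2|_{K_2}$ is an isomorphism onto $C_2$ because $\ker p_2 = i_2(A_2)$ meets $K_2$ trivially and $B_2 = \ker p_2 \oplus K_2$. The identities $\pi_2 i_2 = \mathrm{Id}_{A_2}$ and $p_2 s_2 = \mathrm{Id}_{C_2}$ are then immediate. For $f\pi_2 = \pi_1 g$, I would write $b = i_2(a)+k$ with $k\in K_2$ and use $g(k)\in K_1 = \ker\pi_1$ to obtain $\pi_1 g(b) = \pi_1 i_1 f(a) = f(a) = f\pi_2(b)$. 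For $gs_2 = s_1 h$, I would note that $gs_2(c)\in K_1$ and $p_1(gs_2(c)) = h p_2 s_2(c) = h(c)$, so by injectivity of $p_1|_{K_1}$ it must coincide with $s_1 h(c)$.

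The only genuine obstacle is the middle step: $i_2(A_2)$ and $g^{-1}(K_1)$ are \emph{not} complementary, since they overlap in $i_2(\ker f)$, so one cannot split $B_2$ using $g^{-1}(K_1)$ outright; the correct move is to split \emph{inside} $g^{-1}(K_1)$ relative to this overlap, and it is precisely surjectivity of $f$ that guarantees $i_2(A_2)+g^{-1}(K_1) = B_2$. Once the compatible complement $K_2$ is found, everything else is purely formal.
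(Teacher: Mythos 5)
Your proof is correct, and it follows the same overall strategy as the paper's: reduce to plain linear algebra (every linear map between discrete spaces is continuous), produce a complement $K_2$ of $i_2(A_2)$ in $B_2$ with the key property $g(K_2)\subseteq K_1=\ker\pi_1$, define $\pi_2$ as the projection along $K_2$ and $s_i=(p_i|_{\ker\pi_i})^{-1}$, and deduce the right-square commutativity from injectivity of $p_1|_{K_1}$. The only place you diverge is in how $K_2$ is built. The paper picks an arbitrary complement $S_2$ of $i_2(A_2)$, writes $g$ in upper-triangular block form $\left(\begin{smallmatrix} f & \alpha \\ 0 & \beta \end{smallmatrix}\right)$, lifts $\alpha$ through the surjection $f$ as $\alpha=f\theta$, and corrects $S_2$ to $\mathrm{graph}(-\theta)$. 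You instead work coordinate-free with the preimage $g^{-1}(K_1)$: surjectivity of $f$ gives $i_2(A_2)+g^{-1}(K_1)=B_2$ (this is exactly parallel to the paper's lifting of $\alpha$), the overlap is identified as $i_2(A_2)\cap g^{-1}(K_1)=i_2(\ker f)$, and $K_2$ is any complement of $i_2(\ker f)$ inside $g^{-1}(K_1)$. The two constructions produce the same kind of object --- the paper's $\mathrm{graph}(-\theta)$ is precisely one such complement of $i_2(\ker f)$ in $g^{-1}(K_1)$ --- so the difference is in mechanics rather than substance; your version avoids choosing an initial complement and matrix coordinates, and you also spell out the verification $gs_2=s_1h$, which the paper leaves implicit in its final sentence.
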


\begin{proof}
Since all maps between discrete spaces are continuous, this is just a
statement about vector spaces and linear maps. 
The space $S_1=\ker \pi_1$ is a complement of $\im i_1$ in $B_1$. Choose a complement $S_2$ of $\im i_2$ in $B_2$. Then the left square is equivalent to  
$$
\xymatrix{
A_1 \ar@{^(->}[r]^-{i_1} & A_1\oplus S_1  \ar@/^/[l]^-{\pi_1}  \\
A_2 \ar@{^(->}[r]^-{i_2} \ar@{->>}[u]^-{f} & A_2\oplus S_2  \ar[u]_-{g} \ar@/^/@{.>}[l]^-{\pi_2}
}
$$
where the maps $i_1,i_2$ are inclusions, the map $\pi_1$ is the projection on the first factor, and the map $g$ has upper triangular form $\begin{pmatrix} f & \alpha \\ 0 & \beta\end{pmatrix}$. We wish to construct a splitting $\pi_2$ so that the square commutes. Since $f$ is surjective, we can factor $S_2\stackrel{\alpha}{\longrightarrow} A_1$ as $S_2\stackrel{\theta}{\longrightarrow} A_2\stackrel{f}{\longrightarrow} A_1$. We replace $S_2$ by $S'_2=\mathrm{graph}(-\theta)$, which is still a complement of $A_2$ in $B_2$, so that the map $g$ has diagonal form $(f,\beta)$ with respect to the decomposition $A_2\oplus S'_2$. We define the splitting $\pi_2:A_2\oplus S'_2\to A_2$ to be the projection onto the first factor, and this makes the left square commutative.
Defining $s_i$ as the inverses of $p_i|_{\ker\pi_i}$ gives us splitting maps $s_i$ that make the right square commutative.
\end{proof}

\subsection{Topological complements} 

A {\em topological complement} of a\break closed subspace $A\subset B$ is a
closed subspace $S\subset B$ such that the canonical map $A\oplus S\to
B$ is a topological isomorphism. 

\begin{lemma}
A closed subspace $A\subset B$ has a topological complement if $A$ is
open or of finite codimension.
\end{lemma}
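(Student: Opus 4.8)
The plan is to handle the two hypotheses in turn, reducing the second to the first. If $A$ is closed of finite codimension, then by the converse half of Lemma~\ref{lem:open} it is automatically open, so it suffices to produce a topological complement under the single assumption that $A$ is open; in that situation the complement will moreover turn out to be finite-dimensional, which is what one expects in the finite-codimension case.

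So suppose $A\subset B$ is open and choose any algebraic complement $S$, i.e.\ a subspace with $A\cap S=\{0\}$ and $A+S=B$. The first key step is to note that $S$ is discrete: since $A$ is open, $A\cap S=\{0\}$ is open in the subspace topology of $S$. Likewise $B/A$ is discrete. Because $A\oplus S$ is a \emph{finite} direct sum, its categorical topology coincides with the product topology (for a finite index set the basic open sets $\oplus\,\mathcal O_i$ and $\prod\mathcal O_i$ agree), so the continuous linear bijection $\phi\colon A\oplus S\to B$, $(a,s)\mapsto a+s$, is a topological isomorphism precisely when the two projections $p_A\colon B\to A$ and $p_S\colon B\to S$ are continuous.

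I would then verify continuity of the projections by passing through the quotient. The composite $S\hookrightarrow B\to B/A$, $s\mapsto s+A$, is a continuous linear bijection between the discrete spaces $S$ and $B/A$, hence a topological isomorphism; composing its inverse with the (continuous) quotient map $B\to B/A$ exhibits $p_S$ as continuous. Continuity of $p_A$ then follows by writing $p_A=\mathrm{id}_B-\iota_S\circ p_S$, a continuous map $B\to B$ whose image lies in $A$, hence continuous into the subspace $A$. This proves that $\phi$ is a topological isomorphism, and $S=\ker p_A$ is closed as the preimage of $\{0\}$ under a continuous map to a Hausdorff space, so $S$ is a genuine topological complement.

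I do not expect a substantial obstacle here. The only points demanding care are the identification of the finite direct-sum topology with the product topology (so that ``topological isomorphism'' reduces to continuity of the two projections) and the standard fact that a map into the subspace $A$ is continuous iff it is continuous as a map into the ambient space $B$. Everything else is forced by the openness of $A$, which makes both $S$ and $B/A$ discrete and thereby trivializes all the relevant continuity questions.
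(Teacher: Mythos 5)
Your proof is correct and follows the paper's argument: reduce the finite-codimension case to the open case via Lemma~\ref{lem:open}, take an algebraic complement $S$, and use openness of $A$ to make $S$ discrete. Where the paper says ``one easily verifies that the canonical map $A\oplus S\to B$ is continuous and open,'' you simply spell out that verification (via the discrete quotient $B/A$ and continuity of the two projections), which is a fair and complete filling-in of the same approach.
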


\begin{proof}
Assume that $A\subset B$ is open and pick an algebraic complement
$S\subset B$ of $A$. Then $S$ is discrete with the subspace topology
and one easily verifies that the canonical map $A\oplus S\to B$ is
continuous and open. If a closed subspace $A\subset B$ has finite
codimension, then it is open by Lemma~\ref{lem:open}.  
\end{proof}

\begin{example}
There exist examples of pairs $(B,A)$ with $B$ complete and $A\subset B$ closed such that $A$ does not admit a topological complement. To construct such examples we make the simple observation that, if $A\subset B$ admits a topological complement and $B$ is complete, then $B/A$ must be complete, being isomorphic to the complement of $A$, which is a closed subspace of $B$ and therefore complete. On the other hand, Positselski proves in~\cite[Theorem~3.1]{Positselski} that any linearly topologized vector space $Q$ can be realized as a quotient $B/A$ with $B$ complete.\footnote{Positselski attributes this result to Roelcke-Dierolf~\cite[Proposition~11.1]{Roelcke-Dierolf}, and further to K\"othe~\cite{Kothe}.} If $Q$ is not complete then $A\subset B$ does not admit a topological complement. 
\end{example}

Topological complements are closely related to short exact sequences:

\begin{lemma}
A closed subspace $A\subset B$ has a topological complement if and
only if the short exact sequence $0\to A\to B\to B/A\to 0$ splits.
\end{lemma}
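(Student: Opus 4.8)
The plan is to show that both implications amount to unwinding the equivalent characterizations of splitting collected in Definition-Lemma~\ref{defi:equivalence-splitting}. First I would observe that, since $A\subset B$ is closed, the inclusion $i:A\to B$ is a closed embedding and the quotient map $p:B\to B/A$ is an open surjection, so that $0\to A\xrightarrow{i}B\xrightarrow{p}B/A\to 0$ really is a short exact sequence in the sense of Definition~\ref{def:ses}. With this in place, the two directions are little more than a translation between the language of complements and the language of sections/retractions.

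For the forward implication, suppose $A$ has a topological complement $S$, i.e.\ the canonical map $A\oplus S\to B$, $(a,s)\mapsto a+s$, is a topological isomorphism. I would define $\pi:B\to A$ as the composition of the inverse isomorphism $B\to A\oplus S$ with the projection onto the first factor. This $\pi$ is continuous and surjective, and $\pi i=\mathrm{Id}_A$ because $i(A)=A$ corresponds to $A\oplus 0$ under the isomorphism. This is precisely condition (i) of Definition-Lemma~\ref{defi:equivalence-splitting}, so the sequence splits.

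For the converse, suppose the sequence splits. I would invoke condition (iv), which provides a topological isomorphism $B\simeq A\oplus C$ (with $C=B/A$) realized by $\psi(a,c)=i(a)+s(c)$ together with its continuous inverse, where $s:C\to B$ is a closed embedding and $\pi s=0$. I then set $S:=s(C)=\ker\pi$. This $S$ is closed, being the image of the closed embedding $s$ (equivalently, the kernel of the continuous map $\pi$), and restricting $\psi$ to $A\oplus S$ identifies it topologically with $B$, exhibiting $S$ as a topological complement of $A$.

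Since all the nontrivial topological content---continuity of the projection, closedness of the image of the section, and the fact that the direct-sum map is a homeomorphism---is already packaged into Definition-Lemma~\ref{defi:equivalence-splitting}, I do not expect any genuine obstacle; the statement is essentially a reformulation. The only point that warrants a moment's care is making the dictionary between a topological complement $S$ and a retraction $\pi$/section $s$ explicit, namely that $S=\ker\pi=\mathrm{im}\,s$ and that the canonical map $A\oplus S\to B$ coincides with the isomorphism $\psi$ from condition (iv).
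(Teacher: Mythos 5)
Your proposal is correct and follows essentially the same route as the paper: both directions are obtained by translating between a topological complement $S$ and a retraction/section via Definition-Lemma~\ref{defi:equivalence-splitting}, with $S=\ker\pi$ (equivalently $\mathrm{im}\,s$) in one direction and the projection $B\simeq A\oplus S\to A$ in the other. The paper's proof is just a terser version of yours, leaving the bookkeeping you spell out (and the check that the sequence is exact in the sense of Definition~\ref{def:ses}) implicit.
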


\begin{proof}
Given a splitting $\pi:B\to A$ as in
Definition-Lemma~\ref{defi:equivalence-splitting}(i), the 
subspace $S=\ker \pi$ is a topological complement of $A$.  
Conversely, a topological complement $S$ satisfies $S\cong B/A$ and
gives therefore a splitting $B\simeq A\oplus B/A$.
\end{proof}

In view of this lemma, Proposition~\ref{prop:splitting} implies

\begin{corollary} \label{cor:exists-closed-complement}
Let $B$ be complete with countable basis. Then every closed
subspace $A\subset B$ has a topological complement.
\qed 
\end{corollary}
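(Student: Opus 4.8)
The plan is to obtain this corollary as an immediate consequence of the Splitting Proposition~\ref{prop:splitting} combined with the lemma immediately preceding the corollary, which characterizes the existence of a topological complement in terms of splittings of short exact sequences. In other words, all the substantive work has already been carried out in Proposition~\ref{prop:splitting}, and what remains is only to package it correctly.

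First I would form the canonical short exact sequence
$$
0\to A \stackrel{i}\to B \stackrel{p}\to B/A\to 0,
$$
where $i$ is the inclusion and $p$ the quotient projection, and check that it is a short exact sequence in the sense of Definition~\ref{def:ses}. The inclusion $i$ is a closed embedding precisely because $A$ is assumed closed, and the projection $p$ is an open surjection by the very definition of the quotient topology on $B/A$. Thus the hypotheses on the sequence are met.

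Next, since $B$ is complete and has countable basis by assumption, Proposition~\ref{prop:splitting} applies directly and shows that the sequence splits in the sense of Definition-Lemma~\ref{defi:equivalence-splitting}. Finally, I would invoke the preceding lemma, which asserts that a closed subspace $A\subset B$ has a topological complement if and only if the sequence $0\to A\to B\to B/A\to 0$ splits; applying this to our split sequence yields the desired topological complement of $A$ in $B$.

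I do not expect a genuine obstacle at this stage, since the corollary is essentially a restatement of Proposition~\ref{prop:splitting} under the dictionary provided by the complement-splitting lemma. The only point requiring care is the verification that $0\to A\to B\to B/A\to 0$ qualifies as a short exact sequence in the technical sense of Definition~\ref{def:ses} (closed embedding and open surjection), which is routine. The real content — the inductive construction of compatible splittings on the discrete quotients $B/V_k$ along a nested countable basis, followed by passage to the inverse limit using surjectivity of the transition maps to preserve exactness — lives entirely inside the proof of Proposition~\ref{prop:splitting} and need not be reproduced here.
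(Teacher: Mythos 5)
Your proposal is correct and follows exactly the paper's own route: the paper derives the corollary by applying Proposition~\ref{prop:splitting} to the short exact sequence $0\to A\to B\to B/A\to 0$ and then invoking the lemma identifying topological complements with splittings of this sequence. Your additional verification that this sequence satisfies Definition~\ref{def:ses} ($i$ a closed embedding since $A$ is closed, $p$ an open surjection by the quotient topology) is routine but correctly spelled out.
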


\begin{corollary}\label{cor:Hom-is-surjective}
Let $B$ be complete with countable basis. For any closed subspace $A\subset B$ and any $C$, the restriction map 
$$
\Hom(B,C)\to \Hom(A,C)
$$
is surjective.
\end{corollary}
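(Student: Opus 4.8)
The plan is to reduce everything to the existence of a topological complement, which is exactly Corollary~\ref{cor:exists-closed-complement}. Since $B$ is complete with countable basis and $A\subset B$ is closed, that corollary produces a closed topological complement $S\subset B$, so that the canonical map $A\oplus S\to B$ is a topological isomorphism. In particular there is a continuous projection $p_A\colon B\to A$ (the composition of the inverse isomorphism $B\to A\oplus S$ with the projection onto the first factor) which restricts to the identity on $A$, i.e.\ $p_A|_A=\mathrm{Id}_A$.

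Granting this, surjectivity of the restriction map is immediate. First I would note that the restriction map $\Hom(B,C)\to\Hom(A,C)$, $g\mapsto g|_A$, is well defined: $A$ carries the subspace topology and the inclusion $A\hookrightarrow B$ is continuous, so $g|_A$ is continuous for every $g\in\Hom(B,C)$. Then, given an arbitrary $f\in\Hom(A,C)$, I would set $\tilde f:=f\circ p_A\colon B\to C$. This is continuous as a composition of continuous linear maps, hence $\tilde f\in\Hom(B,C)$, and
$$
  \tilde f|_A = f\circ p_A|_A = f\circ \mathrm{Id}_A = f.
$$
Thus $f$ lies in the image of the restriction map, proving surjectivity.

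There is no genuine obstacle at this level: the entire difficulty has been packaged into Corollary~\ref{cor:exists-closed-complement} (and ultimately into the splitting Proposition~\ref{prop:splitting}, which is where completeness and the countable-basis hypothesis are actually used). The only points worth stating carefully are that $p_A$ is \emph{continuous} — which is what ``topological complement'' guarantees and is precisely why one cannot merely pick an algebraic complement in general — and that $p_A$ restricts to $\mathrm{Id}_A$, so that $f\circ p_A$ genuinely extends $f$. Accordingly the proof is a two-line argument once the topological splitting is invoked.

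\begin{proof}
By Corollary~\ref{cor:exists-closed-complement}, the closed subspace $A\subset B$ admits a topological complement $S$, so that $B\cong A\oplus S$ is a topological direct sum. Let $p_A\colon B\to A$ denote the associated continuous projection, which satisfies $p_A|_A=\mathrm{Id}_A$. Given any $f\in\Hom(A,C)$, the map $\tilde f:=f\circ p_A\colon B\to C$ is continuous and linear, hence $\tilde f\in\Hom(B,C)$, and $\tilde f|_A=f\circ p_A|_A=f$. Therefore the restriction map $\Hom(B,C)\to\Hom(A,C)$ is surjective.
\end{proof}
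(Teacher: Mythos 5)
Your proof is correct and is essentially the paper's own argument: the paper also invokes Corollary~\ref{cor:exists-closed-complement} to get a topological complement $S$ and extends $f$ by $0$ on $S$, which is exactly your $f\circ p_A$. The only difference is phrasing — ``extend by zero on $S$'' versus ``compose with the continuous projection'' — and both hinge on the same point you flag, namely that the splitting $B\simeq A\oplus S$ is topological so the extension is continuous.
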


\begin{proof}
By Corollary~\ref{cor:exists-closed-complement}, $A$ has a complement $S$ such that $B=A\oplus S$. Any map $A\to C$ can be extended by $0$ on $S$, and this is continuous because the direct sum decomposition $B=A\oplus S$ is topological. 
\end{proof}

\begin{example}\label{ex:non-open-mapping}
The Open Mapping Theorem and the Closed Graph Theorem, which are two foundational results from functional analysis on Banach spaces, fail in the current setting. Indeed, consider the following example: 

Let $V=\bk[[t]]$ with countable neighborhood basis of $0$ given by the
open sets $U_n=t^n\bk[[t]]$, $n\in\N$. Then $V$ is complete (and
linearly compact). Let $V_{\mathrm{discr}}$ be $\bk[[t]]$ with the discrete topology. This is again complete and has a countable neighborhood basis of $0$ (consisting of the single set $\{0\}$). We consider the identity map $\varphi=\mathrm{Id}:V_{\mathrm{discr}}\to V$. 
\begin{itemize}
\item This map is continuous and bijective, but not open because $\{0\}$ is not open in $V$. Therefore, the Open Mapping Theorem fails. 
\item The graph of $\varphi$ inside $V_{\mathrm{discr}}\oplus V$ is closed because $\varphi$ is continuous. The graph is the diagonal and it is also closed in $V\oplus V_{\mathrm{discr}}$, where it represents the graph of $\varphi^{-1}=\Id:B\to A$. However, the map $\varphi^{-1}$ is not continuous. Therefore the Closed Graph Theorem fails.
\end{itemize} 
\end{example}

\begin{example}
The same example shows that the following statement is false: ``Let $B$ be complete with countable basis. 
Given $A,S\subset B$ closed subspaces such that $A+S=B$ and $A\cap S=0$, the space $B$ equals the topological direct sum
$B = A\oplus S$."

Indeed, let $V=\bk[[t]]$ as above and $V_{\mathrm{discr}}$ the same vector space endowed with the discrete topology. Then $B=V_{\mathrm{discr}}\oplus V$ is complete and has countable basis. Let $\varphi=\Id:V_{\mathrm{discr}}\to V$. Consider the closed subspaces $S=\mathrm{graph}(\varphi)$ and $A=V_{\mathrm{discr}}\oplus 0$ in $B$. Then $A+S=B$ and $A\cap S=0$, but  the projection $\pi'_A:B\to A$ to A with kernel $S$ is not continuous: its restriction to $0\oplus V$ is $(0,b)\mapsto (-b,0)$ and is therefore identified with $-\varphi^{-1}$, which is not continuous. As a consequence, the algebraic direct sum $B=A\oplus_{\mathrm{alg}} S$ is not topological. 
\end{example}

\subsection{Hahn-Banach} 

In this section we formulate a general Hahn-Banach theorem and
emphasize, following Beilinson~\cite[\S1.1, Remark]{Beilinson} and
Positselski~\cite{Positselski}, the role played by the countable basis
assumption. 
It is based on the following variant of
Corollary~\ref{cor:Hom-is-surjective}, in which the restrictions are
placed on $C$ rather than on $B$.

\begin{proposition}[{Positselski~\cite[Proposition~3.3]{Positselski}}] 
\label{prop:Hom-is-surjective}
Let $B$ be a linearly topologized vector space and $A\subset B$
a linear subspace, endowed with the induced topology. Let $C$ be a
complete linearly topologized vector space with countable basis. Then
any continuous linear map $A\to C$ can be extended to a continuous
linear map $B\to C$. In other words, the canonical map  
$$
\Hom(B,C)\to \Hom(A,C)
$$
is surjective. \qed
\end{proposition}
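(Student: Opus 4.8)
The plan is to exploit the countable basis of $C$ to turn the extension problem into a countable sequence of elementary lifting problems in the category of discrete vector spaces, each of which is pure linear algebra.

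First I would fix a nested countable basis of open linear subspaces $C=W_0\supset W_1\supset W_2\supset\cdots$ with $\bigcap_n W_n=\{0\}$ and $W_0=C$. Since $C$ is complete, the canonical map $C\to\lim_n C/W_n$ is an isomorphism, and each quotient $C/W_n$ is discrete. Applying Lemma~\ref{lem:hom-lim}(a) to the inverse system $(C/W_n)$, and using its naturality in the first variable, gives $\Hom(A,C)\cong\lim_n\Hom(A,C/W_n)$ and $\Hom(B,C)\cong\lim_n\Hom(B,C/W_n)$, with the restriction map $\Hom(B,C)\to\Hom(A,C)$ identified with the limit of the restriction maps $\Hom(B,C/W_n)\to\Hom(A,C/W_n)$. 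Thus a continuous map $f\colon A\to C$ is the same datum as a compatible family $f_n\colon A\to C/W_n$ (the composites of $f$ with the projections), and to produce an extension it suffices to construct compatible continuous maps $g_n\colon B\to C/W_n$ with $g_n|_A=f_n$.

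Next I would construct the $g_n$ by induction, starting from $g_0=0\colon B\to C/W_0=0$. The inductive step — producing $g_{n+1}$ from $g_n$ by lifting through the surjection $p\colon C/W_{n+1}\to C/W_n$ while imposing $g_{n+1}|_A=f_{n+1}$ — is the heart of the argument. Here I would first absorb the topology: since $C/W_n$ and $C/W_{n+1}$ are discrete, continuity of $g_n$ and $f_{n+1}$ means $\ker g_n$ is open in $B$ and $\ker f_{n+1}$ is open in $A$, so there is an open linear subspace $U\subset B$ with $U\subset\ker g_n$ and $A\cap U\subset\ker f_{n+1}$. All four maps then factor through the discrete quotient $\bar B:=B/U$ and its subspace $\bar A:=(A+U)/U$, and the remaining problem — lift $\bar g_n\colon\bar B\to C/W_n$ through $p$ to a map $\bar g_{n+1}$ agreeing with $\bar f_{n+1}$ on $\bar A$ — is purely algebraic. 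It is solved by choosing a vector-space complement $\bar B=\bar A\oplus S$, declaring $\bar g_{n+1}|_{\bar A}=\bar f_{n+1}$, and lifting $\bar g_n|_S$ through $p$ (possible since $S$ is a free, hence projective, $\bk$-module). Composing with $B\to\bar B$ yields the desired continuous $g_{n+1}$, which lifts $g_n$ and restricts to $f_{n+1}$.

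Finally, the family $(g_n)$ is compatible by construction, so $g:=(g_n)\in\lim_n\Hom(B,C/W_n)=\Hom(B,C)$ is a continuous extension of $f$, establishing surjectivity of the restriction map. The main obstacle is precisely the inductive lifting step, where one must lift through $p$ \emph{and simultaneously} match the prescribed value $f_{n+1}$ on $A$; reconciling these two constraints is exactly what the reduction to the discrete quotient $B/U$ achieves, by converting the step into a splitting of plain vector spaces, where complements always exist. I expect countability to be essential here: it permits a sequential rather than transfinite induction and sidesteps the $\lim^1$-type obstruction that would otherwise block the passage from level-wise surjectivity to surjectivity of the limit map, in the same spirit as the countable-basis hypothesis in Proposition~\ref{prop:splitting} and Corollary~\ref{cor:exists-closed-complement}.
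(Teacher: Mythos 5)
Your proof is correct. One point of context: the paper itself contains no proof of this proposition --- it is stated with a citation to Positselski in lieu of an argument --- so there is no in-paper proof to diverge from; your argument is essentially a reconstruction of Positselski's, namely d\'evissage of $C$ along a nested countable basis $C=W_0\supset W_1\supset\cdots$, a sequential inductive lifting through the discrete quotients $C/W_{n+1}\to C/W_n$, and reassembly using completeness $C\cong\lim_n C/W_n$. The one fragment the paper does spell out is its proof of the Hahn--Banach Corollary~\ref{cor:general-Hahn-Banach} (the case $C=\bk$): factor through $B/V$ for $V\subset B$ open with $V\cap A=\ker f$ and extend a functional on the discrete quotient by pure linear algebra --- which is exactly a single instance of your inductive step, so your proof is the correct generalization of the argument the paper records. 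Two minor points to tighten: first, for your lifting step to be solvable you should verify explicitly that $p\circ\bar f_{n+1}=\bar g_n|_{\bar A}$ (immediate from $p\circ f_{n+1}=f_n=g_n|_A$, but it is the compatibility that makes a common lift exist, so it deserves a line); second, you do not need the topological isomorphism of Lemma~\ref{lem:hom-lim}(a) --- the universal property of the limit $C=\lim_n C/W_n$, available by completeness, already identifies continuous maps into $C$ with compatible families into the discrete quotients, and continuity of the assembled $g$ is automatic since $C$ carries the initial topology with respect to the projections. Your closing heuristic about $\lim^1$ is dispensable: the precise role of countability is simply that it provides a nested cofinal chain making a sequential induction possible, each step using only surjectivity of $C/W_{n+1}\to C/W_n$ and the existence of linear complements, in the same spirit as Proposition~\ref{prop:splitting}.
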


Applying this to $C=\bk$ we obtain

\begin{corollary}[Hahn-Banach for linearly topologized vector spaces] \label{cor:general-Hahn-Banach}
Let $B$ be a linearly topologized vector space and $A\subset
B$ a linear subspace, endowed with the induced topology. Then the
canonical map $B^*\to A^*$ is surjective.  
\end{corollary}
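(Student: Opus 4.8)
The plan is to obtain this as the special case $C=\bk$ of Proposition~\ref{prop:Hom-is-surjective}. First I would unwind the definitions: by definition of the topological dual, $B^*=\Hom(B,\bk)$, and since $A$ carries the subspace topology, $A^*=\Hom(A,\bk)$, where continuity of functionals on $A$ is measured with respect to this induced topology. Under these identifications the canonical map $B^*\to A^*$ is precisely the restriction map $\Hom(B,\bk)\to\Hom(A,\bk)$ that appears in the proposition. Thus the statement to be proved is literally the $C=\bk$ instance of that result.

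Next I would verify that $\bk$ satisfies the two hypotheses imposed on $C$ in Proposition~\ref{prop:Hom-is-surjective}, namely that it is complete and has a countable basis of open neighborhoods of $0$. Completeness is immediate: $\bk$ is discrete, and a discrete space is complete because the inverse system $\bk/U$ defining its completion has the maximal element $\bk$ itself (as in the example following the definition of the completion). For the countable basis, observe that $\bk$ carries the discrete topology, so $\{0\}$ is an open linear subspace and $\{\{0\}\}$ is already a finite, hence countable, neighborhood basis of $0$. Both hypotheses therefore hold unconditionally for the target.

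With these verifications in place, Proposition~\ref{prop:Hom-is-surjective} applies and yields surjectivity of $\Hom(B,\bk)\to\Hom(A,\bk)$, that is, of $B^*\to A^*$, which is the assertion. I do not expect any genuine obstacle, since all the substantive work is carried by the cited result of Positselski; the only point requiring care is to identify $A^*$ with $\Hom(A,\bk)$ for the \emph{subspace} topology on $A$, so that the restriction map on duals agrees with the one in the proposition. It is worth emphasizing that, unlike the general extension statement, the scalar case requires no hypothesis on $A$ or $B$ at all: the completeness and countable-basis conditions fall entirely on the target, and these are satisfied trivially by the discrete field $\bk$. This is precisely what makes Hahn-Banach hold in full generality in the linearly topologized setting.
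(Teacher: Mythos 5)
Your proposal is correct and follows exactly the paper's own derivation: the corollary is stated there as the specialization of Proposition~\ref{prop:Hom-is-surjective} to $C=\bk$, and your verification that $\bk$ is complete (being discrete) with countable basis $\{\{0\}\}$ is precisely what makes that specialization legitimate. The paper additionally spells out a self-contained direct proof (factoring $f$ through $A/U$ with $U=\ker f=V\cap A$ and extending over the discrete quotient $B/V$), but that is offered only for the reader's convenience and your route needs nothing from it.
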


We give the direct proof for the reader's convenience, following~\cite{Positselski}. 

\begin{proof}[Proof of Corollary~\ref{cor:general-Hahn-Banach}]
We need to show that any continuous linear map $A\to\bk$ can be
extended to a continuous linear map $B\to \bk$. Let $f:A\to\bk$ be
linear continuous, so that $U=\ker f\subset A$ is open. By definition
of the induced topology on $A$, there exists an open linear subspace
$V\subset B$ such that $U=V\cap A$. The linear map $f$ factorizes
through the surjection $A\to A/U$, so we have a map of discrete vector
spaces $\bar f:A/U\to \bk$. The vector space $A/U$ is a subspace of
the discrete vector space $B/V$, so the map $\bar f$ can be extended
to a (automatically continuous) linear map $\bar g:B/V\to\bk$. The
composition $B\to B/V\to \bk$ provides a continuous linear extension
of the map $f$.  
\end{proof}

Another corollary of Proposition~\ref{prop:Hom-is-surjective} is the
following strengthening of Corollary~\ref{cor:exists-closed-complement},
in which the restrictions are placed on the subspace $A$ rather than
on the ambient space $B$.

\begin{corollary}[{Positselski~\cite[Corollary~3.4]{Positselski}}] 
\label{cor:splitting-Positselski}
Let $B$ be a linearly topologized vector space and $A\subset B$
a linear subspace, endowed with the induced topology. Assume that $A$
is complete and has countable basis.
Then $A$ has a topological complement $S\subset B$ such that
  $B\simeq A\oplus S$. 
\end{corollary}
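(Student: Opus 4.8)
The plan is to realize the complement as the kernel of a continuous projection $\pi\colon B\to A$, and the crucial observation is that the hypotheses imposed on $A$ here (complete, countable basis) are precisely the hypotheses imposed on the \emph{target} space $C$ in Proposition~\ref{prop:Hom-is-surjective}. So the first step I would take is to apply Proposition~\ref{prop:Hom-is-surjective} with $C=A$ to the continuous linear map $\mathrm{Id}_A\colon A\to A$ (continuous since $A$ carries the subspace topology and $A$ is complete with countable basis). This produces a continuous linear extension $\pi\colon B\to A$ of $\mathrm{Id}_A$, i.e.\ $\pi\circ\iota=\mathrm{Id}_A$ where $\iota\colon A\hookrightarrow B$ denotes the inclusion.

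Next I would check that $A$ is closed and that $\pi$ is a genuine splitting. Since $\pi\iota=\mathrm{Id}_A$, the map $\pi$ is an idempotent retraction of $B$ onto $A$, and the complementary endomorphism $\mathrm{Id}_B-\iota\pi\colon B\to B$ is continuous with kernel exactly $A$ (indeed $b=\iota\pi(b)$ forces $b\in A$, and conversely). Hence $A$ is a closed subspace of $B$, so that $0\to A\stackrel{\iota}\to B\to B/A\to 0$ is a short exact sequence in $\Top$, and $\pi$ is a splitting map in the sense of Definition-Lemma~\ref{defi:equivalence-splitting}(i).

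I would then invoke the equivalences of Definition-Lemma~\ref{defi:equivalence-splitting} together with the Lemma of the Topological complements subsection identifying topological complements with splittings, to conclude that $S:=\ker\pi$ is a closed topological complement of $A$, i.e.\ $B\simeq A\oplus S$ topologically. Concretely, the algebraic decomposition $b=\iota\pi(b)+(b-\iota\pi(b))$ with $\iota\pi(b)\in A$ and $b-\iota\pi(b)\in S$ exhibits $B=A\oplus S$, and the inverse to the addition map $A\oplus S\to B$ is $b\mapsto(\pi(b),\,b-\iota\pi(b))$, both of whose components are continuous; thus the splitting is topological and not merely algebraic.

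I do not expect a serious analytic obstacle: once Proposition~\ref{prop:Hom-is-surjective} is available, the entire content of the argument is the clean idea of feeding $C=A$ into that extension statement. The one point that genuinely requires care is the very last verification that the algebraic direct sum is \emph{topological}, i.e.\ that both the projection $\pi$ and the complementary map are continuous. Here this is immediate, but it is exactly the subtlety flagged by the counterexamples following the statement, where continuous bijective algebraic splittings ($\varphi^{-1}=\mathrm{Id}\colon V\to V_{\mathrm{discr}}$) fail to be continuous and the algebraic direct sum is not topological; so it is worth recording explicitly.
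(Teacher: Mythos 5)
Your proposal is correct and follows essentially the same route as the paper: apply Proposition~\ref{prop:Hom-is-surjective} with $C=A$ to $\mathrm{Id}_A$ to obtain a continuous retraction $\pi\colon B\to A$, then conclude $B\simeq A\oplus\ker\pi$ via Definition-Lemma~\ref{defi:equivalence-splitting}. Your explicit verifications---that $A=\ker(\mathrm{Id}_B-\iota\pi)$ is closed (needed since $B$ is not assumed complete) and that the splitting is topological rather than merely algebraic---are details the paper leaves implicit, and they are carried out correctly.
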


\begin{proof}
Applying Proposition~\ref{prop:Hom-is-surjective} to the identity
$\Id_A:A\to A=C$, we find a continuous linear map $\pi:B\to A$ with
$\pi|_A=\Id_A$. Then $B\simeq A\oplus\ker\pi$ by
Definition-Lemma~\ref{defi:equivalence-splitting}.  
\end{proof}

\bibliographystyle{abbrv}
\bibliography{000_SHpair}

\end{document}